\colorlet{Green}{black!20!green}
 \theoremstyle{plain}
\newtheorem{thm}{Theorem}[section]
\newtheorem{conj}[thm]{Conjecture}
\newtheorem{lemma}[thm]{Lemma}
\newtheorem{prop}[thm]{Proposition}
\newtheorem{cor}[thm]{Corollary}
\theoremstyle{definition}
\newtheorem{example}[thm]{Example}
\newtheorem{defn}[thm]{Definition}%[section]
\newtheorem{remark}[thm]{Remark}
\theoremstyle{remark}
\numberwithin{equation}{section}
\def\cD{\mathcal{D}}
\def\cN{\mathcal{N}}
\def\AA{\mathbb{A}}
\def\CC{\mathbb{C}}
\def\EE{\mathbb{E}}
\def\FF{\mathbb{F}}
\def\HH{\mathbb{H}}
\def\KK{\mathbb{K}}
\def\NN{\mathbb{N}}
\def\PP{\mathbb{P}}
\def\QQ{\mathbb{Q}}
\def\RR{\mathbb{R}}
\def\TT{\mathbb{T}}
\def\ZZ{\mathbb{Z}}
\def\fc{\mathfrak{c}}
\def\fr{\mathfrak{r}}
\def\fs{\mathfrak{s}}
\def\dim{\mathrm{dim}}
\def\tr{\mathrm{tr}}
\def\Waff{W_{\mathrm{aff}}}
\def\Saff{S_{\mathrm{aff}}}
\def\scA{\mathscr{A}}
\def\scP{\mathscr{P}}
\def\scH{\mathscr{H}}
\newcommand{\vect}[1]{\boldsymbol{#1}}
\colorlet{lightgray}{black!20}
\tikzset{
    partial ellipse/.style args={#1:#2:#3}{
        insert path={+ (#1:#3) arc (#1:#2:#3)}
    }
}
\renewcommand{\@makefnmark}{\mbox{\textsuperscript{}}}
\title{Buildings, groups of Lie type, and random walks}
\author{
J. Parkinson\footnote{Research partly supported under the Australian Research Council (ARC) discovery grant DP110103205.}
}
\date{}
\begin{document}

\maketitle

\begin{abstract}
 In this paper we survey the theory of random walks on buildings and associated groups of Lie type and Kac-Moody groups. We begin with an introduction to the theory of Coxeter systems and buildings, taking a largely combinatorial perspective. We then survey the theory of random walks on buildings, and show how this theory leads to limit theorems for random walks on the associated groups.
\end{abstract}

%\tableofcontents

\begin{center}
\textit{To Professor Woess on the occasion of his $\mathit{60}^{th}$ birthday}
\end{center}

\section*{Introduction}\label{sect:1}

Probability theory on real Lie groups is a classical area, with beautiful results obtained in the 1960s, 1970s, and 1980s (see, for example \cite{Bou:81,Gui:77,Gui:80,Kai:87,SV:73,Vir:70,Weh:62}).  It is the purpose of the current paper to survey more recent results dealing with probability theory on groups of Lie type defined over other fields, and extensions of these results into the setting of Kac-Moody groups. A unifying feature of these works is the use of a combinatorial/geometric object called the \textit{building} of the group, which in some ways plays a role analogous to the symmetric space of a real Lie group. In fact the building becomes the main object of interest, and so this survey is really about random walks on buildings, with applications to random walks on the associated groups.

Buildings were invented by Jacques Tits in the 1950s in an attempt to give a uniform geometric interpretation of semi-simple Lie groups. He achieved this goal spectacularly by classifying the class of irreducible thick \textit{spherical} buildings of rank at least~$3$, showing that this class of buildings is essentially equivalent to the class of simple linear algebraic groups of relative rank at least~$3$, simple classical linear groups, and certain related groups called groups of mixed type (see \cite{Tit:74}). Since their invention the scope of building theory has expanded immensely, with \textit{affine} buildings playing an important role in the study of Lie groups over $p$-adic fields, and \textit{twin} buildings utilised extensively in the theory of Kac-Moody groups.

For the purpose of this introduction a \textit{building} consists of a set $\Delta$ (whose elements are called \textit{chambers}) and a way of measuring distance between chambers. This measurement is not simply a numerical distance, instead the distance between two chambers is an element of a \textit{Coxeter group}~$W$ associated to the building. Thus there is a ``$W$-valued distance function'' $\delta:\Delta\times\Delta\to W$, satisfying various axioms making $\Delta$ into a kind of ``$W$-metric space'' (see Definition~\ref{defn:building} and Remark~\ref{rem:another} for more details). 

Buildings arise naturally in connection with groups originating in Lie theory, and the axioms satisfied by $\delta$ are in essence capturing the combinatorics of the ``Bruhat decomposition'' in these groups. More precisely, if $G$ is a group with a \textit{Tits system} $(B,N,W,S)$ then setting $\Delta=G/B$ and $\delta(gB,hB)=w$ if and only if $g^{-1}h\in BwB$ produces a building (see Section~\ref{sec:buildingsandgroups} for details). While this connection to group theory is the raison d'\^{e}tre for buildings, there are many buildings that are not associated in any nice way to groups (see, for example, \cite{Ron:86}). This motivates the philosophy of treating the building, rather than the group, as the primary object of interest. 

We will give an introduction to the theory of buildings in the first two sections of the paper, with Section~\ref{sec:1} devoted to the theory of Coxeter groups, a necessary prerequisite to the theory of buildings. Section~\ref{sec:2} is devoted to the buildings themselves and to the related group theoretic notion of a Tits system in a group. We will focus on the classes of buildings on which random walks have been studied, including:
\begin{enumerate}
\item[(1)] The \textit{spherical buildings}, where $W$ is a finite reflection group. By Tits' classification~\cite{Tit:74} these buildings are closely related to groups of Lie type such as $SL_n(\FF)$ where $\FF$ is a field.
\item[(2)] The \textit{affine buildings}, where $W$ is an affine reflection group. By the Tits-Weiss classification~\cite{Tit:86,Wei:09} these buildings are closely related to groups of Lie type defined over fields with discrete valuation, such as $SL_n(\QQ_p)$ or $SL_n(\KK(\!(t)\!))$. The simplest affine buildings are trees with no leaves, for example, homogeneous trees. 
\item[(3)] The \textit{Fuchsian buildings}, where $W$ is generated by reflections in the hyperbolic disc~$\HH^2$. These buildings do not admit a classification (see Section~\ref{sec:classification}), however some of them are related to certain ``Kac-Moody groups''. 
\end{enumerate}

In Section~\ref{sec:3} we survey results on random walks on buildings and associated groups. There are various types of random walks that we will consider. A particularly neat class consists of the \textit{isotropic random walks} on the chambers of a building. These are the random walks $(X_n)_{n\geq 0}$ on the set $\Delta$ of chambers such that the transition probabilities $p(x,y)$ depend only on the $W$-distance $\delta(x,y)$. These random walks arise naturally from bi-$B$-invariant probability measures on groups admitting Tits systems, and any limit theorems established for the random walks on the building imply limit theorems for these measures. 

In Section~\ref{sec:hecke} we outline the beautiful algebraic theory of isotropic random walks on~$\Delta$. Put briefly, the transition operator of an isotropic random walk is an element of an algebra called a \textit{Hecke algebra}. These algebras have been extensively studied, (largely due to their connections with groups of Lie type and $p$-adic Lie groups), and their representation theory plays a key role in the theory of random walks on buildings. 

In Section~\ref{sec:spherical} we specialise to the case of finite spherical buildings. In this case the Hecke algebra is finite dimensional, and we give an overview of how the representation theory of this algebra can be applied to investigate isotropic random walks. In particular we provide tractable upper bounds for mixing times for isotropic walks on finite spherical buildings. The analysis follows, in spirit, the work of Diaconis and Ram~\cite{DR:00} where the representation theory of finite dimensional Hecke algebras is applied to investigate the systematic scan Metropolis algorithm. It turns out that this theory is related to random walks on spherical buildings, and so Section~\ref{sec:spherical} is really a translation of~\cite{DR:00} into the language of buildings. Other works related to random walks on spherical buildings can be found in Brown~\cite{Bro:00,Bro:04}, and Brown and Diaconis~\cite{BD:98}.

Next we consider random walks on affine buildings. In this context it is also natural to consider random walks on the `vertices' of the building (these walks arise from bi-$K$-invariant measures on $p$-adic Lie groups, where $K$ is a maximal compact subgroup). We will survey results on these random walks, and random walks on associated groups, drawing from the works of Cartwright and Woess~\cite{CW:04}, Lindlbauer and Voit~\cite{LV:02}, Parkinson~\cite{Par:07}, Parkinson and Schapira~\cite{PS:11}, Parkinson and Woess~\cite{PW:14}, Schapira~\cite{Sch:09}, Tolli~\cite{Tol:01}, and Trojan~\cite{Tro:13}. These works include precise limit theorems for isotropic random walks on the vertices and chambers of affine buildings, as well as theorems for random walks on groups associated to these buildings. Hecke algebras again play a key role in the analysis. Homogeneous trees are the simplest examples of affine buildings (the ``rank~$2$ case''), and in this direction we mention the fundamental works of Cartwright, Kaimanovich and Woess~\cite{CKW:94}, Lalley~\cite{Lal:93}, and Sawyer~\cite{Saw:78}. The literature relating to probability theory and harmonic analysis on homogeneous trees is extensive, and here we will focus on the higher rank cases. 

The study of random walks on non-spherical, non-affine buildings is very open territory. In Section~\ref{sec:fuchsian} we survey recent results of Gilch, M\"{u}ller and Parkinson~\cite{GMP:14} concerning isotropic  random walks on Fuchsian buildings. In this context a law of large numbers and a central limit theorem are available, with interesting formulae for the speed and variance in terms of an underlying automatic structure related to the building. 

We conclude our survey by listing some future directions in the theory, and providing some appendices. In the first appendix we carry through a `by-hand' computation outlining the general theory of isotropic random walks on the vertices of affine buildings in the special case of $\widetilde{C}_2$ buildings. In this basic case we can minimise some of the heavy (although beautiful) machinery used for the general case, thus making the analysis more accessible. In the second appendix we outline the representation theory of rank~$2$ spherical Hecke algebras, and show how a precise knowledge of the representation theory allows for accurate mixing time estimates for random walks on generalised polygons (that is, rank~$2$ spherical buildings). As a byproduct we recover a proof of the celebrated Feit-Higman Theorem (this approach is due to Kilmoyer and Solomon~\cite{KS:73}, and is in turn an adaptation of Feit and Higman's original proof from 1964~\cite{FH:64}).

On a personal note, it is an absolute pleasure to dedicate this paper to my friend and collaborator Wolfgang Woess on the occasion of his 60th birthday. Wolfgang's tireless support of young mathematicians has been a true gift to the mathematical community, a gift from which I have greatly benefited. 

\section{Coxeter systems}\label{sec:1}

\textit{Coxeter systems} form the backbone of the higher objects of buildings and groups of Lie type. In this section we recall some basic theory of Coxeter systems, focussing on examples and important classes. Standard references include \cite{AB:08,Bou:02,Hum:90}.

\subsection{Definitions}

\begin{defn}
A \textit{Coxeter system} $(W,S)$ is a group $W$ generated by a finite set~$S$ with relations
\begin{align*}%\label{eq:relations}
s^2=1\quad\textrm{and}\quad (st)^{m_{st}}=1\quad\textrm{for all $s,t\in S$ with $s\neq t$},
\end{align*}
where $m_{st}=m_{ts}\in\ZZ_{\geq 2}\cup\{\infty\}$ for all $s\neq t$ (if $m_{st}=\infty$ then it is understood that there is no relation between $s$ and $t$). We sometimes say that $W$ is a \textit{Coxeter group} when the generating set~$S$ is implied.
\end{defn}

Let $(W,S)$ be a Coxeter system. The \textit{rank} of $(W,S)$ is $|S|$. The \textit{length} of $w\in W$ is
$$
\ell(w)=\min\{n\geq 0\mid w=s_1\cdots s_n\textrm{ with }s_1,\ldots,s_n\in S\},
$$
and an expression $w=s_1\cdots s_n$ with $n$ minimal (that is, $n=\ell(w)$) is called a \textit{reduced expression} for~$w$. It is useful to note that if $w\in W$ and $s\in S$ then $\ell(ws)=\ell(w)\pm 1$. In particular, $\ell(ws)=\ell(w)$ is not possible.% (this is a nice exercise, using the fact that $\psi:W\to\{-1,1\}$ with $\psi(s)=-1$ is a group homomorphism). 

For each $I\subseteq S$ the \textit{standard $I$-parabolic subgroup} of $W$ is the subgroup
$W_I=\langle\{s\mid s\in I\}\rangle$. Then $(W_I,I)$ is a Coxeter system, and hence Coxeter systems `contain' other Coxeter systems of lower rank. This fact is very important in the theory, facilitating inductive arguments on the rank of the group. This makes the rank~$2$ systems particularly important as the base case. The rank~$2$ Coxeter group $W=\langle s,t\mid s^2=t^2=(st)^m=1\rangle$ is just the dihedral group of order $2m$ (or the infinite dihedral group if $m=\infty$), and is denoted by $I_2(m)$.

The data required to define a Coxeter system is conveniently encoded in a graph $\Gamma(W,S)$ with labelled edges called the \textit{Coxeter graph}. This graph has vertex set~$S$, and vertices $s,t\in S$ are joined by an edge if and only if $m_{st}\geq 3$. If $m_{st}\geq 4$ then the corresponding edge is given the label $m_{st}$ (thus edges with no label have $m_{st}=3$, and if $s$ and $t$ are not joined by an edge then $m_{st}=2$). A Coxeter system $(W,S)$ is called \textit{irreducible} if the Coxeter graph~$\Gamma(W,S)$ is connected. Note that if $\Gamma(W,S)$ is not connected, and if $S=S_1\cup\cdots \cup S_k$ is the decomposition into connected components, then $W$ is the direct product of parabolic subgroups $W=W_{S_1}\times\cdots \times W_{S_k}$. Thus irreducibility is a natural assumption to make in the theory of Coxeter systems.

\subsection{The Coxeter complex and examples}\label{sec:cox}

The Coxeter complex of a Coxeter system is a natural simplicial complex on which the Coxeter group acts, and plays an important role in the general theory.

Recall that a \textit{simplicial complex} with vertex set~$V$ is a collection $\Sigma$ of finite subsets of $V$ (called \textit{simplices}) such that for every $v\in V$, the singleton $\{v\}$ is a simplex (called a \textit{vertex}), and every subset of a simplex $\sigma$ is a simplex (a \textit{face} of $\sigma$). If $\sigma$ is a simplex which is not a proper subset of any other simplex then $\sigma$ is a \textit{chamber} of~$\Sigma$. 

Let $\Sigma$ can be simplicial complex, and let $\leq$ be the face relation (that is, $\sigma'\leq \sigma$ if and only if $\sigma'$ is a face of $\sigma$). Then $(\Sigma,\leq)$ is a partially ordered set satisfying:
\begin{enumerate}
\item[(P1)] For each pair $\sigma,\sigma'\in\Sigma$ there exists a greatest lower bound $\sigma\cap\sigma'$.  
\item[(P2)] For each $\sigma\in \Sigma$ the poset $\{\sigma'\mid \sigma'\leq \sigma\}$ is isomorphic to the poset of subsets of $\{1,2,\ldots,r\}$ for some~$r$.
\end{enumerate}
On the other hand, \textit{any} partially ordered set $(\Sigma,\leq)$ satisfying (P1) and~(P2) can be identified with a simplicial complex $\Sigma$ by taking the vertex to be the set $V$ of all elements $v\in\Sigma$ such that $r=1$ in~(P2), and identifying each element $\sigma\in\Sigma$ with the simplex $\{v\in V\mid v\leq \sigma\}$.

\begin{defn}
Let $(W,S)$ be a Coxeter system. The \textit{Coxeter complex} $\Sigma(W,S)$ is the simplicial complex constructed as above from the poset of all cosets of the form $wW_I$ with $w\in W$ and $I\subseteq S$, ordered by reverse inclusion (we emphasise the reverse inclusion here: $wW_I\leq vW_J$ if and only if $wW_I\supseteq vW_J$). 
\end{defn}

Explicitly, the vertex set of the simplicial complex $\Sigma(W,S)$ is
$$
V=\{wW_{S\backslash\{s\}}\mid w\in W,s\in S\},
$$
and $c_0=\{W_{S\backslash\{s\}}\mid s\in S\}$ is a chamber. The set of all chambers is $\{wc_0\mid w\in W\}$. We have that $wc_0=vc_0$ if and only if $w=v$, and so the set of all chambers can be identified with~$W$ by $wc_0\leftrightarrow w$. Each chamber has exactly $|S|$ vertices (namely, the chamber $w$ has vertices $wW_{S\backslash\{s\}}$ for $s\in S$). The Coxeter complex comes equipped with a natural \textit{type function} $\tau:\Sigma(W,S)\to 2^S$ given by $\tau(wW_I)=S\backslash I$. Thus the vertex $wW_{S\backslash\{s\}}$ has type $s$ (more accurately, type $\{s\}$), and each chamber has exactly one vertex of each type. 

\noindent\begin{minipage}[t]{0.6\linewidth}
\vspace{0pt}
\begin{example}
Let $(W,S)$ be the dihedral group of order~$6$ with $S=\{s,t\}$. Write $W_s=W_{\{s\}}=\{1,s\}$, and similarly for $W_t$. The Coxeter complex $\Sigma(W,S)$ has six vertices, marked in the diagram to the right by $\bullet$ (vertices of type~$s$) and $\circ$ (vertices of type $t$). Each chamber has $|S|=2$ vertices, and thus chambers are represented as edges in the diagram.  Similarly, the Coxeter complex of a dihedral group of order~$2m$ is a $2m$-gon, and the Coxeter complex of the infinite dihedral group is a two sided infinite path with alternating vertex types. 
\end{example}
\end{minipage}\hfill
    \begin{minipage}[t]{0.4\linewidth}
\vspace{0pt}
\begin{center}
\begin{tikzpicture} [scale=1.9]
\path 
({cos(0*180/3)},{sin(0*180/3)}) node (0)[shape=circle,draw,fill=black,scale=0.5]  {}
({cos(180/3)},{sin(180/3)}) node (1) [shape=circle,draw,scale=0.5] {}
 ({cos(2*180/3)},{sin(2*180/3)}) node (2) [shape=circle,draw,fill=black,scale=0.5] {}
  ({cos(3*180/3)},{sin(3*180/3)}) node (3) [shape=circle,draw,scale=0.5] {}
   ({cos(4*180/3)},{sin(4*180/3)}) node (4) [shape=circle,draw,fill=black,scale=0.5] {}
    ({cos(5*180/3)},{sin(5*180/3)}) node (5) [shape=circle,draw,scale=0.5] {};
\draw (0) -- (1) -- (2) --(3)--(4)--(5) --(0);
\node [above] at (1) {$W_s$};
\node [above] at (2) {$W_t$};
\node [left] at (3) {$tW_s$};
\node [below] at (4) {$tsW_t$};
\node [below] at (5) {$stW_s$};
\node [right] at (0) {$sW_t$};
\path 
({0.866*cos(30+0*180/3)},{0.866*sin(30+0*180/3)}) node (6)  {}
({0.866*cos(30+180/3)},{0.866*sin(30+180/3)}) node (7) {}
 ({0.866*cos(30+2*180/3)},{0.866*sin(30+2*180/3)}) node (8) {}
  ({0.866*cos(30+3*180/3)},{0.866*sin(30+3*180/3)}) node (9) {}
   ({0.866*cos(30+4*180/3)},{0.866*sin(30+4*180/3)}) node (10)  {}
    ({0.866*cos(30+5*180/3)},{0.866*sin(30+5*180/3)}) node (11) {};
    \node [above] at (7) {$1$};
    \node [left] at (8) {$t$};
    \node [left] at (9) {$ts$};
    \node [above] at (10) {$sts$};
    \node [right] at (11) {$st$};
    \node [right] at (6) {$s$};
         \end{tikzpicture}
         \end{center}
\end{minipage}

\bigskip

A Coxeter system is called:
\begin{enumerate}
\item[(1)] \textit{spherical} if $|W|<\infty$,
\item[(2)] \textit{affine} if $W$ is infinite and contains a normal abelian subgroup~$Q$ such that $W/Q$ is finite,
\item[(3)] \textit{Fuchsian} if $W$ is generated by the reflections in the sides of a polygon in~$\HH^2$.
%\item[(4)] \textit{right angled} if $m_{st}\in\{2,\infty\}$ for all $s,t\in S$.
\end{enumerate}
%Let us expand on the first three of these classes.

%\subsection{Spherical Coxeter systems}

If $(W,S)$ is an irreducible spherical Coxeter system then $W$ can be realised as a group generated by linear reflections in $E=\RR^{|S|}$. The action of $W$ decomposes $E$ into $|W|$ geometric cones based at the origin, and by intersecting these cones with the unit sphere we can visualise the Coxeter complex $\Sigma(W,S)$ as a tessellation of the $(|S|-1)$-sphere (some examples are illustrated in Figure~\ref{fig:rank3Coxeter}). There is a well known classification of the irreducible spherical Coxeter systems due to Coxeter \cite{Cox:35}. The nomenclature of this classification has its origins in the Cartan-Killing classification of simple Lie algebras over~$\CC$. The list of spherical Coxeter systems is as follows (in each case the subscript denotes the rank of the system, see Figure~\ref{fig:sphericalclassification} in Appendix~\ref{app:classification} for the Coxeter graphs).
\begin{enumerate}
\item[(1)] \textit{Crystallographic} systems: $A_n$ $(n\geq 1)$, $B_n=C_n$ ($n\geq 2$), $D_n$ ($n\geq 4$), $E_6$, $E_7$, $E_8$, $F_4$, $G_2$.
\item[(2)] \textit{Non-crystallographic} systems: $H_3$, $H_4$, $I_2(m)$ (with $m=5$ or $m\geq 7$).
\end{enumerate}

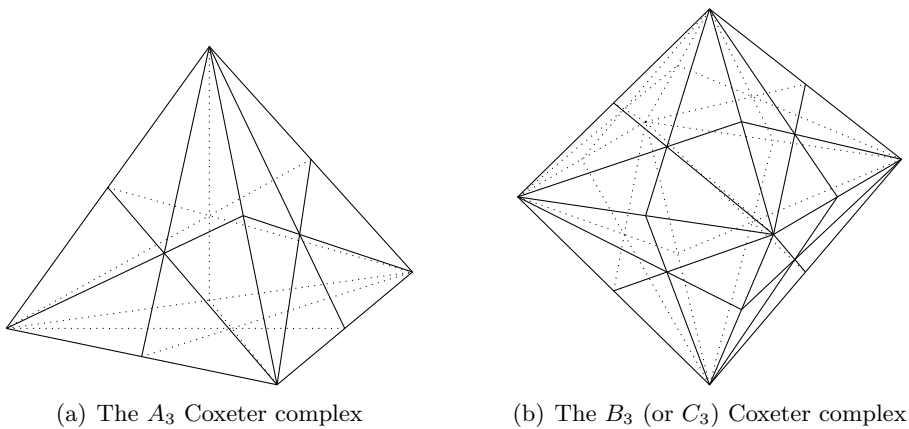
\begin{figure}[!h]
\centering
\subfigure[The $A_3$ Coxeter complex]{
\begin{tikzpicture} [xscale=1.8,yscale=1.5]
\coordinate (A) at (-2,0.5);
\coordinate (B) at (0,0);
\coordinate (C) at (1,1);
\coordinate (D) at (-0.5,3);
\coordinate (BD) at (-0.25,1.5);
\coordinate (AD) at (-1.25,1.75);
\coordinate (CD) at (0.25,2);
\coordinate (BC) at (0.5,0.5);
\coordinate (AB) at (-1,0.25);
\coordinate (AC) at (-0.5,0.75);
\draw (A)--(B)--(C)--(D)--(A);
\draw (B)--(D);
\draw [style=dotted] (A)--(C);
\draw (A)--(BD);
\draw (BD)--(C);
\draw (AD)--(B);
\draw (CD)--(B); 
\draw (BC)--(D);
\draw (AB)--(D);
\draw [style=dotted] (AC)--(D);
\draw [style=dotted] (CD)--(A);
\draw [style=dotted] (BC)--(A);
\draw [style=dotted] (AC)--(B);
\draw [style=dotted] (AB)--(C);
\draw [style=dotted] (AD)--(C);
\end{tikzpicture}
}\hspace{1cm}
\subfigure[The $B_3$ (or $C_3$) Coxeter complex]{
\begin{tikzpicture} [xscale=1.7,yscale=1]
\coordinate (A) at (-2,0.5);
\coordinate (B) at (0,0);
\coordinate (C) at (1,1);
\coordinate (D) at (-0.5,3);
\coordinate (BD) at (-0.25,1.5);
\coordinate (AD) at (-1.25,1.75);
\coordinate (CD) at (0.25,2);
\coordinate (BC) at (0.5,0.5);
\coordinate (AB) at (-1,0.25);
\coordinate (AC) at (-0.5,0.75);
\coordinate (E) at (-1,1.5);
\coordinate (F) at (-0.5,-2);
\coordinate (AE) at (-1.5,1); 
\coordinate (BF) at (-0.25,-1);
\coordinate (AF) at (-1.25,-0.75);
\coordinate (CF) at (0.25,-0.5);
\coordinate (DE) at (-0.75,2.25);
\coordinate (CE) at (0,1.25);
\coordinate (EF) at (-0.75,-0.25);
\draw (A)--(B)--(C)--(D)--(A);
\draw (B)--(D);
\draw [style=dotted] (A)--(E);
\draw (A)--(BD);
\draw (BD)--(C);
\draw (AD)--(B);
\draw (CD)--(B); 
\draw (BC)--(D);
\draw (AB)--(D);
\draw [style=dotted] (AC)--(B);
\draw [style=dotted] (E)--(C);
\draw [style=dotted] (E)--(D);
\draw (A)--(F)--(B);
\draw (F)--(C);
\draw [style=dotted] (AE)--(D);
\draw [style=dotted] (F)--(E);
\draw (B)--(AF);
\draw (F)--(AB);
\draw (A)--(BF);
\draw (C)--(BF);
\draw (F)--(BC);
\draw (B)--(CF);
\draw [style=dotted] (E)--(AD);
\draw [style=dotted] (A)--(DE);
\draw [style=dotted] (D)--(CE);
\draw [style=dotted] (F)--(CE);
\draw [style=dotted] (E)--(CD);
\draw [style=dotted] (C)--(DE);
\draw [style=dotted] (C)--(EF);
\draw [style=dotted] (E)--(CF);
\draw [style=dotted] (A)--(EF);
\draw [style=dotted] (F)--(AE);
\draw [style=dotted] (E)--(AF);
\end{tikzpicture}
}
\caption{Examples of rank~$3$ spherical Coxeter systems}
\label{fig:rank3Coxeter}
\end{figure}

%\subsection{Affine Coxeter systems}

If $(W,S)$ is an irreducible affine Coxeter system then $W$ can be realised as a group generated by affine reflections in~$E=\RR^{|S|-1}$ (see Section~\ref{sec:root}). The action of $W$ decomposes $E$ into geometric simplices, and so the Coxeter complex $\Sigma(W,S)$ may be visualied as a tessellation of $\RR^{|S|-1}$ (some examples are illustrated in Figure~\ref{fig:affinerank3}, the additional information in the figure will be explained in Section~\ref{sec:root}). The classification of irreducible affine Coxeter systems is closely related to the classification of irreducible spherical Coxeter systems. Specifically, to each irreducible crystallographic spherical Coxeter system (of type $X_n$, say) there is an associated affine Coxeter system of type $\widetilde{X}_n$ obtained by adding one additional generator to the spherical system. In the case of spherical systems of type $B_n=C_n$ there are two associated affine systems, called $\widetilde{B}_n$ and $\widetilde{C}_n$, and these are non-isomorphic if $n>2$. See Figure~\ref{fig:affineclassification} in Appendix~\ref{app:classification} for the Coxeter graphs of the irreducible affine Coxeter systems.

\begin{figure}[!h]
\centering
\subfigure[$\widetilde{A}_2$ Coxeter complex]{
\begin{tikzpicture}[scale=0.78]
\path [fill=lightgray!70] (0,0) -- (2.2,3.81) -- (-2.2,3.81) -- (0,0);
\path [fill=gray!90] (0,0) -- (-0.5,0.866) -- (0.5,0.866) -- (0,0);
    \draw (2.8, -3.81)--( 3.7, -2.2516);
    \draw (1.8, -3.81)--( 3.7, -0.52 );
    \draw (0.8, -3.81)--( 3.7, 1.212 );
    \draw (-0.2, -3.81)--( 3.7, 3.044);
    \draw (-1.2, -3.81)--( 3.2, 3.81);
    \draw (-2.2, -3.81)--( 2.2, 3.81);
    \draw (-3.2, -3.81)--( 1.2, 3.81);
    \draw (-3.7, -3.044)--( 0.2, 3.81);
    \draw (-3.7, -1.212)--( -0.8, 3.81 );
      \draw (-3.7, 0.520)--( -1.8, 3.81);
    \draw (-3.7, 2.2516)--( -2.8, 3.81);
    %slope 2
    \draw (-2.8, -3.81)--( -3.7, -2.2516);
    \draw (-1.8, -3.81)--( -3.7, -0.52 );
    \draw (-0.8, -3.81)--( -3.7, 1.212);
    \draw (0.2, -3.81)--( -3.7, 3.044 );
    \draw (1.2, -3.81)--( -3.2, 3.81);
    \draw (2.2, -3.81)--( -2.2, 3.81);
    \draw  (3.2, -3.81)--( -1.2, 3.81 );
    \draw (3.7, -3.044)--( -0.2, 3.81 );
    \draw (3.7, -1.212)--( 0.8, 3.81);
    \draw (3.7, 0.520)--( 1.8, 3.81);
    \draw (3.7, 2.2516)-- (2.8, 3.81);
    %slope 3
    \draw (-3.7, -3.464)--( 3.7, -3.464);
    \draw (-3.7, -2.598)--( 3.7, -2.598);
    \draw (-3.7, -1.732)--( 3.7, -1.732);
    \draw (-3.7, -0.866)--( 3.7, -0.866);
    \draw (-3.7, 0)--( 3.7, 0);
    \draw (-3.7, 3.464)--( 3.7, 3.464 );
    \draw (-3.7, 2.598)--( 3.7, 2.598);
    \draw (-3.7, 1.732)--( 3.7, 1.732);
    \draw (-3.7, 0.866)--( 3.7, 0.866);
     \node at (0,0) {$\bullet$};
    \node at (3,0) {$\bullet$};
     \node at (-3,0) {$\bullet$};
    \node at (0,1.732) {$\bullet$};
    \node at (3,1.732) {$\bullet$};
     \node at (-3,1.732) {$\bullet$};
     \node at (-1.5,2.598) {$\bullet$};
     \node at (1.5,2.598) {$\bullet$};
     \node at (-1.5,0.866) {$\bullet$};
      \node at (1.5,0.866) {$\bullet$};
      \node at (0,3.464) {$\bullet$};
    \node at (3,3.464) {$\bullet$};
     \node at (-3,3.464) {$\bullet$};
   \node at (0,-1.732) {$\bullet$};
    \node at (3,-1.732) {$\bullet$};
     \node at (-3,-1.732) {$\bullet$};
     \node at (-1.5,-2.598) {$\bullet$};
     \node at (1.5,-2.598) {$\bullet$};
     \node at (-1.5,-0.866) {$\bullet$};
      \node at (1.5,-0.866) {$\bullet$};
      \node at (0,-3.464) {$\bullet$};
    \node at (3,-3.464) {$\bullet$};
     \node at (-3,-3.464) {$\bullet$};
     %
%\node [color=white] at (-0.5,0.866) {$\bullet$};
%\node at (-0.5,0.866) {$\circ$};
%
    \draw [latex-latex, line width=1pt] (-1.5,-0.866)--(1.5,0.866);
    \draw [latex-latex, line width=1pt] (1.5,-0.866)--(-1.5,0.866);
    \draw [latex-latex, line width=1pt] (0,-1.732)--(0,1.732);
    \draw [-latex, line width=1pt] (0,0) -- (0.5,0.866);
    \draw [-latex,line width=1pt] (0,0)--(-0.5,0.866);
    \node at (-2,1.1) {\small{$\alpha_1^{\vee}$}};
    \node at (2,1.1) {\small{$\alpha_2^{\vee}$}};
    \node at (-0.9,1.1) {\small{$\omega_1$}};
    \node at (1,1.1) {\small{$\omega_2$}};
\end{tikzpicture}
}\hspace{1cm}
\subfigure[$\widetilde{C}_2$ Coxeter complex]{
\begin{tikzpicture}[scale=0.7]
\path [fill=lightgray!70] (0,0) -- (4.25,0) -- (4.25,4.25) -- (0,0);
\path [fill=gray!90] (0,0) -- (1,0) -- (1,1) -- (0,0);
\draw (-4.25,-2) -- (4.25,-2); 
\draw (-4.25,-1) -- (4.25,-1);
\draw (-4.25,0) -- (4.25,0);
\draw (-4.25,1) -- (4.25,1);
\draw (-4.25,2) -- (4.25,2);
\draw (-4.25,3) -- (4.25,3); 
\draw (-4.25,4) -- (4.25,4);
\draw (-4.25,-3) -- (4.25,-3);
\draw (-4.25,-4) -- (4.25,-4);
\draw (-2,-4.25) -- (-2,4.25);
\draw (-1,-4.25) -- (-1,4.25);
\draw (0,-4.25) -- (0,4.25);
\draw (1,-4.25) -- (1,4.25);
\draw (2,-4.25) -- (2,4.25);
\draw (-4,-4.25) -- (-4,4.25);
\draw (-3,-4.25) -- (-3,4.25);
\draw (3,-4.25) -- (3,4.25);
\draw (4,-4.25) -- (4,4.25);
\draw (-4.25,3.75)--(-3.75,4.25);
\draw (-4.25,1.75)--(-1.75,4.25);
\draw (-4.25,-0.25) -- (0.25,4.25);
\draw (-4.25,-2.25) -- (2.25,4.25);
\draw (-4.25,-4.25) -- (4.25,4.25);
\draw (-2.25,-4.25) -- (4.25,2.25);
\draw (-0.25,-4.25) -- (4.25,0.25);
\draw (1.75,-4.25)--(4.25,-1.75);
\draw (3.75,-4.25)--(4.25,-3.75);
\draw (4.25,3.75)--(3.75,4.25);
\draw (4.25,1.75)--(1.75,4.25);
\draw (4.25,-0.25) -- (-0.25,4.25);
\draw (4.25,-2.25) -- (-2.25,4.25);
\draw (4.25,-4.25) -- (-4.25,4.25);
\draw (2.25,-4.25) -- (-4.25,2.25);
\draw (0.25,-4.25) -- (-4.25,0.25);
\draw (-1.75,-4.25)--(-4.25,-1.75);
\draw (-3.75,-4.25)--(-4.25,-3.75);
\node at (0,0) {$\bullet$};
\node at (-2,0) {$\bullet$};
\node at (2,0) {$\bullet$};
\node at (-2,-2) {$\bullet$};
\node at (0,-2) {$\bullet$};
\node at (2,-2) {$\bullet$};
\node at (0,2) {$\bullet$};
\node at (-2,2) {$\bullet$};
\node at (2,2) {$\bullet$};
\node at (-4,-4) {$\bullet$};
\node at (-4,-2) {$\bullet$};
\node at (-4,0) {$\bullet$};
\node at (-4,2) {$\bullet$};
\node at (-4,4) {$\bullet$};
\node at (-2,-4) {$\bullet$};
\node at (-2,4) {$\bullet$};
\node at (0,-4) {$\bullet$};
\node at (0,4) {$\bullet$};
\node at (2,-4) {$\bullet$};
\node at (2,4) {$\bullet$};
\node at (4,-4) {$\bullet$};
\node at (4,-2) {$\bullet$};
\node at (4,0) {$\bullet$};
\node at (4,2) {$\bullet$};
\node at (4,4) {$\bullet$};
%\node [color=white] at (-3,1) {$\bullet$};
%\node at (-3,1) {$\circ$};
%\node [color=white] at (-3,3) {$\bullet$};
%\node at (-3,3) {$\circ$};
%\node [color=white] at (-3,1) {$\bullet$};
%\node at (-3,1) {$\circ$};
%\node [color=white] at (-3,-1) {$\bullet$};
%\node at (-3,-1) {$\circ$};
%\node [color=white] at (-3,-3) {$\bullet$};
%\node at (-3,-3) {$\circ$};
%
%\node [color=white] at (-1,1) {$\bullet$};
%\node at (-1,1) {$\circ$};
%\node [color=white] at (-1,3) {$\bullet$};
%\node at (-1,3) {$\circ$};
%\node [color=white] at (-1,1) {$\bullet$};
%\node at (-1,1) {$\circ$};
%\node [color=white] at (-1,-1) {$\bullet$};
%\node at (-1,-1) {$\circ$};
%\node [color=white] at (-1,-3) {$\bullet$};
%\node at (-1,-3) {$\circ$};
%
%\node [color=white] at (1,1) {$\bullet$};
%\node at (1,1) {$\circ$};
%\node [color=white] at (1,3) {$\bullet$};
%\node at (1,3) {$\circ$};
%\node [color=white] at (1,1) {$\bullet$};
%\node at (1,1) {$\circ$};
%\node [color=white] at (1,-1) {$\bullet$};
%\node at (1,-1) {$\circ$};
%\node [color=white] at (1,-3) {$\bullet$};
%\node at (1,-3) {$\circ$};
%
%\node [color=white] at (3,1) {$\bullet$};
%\node at (3,1) {$\circ$};
%\node [color=white] at (3,3) {$\bullet$};
%\node at (3,3) {$\circ$};
%\node [color=white] at (3,1) {$\bullet$};
%\node at (3,1) {$\circ$};
%\node [color=white] at (3,-1) {$\bullet$};
%\node at (3,-1) {$\circ$};
%\node [color=white] at (3,-3) {$\bullet$};
%\node at (3,-3) {$\circ$};
%
\node at (2.7,-1.7) {\small{$\alpha_1^{\vee}$}};
\node at (0.7,2.3) {\small{$\alpha_2^{\vee}$}};
\node at (2.7,0.2) {\small{$\omega_1$}};
\node at (1.6,1.15) {\small{$\omega_2$}};
\draw [latex-latex,line width=1pt] (0,-2)--(0,2);
\draw [latex-latex,line width=1pt] (-2,0)--(2,0);
\draw [latex-latex,line width=1pt] (-2,-2)--(2,2);
\draw [latex-latex,line width=1pt] (-2,2)--(2,-2);
\draw [-latex, line width=1pt] (0,0)--(1,1);
\end{tikzpicture}
}
\caption{Rank~$3$ affine Coxeter systems, and associated (dual) root systems}\label{fig:affinerank3}
\end{figure}
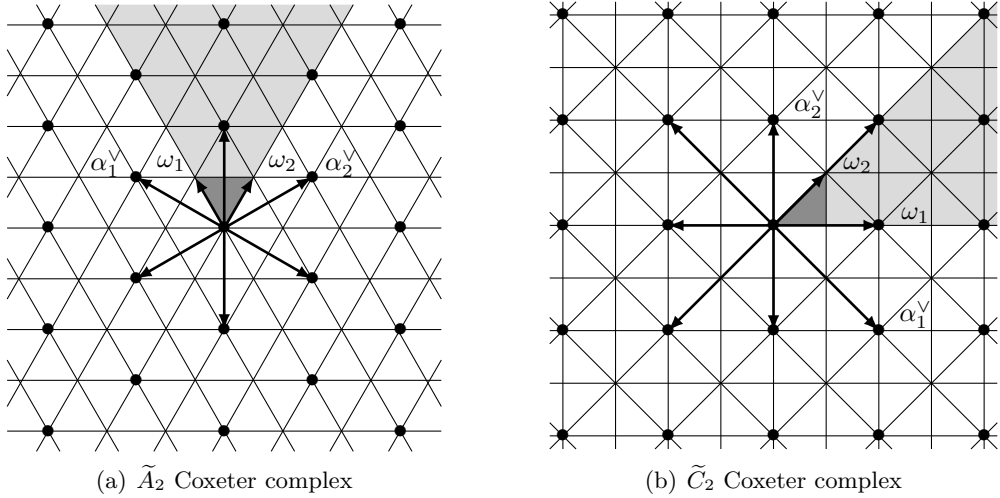

%\subsection{Fuchsian Coxeter systems}

If $(W,S)$ is a Fuchsian Coxeter system then $W$ may be realised as a group generated by reflections in the sides of a polygon in the hyperbolic disc~$\HH^2$, and thus $W$ is a cocompact discrete subgroup of $PGL_2(\RR)$ (hence the term `Fuchsian'). Specifically, let $n\geq 3$ be an integer, and let $k_1,\ldots,k_n\geq 2$ be integers satisfying 
\begin{align}\label{eq:hyperbolic}
\sum_{i=1}^n\frac{1}{k_i}<n-2.
\end{align}
Assign the angles $\pi/k_i$ to the vertices of a combinatorial $n$-gon~$F$. There is a convex realisation of $F$ (which we also call~$F$) in the hyperbolic disc~$\mathbb{H}^2$. Let $W$ be the subgroup of $PGL_2(\mathbb{R})$ generated by the set $S$ of reflections in the sides of~$F$. Then $(W,S)$ is a Coxeter system (see \cite[Example~6.5.3]{Dav:08}), and if $s_1,\ldots,s_{n}$ are the reflections in the sides of~$F$ (arranged cyclically), then the order of $s_is_j$ is 
\begin{align}\label{eq:hyperbolic2}
\begin{aligned}
m_{ij}=\begin{cases}k_i&\textrm{if $j=i+1$}\\
\infty&\textrm{if $|i-j|>1$},
\end{cases}
\end{aligned}
\end{align}
where the indices are read cyclically. We denote this Coxeter system by
$
F(k_1,\ldots,k_n).
$ The group $W$ acts on $\mathbb{H}^2$ with fundamental domain~$F$, and thus induces a tessellation of $\mathbb{H}^2$ by isometric polygons $wF$, $w\in W$. Examples are shown in Figure~\ref{fig:triangle}. We note that this is \textit{not} a depiction of the Coxeter complex of these groups unless $|S|=3$. For example, each chamber of the Coxeter complex of the group represented in Figure~\ref{fig:triangle}(b) is a $|S|-1=5$ dimensional simplex. Instead the pictures are (essentially) the \textit{Davis complex} of the group (see \cite[Example~12.43]{AB:08}). We will not go into further details, however we simply remark that this is a much more convenient way to visualise Fuchsian Coxeter systems (and their buildings). 

\begin{figure}[!h]
\centering
\subfigure[Fuchsian Coxeter system $F(3,3,4)$]{
\includegraphics[totalheight=6cm]{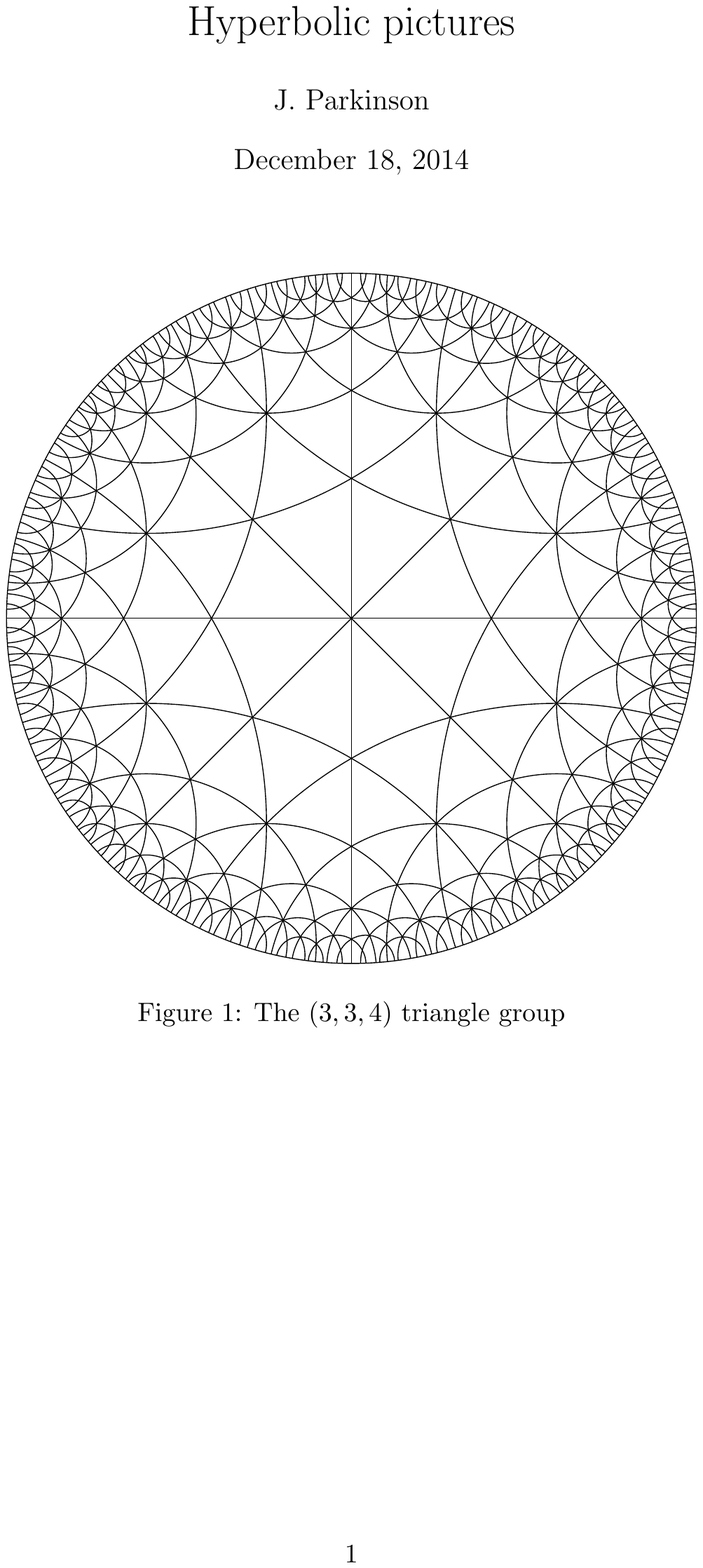}}\hspace{1.5cm}
\subfigure[\mbox{Fuchsian Coxeter system $F(2,2,2,2,2,2)$}]{
\includegraphics[totalheight=6cm]{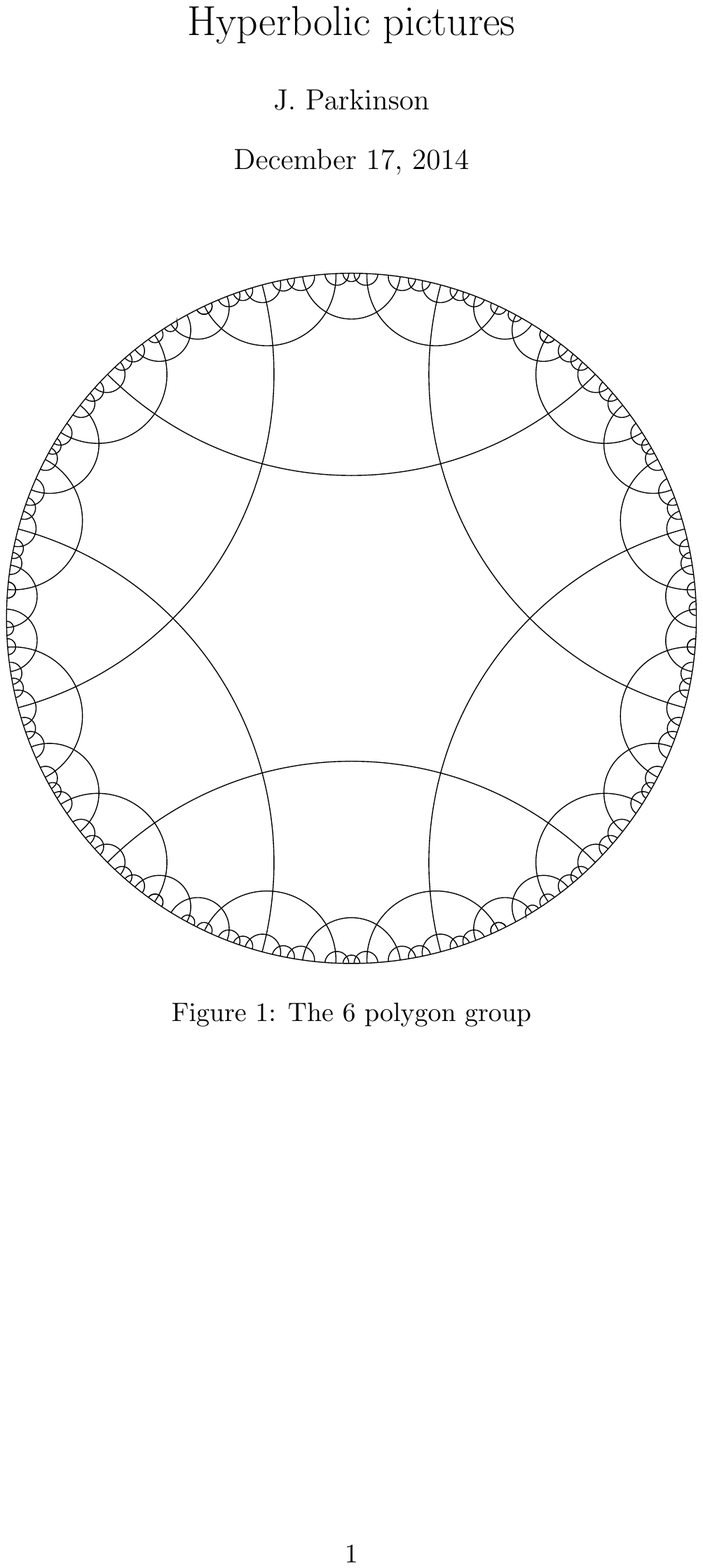}}
\caption{Fuchsian Coxeter systems}\label{fig:triangle}
\end{figure}

\subsection{The Coxeter complex of an affine Coxeter system}\label{sec:root}

For our later discussions it is necessary to have a concrete description of the Coxeter complex of an affine Coxeter system in terms of \textit{root systems}. The standard reference for this theory is \cite{Bou:02}. Let $E$ be a $d$-dimensional real vector space with inner product $\langle\cdot,\cdot\rangle$. The hyperplane orthogonal to the vector $\alpha\in E\backslash\{0\}$ is $H_{\alpha}=\{x\in E\mid \langle x,\alpha\rangle=0\}$, and the \textit{reflection} in $H_{\alpha}$ is given by
$
s_{\alpha}(x)=x-\langle x,\alpha\rangle\alpha^{\vee}$ where $\alpha^{\vee}=2\alpha/\langle\alpha,\alpha\rangle$.

A \textit{root system} in $E$ is a finite set $R$ of non-zero vectors (called \textit{roots}) such that: (1) $R$ spans~$E$, (2) if $\alpha\in R$ and $k\alpha\in R$ then $k=\pm 1$, (3) if $\alpha,\beta\in R$ then $s_{\alpha}(\beta)\in R$, and (4) if $\alpha,\beta\in R$ then $\langle\alpha,\beta^{\vee}\rangle\in\mathbb{Z}$. The \textit{rank} of $R$ is $d=\dim(E)$. Figure~\ref{fig:roots} illustrates three rank~$2$ root systems.

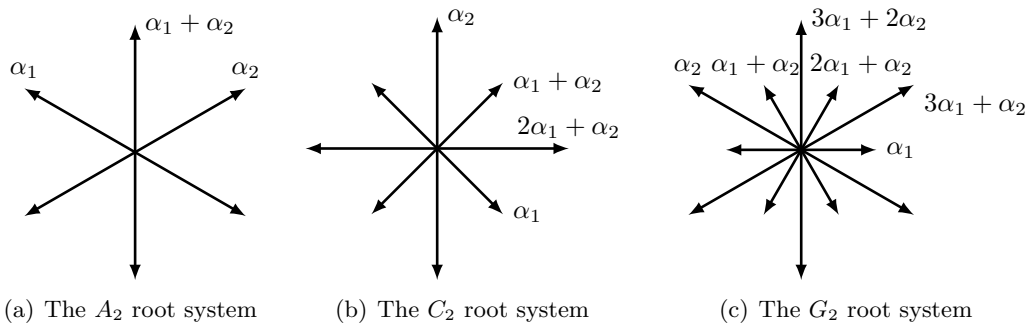
\begin{figure}[!h]
\centering
\subfigure[The $A_2$ root system]{
\begin{tikzpicture}[scale=1.7]
\draw[latex-latex,line width=1pt] (0,-1)--(0,1);
\draw [latex-latex,line width=1pt] (-0.866,0.5)--(0.866,-0.5);
\draw  [latex-latex,line width=1pt] (0.866,0.5)--(-0.866,-0.5);
\node [above] at (-0.866,0.5) {\small{$\alpha_1$}};
\node [above] at (0.866,0.5) {\small{$\alpha_2$}};
\node [right] at (0,1) {\small{$\alpha_1+\alpha_2$}};
\end{tikzpicture}}\hspace{0.2cm}
\subfigure[The $C_2$ root system]{
\begin{tikzpicture}[scale=1.75]
\draw [latex-latex,line width=1pt] (0,-1)--(0,1);
\draw [latex-latex,line width=1pt] (-0.5,-0.5)--(0.5,0.5);
\draw [latex-latex,line width=1pt] (0.5,-0.5)--(-0.5,0.5);
\draw [latex-latex,line width=1pt] (-1,0)--(1,0);
\node [right] at (0.5,-0.5) {\small{$\alpha_1$}};
\node [right] at (0,1) {\small{$\alpha_2$}};
\node [right] at (0.5,0.5) {\small{$\alpha_1+\alpha_2$}};
\node [above] at (1,0) {\small{$2\alpha_1+\alpha_2$}};
\end{tikzpicture}}\hspace{0.2cm}
\subfigure[The $G_2$ root system]{
\begin{tikzpicture}[scale=1]
\draw [latex-latex,line width=1pt] (-0.5,-0.866)--(0.5,0.866);
\draw [latex-latex,line width=1pt] (0.5,-0.866)--(-0.5,0.866);
\draw [latex-latex,line width=1pt] (-1,0)--(1,0);
\draw [latex-latex,line width=1pt] (0,-1.732)--(0,1.732);
\draw [latex-latex,line width=1pt] [rotate around={60:(0,0)}] (0,-1.732)--(0,1.732);
\draw [latex-latex,line width=1pt] [rotate around={-60:(0,0)}] (0,-1.732)--(0,1.732);
\node [right] at (1,0) {\small{$\alpha_1$}};
\node [above] at (-1.5,0.866) {\small{$\alpha_2$}};
\node [above] at (-0.6,0.866) {\small{$\alpha_1+\alpha_2$}};
\node [above] at (0.8,0.866) {\small{$2\alpha_1+\alpha_2$}};
\node [right] at (1.5,0.6) {\small{$3\alpha_1+\alpha_2$}}; 
\node [right] at (0,1.732) {\small{$3\alpha_1+2\alpha_2$}};
\end{tikzpicture}}
\caption{The irreducible rank~$2$ root systems}\label{fig:roots}
\end{figure}

Let $R$ be a rank~$d$ root system. There exists a subset $\{\alpha_1,\ldots,\alpha_d\}\subseteq R$ of \textit{simple roots} with the property that every $\alpha\in R$ can be written as a linear combination of $\alpha_1,\ldots,\alpha_d$ with integer coefficients which are either all nonpositive, or all nonnegative. Those roots whose coefficients are all nonnegative are called \textit{positive roots} (with respect to the fixed chosen set of simple roots), and the set of all positive roots is denoted~$R^+$. Then $R=R^+\cup(-R^+)$.

The \textit{Weyl group} of $R$ is the finite subgroup $W_0$ of $GL(E)$ generated by the reflections~$s_{\alpha}$ with $\alpha\in R$. For each $i=1,\ldots,d$ write $s_i=s_{\alpha_i}$, and let $S_0=\{s_1,\ldots,s_d\}$. Then $(W_0,S_0)$ is a spherical Coxeter system, with the order of $s_is_j$ being $m_{ij}$, where $\pi-\pi/m_{ij}$ is the angle between $\alpha_i$ and $\alpha_j$. The Coxeter system $(W_0,S_0)$ is irreducible if and only if the root system $R$ is irreducible (where the latter means that there is no partition $R=R_1\cup R_2$ with $R_1$ and $R_2$ nonempty such that $\langle\alpha,\beta\rangle=0$ for all $\alpha\in R_1$ and all $\beta\in R_2$). 

The irreducible root systems admit a complete classification, and explicit descriptions of each system can be found in \cite[Plates I--IV]{Bou:02}. They fall into four infinite families $A_d$ ($d\geq 1$), $B_d$ ($d\geq 2$), $C_d$ ($d\geq 2$) and $D_d$ ($d\geq 4$), and $5$ exceptional types $E_6,E_7,E_8,F_4$ and $G_2$. If $R$ is an irreducible root system of type $X_d$ then the Coxeter system $(W_0,S_0)$ is also of type $X_d$, and hence every irreducible crystallographic spherical Coxeter group can be realised as the Weyl group of an irreducible root system.

For each $\alpha\in R$ and each $k\in\ZZ$ let
$
H_{\alpha,k}=\{x\in E\mid \langle x,\alpha\rangle=k\}. 
$
Thus the affine hyperplane $H_{\alpha,k}$ is a translate of the linear hyperplane~$H_{\alpha}=H_{\alpha,0}$. The orthogonal (affine) reflection in the hyperplane $H_{\alpha,k}$ is given by the formula
$
s_{\alpha,k}(x)=x-(\langle x,\alpha\rangle-k)\alpha^{\vee}$ for $x\in E$, and the \textit{affine Weyl group} of~$R$ is the subgroup $W_{\mathrm{aff}}$ of $\mathrm{Aff}(E)$ generated by all reflections $s_{\alpha,k}$ with $\alpha\in R$ and $k\in\mathbb{Z}$. The root system $R$ has a unique \textit{highest root} $\varphi$ (the \textit{height} of the root $\alpha=a_1\alpha_1+\cdots+a_d\alpha_d$ is $a_1+\cdots+a_d$, and $\varphi$ is the unique root of greatest height). Let $s_0=s_{\varphi,1}$. Then $W_{\mathrm{aff}}$ is generated by $\Saff=\{s_0,s_1,\ldots,s_d\}$ and the order of $s_0s_j$ is $m_{0j}$, where $\pi-\pi/m_{0j}$ is the angle between $-\varphi$ and $\alpha_j$. Thus $(\Waff,\Saff)$ is a Coxeter system, and $(W_0,S_0)$ is a parabolic subsystem of $(W_{\mathrm{aff}},S_{\mathrm{aff}})$. For $\lambda\in E$ let $t_{\lambda}\in\mathrm{Aff}(E)$ be the translation $t_{\lambda}(x)=x+\lambda$. Since $s_{\alpha,k}=t_{k\alpha^{\vee}}s_{\alpha}$ we have 
\begin{align*}
\Waff=Q\rtimes W_0,\quad \text{where $Q=\ZZ\alpha_1^{\vee}+\cdots+\ZZ\alpha_d^{\vee}$ is the \textit{coroot lattice}}.
\end{align*}
Thus $(W_{\mathrm{aff}},S_{\mathrm{aff}})$ is an affine Coxeter system. All irreducible affine Coxeter systems arise in this way.

Let $\omega_1,\ldots,\omega_d\in E$ be the dual basis to $\alpha_1,\ldots,\alpha_d$, given by $\langle\omega_i,\alpha_j\rangle=\delta_{i,j}$. The \textit{coweight lattice} $P$ of $R$ is 
\begin{align*}
P&=\{\lambda\in E\mid \langle\lambda,\alpha\rangle\in\mathbb{Z}\text{ for all $\alpha\in R$}\}=\ZZ\omega_1+\cdots+\ZZ\omega_d.
\end{align*}
Note that $Q\subseteq P$, and that $Q$ and $P$ are both $W_{\mathrm{aff}}$-invariant lattices. The set of \textit{dominant coweights} is
$$
P^+=\NN\omega_1+\cdots+\NN\omega_d.
$$

The family of hyperplanes $H_{\alpha,k}$, $\alpha\in R$, $k\in\ZZ$, tessellates $E$ into $d$-dimensional geometric simplices (the \textit{chambers}). The extreme points of the chambers are \textit{vertices}, and each chamber has exactly $d+1$ vertices. The resulting simplicial complex is isomorphic to the Coxeter complex $\Sigma(W_{\mathrm{aff}},S_{\mathrm{aff}})$. The affine Weyl group $W_{\mathrm{aff}}$ acts simply transitively on the set of all chambers. The \textit{fundamental chamber} is 
$$
\fc_0=\{x\in E\mid \langle x,\alpha_i\rangle\geq 0\text{ for $1\leq i\leq d$, and }\langle x,\varphi\rangle\leq 1\}
$$
and we often identify $W_{\mathrm{aff}}$ with the set of chambers by $w\leftrightarrow w\fc_0$. The set $P$ of coweights is a subset of the set of all vertices of $\Sigma(W_{\mathrm{aff}},S_{\mathrm{aff}})$, called the \textit{special vertices}. The set $Q$ is the set of type~$0$ vertices, and every chamber has exactly one type $0$ vertex. 

The action of $W_0$ decomposes $E$ into $|W_0|$ geometric cones based at the origin, and the translates of these cones by elements of~$P$ are called \textit{sectors}. The \textit{fundamental sector} is 
$$
\fs_0=\{x\in E\mid \langle x,\alpha_i\rangle\geq 0\text{ for all $1\leq i\leq d$}\}.
$$
We also write $E^+=\fs_0$ (roughly speaking, we write $\fs_0$ when we are interested in the simplicial structure, and $E^+$ when we are interested in the metric structure). 

Examples of the above construction of affine Coxeter systems are illustrated in Figure~\ref{fig:affinerank3}, where the sector $\fs_0$ and the chamber $\fc_0$ are shaded (light and dark, respectively). The coroot lattice $Q$ is indicted with heavy dots. In Figure~\ref{fig:affinerank3}(a) all vertices are special, while in Figure~\ref{fig:affinerank3}(b) only the vertices of valency $8$ are special.

%Each vertex $v$ can be assigned a type $\tau(v)\in\{0,1,\ldots,d\}$ in such a way that every chamber has exactly one vertex of each type. Specifically, the set of vertices of $\fc_0$ is $\{0\}\cup\{\omega_i/m_i\mid 1\leq i\leq d\}$ where $\varphi=m_1\alpha_1+\cdots+m_d\alpha_d$ is the expression of the highest root in terms of simple roots, and we set $\tau(0)=0$ and $\tau(\omega_i/m_i)=i$, and then to assign a type to an arbitrary vertex we translate by the simple transitive action of~$W_{\mathrm{aff}}$. 

%The set $Q$ is the set of all type~$0$ vertices. The set $P$ is a subset of the full vertex set, called the \textit{special vertices} (in type $A_n$ all vertices are special, however in other types there are non-special vertices). 

%Each chamber has $d+1$ vertices, and the arrangement of hyperplanes induces a simplicial complex $\Sigma(W_{\mathrm{aff}})$ on which $W_{\mathrm{aff}}$ acts by simplicial complex automorphisms.   

%%%%%%%%%%%%%%
%%%%%%%%%%%%%%
%%%%%%%%%%%%%%
%%%%%%%%%%%%%%
%%%%%%%%%%%%%%
%%%%%%%%%%%%%%
%%%%%%%%%%%%%%
%%%%%%%%%%%%%%
%%%%%%%%%%%%%%
%%%%%%%%%%%%%%
%%%%%%%%%%%%%%
%%%%%%%%%%%%%%
%%%%%%%%%%%%%%
%%%%%%%%%%%%%%
%%%%%%%%%%%%%%
%%%%%%%%%%%%%%
%%%%%%%%%%%%%%
%%%%%%%%%%%%%%

\section{Buildings}\label{sec:2}

\textit{Buildings} were introduced by Jacques Tits in the 1950s. ``The origin of the notions of buildings and $BN$-pairs lies in an attempt to give a systematic procedure for geometric interpretation of the semi-simple Lie groups and, in particular, the exceptional groups'' \cite[Introduction]{Tit:74}. Over the past~$60$ years the theory has grown immensely, and has had diverse applications in geometry, group theory, representation theory, and geometric group theory. In this section we give a brief introduction to the theory, with our main references being~\cite{AB:08,Ron:09,Tit:74}.

\subsection{Definitions and basic properties}\label{sec:defn}

Buildings are defined axiomatically, and historically there have been two main approaches to the theory, both due to Jacques Tits. The initial approach was via simplicial complexes, and later an approach was developed using `chamber systems' (see \cite{Tit:81} for an enlightening historical discussion). Both approaches are relevant and useful, however here we have chosen to adopt Tits' original simplicial complex definition (see Remark~\ref{rem:another} for the other approach). 

\begin{defn}\label{defn:building}
Let $(W,S)$ be a Coxeter system with Coxeter complex $\Sigma(W,S)$. A \textit{building of type $(W,S)$} is a nonempty simplicial complex $\Sigma$ with a family $\mathbf{A}$ of subcomplexes (called \textit{apartments}) such that 
\begin{enumerate}
\item[(B1)] each apartment $A\in\mathbf{A}$ is isomorphic to the Coxeter complex $\Sigma(W,S)$,
\item[(B2)] given any two simplices of $\Sigma$ there is an apartment $A\in\mathbf{A}$ containing both of them, and
\item[(B3)] if $A,A'\in\mathbf{A}$ are apartments containing a common chamber then there is a unique simplicial complex isomorphism $\psi:A'\to A$ fixing each simplex of the intersection $A\cap A'$. 
\end{enumerate}
\end{defn}

Let $\Sigma$ be a building of type $(W,S)$. The \textit{rank} of $\Sigma$ is $|S|$. Fix, once and for all, an apartment of $\Sigma$ and identify it with $\Sigma(W,S)$. Thus we regard $\Sigma(W,S)$ as an apartment of $\Sigma$, the ``standard'' (or ``base'') apartment. The type function on the Coxeter complex $\Sigma(W,S)$ extends uniquely to a type function $\tau:\Sigma\to 2^S$ on the building making $\Sigma$ into a labelled simplicial complex. The isomorphism in (B3) is then necessarily type preserving. Let $\Delta$ be the set of all chambers of~$\Sigma$. 

\noindent\begin{minipage}[t]{0.6\linewidth}
\vspace{0pt}
\begin{example}\label{ex:tree}
Buildings of type $\widetilde{A}_1=I_2(\infty)$ are equivalent to trees in which every vertex has valency at least~$2$. The apartments are two sided infinite geodesics in the tree, and the chambers are the edges of the tree. There are two types of vertices, indicated by $\bullet$ and $\circ$ in the picture. Buildings of higher rank are considerably more sophisticated objects, although the tree example is very instructive. 
\end{example}
\end{minipage}\hfill
    \begin{minipage}[t]{0.4\linewidth}
\vspace{0pt}
\begin{center}
\begin{tikzpicture} [xscale=0.25, yscale=0.25]
\path 
(2,0) node (1) [shape=circle,draw,fill=black,scale=0.5]  {}
(5,2) node (2) [shape=circle,draw,scale=0.5] {}
(5,-2) node (3) [shape=circle,draw,scale=0.5] {}
(6,5) node (4) [shape=circle,draw,fill=black,scale=0.5]  {}
(8,3) node (5) [shape=circle,draw,fill=black,scale=0.5]  {}
(8,-3) node (6) [shape=circle,draw,fill=black,scale=0.5]  {}
(6,-5) node (7) [shape=circle,draw,fill=black,scale=0.5]  {}
(5,7) node (8) {}%[shape=circle,draw,scale=0.5] {}
(7,7) node (9) {}%[shape=circle,draw,scale=0.5] {}
(10,4) node (10) {}%[shape=circle,draw,scale=0.5] {}
(10,2) node (11) {}%[shape=circle,draw,scale=0.5] {}
(10,-2) node (12) {}%[shape=circle,draw,scale=0.5] {}
(10,-4) node (13) {}%[shape=circle,draw,scale=0.5] {}
(7,-7) node (14) {}%[shape=circle,draw,scale=0.5] {}
(5,-7) node (15) {}%[shape=circle,draw,scale=0.5] {}
(-2,0) node (-1) [shape=circle,draw,scale=0.5]  {}
(-5,2) node (-2) [shape=circle,draw,fill=black,scale=0.5] {}
(-5,-2) node (-3) [shape=circle,draw,fill=black,scale=0.5] {}
(-6,5) node (-4) [shape=circle,draw,scale=0.5]  {}
(-8,3) node (-5) [shape=circle,draw,scale=0.5]  {}
(-8,-3) node (-6) [shape=circle,draw,scale=0.5]  {}
(-6,-5) node (-7) [shape=circle,draw,scale=0.5]  {}
(-5,7) node (-8) {}%[shape=circle,draw,fill=black,scale=0.5] {}
(-7,7) node (-9) {}%[shape=circle,draw,fill=black,scale=0.5] {}
(-10,4) node (-10) {}%[shape=circle,draw,fill=black,scale=0.5] {}
(-10,2) node (-11) {}%[shape=circle,draw,fill=black,scale=0.5] {}
(-10,-2) node (-12) {}%[shape=circle,draw,fill=black,scale=0.5] {}
(-10,-4) node (-13) {}%[shape=circle,draw,fill=black,scale=0.5] {}
(-7,-7) node (-14) {}%[shape=circle,draw,fill=black,scale=0.5] {}
(-5,-7) node (-15) {};%[shape=circle,draw,fill=black,scale=0.5] {};
\draw (1) -- (2);
\draw (1)--(3);
\draw (2)--(4);
\draw (2)--(5);
\draw (3)--(6);
\draw (3)--(7);
\draw [style=dashed](4)--(8);
\draw [style=dashed](4)--(9);
\draw [style=dashed](5)--(10);
\draw [style=dashed](5)--(11);
\draw [style=dashed](6)--(12);
\draw [style=dashed](6)--(13);
\draw [style=dashed](7)--(14);
\draw [style=dashed](7)--(15);
\draw (-1) -- (-2);
\draw (-1)--(-3);
\draw (-2)--(-4);
\draw (-2)--(-5);
\draw (-3)--(-6);
\draw (-3)--(-7);
\draw [style=dashed](-4)--(-8);
\draw [style=dashed](-4)--(-9);
\draw [style=dashed](-5)--(-10);
\draw [style=dashed](-5)--(-11);
\draw [style=dashed](-6)--(-12);
\draw [style=dashed](-6)--(-13);
\draw [style=dashed](-7)--(-14);
\draw [style=dashed] (-7)--(-15);
\draw (1)--(-1);
         \end{tikzpicture}
         \end{center}
\end{minipage}

\bigskip

The \textit{dimension} of $\sigma\in\Sigma$ is $|\sigma|-1$, and the \textit{codimension} of $\sigma\in\Sigma$ is $|S|-|\sigma|$. Each chamber of $\Sigma$ has dimension $|S|-1$ (that is, has $|S|$ vertices). A \textit{panel} of $\Sigma$ is a codimension~$1$ simplex. Chambers $x,y\in\Delta$ are \textit{$s$-adjacent} (written $x\sim_s y$) if and only if they share a panel of type $S\backslash\{s\}$ (that is, if either $x=y$ or $x\cap y$ is a panel of type $S\backslash\{s\}$, or equivalently, if either $x=y$ or $x\backslash y$ is a vertex of type~$s$).

\noindent\begin{minipage}{0.6\linewidth}
\qquad For example, in a rank~$3$ building the chambers are triangles with the three edges (panels) of the triangle corresponding to the $3$ types of adjacency. Chambers are `glued together' along their $s$-edges if and only if they are $s$-adjacent. In a rank~$d$ building the chambers are $(d-1)$-simplices, and are glued together along their panels (codimension~$1$ faces). An alternate way to visualise a higher rank building is to imagine each chamber as a $d$-gon, with the sides in bijection with~$S$, and these polygons are glued together along their edges according adjacency. This is particularly useful when $(W,S)$ is Fuchsian.  
\end{minipage}
\begin{minipage}{0.4\linewidth}
\begin{center}
\begin{tikzpicture}[scale=0.4]
\path [fill=lightgray!70] (0,5)--(-5,4) -- (0,-2) -- (0,5);
\path [fill=lightgray!70] (0,-2)--(5,1) -- (0,5) -- (0,-2);
\path [fill=lightgray!70] (2.917, 2.666)-- (5, 6)--(0,5)--(2.917, 2.666);
\path [fill=lightgray!70] (1.321, 5.264)-- (2, 9)--(0,5)--(1.321, 5.264);
\path [fill=lightgray!70] (-2.5926, 4.4815)-- (-4, 8)--(0,5)--(-2.5926, 4.4815);
    \draw (-5, 4)--( 0, -2);
   \draw (0, 5)-- (-5, 4);
    \draw (0 ,-2) --(0, 5);
    \draw (0, -2)-- ( 5, 1);
    \draw (5, 1) --(0, 5);
    \draw (0, 5)-- (2, 9);
   \draw (2.917, 2.666)-- (5, 6);
    \draw  (2, 9) -- (1.321, 5.264);
    \draw ( 0, 5)--  (5, 6);
    \draw (-2.5926, 4.4815) --( -4, 8);
    \draw  (-4, 8) -- (0, 5);
     \draw[style=dashed]  (0, -2)--  (2.917, 2.666 );
    \draw[style=dashed] (1.321, 5.264)-- ( 0, -2);
    \draw[style=dashed] (0, -2)-- (-2.5926, 4.4815);
    %\node at (3,1) {$x$};
    %\node at (5.75,1) {$v$};
    %\node at (5,1) {$\bullet$};
\end{tikzpicture}
\end{center}
%\caption{Visualising a local piece of a rank~$3$ building}
%\label{fig:panel}
\end{minipage}
\smallskip

A \textit{gallery of type $(s_1,\ldots,s_n)\in S^n$} joining $x\in\Delta$ to $y\in\Delta$ is a sequence $x_0,x_1,\ldots,x_n\in\Delta$ of chambers such that
$$
x=x_0\sim_{s_1}x_1\sim_{s_2}\cdots\sim_{s_n}x_n=y\quad\textrm{with $x_{j-1}\neq x_j$ for all $1\leq j\leq n$}.
$$
This gallery has \textit{length $n$}. 

The \textit{$W$-distance function} $\delta:\Delta\times \Delta\to W$ on $\Delta$ is defined as follows: If $x,y\in \Delta$ and if there is a minimal length gallery of type $(s_1,\ldots,s_n)$ from $x$ to $y$, then let
$$
\delta(x,y)=s_1s_2\cdots s_n.
$$
This does not depend on the particular minimal length gallery chosen.%, for if there is also a minimal length gallery of type $(s_1',s_2',\ldots,s_n')$ from $x$ to $y$ then it turns out that $s_1s_2\cdots s_n=s_1's_2'\cdots s_n'$ in the group~$W$. 

%Let $(\Delta,\delta)$ be a building of type $(W,S)$ and let $s\in S$. The \textit{rank} of $(\Delta,\delta)$ is $|S|$. Chambers $x,y\in\Delta$ are \textit{$s$-adjacent} (written $x\sim_s y$) if $\delta(x,y)=s$. One useful way to visualise a building is to imagine an $|S|$-gon with edges labelled by the generators $s\in S$ (think of the edges as being coloured by $|S|$ different colours). Now take one copy of this polygon for each element $x\in\Delta$, and glue these polygons together along their edges so that $x\sim_s y$ if and only if the polygons are glued together along their $s$-coloured edges. %Alternatively one can think of each chamber as an $(|S|+1)$-simplex, with each of the $|S|$ codimension~$1$ faces assigned a different colour. Then glue the chambers together along codimension~$1$ faces, matching up the colours by the adjacency relations.

A building is called \textit{thick} if $|\{y\in\Delta\mid x\sim_s y\}|\geq 3$ for all chambers $x\in\Delta$ and all $s\in S$, and \textit{thin} if $|\{y\in\Delta\mid x\sim_s y\}|=2$ for all chambers $x\in\Delta$ and all $s\in S$. It is clear that the Coxeter complex $\Sigma(W,S)$ is a thin building of type $(W,S)$, and that all thin buildings are Coxeter complexes. Typically we are interested in thick buildings.

A building is \textit{regular} if for each $s\in S$ the cardinality
$$
q_s+1=|\{y\in\Delta\mid x\sim_s y\}|\quad\text{is finite and does not depend on $x\in\Delta$}.
$$
All locally finite thick buildings whose Coxeter group $(W,S)$ has $m_{st}<\infty$ for all $s,t\in S$ are necessarily regular (see \cite[Theorem~2.4]{Par:06a}). Here \textit{locally finite} means that $|\{y\in\Delta\mid x\sim_s y\}|<\infty$ for all $x\in\Delta$ and $s\in S$. The numbers $(q_s)_{s\in S}$ are called the \textit{thickness parameters} (or just the \textit{parameters}) of the (regular) building.

For each $x\in\Delta$ and each $w\in W$ let
$$
\Delta_w(x)=\{y\in\Delta\mid \delta(x,y)=w\}\quad\text{be the \textit{sphere of radius $w$ centred at $x$}.}
$$
If $(\Delta,\delta)$ is regular, then by \cite[Proposition~2.1]{Par:06a} the cardinality $q_w=|\Delta_w(x)|$ does not depend on $x\in \Delta$, and is given by
$$
q_w=q_{s_1}\cdots q_{s_{k}}\quad\textrm{whenever $w=s_1\cdots s_{k}$ is a reduced expression.}
$$

\begin{comment}
The thin sub-buildings of type $(W,S)$ are called \textit{apartments} of $(\Delta,\delta)$. Thus each apartment is isomorphic to the building $(W,\delta_W)$ from Theorem~\ref{thm:thin}. Let $\AA$ be the set of all apartments of~$(\Delta,\delta)$. Two key facts concerning apartments are as follows:
\begin{enumerate}
\item[(A1)] If $x,y\in\Delta$ then there is an apartment $A\in\AA$ containing both $x$ and $y$.
\item[(A2)] If $A,A'\in\AA$ are apartments containing a common chamber $x$ then there is a unique isomorphism $\theta:A'\to A$ fixing each chamber of the intersection $A\cap A'$.
\end{enumerate}
In fact conditions~(A1) and (A2) can be taken as an alternative, equivalent definition of buildings (see \cite[Theorem~5.91]{AB:08} for the equivalence of the two axiomatic systems). The facts (A1) and (A2) give us a global view of buildings: they are made by `gluing together' many copies of the Coxeter complex. 
\end{comment}

The adjectives `spherical', `affine' and `Fuchsian' from Coxeter systems carry over to buildings. Thus the apartments of spherical, affine, or Fuchsian buildings are tessellations of a sphere, Euclidean space, or hyperbolic disc, respectively.

\begin{remark}\label{rem:another}
Let $\Sigma$ be a building of type $(W,S)$ with chamber set $\Delta$ and Weyl distance function $\delta:\Delta\times\Delta\to W$. It is not hard to see that the pair $(\Delta,\delta)$ satisfies the following:
\begin{enumerate}
\item[$(\text{B1})'$] $\delta(x,y)=1$ if and only if $x=y$.
\item[$(\text{B2})'$] If $\delta(x,y)=w$ and $z\in\Delta$ satisfies $\delta(y,z)=s$ with $s\in S$, then $\delta(x,z)\in \{w,ws\}$. If, in addition, $\ell(ws)=\ell(w)+1$, then $\delta(x,z)=ws$.
\item[$(\text{B3})'$] If $\delta(x,y)=w$ and $s\in S$, then there is a chamber $z\in\Delta$ with $\delta(y,z)=s$ and $\delta(x,z)=ws$.
\end{enumerate}
Conversely, suppose that we are given a set $\Delta$ and a function $\delta:\Delta\times\Delta\to W$ satisfying $(\text{B1})'$, $(\text{B2})'$, and $(\text{B3})'$. For each $I\subseteq S$ and each $x\in \Delta$ let $R_I(x)=\{y\in\Delta\mid \delta(x,y)\in W_I\}$. The poset $(\Sigma,\leq)$ of all sets of the form $R_I(x)$ with $I\subseteq S$ and $x\in\Delta$ (ordered by reverse inclusion) satisfies conditions~(P1) and (P2) from Section~\ref{sec:cox}, and hence we may regard $\Sigma$ as a simplicial complex. It turns out that this simplicial complex is a building of type $(W,S)$ (the most challenging thing to check is the existence of apartments). This gives a second approach to buildings: Specifically one can take a building of type $(W,S)$ to be a pair $(\Delta,\delta)$ where $\Delta$ is a set and $\delta:\Delta\times\Delta\to W$ is a function satisfying $(\text{B1})'$, $(\text{B2})'$ and $(\text{B3})'$. See \cite{AB:08,Tit:81} for further details. A certain fluency in both approaches is useful when working with buildings.
\end{remark}

\subsection{Buildings and groups}\label{sec:buildingsandgroups}

The group theoretic counterpart to a building is the notion of a \textit{Tits system} in a group. This concept has been very influential in group theory due to the existence of Tits systems in many ``Lie theoretic'' groups, facilitating a uniform treatment of these groups. We will see that every Tits system gives rise to a building, however not every building results from a Tits system. 

\begin{defn}
A \textit{Tits system} in a group $G$ is a quadruple $(B,N,W,S)$ where $B$ and $N$ are subgroups of $G$, and $(W,S)$ is a Coxeter system, and the following axioms are satisfied:
\begin{enumerate}
\item[(T1)] The group $G$ is generated by $B\cup N$. 
\item[(T2)] The group $H=B\cap N$ is a normal subgroup of~$N$, and $N/H\cong W$.
\item[(T3)] If $n_s\in N$ maps to $s\in S$ under the natural homomorphism of $N$ onto $W$ then for all $n\in N$ we have
$
BnBn_sB\subseteq BnB\cup Bnn_sB.
$
\item[(T4)] With $n_s$ as above, $n_sBn_s^{-1}\neq B$ for all $s\in S$.
\end{enumerate}
Since $H$ is a subgroup of $B$ there is no harm in writing $wB$ in place of $nB$ whenever $n\in N$ maps to $w\in W$ under the homomorphism of $N$ onto~$W$, and we will do so throughout.
\end{defn}

The axioms of a Tits system may appear as foreign to the reader as the axioms of a building! Thus we pause to mention some important classes of groups that admit Tits systems. To begin with, the Chevalley groups and twisted Chevalley groups admit natural Tits systems, with the associated Coxeter systems being of spherical type. For excellent treatments of this theory, see \cite{Car:89,Ste:67}. For readers familiar with Chevalley groups, the Tits system is as follows. Let $R$ be an irreducible root system, and let $G(\FF)$ be the Chevalley group of type~$R$ over the field~$\FF$. Recall that $G(\FF)$ is generated by elements $x_{\alpha}(t)$ with $\alpha\in R$ and $t\in\FF$. Let
$$
n_{\alpha}(t)=x_{\alpha}(t)x_{-\alpha}(-t^{-1})x_{\alpha}(t)\quad\text{and}\quad h_{\alpha^{\vee}}(t)=n_{\alpha}(t)n_{\alpha}(-1)\quad\text{for $\alpha\in R$ and $t\in\FF^{\times}$}.
$$
Let $N$ (respectively $H$) be the subgroup of $G(\FF)$ generated by the elements $n_{\alpha}(t)$ (respectively $h_{\alpha^{\vee}}(t)$) with $\alpha\in R$ and $t\in\FF^{\times}$. Let $U$ be the subgroup of $G(\FF)$ generated by the elements $x_{\alpha}(t)$ with $\alpha\in R^+$ and $t\in\FF$, and let $B=\langle U,H\rangle$. Then $(B,N,W_0,S_0)$ is a Tits system in~$G(\FF)$.

When the field $\FF$ has a discrete valuation Iwahori and Matsumoto~\cite{IM:65} discovered that the Chevalley group $G(\FF)$ admits another Tits system, this time with Coxeter group being the affine Weyl group~$W_{\mathrm{aff}}$. Examples of fields with discrete valuation include the $p$-adic numbers $\QQ_p$, and the field of Laurent series $\KK(\!(t)\!)$ with $\KK$ any field. For concreteness, suppose that $\FF=\KK(\!(t)\!)$. Let $K=G(\KK[[t]])$ be the Chevalley group defined over the ring of power series with coefficients in~$\KK$ (the \textit{valuation ring} of $\FF$). The evaluation map $\theta:\KK[[t]]\to\KK$, $t\mapsto 0$, induces a group homomorphism $\theta:K\to G(\KK)$. Let $N=N(\FF)$ and $B=B(\KK)$ be the groups from the previous paragraph (for the groups $G(\FF)$ and $G(\KK)$ respectively). The \textit{Iwahori subgroup} of $G(\FF)$ is the inverse image of $B$ under $\theta$. That is, $I=\theta^{-1}(B)$. Then $(I,N,W_{\mathrm{aff}},S_{\mathrm{aff}})$ is a Tits system in~$G(\FF)$. See \cite{IM:65,BT:72} for details.

There is a vast generalisation of the notation of a Chevalley group. Recall that Chevalley groups are constructed as automorphism groups of finite dimensional Lie algebras associated to Cartan matrices. In a similar (although highly non-trivial) way there is a construction of groups using infinite dimensional Lie algebras associated to generalised Cartan matrices (so called \textit{Kac-Moody algebras}, see \cite{Kac:90}). The associated \textit{Kac-Moody groups} admit Tits systems with more general Coxeter systems (see \cite{Tit:87}). In fact every Coxeter system with $m_{st}\in\{2,3,4,6,\infty\}$ arises as the Coxeter group of a Tits system in a Kac-Moody group. 

%While it is not possible to go into further detail on the above classes of groups in this survey, we simply note that the existence of a Tits system in all of these groups is a unifying structural feature, and allows for remarkably elegant, concise, and uniform proofs of many theorems. Furthermore, in order to emphasise the importance of these Lie theoretic groups, we recall the celebrated \textit{classification of finite simple groups} which states that every nonabelian finite simple group is either an alternating group, one of $26$ sporadic groups, or is a Chevalley or twisted Chevalley group (or the related Tits group). Thus Chevalley groups and twisted Chevalley groups form the vast bulk of the finite simple groups.  

We will now describe the connection between Tits systems and buildings. Let $\Sigma$ be a building with system of apartments $\mathbf{A}$. Suppose that $G$ is a group acting on $\Sigma$ by type preserving simplicial complex automorphisms, and that $G$ preserves the apartment system~$\mathbf{A}$. We say that $G$ acts \textit{strongly transitively relative to $\mathbf{A}$} if it is transitive on pairs $(A,x)$ with $A$ an apartment in~$\mathbf{A}$ and $x$ a chamber of~$A$. For the statement of the following theorem it is convenient to adopt the approach to buildings from Remark~\ref{rem:another}.

\begin{thm}\label{thm:Titsbuilding}
\emph{(1)} Let $(B,N,W,S)$ be a Tits system in a group~$G$. Let $\Delta=G/B$, and define $\delta:\Delta\times\Delta\to W$ by 
$$
\delta(gB,hB)=w\quad\text{if and only if}\quad g^{-1}h\in BwB.
$$
Then $(\Delta,\delta)$ is a thick building of type~$(W,S)$. The set $A=\{wB\mid w\in W\}$ is an apartment, $\mathbf{A}=\{gA\mid g\in G\}$ is a system of apartments, and $G$ acts strongly transitively with respect to~$\mathbf{A}$.

\emph{(2)} Let $(\Delta,\delta)$ be a thick building of type $(W,S)$ and suppose that a group $G$ acts strongly transitively with respect to a $G$-invariant apartment system~$\mathbf{A}$. Let $o\in\Delta$ be a chamber, and let $A\in\mathbf{A}$ be an apartment containing~$o$. Let
$$
B=\{g\in G\mid go=o\}\quad\text{and}\quad N=\{g\in G\mid gx\in A\text{ for all $x\in A$}\}.
$$
Then $(B,N,W,S)$ is a Tits system in~$G$. 
\end{thm}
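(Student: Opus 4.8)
The plan is to handle the two parts separately, using in both the $W$-distance characterisation of buildings from Remark~\ref{rem:another} as the common language; this is what makes the passage between the group and the building essentially a translation rather than a new construction.

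For part (1) I would first derive from (T1)--(T4) the \emph{Bruhat decomposition} $G=\bigsqcup_{w\in W}BwB$, together with the product rule $BwB\cdot BsB\subseteq BwB\cup BwsB$ for $s\in S$ and its sharpening $BwB\cdot BsB=BwsB$ whenever $\ell(ws)=\ell(w)+1$ (the standard arguments of \cite{AB:08,Bou:02}; the product rule is (T3) applied with $n=n_w$, the sharpening is an induction on $\ell(w)$, and (T4) is what forces the union to be disjoint). This makes $\delta(gB,hB)=w\iff g^{-1}h\in BwB$ well defined (the value depends only on the cosets since $BwB$ is bi-$B$-invariant), after which $(\mathrm{B1})'$--$(\mathrm{B3})'$ of Remark~\ref{rem:another} are short: $(\mathrm{B1})'$ is $g^{-1}h\in B\iff gB=hB$; $(\mathrm{B2})'$ is the product rule applied to $g^{-1}k=(g^{-1}h)(h^{-1}k)$, with the sharpening covering the length-increasing case; and for $(\mathrm{B3})'$, given $\delta(gB,hB)=w$, take $z=hn_sB$ when $\ell(ws)=\ell(w)+1$ (then $g^{-1}hn_s\in BwB\cdot n_s\subseteq BwBsB=BwsB$), while when $\ell(ws)=\ell(w)-1$ write $BwB=B(ws)B\cdot BsB$, factor $g^{-1}h=xy$ with $x\in B(ws)B$ and $y\in BsB$, and take $z=hy^{-1}B$, so $\delta(hB,z)=s$ and $g^{-1}hy^{-1}=x\in B(ws)B$. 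Finally $A=\{wB\mid w\in W\}$ is an apartment: $w\mapsto wB$ is a bijection onto its chambers (injectivity because $B\cap N=H$ maps to $1$ in $W$) carrying $\delta$ to the Coxeter distance $(v,v')\mapsto v^{-1}v'$, so $A\cong\Sigma(W,S)$; the orbit $\mathbf A=\{gA\mid g\in G\}$ is a $G$-invariant apartment system, strong transitivity holds because $gn_w$ sends the base pair $(A,B)$ to $(gA,gwB)$ (for a representative $n_w\in N$ of $w$), and thickness reduces to $[B:B\cap n_sBn_s^{-1}]\geq 2$, which is exactly the content of (T4) (if that index is $1$ then, using $BsB=Bn_s^{-1}B$, one gets $n_sBn_s^{-1}=B$).

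For part (2), after fixing $o$ and $A\ni o$ and identifying $A$ with $\Sigma(W,S)$ (with $o\leftrightarrow 1$), I would set $B=\mathrm{Stab}_G(o)$, $N=\{g\mid gA=A\}$, $H=B\cap N$, and establish three structural facts from strong transitivity plus the building axioms (B2)/(B3): (i) $H$ is the pointwise stabiliser of $A$ — an element of $H$ is a type-preserving automorphism of the Coxeter complex fixing a chamber, hence trivial on $A$ — so $H\triangleleft N$ since $N$ stabilises $A$ setwise; (ii) $N$ acts on $A$ through a simply transitive type-preserving action, and since those automorphisms are precisely the left translations by $W$, $N/H\cong W$, with a representative $n_w$ of $w$ carrying $o$ to the chamber of $A$ labelled $w$; (iii) $B$ is transitive on each sphere $\Delta_w(o)$ — given $x,y\in\Delta_w(o)$, pick apartments of $\mathbf A$ through $\{o,x\}$ and $\{o,y\}$ by (B2), move them onto $A$ by elements $h_x,h_y\in B$ coming from strong transitivity, and observe $h_x^{-1}x=h_y^{-1}y=n_wo$ (the unique chamber of the thin apartment $A$ at $\delta$-distance $w$ from $o$), so $y=h_yh_x^{-1}x$. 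Fact (iii) supplies the dictionary $BwB=\{g\mid\delta(o,go)=w\}$ (since $\Delta_w(o)=B\cdot n_wo$ forces $\{g\mid go\in\Delta_w(o)\}=Bn_w\,\mathrm{Stab}_G(o)=Bn_wB$), after which the Tits-system axioms are essentially re-readings of the building: (T2) is (i)+(ii); (T1) becomes $G=BNB$ — given $g$, pick $A'\in\mathbf A$ through $o$ and $go$, an $h\in B$ with $hA=A'$, then $n\in N$ with $no=h^{-1}go$, so $n^{-1}h^{-1}g\in B$; (T3): if $g\in BwB\cdot n_sB$ then $go=g_1n_so$ with $\delta(o,g_1o)=w$ and $\delta(g_1o,g_1n_so)=s$, so $(\mathrm{B2})'$ gives $\delta(o,go)\in\{w,ws\}$, i.e.\ $g\in BwB\cup B(ws)B$; and (T4): thickness gives $|\Delta_s(o)|\geq 2$, so $Bn_sB$ is a union of at least two left $B$-cosets, whence $n_sBn_s^{-1}\neq B$.

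The principal obstacle is concentrated in part (1): squeezing the Bruhat decomposition, and especially the \emph{sharp} product $BwB\cdot BsB=BwsB$ for increasing length, out of the four bare axioms — once this Weyl-group combinatorics is in hand, well-definedness of $\delta$, the verification of $(\mathrm{B1})'$--$(\mathrm{B3})'$, thickness, and strong transitivity are all bookkeeping. In part (2) the only genuinely geometric input is fact (iii), the transitivity of $B=\mathrm{Stab}_G(o)$ on $\delta$-spheres; with the dictionary $BwB\leftrightarrow\Delta_w(o)$ established, the Tits-system axioms are direct translations of $(\mathrm{B1})'$--$(\mathrm{B3})'$ and thickness.
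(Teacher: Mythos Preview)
The paper does not actually prove Theorem~\ref{thm:Titsbuilding}; it is stated as a known result and the reader is implicitly referred to the standard references \cite{AB:08,Ron:09,Tit:74} cited at the start of Section~\ref{sec:2}. So there is no in-paper argument to compare against.

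That said, your outline is correct and is exactly the standard textbook route (essentially the treatment in \cite[\S6]{AB:08}): for part~(1), extract the Bruhat decomposition and the sharp product rule $BwB\cdot BsB=BwsB$ for $\ell(ws)>\ell(w)$ from (T1)--(T4), then verify $(\mathrm{B1})'$--$(\mathrm{B3})'$ directly; for part~(2), use strong transitivity to show that $B$ acts transitively on each $\delta$-sphere $\Delta_w(o)$, establish the dictionary $BwB=\{g\mid\delta(o,go)=w\}$, and read off (T1)--(T4) from the building axioms and thickness. Your identification of where the real work lies (the sharp product rule in~(1), and sphere-transitivity of $B$ in~(2)) is accurate. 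One small point worth tightening in~(1): the thickness argument needs $n_s^2\in H\subseteq B$ to pass from $B\subsetneq n_sBn_s^{-1}$ (which is what index $\geq 2$ a priori excludes the negation of) to the genuine inequality $n_sBn_s^{-1}\neq B$; you allude to this but it deserves one explicit line.
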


Thus we have a wealth of examples of thick buildings. In particular, since every Coxeter system $(W,S)$ with $m_{st}\in \{2,3,4,6,\infty\}$ can occur as the Coxeter system of a Kac-Moody group, there are thick buildings of type $(W,S)$ for every such $(W,S)$. However we should emphasise that not all buildings arise from this type of construction (see, for example, Theorem~\ref{thm:free} below).

\begin{comment}
(The building is the order complex of the poset of parabolic cosets. )

\begin{example}
The group $GL_n(\FF)$ (or $PGL_n(\FF)$, or $SL_n(\FF)$, or $PSL_n(\FF)$) act on the building. The group $Sp_{2n}(\FF)$ (or $PSp_{2n}(\FF)$). These are the type $A_n$ and $C_n$ examples. 
\end{example}

To give a quick example of how the interaction between Tits systems and buildings can be beneficial to studying the groups, we recall the following well known result, which links the action of a group with a Tits system on its building to the simplicity of the group. It is this criterion that is used to show that most Chevalley and twisted Chevalled groups are simple. 

\begin{thm}\label{thm:simple} Let $(B,N,W,S)$ be a Tits system in a group~$G$ with $(W,S)$ irreducible. If $G$ is perfect, $B$ is soluble, and $G$ acts faithfully on its building, then $G$ is a simple group.
\end{thm}
\end{comment}

\subsection{Affine buildings}

In this section we discuss some additional structural theory for affine buildings. Let $\Sigma$ be an affine building of type $(W_{\mathrm{aff}},S_{\mathrm{aff}})$. The \text{special vertices} of the Coxeter complex of $(W_{\mathrm{aff}},S_{\mathrm{aff}})$ are the elements of the coweight lattice $P$, and the \textit{special vertices} of $\Sigma$ are the vertices which are special vertices in some apartment. Let $V$ be the set of all special vertices of~$\Sigma$. A \textit{sector} in $\Sigma$ is a subset $\fs$ which is isomorphic to a sector in some apartment of the building (where sectors in apartments are as in Section~\ref{sec:root}). Thus sectors are always based at special vertices. A fundamental fact concerning sectors in affine buildings is (see \cite[Chapter~11]{AB:08}): 
\begin{enumerate}
\item[(S1)] If $x$ is a chamber of $\Sigma$, and if $\fs$ is a sector of $\Sigma$, then there is a subsector $\fs'$ of 
$\fs$ such that $\fs'\cup x$ is contained in an apartment.%, and 
%\item[(S2)] given sectors $\fs$ and $\ft$ of $\Sigma$, there are subsectors 
%$\fs'\subseteq \fs$ and $\ft'\subseteq \ft$ such that $\fs'\cup\ft'$ lies in 
%an apartment. 
\end{enumerate}

Figure~\ref{fig:piece} shows a simplified picture of an affine building of type $\widetilde{A}_2$, however note that if the building is thick then the `branching' actually occurs along \textit{every} 
wall, and so the picture is rather incomplete. All vertices in this building are special, and a sector is shaded.
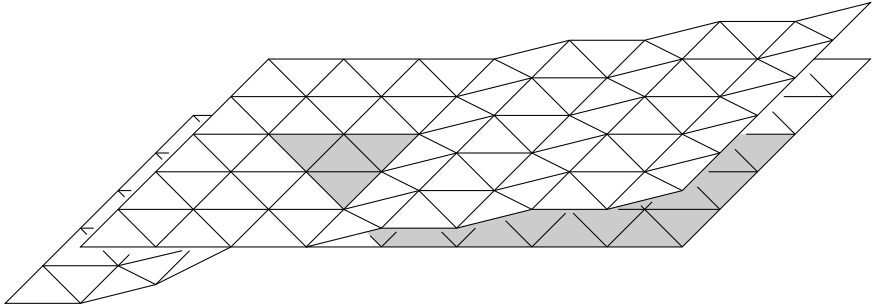
\begin{figure}[h]
\begin{center}
\begin{tikzpicture}[scale=0.5]
\path [fill=lightgray] (-6,0) -- (-4,-2) -- (-2,0);
\path [fill=lightgray] (5,-3) -- (-3,-3) -- (-3.4,-2.6) -- (-3,-2.5) -- (-1,-2.5) -- (1,-2) -- (3,-2) -- (5,-1.5) -- (6.5,0) -- (8,0) -- (5,-3);
\draw (-5,-3) -- (-3,-2.5) -- (-1,-2.5) -- (1,-2) -- (3,-2) -- (5,-1.5) -- (10,3.5) -- (8,3) -- (6,3) -- (4,2.5) -- (2,2.5) -- (0,2) -- (-6,2) -- (-11,-3) -- (5,-3) -- (10,2) -- (8.5,2);
\draw (-7,-3) -- (-9,-4) -- (-11,-4.5) -- (-13,-4.5) -- (-8,0.5) -- (-7.5,0.5);
\draw (-9,-4) -- (-8.1,-3.1);
\draw (-5,-3) -- (0,2);
\draw (-3,-2.5) -- (2,2.5);
\draw (-1,-2.5) -- (4,2.5);
\draw (1,-2) -- (6,3);
\draw (-9,-3) -- (-4,2);
\draw (-7,-3) -- (-2,2);
\draw (-10,-2) -- (-4,-2) -- (-2,-1.5) -- (0,-1.5) -- (2,-1) -- (4,-1) -- (6,-0.5);
\draw (-9,-1) -- (-3,-1) -- (-1,-0.5) -- (1,-0.5) -- (3,0) -- (5,0) -- (7,0.5);
\draw (-8,0) -- (-2,0) -- (0,0.5) -- (2,0.5) -- (4,1) -- (6,1) -- (8,1.5);
\draw (-7,1) -- (-1,1) -- (1,1.5) -- (3,1.5) -- (5,2) -- (7,2) -- (9,2.5);
\draw (-10,-2) -- (-9,-3);
\draw (-9,-1) -- (-7,-3);
\draw (-8,0) -- (-5,-3);
\draw (-7,1) -- (-4,-2);
\draw (-6,2) -- (-3,-1);
\draw (-4,2) -- (-2,0);
\draw (-2,2) -- (-1,1);
\draw (8,3) -- (9,2.5);
\draw (6,3) -- (7,2) -- (8,1.5);
\draw (4,2.5) -- (5,2) -- (6,1) -- (7,0.5);
\draw (2,2.5) -- (3,1.5) -- (4,1) -- (5,0) -- (6,-0.5);
\draw (0,2) -- (1,1.5) -- (2,0.5) -- (3,0) -- (4,-1) -- (5,-1.5);
\draw (3,-2) -- (8,3);
\draw (-1,1) -- (0,0.5) -- (1,-0.5) -- (2,-1) -- (3,-2);
\draw (-2,0) -- (-1,-0.5) -- (0,-1.5) -- (1,-2);
\draw (-3,-1) -- (-2,-1.5) -- (-1,-2.5);
\draw (-4,-2) -- (-3,-2.5);
\draw (7.7,1) -- (9,1);
\draw (6.7,0) -- (8,0);
\draw (5.7,-1) -- (7,-1);
\draw (3.3,-2) -- (6,-2);
\draw (3,-3) -- (4.2,-1.8);
\draw (1,-3) -- (1.9,-2.1);
\draw (-1,-3) -- (-0.5,-2.5);
\draw (-3,-3) -- (-2.6,-2.6);
\draw (-3,-3) -- (-3.3,-2.7);
\draw (-1,-3) -- (-1.4,-2.6);
\draw (1,-3) -- (0.3,-2.3);
\draw (3,-3) -- (2.1,-2.1);
\draw (5,-3) -- (3.9,-1.9);
\draw (6,-2) -- (5.4,-1.4);
\draw (7,-1) -- (6.4,-0.4);
\draw (8,0) -- (7.4,0.6);
\draw (9,1) -- (8.4,1.6);
\draw (-9,-4) -- (-10,-3.5) -- (-8.3,-3.1);
\draw (-11,-4.5) -- (-9.7,-3.2);
\draw (-11,-4.5) -- (-12,-3.5) -- (-10,-3.5) -- (-10.4,-3.1);
\draw (-10.83,-2.67) -- (-11,-2.5);
\draw (-9.83,-1.67) -- (-10,-1.5);
\draw (-8.83,-0.67) -- (-9,-0.5);
\draw (-7.83,0.33) -- (-8,0.5);
\draw (-11,-2.5) -- (-10.65,-2.5);
\draw (-10,-1.5) -- (-9.65,-1.5);
\draw (-9,-0.5) -- (-8.65,-0.5);
\end{tikzpicture}
\end{center}
\caption{A small piece of an $\widetilde{A}_2$ building}
\label{fig:piece}
\end{figure}

The following notion of `vector distance' gives a refined way of measuring the distance between special vertices in affine buildings. 

\begin{defn}\label{defn:vectdist} Let $x,y\in V$ be special vertices of $\Sigma$. The \textit{vector distance} 
$\vect{d}(x,y)\in P^+$ from $x$ to $y$ is defined as follows. By (B2) there is an apartment $A$ containing 
$x$ and $y$, and let $\psi:A\to \Sigma$ be a type preserving isomorphism. Then we define
$$
\vect{d}(x,y)=\bigl(\psi(y)-\psi(x)\bigr)^+,
$$
where for $\mu \in P$, we denote by $\mu^+$ the unique element in $W_0\mu\cap P^+$. This value is independent of choice of apartment $A$ and the isomorphism $\psi:A\to\Sigma$ (see \cite[Proposition~5.6]{Par:06a}).
\end{defn}

More intuitively, to compute $\vect{d}(x,y)$ one looks at the vector from $x$ to $y$ (in any apartment containing $x$ and $y$) and takes the dominant representative of this vector under the $W_0$-action.

If $A$ is an apartment and $\fs\subset A$ is a sector of $A$, then the \textit{retraction of $\Sigma$ onto $A$ with centre~$\fs$} is the map $\rho_{A,\fs}:\Sigma\to A$ computed as follows: If $x\in \Sigma$, choose (by (S1)) an apartment~$A'$ containing $x$ and a subsector of~$\fs$. Let $\psi:A'\to A$ be the isomorphism from~(B3), and let $\rho_{A,\fs}(x)=\psi(x)$. It is easy to check that this value is independent of the apartment $A'$ chosen. Intuitively speaking, the retraction $\rho_{A,\fs}$ flattens the building onto the apartment $A$ with the centre of this flattening being `deep' in the sector $\fs$ (this is illustrated in Figure~\ref{fig:busemann} for the rank~$2$ case of trees).

If $\fs$ is a sector in an apartment $A$, then there is a unique isomorphism $\psi_{A,\fs}:A\to\Sigma(W_{\mathrm{aff}},S_{\mathrm{aff}})$ mapping $\fs$ to $\fs_0$ and preserving vector distances (cf. \cite[Lemma~3.2]{Par:06b}). This gives a canonical way of fixing a Euclidean coordinate system on the apartment $A$ with respect to the sector $\fs$. 

\begin{defn}\label{defn:buse}
The \textit{vector Busemann function} associated to the sector $\fs$ is the function
$$
\vect{h}_{\fs}:\Sigma\to P\quad\text{given by}\quad \vect{h}_{\fs}(x)=\psi_{A,\fs}(\rho_{A,\fs}(x)),
$$
where $A$ is any apartment containing $\fs$ (this does not depend on~$A$). We write $\vect{h}=\vect{h}_{\fs_0}$. 
\end{defn}

\begin{example}
Let $\Sigma$ be a homogeneous tree of degree $q+1$. Thus $\Sigma$ is an affine building of type~$\widetilde{A}_1$ (see Example~\ref{ex:tree}). In this case the vector distance $\vect{d}(x,y)$ is simply the graph distance $d(x,y)$, and the vector Busemann function $\vect{h}_{\fs}:\Sigma\to P$ is the familiar `horocycle function' (perhaps with an additional superficial minus sign; see \cite[Figure~1]{BNW:08}). This is illustrated in Figure~\ref{fig:busemann} -- to compute $\vect{h}_{\fs}(v)$ for a vertex~$v$ one reads off the `level' of~$v$. For example, $\vect{h}_{\fs}(x)=-1$.  Note that there are infinitely many vertices on each horizontal level. More generally, the vector Busemann functions for an affine building are vector analogues of the usual Busemann functions for a $\mathrm{CAT}(0)$ space (see \cite[Proposition~2.8]{PW:14}).
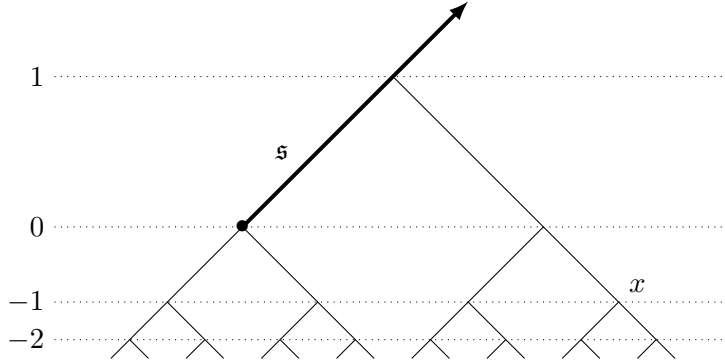
\begin{figure}[h]
\begin{center}
\begin{tikzpicture}[xscale=0.5, yscale=0.25]
\node at (-4,4) {$\bullet$};
\node at (-9,-2) [left] {$-2$};
\node at (-9,0) [left] {$-1$};
\node at (-9,4) [left] {$0$};
\node at (-9,12) [left] {$1$};
\node at (-2.5,8) [left] {$\fs$};
\node at (6.5,0) [right, above] {$x$};
\draw[-latex,line width=1.5pt] (-4,4)--(2,16);
\draw (-7.5,-3)--(-4,4);
\draw (-5.5,-3)--(-5,-2);
\draw (-3.5,-3)--(-2,0);
\draw (-1.5,-3)--(-1,-2);
\draw (-6.5,-3)--(-7,-2);
\draw (-4.5,-3)--(-6,0);
\draw (-2.5,-3)--(-3,-2);
\draw (-0.5,-3)--(-4,4);
\draw (7.5,-3)--(0,12);
\draw (5.5,-3)--(5,-2);
\draw (3.5,-3)--(2,0);
\draw (1.5,-3)--(1,-2);
\draw (6.5,-3)--(7,-2);
\draw (4.5,-3)--(6,0);
\draw (2.5,-3)--(3,-2);
\draw (0.5,-3)--(4,4);
%\draw [style=dotted] (-9,-3)--(9,-3);
\draw [style=dotted] (-9,-2)--(9,-2);
\draw [style=dotted] (-9,0)--(9,0);
\draw [style=dotted] (-9,4)--(9,4);
\draw [style=dotted] (-9,12)--(9,12);
\end{tikzpicture}
\end{center}
\caption{Vector Buesmann function for a tree}
\label{fig:busemann}
\end{figure}
\end{example}

\begin{example}\label{ex:padic} Let $R$ be an irreducible root system, let $G=G(\KK(\!(t)\!))$ be a Chevalley group over the field $\KK(\!(t)\!)$, and let $K=G(\KK[[t]])$. Following the work of Iwahori and Matsumoto~\cite{IM:65} and Bruhat and Tits~\cite{BT:72}, $G/K$ is the set of type zero vertices of an affine building (thus is a subset of the set of special vertices). The \textit{Cartan} and \textit{Iwasawa} 
decompositions of $G$ are (respectively): 
\begin{align}\label{eq:CIdecomp}
G=\bigsqcup_{\lambda\in Q\cap P^+}Kt_{\lambda}K\qquad\textrm{and}\qquad G=\bigsqcup_{\mu\in Q}Ut_{\mu}K,
\end{align}
(with $U$ being the subgroup of $G$ generated by the elements $x_{\alpha}(f)$ with $\alpha\in R^+$ and $f\in\mathbb{K}(\!(t)\!)$, and with $t_{\lambda}$ given by $t_{\lambda}=h_{\alpha_1^{\vee}}(t^{-a_1})\cdots h_{\alpha_d^{\vee}}(t^{-a_d})$ if $\lambda=a_1\alpha_1^{\vee}+\cdots+a_d\alpha_d^{\vee}$). The vector distance between vertices $gK$ and $hK$ in $\Delta$ is 
\begin{align}\label{eq:vbuild}
\vect{d}(gK,hK)=\lambda\qquad\textrm{if and only if}\qquad g^{-1}hK\subseteq Kt_{\lambda}K,
\end{align}
and since each $u\in U$ stabilises a subsector of the fundamental sector of $\Sigma$, 
the Busemann function $\vect{h}$ is given by 
\begin{align}\label{eq:hbuild}
\vect{h}(gK)=\mu\qquad\textrm{if and only if}\qquad gK\subseteq Ut_{\mu}K.
\end{align}
Thus the vector distance $\vect{d}$ and the vector Busemann function $\vect{h}$ are natural statistics from the group theoretic context, encoding the Cartan and Iwasawa decompositions (respectively). 
\end{example}

\subsection{Generalised polygons}\label{sec:gpoly}

Let $\Sigma$ be a building of type $(W,S)$ with chamber set~$\Delta$ and Weyl distance $\delta:\Delta\times\Delta\to W$. Let $I\subseteq S$, and let $W_I$ be the parabolic subgroup of~$W$ generated by~$I$. The \textit{$I$-residue} of a chamber $x\in\Delta$ is 
$$
R_I(x)=\{y\in\Delta\mid \delta(x,y)\in W_I\}.
$$
This residue is a building of type~$(W_I,I)$ (it is easiest to check this using the formulation of buildings from Remark~\ref{rem:another}). Thus general buildings are `made up of' many other buildings of lower rank. The rank~$1$ residues have no interesting structure, and so the important case is rank~$2$. 

Hence buildings of type $I_2(m)$ play a critical role in the theory. We have already discussed buildings of type $I_2(\infty)$ in Example~\ref{ex:tree}, and so here we consider the building of type $I_2(m)$ with $2\leq m<\infty$. It is easy to verify that these buildings are equivalent to bipartite graphs with:
\begin{center}
diameter $m$, and girth $2m$ (where girth is the length of the shortest cycle).
\end{center}
Such graphs are also known in the literature by another name: \textit{generalised $m$-gons}. Thus 
buildings of type $I_2(m)$ are equivalent to generalised $m$-gons. The chambers of the building are the edges of the generalised $m$-gon, and the apartments of the building are the cycles of length $2m$ in the generalised $m$-gon. Some examples are shown in Figure~\ref{fig:further}.

\begin{figure}[!h]
\centering
\subfigure[Building of type $I_2(3)$]{
\begin{tikzpicture} [scale=2.3]
\path 
({cos(0*180/7)},{sin(0*180/7)}) node (0)[shape=circle,draw,fill=black,scale=0.5]  {}
({cos(180/7)},{sin(180/7)}) node (1) [shape=circle,draw,scale=0.5] {}
 ({cos(2*180/7)},{sin(2*180/7)}) node (2) [shape=circle,draw,fill=black,scale=0.5] {}
  ({cos(3*180/7)},{sin(3*180/7)}) node (3) [shape=circle,draw,scale=0.5] {}
   ({cos(4*180/7)},{sin(4*180/7)}) node (4) [shape=circle,draw,fill=black,scale=0.5] {}
    ({cos(5*180/7)},{sin(5*180/7)}) node (5) [shape=circle,draw,scale=0.5] {}
     ({cos(6*180/7)},{sin(6*180/7)}) node (6) [shape=circle,draw,fill=black,scale=0.5] {}
      ({cos(7*180/7)},{sin(7*180/7)}) node (7) [shape=circle,draw,scale=0.5] {}
      ({cos(8*180/7)},{sin(8*180/7)}) node (8) [shape=circle,draw,fill=black,scale=0.5] {}
      ({cos(9*180/7)},{sin(9*180/7)}) node (9) [shape=circle,draw,scale=0.5] {}
      ({cos(10*180/7)},{sin(10*180/7)}) node (10) [shape=circle,draw,fill=black,scale=0.5] {}
      ({cos(11*180/7)},{sin(11*180/7)}) node (11) [shape=circle,draw,scale=0.5] {}
      ({cos(12*180/7)},{sin(12*180/7)}) node (12) [shape=circle,draw,fill=black,scale=0.5] {}
      ({cos(13*180/7)},{sin(13*180/7)}) node (13) [shape=circle,draw,scale=0.5] {};
\draw (0) -- (1) -- (2) --(3)--(4)--(5)--(6)--(7)--(8)--(9)--(10)--(11)--(12)--(13)--(0);
\draw (4)--(9);
\draw (3)--(12);
\draw (5)--(0);
\draw (13)--(8);
\draw (1)--(10);
\draw (6)--(11);
\draw (2)--(7);
         \end{tikzpicture}}\hspace{2cm}
         \subfigure[Building of type $I_2(4)$]{
         \begin{tikzpicture} [scale=1.7]
\path %[line width=0pt]
(-0.398806,1.28975) node (1) [shape=circle,draw,fill=black,scale=0.5] {}
(-0.112204,1.34533) node (2) [shape=circle,draw,scale=0.5] {}
(0.112204,1.34533) node (3) [shape=circle,draw,fill=black,scale=0.5] {}
(0.398806,1.28975) node (4) [shape=circle,draw,scale=0.5] {}
(0.670803,1.17155) node (5) [shape=circle,draw,fill=black,scale=0.5] {}
(0.906918,1) node (6) [shape=circle,draw,scale=0.5] {}
(1.10339, 0.777842) node (7) [shape=circle,draw,fill=black,scale=0.5] {}
(1.24481, 0.522442) node (8) [shape=circle,draw,scale=0.5] {}
(1.31416,0.309017) node (9) [shape=circle,draw,fill=black,scale=0.5] {}
(1.34986,0.0192676) node (10) [shape=circle,draw,scale=0.5] {}
(1.3215, -0.275943) node (11) [shape=circle,draw,fill=black,scale=0.5] {}
(1.23131, -0.553513) node (12) [shape=circle,draw,scale=0.5] {}
(1.08074, -0.809017) node (13) [shape=circle,draw,fill=black,scale=0.5] {}
(0.88154, -1.02244) node (14) [shape=circle,draw,scale=0.5] {}
(0.69999, -1.15435) node (15) [shape=circle,draw,fill=black,scale=0.5] {}
(0.435455, -1.27784) node (16) [shape=circle,draw,scale=0.5] {}
(0.145929, -1.34209) node (17) [shape=circle,draw,fill=black,scale=0.5] {}
(-0.145927, -1.34209) node (18) [shape=circle,draw,scale=0.5] {}
(-0.435455, -1.27784) node (19) [shape=circle,draw,fill=black,scale=0.5] {}
(-0.69999, -1.15435) node (20) [shape=circle,draw,scale=0.5] {}
(-0.88154, -1.02244) node (21) [shape=circle,draw,fill=black,scale=0.5] {}
(-1.08074, -0.809017) node (22) [shape=circle,draw,scale=0.5] {}
(-1.23131, -0.553516) node (23) [shape=circle,draw,fill=black,scale=0.5] {}
(-1.3215, -0.275944) node (24) [shape=circle,draw,scale=0.5] {}
(-1.34986, 0.0192676) node (25) [shape=circle,draw,fill=black,scale=0.5] {}
(-1.31416, 0.309017) node (26) [shape=circle,draw,scale=0.5] {}
(-1.24481, 0.522442) node (27) [shape=circle,draw,fill=black,scale=0.5] {}
(-1.10339, 0.777842) node (28) [shape=circle,draw,scale=0.5] {}
(-0.906921, 1.) node (29) [shape=circle,draw,fill=black,scale=0.5] {}
(-0.670803,1.17155) node (30) [shape=circle,draw,scale=0.5] {}
;
\draw (1)--(2)--(3)--(4)--(5)--(6)--(7)--(8)--(9)--(10)--(11)--(12)--(13)--(14)--(15)--(16)--(17)--(18)--(19)--(20)--(21)--(22)--(23)--(24)--(25)--(26)--(27)--(28)--(29)--(30)--(1);
\draw (1)--(10);
\draw (2)--(15);
\draw (3)--(20);
\draw (4)--(25);
\draw (5)--(12);
\draw (6)--(29);
\draw (7)--(16);
\draw (8)--(21);
\draw (9)--(26);
\draw (11)--(18);
\draw (13)--(22);
\draw (14)--(27);
\draw (17)--(24);
\draw (19)--(28);
\draw (23)--(30);
\end{tikzpicture}}
         \caption{Examples of finite thick generalised $m$-gons}\label{fig:further}
\end{figure}
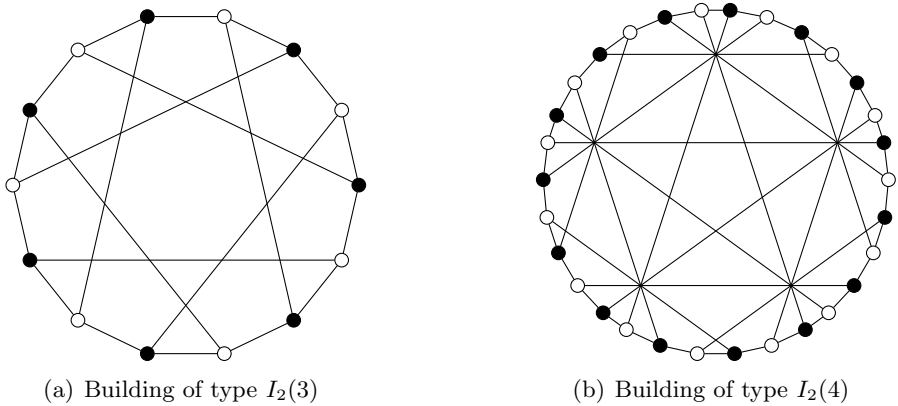

It is not hard to see that finite thick generalised $m$-gons are necesarily biregular (with alternating valencies $q+1$ and $r+1$, say), and that if $m$ is odd then necessarily $q=r$.  There is a lot to say about generalised $m$-gons (see, for example, \cite{HVM:98}). Let us simply recall some of the key results. Since we are rarely interested in `ordinary' $m$-gons, we will usually omit the adjective `generalised'. 

\begin{thm}\label{thm:existmgon}\emph{\cite{Tit:77}} For each $m\geq 2$ there exist thick $m$-gons.
\end{thm}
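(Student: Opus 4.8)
The plan is to carry out Tits' \emph{free construction} of generalised polygons. This is genuinely needed in the stated generality, since by the Feit--Higman theorem a \emph{finite} thick $m$-gon can exist only for $m\in\{2,3,4,6,8\}$; for the remaining values of $m$ there is no finite example to write down, and one instead builds an infinite thick $m$-gon ``freely''. (For $m\in\{2,3,4,6,8\}$ one can exhibit finite examples arising from groups with a rank-$2$ $BN$-pair: the complete bipartite graph $K_{3,3}$ for $m=2$, the incidence graph of a finite projective plane for $m=3$, a symplectic quadrangle for $m=4$, a split Cayley hexagon for $m=6$, a Ree--Tits octagon for $m=8$; but these are not needed below.) Recall from Section~\ref{sec:gpoly} that a thick $m$-gon is the same thing as a connected bipartite graph of girth $2m$ and diameter $m$ all of whose vertices have degree at least~$3$, so it is enough to construct such a graph.

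I would build an increasing chain $\Gamma_0\subseteq\Gamma_1\subseteq\Gamma_2\subseteq\cdots$ of connected bipartite graphs, starting from the ordinary $2m$-cycle $\Gamma_0$ (the thin $m$-gon). Along with the $\Gamma_i$ I keep a list of pending \emph{tasks}: for each vertex $v$ so far created, a task ``thicken $v$'', and for each pair $\{u,v\}$ so far created, a task ``shorten $\{u,v\}$''; since each $\Gamma_i$ is finite these can be processed one at a time in a fixed fair order (e.g.\ first-in-first-out), so that every task is eventually served, with the new tasks appended whenever a move creates new vertices. To perform ``thicken $v$'': if $\deg_{\Gamma_i}v\geq 3$ do nothing, otherwise attach one new pendant vertex at $v$. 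To perform ``shorten $\{u,v\}$'': if $d_{\Gamma_i}(u,v)\leq m$ do nothing, otherwise adjoin a path with all-new interior, of length $\ell\in\{m-1,m\}$ chosen with $\ell\equiv d_{\Gamma_i}(u,v)\pmod 2$, joining $u$ to $v$. Finally put $\Gamma=\bigcup_{i\geq 0}\Gamma_i$.

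Then I would check: (i) every move preserves connectedness and bipartiteness, the parity choice of $\ell$ being exactly what makes ``shorten'' bipartition-preserving. (ii) Each $\Gamma_i$ has girth $\geq 2m$, by induction on $i$: a pendant vertex lies on no cycle, and when a path of length $\ell\in\{m-1,m\}$ is adjoined between $u$ and $v$ with $d_{\Gamma_i}(u,v)>m$, its interior vertices have degree $2$, so every newly created cycle consists of that entire path together with a $u$--$v$ path in $\Gamma_i$ and hence has length $\geq (m-1)+(m+1)=2m$; since the $2m$-cycle $\Gamma_0$ is never destroyed, $\Gamma$ has girth exactly $2m$. (iii) $\Gamma$ has diameter $m$: any two of its vertices lie in some $\Gamma_i$, so the task ``shorten $\{u,v\}$'' is eventually served and from then on $d(u,v)\leq m$, while girth $2m$ forces two antipodal vertices of $\Gamma_0$ to remain at distance $m$. (iv) $\Gamma$ is thick: any vertex appears in some $\Gamma_i$, and as long as its degree is $\leq 2$ it acquires a new pendant each time its ``thicken'' task is served, so its degree in $\Gamma$ is at least $3$, and both colour classes are treated symmetrically. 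By the characterisation recalled in Section~\ref{sec:gpoly}, $\Gamma$ is thus a thick generalised $m$-gon, i.e.\ a thick building of type $I_2(m)$.

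The point requiring care --- and the reason the ``shorten'' and ``thicken'' tasks must be interleaved rather than carried out in separate phases --- is that a single ``shorten'' move typically creates fresh vertices lying at distance $m+1$ from part of the existing graph, so the intermediate $\Gamma_i$ need not have diameter $\leq m$. What the dovetailing guarantees is that, for each fixed pair of vertices, this defect is repaired at some finite stage and never reopens (adjoining edges only shortens distances), all the while never lowering the girth below $2m$ and never destroying the apartment $\Gamma_0$. Reconciling these three requirements in the limit is the main obstacle, and it is precisely the bookkeeping carried out in~\cite{Tit:77}.
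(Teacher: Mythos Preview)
The paper does not prove this theorem; it simply cites \cite{Tit:77} and remarks that the proof is via a free construction producing $m$-gons in which every vertex has infinitely many neighbours. Your proposal carries out exactly this free construction, and the girth and diameter verifications (i)--(iii) are correct.

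There is one bookkeeping slip in your thickness argument~(iv). As you have set things up, each vertex $v$ receives a single ``thicken $v$'' task when it is created, and FIFO processing serves it exactly once, adding at most one pendant. But a pendant vertex is born with degree~$1$, so a single serving only raises it to degree~$2$; your phrase ``each time its `thicken' task is served'' tacitly assumes repeated servings, which is not what you arranged. The repair is immediate --- either re-enqueue ``thicken $v$'' after serving it whenever $\deg v<3$, or simply have the task attach $3-\deg v$ pendants in one go --- and it does not disturb the girth or diameter arguments.
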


Theorem~\ref{thm:existmgon} shows that there are many examples of thick $m$-gons. The proof of the theorem is via a `free construction' which produces $m$-gons in which every vertex has infinitely many neighbours. Finite thick $m$-gons are much more restricted, as shown by the following beautiful and unexpected theorem due to Feit and Higman.

\begin{thm}\label{thm:FH}\emph{\cite{FH:64}}
Finite thick generalsed $m$-gons exist if and only if $m\in\{2,3,4,6,8\}$.
\end{thm}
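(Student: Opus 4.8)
The plan is to treat the two implications separately, as they are of completely different character. For the ``if'' direction one exhibits, for each $m\in\{2,3,4,6,8\}$, a finite thick generalised $m$-gon as the rank~$2$ spherical building of a finite group of Lie type via Theorem~\ref{thm:Titsbuilding}: a complete bipartite graph for $m=2$ (the building of type $A_1\times A_1$); a Desarguesian projective plane $PG(2,q)$ for $m=3$ (the building of $PGL_3(q)$, type $A_2$); a symplectic quadrangle $W(q)$ for $m=4$ (the building of $Sp_4(q)$, type $C_2$); a split Cayley hexagon for $m=6$ (the building of $G_2(q)$); and a Ree--Tits octagon for $m=8$ (the building of the Ree group ${}^2F_4(2^{2a+1})$). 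In each case thickness, finiteness, and the parameters follow at once from the Bruhat decomposition. This is the routine half; the content of the theorem, and the sharp contrast with Theorem~\ref{thm:existmgon}, lies in the converse.

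So let $\Sigma$ be a finite thick generalised $m$-gon with chamber set $\Delta$; we must force $m\in\{2,3,4,6,8\}$, and may assume $m\geq 3$. By the biregularity remark following Figure~\ref{fig:further}, $\Sigma$ is regular with parameters $(q,r)$, $q,r\geq 2$ (with $q=r$ when $m$ is odd). First I would make the finite-dimensional space $\CC[\Delta]$ into a module over the rank~$2$ Iwahori--Hecke algebra $\cH=\cH(q,r)$ of type $I_2(m)$: for $w\in W=I_2(m)$ let $A_w\in\End(\CC[\Delta])$ be the sphere operator $(A_wf)(x)=\sum_{\delta(x,y)=w}f(y)$; using the gallery combinatorics of $\Sigma$ together with the regularity fact that $q_w=|\Delta_w(x)|$ is independent of $x$, the $A_w$ satisfy $A_sA_w=A_{sw}$ when $\ell(sw)>\ell(w)$ and $A_s^2=q_sA_1+(q_s-1)A_s$, so $T_w\mapsto A_w$ gives the desired action. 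For real $q,r\geq 2$ the algebra $\cH(q,r)$ is split semisimple, with irreducibles the index character ($T_s\mapsto q_s$), the sign character, two further linear characters when $m$ is even, and $\lfloor(m-1)/2\rfloor$ two-dimensional modules $\rho_k$, $1\leq k<m/2$, on which $T_sT_t$ acts with eigenvalues built from $qr$ and $2\cos(2\pi k/m)$. Write $\CC[\Delta]\cong\bigoplus_\rho\rho^{\oplus n_\rho}$.

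The crux is that the constraint ``every $n_\rho$ is a non-negative integer'' is arithmetically incompatible with $m\notin\{2,3,4,6,8\}$. Since each $A_w$ ($w\neq 1$) is a matrix of zeros and ones with zero diagonal, the character of $\CC[\Delta]$ is given by $\chi_{\CC[\Delta]}(T_1)=|\Delta|$ and $\chi_{\CC[\Delta]}(T_w)=0$ for $w\neq 1$; feeding this into the character theory of $\cH(q,r)$ determines every $n_\rho$ explicitly (concretely $n_\rho=|\Delta|/c_\rho$ with $c_\rho$ the Schur element of $\rho$). For the linear characters the $c_\rho$ are specialisations of Poincar\'e polynomials and pose no arithmetic difficulty, but for the two-dimensional $\rho_k$ the quantity $n_{\rho_k}$ is an explicit rational expression in $q,r$ whose only irrationality comes from $2\cos(2\pi k/m)$, so $n_{\rho_k}$ lies in the real cyclotomic field $\QQ(\cos(\pi/m))$, of degree $\varphi(2m)/2$ over $\QQ$. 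The plan is then to exploit the integrality of the $n_{\rho_k}$ via Galois theory: $\mathrm{Gal}(\QQ(\cos(\pi/m))/\QQ)$ permutes the values $\cos(2\pi k/m)$, hence permutes the modules $\rho_k$, so Galois-conjugate $\rho_k$'s must have equal multiplicities. Combining these equalities with the global constraints $n_{\mathbf{1}}=1$ and $\sum_\rho n_\rho\dim\rho=|\Delta|=W(q,r)$ (the Poincar\'e polynomial at $(q,r)$), and using $q,r\geq 2$, leaves only finitely many possibilities for $m$; checking them shows the multiplicities can all be non-negative integers exactly when $m\in\{2,3,4,6,8\}$, the case $m=8$ being admissible, despite $\varphi(16)=8$, only because of a special factorisation of the numerators of the $n_{\rho_k}$ (the avatar of the classical fact that $2qr$ must then be a perfect square).

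The main obstacle is precisely this last, arithmetic, step. Everything upstream -- the Hecke-algebra action on $\CC[\Delta]$, split semisimplicity and the character table of $\cH(q,r)$ for type $I_2(m)$, and the computation of $\chi_{\CC[\Delta]}$ -- is routine once the representation theory of rank~$2$ Hecke algebras is in hand, and is developed in the paper's second appendix. But no soft dimension- or degree-count isolates the five exceptional values: for instance $\varphi(2m)=4$ for the would-be generalised pentagon $m=5$ exactly as for $m=4$ and $m=6$, so $m=5$ (and likewise $m=7$, $m=12$, and so on) must be eliminated by actually computing the numerators of the $n_{\rho_k}$ -- equivalently, by reproving the Feit--Higman and Haemers--Roos divisibility constraints on $(q,r)$ -- and verifying case by case that thick solutions exist only for $m\in\{2,3,4,6,8\}$. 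This explicit and somewhat delicate bookkeeping is the heart of the matter, and is exactly what the Kilmoyer--Solomon reworking of Feit and Higman's 1964 argument accomplishes.
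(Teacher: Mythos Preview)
Your proposal is correct and follows the same Kilmoyer--Solomon strategy that the paper adopts in Appendix~\ref{app:rank2}: realise $\CC[\Delta]$ as a module for the rank~$2$ Hecke algebra, decompose it using the classification of irreducibles (Proposition~\ref{prop:22}), and extract an arithmetic obstruction from the integrality of the multiplicities.

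The one place where the paper is sharper than your outline is the endgame. You anticipate a case-by-case elimination of values of $m$ and describe this as ``the heart of the matter''; in fact the paper avoids this almost entirely. Using the multiplicity formula $m_{\rho}=\dim(\rho)/\langle\chi_{\rho},\chi_{\rho}\rangle$ of Theorem~\ref{thm:character}, it computes the Schur element $\langle\chi_j,\chi_j\rangle$ of each two-dimensional irreducible explicitly: for $m$ odd it is $2m+(q-1)^2m/\bigl(q(1-\cos\theta_j)\bigr)$, and for $m$ even there is an analogous closed form involving $\sin^2\theta_j$ and $\cos\theta_j/\sqrt{qr}$. Rationality of these quantities then forces $\cos(2\pi/m)\in\QQ$ in the odd case, and (after symmetrising $j\leftrightarrow m/2-j$) both $\sin^2(2\pi/m)\in\QQ$ and a corresponding condition in the even case. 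The well-known classification of rational values of $\cos(2\pi/m)$ then pins down $m$ immediately, with no residual case analysis. Your Galois-theoretic framing is equivalent in spirit (Galois-invariance of integer multiplicities is exactly rationality of the Schur elements), but the direct route through the explicit formula is shorter than you suggest.
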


The `only if' part of the Feit-Higman Theorem follows from rather involved character theory (see Appendix~\ref{app:rank2}). For the `if' part of the theorem, the existence of finite thick $2$-gons is clear (they are just complete bipartite graphs), and examples of finite thick $3$-gons, $4$-gons, $6$-gons, and $8$-gons come from the Chevalley groups $A_2(q)$, $B_2(q)$, $G_2(q)$, and the twisted Chevalley group $^2F_4(2^{2m+1})$ via Theorem~\ref{thm:Titsbuilding}.

Thus in the finite theory the $m$-gons with $m=2,3,4,6,8$ play a special role. We often call $4$-gons, $6$-gons, and $8$-gons \textit{quadrangles}, \textit{hexagons}, and \textit{octagons} respectively. Generalised $3$-gons are called \textit{projective planes} (this comes from the older language of incidence geometry). There are severe restrictions on the possible thickness parameters of projective planes, quadrangles, hexagons, and octagons. Most of these restrictions have character theoretic proofs (we prove some of these in Appendix~\ref{app:rank2}). References to the original works can be found in Kantor~\cite{Kan:84}.

\begin{thm}\label{thm:param}
Let $\Gamma$ be a finite thick $m$-gon with parameters $(q,r)$. 
\begin{enumerate}
\item[\emph{(1)}] If $m=3$ then $q=r$, and if $q\equiv 1,2 \mod 4$ then $q$ is a sum of two squares.
\item[\emph{(2)}] If $m=4$ then $q\leq r^2$, $r\leq q^2$, and $q^2(qr+1)/(q+r)\in\ZZ$.
\item[\emph{(3)}] If $m=6$ then $q\leq r^3$, $r\leq q^3$, $q^3(q^2r^2+qr+1)/(q^2+qr+r^2)\in\ZZ $, and $\sqrt{qr}\in\ZZ$.
\item[\emph{(4)}] If $m=8$ then $q\leq r^2$, $r\leq q^2$, $q^4(qr+1)(q^2r^2+1)/(q+r)(q^2+r^2)\in\ZZ$, and $\sqrt{2qr}\in\ZZ$. 
\end{enumerate} 
\end{thm}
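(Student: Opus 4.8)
The plan is to study $\Gamma$ through the spectral theory of its incidence graph, equivalently through the representation theory of the rank~$2$ Hecke algebra attached to the building of type $I_2(m)$ with parameters $(q,r)$. Let $N$ be the point--line incidence matrix of $\Gamma$, so that $NN^{\mathrm{T}}=(r+1)I+A$, where $A$ is the adjacency matrix of the collinearity graph on the points; when $m$ is even this graph is distance-regular of diameter $m/2$. The chamber-adjacency operators $A_s$, $A_t$ of the building satisfy $(A_s-q_sI)(A_s+I)=0$ together with the length-$m$ braid relation, so they generate a semisimple algebra whose irreducible modules are the familiar $1$- and $2$-dimensional modules of the dihedral Hecke algebra (reviewed in Appendix~\ref{app:rank2}). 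Decomposing the natural module of functions on points, or on chambers, into isotypic components realises each eigenvalue multiplicity of $A$ as the dimension of a canonical subspace, hence a non-negative integer; and the eigenvalues and their multiplicities are given by explicit closed formulas, rational in $q$ and $r$ (for $m=6$ and $m=8$ involving the radicals $\sqrt{qr}$ and $\sqrt{2qr}$ arising from the $2$-dimensional modules).

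Granting these formulas, the divisibility assertions are bookkeeping. For $m=3$ the equality $q=r$ is automatic: in a finite projective plane an elementary count shows that every line carries the same number $n+1$ of points as there are lines through a point, and the surviving multiplicity is then a polynomial in $q$, imposing nothing beyond integrality; the congruence statement that $q$ is a sum of two squares when $q\equiv 1,2\bmod 4$ is of a different, purely number-theoretic, flavour, and I would prove it separately by the Bruck--Ryser argument, feeding the identity $NN^{\mathrm{T}}=qI+J$ into the theory of rational quadratic forms and invoking Hasse--Minkowski. For $m=4,6,8$ one writes out the multiplicity of the eigenvalue coming from a $2$-dimensional module, clears denominators, and reads off $q^{2}(qr+1)/(q+r)\in\ZZ$, then $q^{3}(q^{2}r^{2}+qr+1)/(q^{2}+qr+r^{2})\in\ZZ$, then $q^{4}(qr+1)(q^{2}r^{2}+1)/\bigl((q+r)(q^{2}+r^{2})\bigr)\in\ZZ$. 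The square-root conditions come from the same source: when $\sqrt{qr}$ (resp.\ $\sqrt{2qr}$) is irrational the two eigenvalues produced by the $2$-dimensional module are Galois conjugate over $\QQ$, so they share a multiplicity $f$, and evaluating the multiplicity formula places $f$ in $\QQ(\sqrt{qr})$ (resp.\ $\QQ(\sqrt{2qr})$) with non-vanishing irrational part unless the radical is rational --- contradicting $f\in\ZZ$. As $\sqrt{qr}$ and $\sqrt{2qr}$ are algebraic integers, rationality forces integrality; thickness ($q,r\geq 2$) is what excludes the degenerate situations in which the two eigenvalues coincide.

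The inequalities $q\leq r^{2}$, $r\leq q^{2}$ for $m=4,8$ and $q\leq r^{3}$, $r\leq q^{3}$ for $m=6$ are of a genuinely different nature: the multiplicity formulas are positive for all $q,r\geq 1$, so integrality says nothing, and a positive-semidefiniteness input is needed. I would take these from the Krein conditions for the association scheme carried by the collinearity graph --- the non-negativity of the Krein parameters $q^{k}_{ij}$, itself equivalent to the positive semidefiniteness of an entrywise product of primitive idempotents --- since, for the point graph of a generalised polygon, one distinguished Krein parameter, written out as an explicit rational function of $q$ and $r$, rearranges to exactly the Higman--Haemers--Roos bound. An essentially equivalent route uses eigenvalue interlacing for the subconstituent induced on the points at maximal distance from a fixed point. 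I expect this positivity step to be the main obstacle: the integrality arguments are mechanical once the Hecke-algebra decomposition and the eigenvalue and multiplicity formulas are in place, whereas identifying the right Krein parameter (or the right submatrix for interlacing) and checking that its non-negativity collapses to precisely the stated inequality is where the real content sits.
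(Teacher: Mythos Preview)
Your proposal is correct and, for the divisibility and square-root conditions, essentially coincides with the paper's proof in Appendix~\ref{app:rank2}: both compute the multiplicities of the irreducible Hecke-algebra modules in the geometric (chamber) representation via $m_\rho=\dim(\rho)/\langle\chi_\rho,\chi_\rho\rangle$, read off the divisibility constraints from the multiplicity of a $1$-dimensional representation ($\rho^1$ in the paper's notation), and extract the integrality of $\sqrt{qr}$, $\sqrt{2qr}$ from the multiplicity of a $2$-dimensional representation ($\rho_1$). Your framing via the collinearity graph and incidence matrix is an equivalent repackaging.

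Where you go further than the paper is in sketching arguments for the Bruck--Ryser condition and the Higman--Haemers--Roos inequalities. The paper does not prove these: it states them with a reference to Kantor~\cite{Kan:84} and explicitly singles out the Bruck--Ryser--Chowla theorem as a named result. Your proposed routes---rational quadratic forms and Hasse--Minkowski for Bruck--Ryser, Krein parameters or interlacing for the inequalities---are the standard ones and are correct, so in this respect your outline is more complete than what the paper itself supplies.
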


The number theoretic part of statement~(1) is called the \textit{Bruck-Ryser-Chowla Theorem}. For example, it prohibits the existence of a projective plane with parameter~$q=6$. 

Together, Theorems~\ref{thm:FH} and~\ref{thm:param} place a lot of conditions on the structure of general locally finite thick buildings. In particular we immediately have the following corollary by looking at rank~$2$ residues.

\begin{cor}\label{cor:restriction} Let $\Sigma$ be a locally finite thick building of type $(W,S)$ with parameters~$(q_s)_{s\in S}$. For each $s,t\in S$ with $s\neq t$ let $m_{st}$ be the order of $st$. Then:
\begin{enumerate}
\item[\emph{(1)}] $m_{st}\in\{2,3,4,6,8,\infty\}$ for all $s,t\in S$ with $s\neq t$, and 
\item[\emph{(2)}] if $m_{st}\in \{3,4,6,8\}$ then the pair $(q_s,q_t)$ satisfies the constraints from Theorem~\ref{thm:param}. 
\end{enumerate}
\end{cor}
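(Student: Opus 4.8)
The plan is to reduce everything to rank~$2$ by passing to residues. Fix $s,t\in S$ with $s\neq t$, pick any chamber $x\in\Delta$, and consider the $\{s,t\}$-residue $R=R_{\{s,t\}}(x)$. As recalled in Section~\ref{sec:gpoly}, $R$ is a building of type $(W_{\{s,t\}},\{s,t\})=I_2(m_{st})$, so it is exactly the kind of object to which Theorems~\ref{thm:FH} and~\ref{thm:param} apply --- provided one checks that $R$ is thick, locally finite, and finite, and provided one identifies its parameters with $(q_s,q_t)$.

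First I would observe that $R$ inherits thickness, local finiteness, and the relevant parameters from $\Sigma$. Indeed, if $y\in R$ and $u\in\{s,t\}$, then every chamber $z$ with $z\sim_u y$ again lies in $R$: by axiom $(\text{B2})'$ of Remark~\ref{rem:another} we have $\delta(x,z)\in\{\delta(x,y),\delta(x,y)u\}\subseteq W_{\{s,t\}}$. Hence the set of $u$-neighbours of $y$ computed inside $R$ coincides with the set computed inside $\Sigma$. Since $\Sigma$ is thick this set has cardinality $\geq 3$, so $R$ is thick; since $\Sigma$ is locally finite this set is finite, so $R$ is locally finite; and since $\Sigma$ has parameters $(q_u)_{u\in S}$ this set has cardinality exactly $q_u+1$, so $R$ is regular with parameters $(q_s,q_t)$.

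If $m_{st}=\infty$ there is nothing to prove for the pair $\{s,t\}$. Otherwise $2\leq m_{st}<\infty$, and viewing $R$ as a generalised $m_{st}$-gon (a bipartite graph of diameter $m_{st}$ and girth $2m_{st}$), local finiteness together with the finite diameter forces this graph, hence $R$, to be finite. Thus $R$ is a finite thick generalised $m_{st}$-gon, and the Feit--Higman Theorem (Theorem~\ref{thm:FH}) gives $m_{st}\in\{2,3,4,6,8\}$, proving~(1). Since the parameters of $R$ are $(q_s,q_t)$ by the previous paragraph, Theorem~\ref{thm:param} applies verbatim to $R$ and yields~(2).

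There is no genuine obstacle here: once the residue machinery is in place the corollary is an immediate consequence of the rank~$2$ theory, and the writing would consist mainly of making the two bookkeeping points above precise --- that $u$-adjacency of chambers inside $R$ agrees with $u$-adjacency inside $\Sigma$ for $u\in\{s,t\}$ (so that thickness, local finiteness, and the parameters all transfer), and that a locally finite generalised $m$-gon with $m<\infty$ is automatically finite. Both follow directly from the definitions.
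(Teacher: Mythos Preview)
Your proof is correct and follows exactly the approach the paper indicates: the corollary is stated as an immediate consequence of looking at rank~$2$ residues and applying Theorems~\ref{thm:FH} and~\ref{thm:param}, and you have simply made explicit the bookkeeping (that residues inherit thickness, local finiteness, and the parameters $(q_s,q_t)$, and that a locally finite generalised $m$-gon with $m<\infty$ is finite) that the paper leaves to the reader.
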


It is an open problem to determine the possible parameters of thick projective planes, quadrangles, hexagons, and octagons. The \textit{known} examples have the following parameters (where we arrange the parameters $(q,r)$ so that $q\leq r$). Projective planes: $(q,q)$ with $q$ a prime power. Quadrangles: $(q,q)$, $(q,q^2)$, $(q^2,q^3)$, $(q-1,q+1)$ with $q$ a prime power. Hexagons: $(q,q)$, $(q,q^3)$ with $q$ a prime power. Octagons: $(q,q^2)$ with $q=2^{2k+1}$ an odd power of~$2$. Constructing a thick generalised $m$-gon with parameters other than these, or proving further restrictions on the possible parameters, would be revolutionary. Finally we note that for a given value of the parameters there may be multiple non-isomorphic generalised $m$-gons. For example there are $4$ distinct projective planes with parameters $(9,9)$. 

\begin{comment}
We conclude this section with some open problems and conjectures. Conjectures~\ref{conj:1} and~\ref{conj:2} are considered to be folklore. For information on Conjectures~\ref{conj:3} and~\ref{conj:4} see \cite{Kan:84}. 

\begin{conj}\label{conj:1}
The parameter of every finite thick projective plane is a prime power.
\end{conj}

\begin{conj}\label{conj:2}
If $\Gamma$ is a projective plane with prime parameter~$p$ then $\Gamma$ is a classical (that is, the building of $A_2(p)$).
\end{conj}

\begin{conj}\label{conj:3} The only finite thick hexagons and octagons are the classical ones (that is, the buildings of $G_2(q)$, $^3D_4(q^3)$, and $^2F_4(2^{2m+1})$).
\end{conj}

\begin{conj}\label{conj:4}
The only finite thick generalised polygons with an edge transitive automorphism group are the classical polygons, together with two exceptional non-classical quadrangles with parameters $(3,5)$ and $(15,17)$. 
\end{conj}

\end{comment}

\subsection{Classification and free constructions}\label{sec:classification}

Thick irreducible spherical (respectively affine) buildings of rank at least $3$ (respectively~$4$) have been classified by Tits (respectively, Tits and Weiss) (see \cite{Tit:74} and \cite{BT:72,Tit:86,Wei:09}). Put very roughly, this classification says that all irreducible thick spherical buildings of rank at least~$3$ arise from groups of Lie origin via Tits systems, and that all irreducible thick affine buildings of rank at least~$4$ arise from groups of Lie origin defined over fields (or skew-fields) with discrete valuation via affine Tits systems. The precise statement of these classification theorems is involved (see the above references for details).

On the other hand, the following essentially free construction shows that the situation is very different for Coxeter systems which contain no irreducible spherical rank~$3$ parabolic subgroups.

\begin{thm}\emph{\cite{Ron:86}}\label{thm:free}
Let $(W,S)$ be a Coxeter system such that every irreducible rank~$3$ parabolic subgroup of $W$ is infinite. Suppose that $(q_s)_{s\in S}$ is a sequence of integers such that for each pair $s,t\in S$ there exists a generalised $m_{st}$-gon with parameters $(q_s,q_t)$. Then there exists a locally finite thick regular building of type $(W,S)$ whose rank~$2$ residues of type $W_{\{s,t\}}$ range through any desired set of generalised $m_{st}$-gons having parameters $(q_s,q_t)$. 
\end{thm}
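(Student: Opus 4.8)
The plan is to build the building by amalgamating rank-$2$ residues in a tree-like fashion, using the hypothesis on rank-$3$ parabolics to guarantee that there is never an obstruction to gluing. Concretely, I would work with the chamber-system / $W$-metric formulation of buildings from Remark~\ref{rem:another}: a building of type $(W,S)$ is a set $\Delta$ with a map $\delta:\Delta\times\Delta\to W$ satisfying $(\text{B1})'$--$(\text{B3})'$. The construction proceeds by a transfinite (or stagewise) amalgamation. Start with a single chamber $x_0$. At each stage one has a partial structure --- a set $\Delta_n$ with a partial $W$-distance that is ``locally complete'' up to galleries of bounded length --- and one extends it by, for each chamber $x$ and each $s\in S$, ensuring that the $s$-panel through $x$ contains exactly $q_s+1$ chambers, and more generally that every $\{s,t\}$-residue is a copy of the prescribed generalised $m_{st}$-gon with parameters $(q_s,q_t)$ (which exists by hypothesis, cf.\ Theorem~\ref{thm:existmgon}). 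Taking the direct limit of the $\Delta_n$ produces a set $\Delta$ with a function $\delta:\Delta\times\Delta\to W$; one then verifies $(\text{B1})'$, $(\text{B2})'$, $(\text{B3})'$.

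The key steps, in order, are: (1) set up the amalgamation so that at each finite stage the partial object is ``residually complete'' of rank $2$ --- i.e.\ every already-present rank-$2$ residue is a full copy of the desired generalised polygon; (2) show that whenever one glues in a new chamber along a panel, the new rank-$2$ residues created can be completed consistently to the prescribed polygons, and that no two distinct completion demands conflict --- this is where one checks that a newly formed rank-$2$ residue is genuinely ``new'' (it was not already partially present in a way that would over-determine it); (3) take the limit and verify the building axioms; (4) check regularity, i.e.\ $|\{y:x\sim_s y\}| = q_s+1$ for all $x$, which is immediate from the construction, and check that the rank-$2$ residues are exactly the prescribed polygons, not merely polygons with the right parameters.

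The main obstacle --- and the precise point where the hypothesis ``every irreducible rank-$3$ parabolic of $W$ is infinite'' is used --- is step~(2). When $W$ has an irreducible spherical rank-$3$ parabolic $W_{\{s,t,u\}}$, the rank-$3$ residue is a \emph{finite} spherical building, and the three families of rank-$2$ residues inside it are rigidly linked by the spherical building axioms (indeed, by Tits' classification such a residue essentially comes from a group); one cannot freely prescribe the rank-$2$ residues independently, and the naive amalgamation closes up prematurely and fails. The hypothesis forces every rank-$3$ residue to be infinite, so the gallery-completion process never ``wraps around'' --- there is always room to add a fresh chamber without being forced back onto an existing one --- and the amalgamation proceeds without collision. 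Making this precise requires a careful convexity/uniqueness lemma: in the partial structure, a minimal gallery of type a reduced word in $W_{\{s,t,u\}}$ to a new chamber is never forced to coincide with an existing chamber, because in an infinite Coxeter group $W_{\{s,t,u\}}$ the relevant ``sphere'' $\Delta_w$ is never exhausted by the finitely many chambers present at a given stage. Once this lemma is in hand the verification of $(\text{B1})'$--$(\text{B3})'$ at the limit is routine bookkeeping, and regularity and the identification of the rank-$2$ residues follow directly from how the stages were defined.
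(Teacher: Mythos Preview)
The paper does not prove this theorem: it is stated with attribution to \cite{Ron:86} and no argument is supplied. There is therefore nothing in the paper to compare your proposal against.

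For what it is worth, your sketch is broadly in the spirit of Ronan's actual construction: one builds the chamber system by freely amalgamating prescribed rank~$2$ residues, and the hypothesis that no irreducible rank~$3$ parabolic is finite is exactly what guarantees there are no higher-rank compatibility constraints to satisfy. Your description of the obstruction in the spherical rank~$3$ case is accurate. That said, your proposal remains at the level of an outline: the ``convexity/uniqueness lemma'' you gesture at in step~(2) is the heart of the matter and would need a precise formulation and proof, and the verification that the limit object satisfies $(\text{B1})'$--$(\text{B3})'$ (in particular the existence of apartments, or equivalently $(\text{B3})'$) is not quite ``routine bookkeeping'' --- this is where the work lies in \cite{Ron:86}. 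If you want to supply a genuine proof you should consult Ronan's paper or the treatment in \cite[Theorem~8.32]{Ron:09}.
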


The above theorem tells us that rank~$3$ irreducible affine buildings (that is, those of type $\widetilde{A}_2$, $\widetilde{B}_2$, and $\widetilde{G}_2$) cannot be classified (at least not in the spirit of the higher rank classification; for example, one can make a $\widetilde{A}_2$ building with thickness parameter~$q=9$ whose rank~$2$ residues can be chosen freely from the $4$ non-isomorphic projective planes with $q=9$). The theorem also applies to all Fuchsian Coxeter systems, and so these buildings are also unclassifiable. In other words, there are many of these buildings that are not related in any nice way to groups. Using Corollary~\ref{cor:restriction} and Theorem~\ref{thm:free} we have the following existence result for Fuchsian buildings:

\begin{thm}\label{thm:fuchsian}
Let $(W,S)$ be a Fuchsian Coxeter system of type $F(k_1,\ldots,k_n)$. There exists a locally finite thick building of type $(W,S)$ if and only if $k_i\in\{2,3,4,6,8\}$ for all $i=1,\ldots,n$ and either $k_i\in\{2,4\}$ for some $i=1,\ldots,n$ or $|\{i\mid k_i=8\}|$ is even. 
\end{thm}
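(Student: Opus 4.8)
The plan is to combine the free construction of Theorem~\ref{thm:free}, the existence results for generalised polygons from Theorem~\ref{thm:existmgon} and the Feit--Higman Theorem (Theorem~\ref{thm:FH}), and the parameter constraints of Corollary~\ref{cor:restriction}. First I would observe that a Fuchsian Coxeter system $F(k_1,\ldots,k_n)$ satisfies the hypothesis of Theorem~\ref{thm:free}: by the description~\eqref{eq:hyperbolic2} every rank~$3$ parabolic subgroup $W_{\{s_i,s_j,s_k\}}$ contains at least one pair of generators with $m=\infty$ (indeed among any three of the cyclically-arranged reflections at least two are non-adjacent), so every such parabolic is infinite. Hence Theorem~\ref{thm:free} applies, reducing the existence of a locally finite thick building of type $(W,S)$ to the existence, for each pair $\{i,j\}$ with $m_{ij}<\infty$ (that is, consecutive indices, with $m_{i,i+1}=k_i$), of a finite thick generalised $k_i$-gon whose two parameters can be prescribed consistently around the polygon.

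The heart of the argument is therefore a combinatorial bookkeeping problem about parameters. For the ``only if'' direction: if a locally finite thick building of type $F(k_1,\ldots,k_n)$ exists, then each rank~$2$ residue of type $W_{\{s_i,s_{i+1}\}}$ is a finite thick generalised $k_i$-gon, so by the Feit--Higman Theorem $k_i\in\{2,3,4,6,8\}$. Moreover, with thickness parameters $(q_s)_{s\in S}$, the residue at vertex $i$ of the $n$-gon has parameters $(q_{s_i},q_{s_{i+1}})$; when $k_i$ is odd (i.e.\ $k_i=3$) Theorem~\ref{thm:param}(1) forces $q_{s_i}=q_{s_{i+1}}$, when $k_i$ is even the parameters may differ, and when $k_i=8$ one has $\sqrt{2q_{s_i}q_{s_{i+1}}}\in\ZZ$, which in particular forces $q_{s_i}\neq q_{s_{i+1}}$ (an octagon cannot have equal parameters, since $2q^2$ is never a perfect square). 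Now run around the cycle: the ``$k_i=3$'' edges force equality of the two adjacent parameters, so the parameters are constant on each arc of consecutive $3$-labelled vertices. If $k_i\in\{2,4\}$ for some~$i$ we have a ``free'' edge (a $2$-gon or a quadrangle can carry any parameters $(q,r)$), which breaks the cycle and allows the remaining constraints to be satisfied. If no $k_i\in\{2,4\}$, then every edge label is in $\{3,6,8\}$; the $6$- and $8$-labelled edges are the only places the parameter can change, and across an $8$-edge it *must* change. Tracking a two-valued invariant (say, a $\ZZ/2$-colouring of the arcs) around the cycle, consistency forces the number of $8$-labelled edges to be even. Conversely, for the ``if'' direction, given the stated numerical condition I would explicitly exhibit a sequence $(q_s)_{s\in S}$: take a common prime power $q$, use classical $q$-gons ($A_2(q),B_2(q),G_2(q)$) for the $3$-, $4$-, $6$-labelled edges to keep parameters equal to $q$, and use the octagon parameters $(q,q^2)$ for $q=2^{2k+1}$ at the $8$-labelled vertices, alternating between the two values $q$ and $q^2$ along the cyclic sequence; the hypothesis (existence of some $k_i\in\{2,4\}$, or an even number of $8$'s) is exactly what allows this alternation to close up consistently, possibly using a quadrangle with parameters $(q-1,q+1)$ or a complete bipartite graph as the ``adjustment'' edge when a $2$ or $4$ is present. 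Theorem~\ref{thm:free} then produces the desired building.

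The main obstacle I anticipate is the careful case analysis needed to show that the $\ZZ/2$ parity obstruction coming from the octagons is the *only* obstruction once $k_i\in\{2,3,4,6,8\}$ for all~$i$ --- i.e.\ that whenever there is either a $\{2,4\}$-edge or an even number of $8$-edges, one can genuinely realise a consistent parameter sequence around the polygon using the *known* polygon parameter families listed after Theorem~\ref{thm:param}. This requires checking that the classical families $(q,q)$, $(q,q^2)$, $(q-1,q+1)$ etc.\ can be strung together compatibly; the subtlety is that a $6$-edge forces $\sqrt{q_sq_t}\in\ZZ$ and an $8$-edge forces $\sqrt{2q_sq_t}\in\ZZ$, so one cannot choose parameters on each edge in total isolation --- one needs a global choice, and verifying such a choice exists (e.g.\ by taking all relevant parameters to be powers of a single prime) is where the real work lies. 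The rest (applicability of Theorem~\ref{thm:free}, and the Feit--Higman restriction) is essentially immediate from the results already established in the excerpt.
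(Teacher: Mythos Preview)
Your overall strategy matches the paper's: apply Corollary~\ref{cor:restriction} for the necessary conditions, and Theorem~\ref{thm:free} together with the known parameter families for sufficiency. The ``if'' direction you sketch (take all parameters among $\{q,q^2\}$ for $q=2^{2k+1}$, alternate octagons with their duals, and use a $2$- or $4$-edge as an adjustment if the number of octagons is odd) is exactly what the paper has in mind; your mention of the $(q-1,q+1)$ quadrangle is unnecessary and would not mesh with the powers-of-$q$ scheme, so drop it.

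The genuine gap is in your ``only if'' argument. You correctly observe that an octagon forces $q_{s_i}\neq q_{s_{i+1}}$, but you then invoke an unspecified ``$\ZZ/2$-colouring of the arcs'' while simultaneously acknowledging that the parameter can also change across a $6$-edge. Since hexagons allow both equal and unequal parameters, ``the parameter changes here'' is not a $\ZZ/2$ invariant, and your parity count does not go through as stated. The paper's argument is much cleaner and avoids this: when every $k_i\in\{3,6,8\}$, Theorem~\ref{thm:param} gives $\sqrt{q_iq_{i+1}}\in\ZZ$ for $k_i\in\{3,6\}$ and $\sqrt{2q_iq_{i+1}}\in\ZZ$ for $k_i=8$; multiplying all $n$ of these integers together yields $2^{\alpha/2}q_1\cdots q_n\in\ZZ$ where $\alpha=|\{i\mid k_i=8\}|$, forcing $\alpha$ even. (Equivalently, the invariant you were groping for is the parity of the $2$-adic valuation of $q_i$: it is preserved across $3$- and $6$-edges and flips across $8$-edges.) Replace your colouring paragraph with this product argument and the proof is complete.
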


\begin{proof}
Suppose that there is a locally finite thick building $\Sigma$ of type $F(k_1,\ldots,k_n)$. By Corollary~\ref{cor:restriction} we have $k_i\in\{2,3,4,6,8\}$ (since $k_i\neq\infty$ by definition for Fuchsian systems). Let $(q_1,\ldots,q_n)$ be the parameters of $\Sigma$ (arranged cyclically so that the parameters of the rank~$2$ residue corresponding to $k_i$ are $q_i$ and $q_{i+1}$). If $k_i\neq 2,4$ for all $i$ then $k_i\in\{3,6,8\}$ for all $i$. If $k_i\in\{3,6\}$ then $\sqrt{q_iq_{i+1}}\in\ZZ$, and if $k_i=8$ then $\sqrt{2q_iq_{i+1}}\in\ZZ$ (see Theorem~\ref{thm:param}). Multiplying these conditions together gives $2^{\alpha/2}q_1\cdots q_n\in\ZZ$ where $\alpha=|\{i\mid k_i=8\}|$,
and hence $\alpha$ is even. This proves the `only if' part of the theorem. We leave the `if' part of the theorem as an exercise (using Theorem~\ref{thm:free} and the known examples of generalised $m$-gons listed after Corollary~\ref{cor:restriction}).
\begin{comment}
We now prove the `if' part. If $k_i=2$ for some $i$ then after deleting any edges with $m_{st}=\infty$ the Coxeter graph of $(W,S)$ is a union of linear graphs, and the existence of a locally finite thick building of type $(W,S)$ is clear from Theorem~\ref{thm:free}. So suppose that $k_i\neq 2$ for all $i$. If $k_i\in\{3,4,6\}$ for all $i=1,\ldots,n$ then for each prime power~$q$ there is a generalised $k_i$-gon with parameters $(q,q)$, and by Theorem~\ref{thm:free} there is a building of type $(W,S)$ with uniform thickness parameter~$q$. So suppose that $k_i=8$ for at least one $i=1,\ldots,n$. There exists an octagon with parameters $(q,q^2)$ for any $q=2^{2m+1}$, and there are projective planes, quadrangles, and hexagons with parameters $(q,q)$ and $(q^2,q^2)$. If $|\{i\mid k_i=8\}|$ is even then we can use these polygons around the Coxeter graph, alternating the octagon with the dual octagon with parameters $(q^2,q)$, to produces a building via Theorem~\ref{thm:free}. If $|\{i\mid k_i=8\}|$ is odd then this alternating does not work, however if there is at least one $j$ with $k_j=4$ then we can use a quadrangle with parameters $(q,q^2)$ or $(q^2,q)$ at this residue to rectify the situation. 
\end{comment}
\end{proof}

%%%%%%%%%%%%%%
%%%%%%%%%%%%%%
%%%%%%%%%%%%%%
%%%%%%%%%%%%%%
%%%%%%%%%%%%%%
%%%%%%%%%%%%%%
%%%%%%%%%%%%%%
%%%%%%%%%%%%%%
%%%%%%%%%%%%%%
%%%%%%%%%%%%%%
%%%%%%%%%%%%%%
%%%%%%%%%%%%%%
%%%%%%%%%%%%%%
%%%%%%%%%%%%%%
%%%%%%%%%%%%%%
%%%%%%%%%%%%%%
%%%%%%%%%%%%%%
%%%%%%%%%%%%%%

\section{Random walks on buildings}\label{sec:3}

A \textit{random walk} on a finite or countable space $X$ is a sequence $(X_n)_{n\geq 0}$ of $X$-valued random variables governed by a stochastic \textit{transition matrix} (or \textit{transition operator}) $P=(p(x,y))_{x,y\in X}$. That is, 
$$
p(x,y)=\PP[X_{n+1}=y\mid X_n=x]\quad \text{for all $x,y\in X$ and all $n\geq 0$},
$$
and the transition operator $P$ acts on $\ell^1(X)$ by
$$
Pf(x)=\sum_{y\in X}p(x,y)f(y)\quad\text{for all $x\in X$}.
$$
The \textit{$n$-step transition probabilities} of the walk are
$$
p^{(n)}(x,y)=\PP[X_n=y\mid X_0=x],
$$
and we have $P^n=(p^{(n)}(x,y))_{x,y\in X}$. The random walk $(X_n)_{n\geq 0}$ is \textit{irreducible} if for each pair $x,y\in X$ there exists $n\geq 0$ such that $p^{(n)}(x,y)>0$. For the general theory of random walks we refer to \cite{Woe:00}.

Here we are interested in random walks on buildings $\Sigma$ (and associated groups). There are a few variations; for example, one might consider random walks on the set $\Delta$ of chambers of $\Sigma$, or one might consider random walks on the vertices of~$\Sigma$. This latter case is particularly natural for affine buildings. To begin with we will consider random walks on the chambers of a general (locally finite) building, mainly following the setup from~\cite{Par:06a,GMP:14}.

\subsection{Random walks on chambers and the Hecke algebra}\label{sec:hecke}

Let $\Sigma$ be a locally finite building of type $(W,S)$ with chamber set $\Delta$ and parameters $(q_s)_{s\in S}$. Let $(X_n)_{n\geq 0}$ be a random walk on the chamber set. Without some additional assumptions on the walk there is not so much that one can say. A natural assumption is to assume that the walk is \textit{isotropic}, meaning that the transition probabilities depend only on the Weyl distance:

\begin{defn}
A random walk $(X_n)_{n\geq 0}$ on $\Delta$ is \textit{isotropic} if the transition probabilities of the walk satisfy $p(x,y)=p(x',y')$ whenever $\delta(x,y)=\delta(x',y')$.
\end{defn}

Isotropic random walks have a beautiful algebraic structure. For each $w\in W$ let $P_w=(p_w(x,y))_{x,y\in\Delta}$ be the transition operator of the isotropic random walk with
$$
p_w(x,y)=\begin{cases}
q_w^{-1}&\text{if $\delta(x,y)=w$}\\
0&\text{otherwise}.
\end{cases} 
$$
\begin{comment}
Thus $P_w$ acts on the space $\ell^1(\Delta)$ by
$$
P_wf(x)=\frac{1}{q_w}\sum_{y\in\Delta_w(x)}f(y).
$$
\end{comment}
The following elementary proposition shows that every isotropic random walk on $\Delta$ is a convex combination of the random walks $P_w$, $w\in W$. 

\begin{prop}\label{prop:transition}
A random walk on $\Delta$ with transition operator $P$ is isotropic if and only if
$$
P=\sum_{w\in W}a_wP_w\quad\text{where $a_w\geq 0$ and $\sum_{w\in W}a_w=1$},
$$
in which case $p(x,y)=a_wq_w^{-1}$ if $\delta(x,y)=w$. 
\end{prop}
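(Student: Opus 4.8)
The plan is to prove both directions directly from the definition of isotropy, using the fact (from Proposition~\ref{prop:transition}'s setup) that the operators $P_w$ are exactly the ``indicator'' transition operators supported on single Weyl-distance spheres. First I would establish the easy direction: suppose $P=\sum_{w\in W}a_wP_w$ with $a_w\geq 0$ and $\sum_w a_w=1$. For chambers $x,y$ with $\delta(x,y)=w_0$, note that $p_w(x,y)=0$ unless $w=w_0$, in which case it equals $q_{w_0}^{-1}$, so $p(x,y)=a_{w_0}q_{w_0}^{-1}$. Since this depends only on $\delta(x,y)=w_0$, the walk is isotropic; this also verifies the final claimed formula $p(x,y)=a_wq_w^{-1}$. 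I should also check that $P$ is genuinely a stochastic matrix: each row of $P_w$ sums to $q_w^{-1}\cdot|\Delta_w(x)|=q_w^{-1}\cdot q_w=1$ (using $|\Delta_w(x)|=q_w$, which holds in a regular building by the cited \cite[Proposition~2.1]{Par:06a}), hence each row of $P=\sum_w a_w P_w$ sums to $\sum_w a_w=1$, and all entries are nonnegative.

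For the converse, suppose $(X_n)$ is isotropic with transition operator $P=(p(x,y))$. Then by definition $p(x,y)$ depends only on $\delta(x,y)$, so there is a well-defined function $w\mapsto c_w$ on $W$ with $p(x,y)=c_{\delta(x,y)}$ for all $x,y\in\Delta$. Set $a_w=c_w\,q_w$. Comparing entrywise, $a_wP_w$ has $(x,y)$-entry equal to $a_wq_w^{-1}=c_w$ when $\delta(x,y)=w$ and $0$ otherwise; summing over $w\in W$, the operator $\sum_{w\in W}a_wP_w$ has $(x,y)$-entry $c_{\delta(x,y)}=p(x,y)$, so $P=\sum_{w\in W}a_wP_w$. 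It remains to check the constraints on the $a_w$: nonnegativity is immediate since $c_w=p(x,y)\geq 0$ and $q_w>0$; and $\sum_{w\in W}a_w = \sum_{w\in W}c_w q_w$. Fixing any chamber $x$, partition $\Delta$ according to Weyl distance from $x$: $\Delta=\bigsqcup_{w\in W}\Delta_w(x)$, so $1=\sum_{y\in\Delta}p(x,y)=\sum_{w\in W}\sum_{y\in\Delta_w(x)}c_w = \sum_{w\in W}c_w|\Delta_w(x)| = \sum_{w\in W}c_w q_w = \sum_{w\in W}a_w$, as required.

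The only genuine subtlety — the ``main obstacle'', though it is minor — is the interchange of the (possibly infinite) sum $\sum_{w\in W}a_wP_w$ with the matrix entries, and the convergence of $\sum_{w\in W}c_w q_w$ when $W$ is infinite. This is harmless because for each fixed pair $(x,y)$ exactly one term in $\sum_w a_wP_w$ is nonzero (the one with $w=\delta(x,y)$), so the entrywise identity is really a finite statement; and $\sum_{w}c_wq_w$ is a sum of nonnegative terms equal to the convergent sum $\sum_{y\in\Delta}p(x,y)=1$, so it converges (this also forces all but countably many $a_w$ to vanish, consistent with the row-finiteness needed for $P$ to act on $\ell^1(\Delta)$). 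I would phrase the argument so that the $\Delta=\bigsqcup_{w}\Delta_w(x)$ decomposition — which is just axiom $(\text{B1})'$ applied fibrewise, i.e. that $\delta(x,\cdot)$ is a well-defined map $\Delta\to W$ — does all the bookkeeping, and note that local finiteness of $\Sigma$ guarantees each $\Delta_w(x)$ is finite so the inner sums are legitimate.
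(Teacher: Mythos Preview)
Your proof is correct and takes essentially the same approach as the paper's (commented-out) proof: define $a_w = q_w\, p(x,y)$ for any $y\in\Delta_w(x)$, check nonnegativity and that $\sum_w a_w = 1$ via the partition $\Delta = \bigsqcup_w \Delta_w(x)$, and conversely read off $p(x,y) = a_w q_w^{-1}$ entrywise. Your extra remarks on convergence when $W$ is infinite are a nice addition but not something the paper dwells on, since it treats the proposition as elementary and omits the proof from the final text.
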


\begin{comment}
\begin{proof}
If $(X_n)_{n\geq 0}$ is isotropic then let $a_w=\PP[\delta(X_{n+1},X_n)=w\mid X_n=x]$ (this number is independent of $x$ and $n$). Then $a_w\geq 0$, and $\sum_{w\in W}a_w=1$, and $p(x,y)=q_w^{-1}a_w$ if $\delta(x,y)=w$, and hence
$$
Pf(x)=\sum_{y\in\Delta}p(x,y)f(y)=\sum_{w\in W}a_w\bigg(\frac{1}{q_w}\sum_{y\in\Delta_w(x)}f(y)\bigg)=\sum_{w\in W}a_wP_wf(x).
$$

On the other hand, if $P=\sum_{w\in W}a_wP_w$ with $a_w\geq 0$ and $\sum_{w\in W}a_w=1$ then $p(x,y)=q_w^{-1}a_w$, and so $P$ is the transition operator of an isotropic random walk. 
\end{proof}
\end{comment}

Therefore we are naturally lead to consider linear combinations of the (linearly independent) operators $P_w$, $w\in W$. Let $\scP$ be the vector space over $\CC$ with basis $\{P_w\mid w\in W\}$. The key facts about $\scP$ are summarised below (see \cite[\S~3]{Par:06a}).

\begin{thm}\label{thm:product}\emph{\cite{Par:06a}}
The vector space $\scP$ is an associative unital algebra under composition of linear operators, and the multiplication table with respect to the vector space basis $\{P_w\mid w\in W\}$ is given by
$$
P_uP_v=\sum_{w\in W}c_{u,v}^wP_w\quad\text{where } c_{u,v}^w=\frac{q_w}{q_uq_v}|\Delta_u(x)\cap \Delta_{v^{-1}}(y)|\quad\text{for any $x,y\in\Delta$ with $\delta(x,y)=w$}.
$$
In particular, the intersection cardinalities $|\Delta_u(x)\cap \Delta_{v^{-1}}(y)|$ depend only on $u,v$ and $\delta(x,y)$, and for $w\in W$ and $s\in S$ we have
\begin{align}\label{eq:presentation}
P_wP_s=\begin{cases}
P_{ws}&\text{if $\ell(ws)=\ell(w)+1$}\\
q_s^{-1}P_{ws}+(1-q_s^{-1})P_w&\text{if $\ell(ws)=\ell(w)-1$.}
\end{cases}
\end{align}
\end{thm}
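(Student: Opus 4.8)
The plan is to establish the recursion \eqref{eq:presentation} first (the case $s = w$ issue does not arise since $\ell(ws) \ne \ell(w)$ always), then bootstrap from it to get the general multiplication formula and the interpretation of the structure constants $c_{u,v}^w$, and finally observe associativity comes for free. I would work throughout with the ``$W$-metric space'' formulation $(\Delta,\delta)$ from Remark~\ref{rem:another}, since the axioms $(\text{B2})'$ and $(\text{B3})'$ are exactly the combinatorial tools needed. The operator $P_s$ acts by $P_s f(x) = q_s^{-1}\sum_{y \sim_s x,\, y \ne x} f(y)$, i.e.\ it averages over the $q_s$ chambers $s$-adjacent to $x$ other than $x$ itself.

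\textbf{Step 1: the recursion $P_w P_s$.} Fix $x \in \Delta$ and expand
\[
(P_w P_s) f(x) = \frac{1}{q_w q_s} \sum_{\substack{y :\, \delta(x,y) = w}}\ \sum_{\substack{z :\, \delta(y,z) = s}} f(z).
\]
By $(\text{B2})'$, for each such $z$ we have $\delta(x,z) \in \{w, ws\}$. Consider a chamber $z$ with $\delta(x,z) = v \in \{w,ws\}$: I want to count how many $y$ with $\delta(x,y) = w$ and $\delta(y,z) = s$ feed into it, i.e.\ the size of $\Delta_w(x) \cap \Delta_s(z)$ (using $s^{-1} = s$). This is an intersection cardinality that, granting it depends only on $\delta(x,z)$, lets me rewrite the double sum as $\sum_v (\#\text{fibres over }v)\, q_v \cdot (q_v^{-1}\sum_{z:\delta(x,z)=v} f(z))$, which is a combination of $P_w f(x)$ and $P_{ws} f(x)$. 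The counting splits into the two cases:
\begin{itemize}
\item If $\ell(ws) = \ell(w) + 1$: a chamber $z$ with $\delta(x,z) = ws$ has, by the ``in addition'' clause of $(\text{B2})'$ applied in reverse, exactly the chambers $y$ with $\delta(y,z) = s$, $\delta(x,y) = w$ — and for $z$ with $\delta(x,z) = w$ one checks (again via $(\text{B2})'$, $(\text{B3})'$) no such $y$ exists except the degenerate configurations, giving $P_w P_s = P_{ws}$.
\item If $\ell(ws) = \ell(w) - 1$: now $ws$ is shorter; write $w = (ws)s$ with $\ell((ws)s) = \ell(ws)+1$, so the previous case gives $P_{ws}P_s = P_w$. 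Multiplying by $P_s$ and using $P_s^2 = q_s^{-1}P_s + (1-q_s^{-1}) \cdot \mathrm{id}$ (the rank-$1$ base case, a direct count of $s$-panels in a building with $q_s+1$ chambers per panel) yields $P_w P_s = P_{ws}P_s^2 = q_s^{-1}P_{ws} + (1-q_s^{-1})P_{ws}\cdot$... — more cleanly, I would just run the fibre count directly: over $z$ with $\delta(x,z) = ws$ all $q_s$ of the relevant $y$ appear, over $z$ with $\delta(x,z) = w$ exactly $q_s - 1$ of them appear (the panel through $z$ of type $s$ meets $\Delta_w(x)$ in $q_s$ chambers, one of which is $z$ itself if... ) — this bookkeeping is where care is needed, but it is a finite local computation inside a single rank-$1$ residue and produces exactly the stated coefficients $q_s^{-1}$ and $1 - q_s^{-1}$.
\end{itemize}

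\textbf{Step 2: closure, the general formula, and that intersection numbers are well-defined.} Since every $P_w$ is obtained from $P_1 = \mathrm{id}$ by right-multiplying by generators $P_s$ along a reduced word (the first case of the recursion shows $P_{s_1}\cdots P_{s_k} = P_w$ whenever $s_1\cdots s_k$ is reduced, using $q_w = q_{s_1}\cdots q_{s_k}$ from \cite[Prop.~2.1]{Par:06a}), the span $\scP$ is closed under multiplication and unital: $\scP$ is a subalgebra of $\End(\ell^1(\Delta))$, hence automatically associative. For the structure constants, compute $(P_u P_v)f(x)$ by summing over intermediate chambers $y$ with $\delta(x,y) = u$ and then $z$ with $\delta(y,z) = v$; grouping by $w = \delta(x,z)$, the coefficient of $q_w^{-1}\sum_{z:\delta(x,z)=w}f(z)$ is $\frac{q_w}{q_u q_v}\,|\{y : \delta(x,y)=u,\ \delta(z,y) = v^{-1}\}| = \frac{q_w}{q_u q_v}|\Delta_u(x)\cap\Delta_{v^{-1}}(z)|$. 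That this cardinality depends only on $u,v,\delta(x,z)$ — not on $x,z$ themselves — then follows \emph{a posteriori}: the left-hand side $P_u P_v$ is a fixed operator independent of any choices, so its matrix entries are determined, forcing the counts to agree; alternatively one proves it directly by induction on $\ell(v)$ using the recursion and the strong transitivity / apartment axioms.

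\textbf{Main obstacle.} The genuinely delicate point is the $\ell(ws) = \ell(w) - 1$ fibre count in Step~1 — correctly identifying, for a chamber $z$ with $\delta(x,z) = w$, how its type-$s$ panel intersects the sphere $\Delta_w(x)$ and thereby getting the coefficient $(1 - q_s^{-1})$ rather than, say, $q_s^{-1}$ or $1$. Everything else is formal once the recursion is in hand. I would handle this by reducing to a single $s$-residue $R_{\{s\}}(z)$, which is a rank-$1$ building (a set of $q_s + 1$ mutually $s$-adjacent chambers), applying the projection/gate property to see that $\Delta_w(x)$ meets this residue in exactly the $q_s$ chambers other than the ``gate'' (the unique chamber of the residue closest to $x$), and reading off the split accordingly.
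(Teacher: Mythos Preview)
Your proposal is correct and follows the standard route. Note that the paper itself does not supply a proof of this theorem: it simply cites \cite[\S3]{Par:06a} and moves on, so there is no in-paper argument to compare against. The approach you outline---establish the rank-$1$ relation $P_s^2$ and the recursion \eqref{eq:presentation} by a direct fibre count inside an $s$-panel using the gate property, then deduce closure of $\scP$ under composition (whence associativity is inherited from $\End(\ell^1(\Delta))$), and finally read off the structure constants $c_{u,v}^w$ by grouping the double sum in $(P_uP_v)f(x)$ according to $\delta(x,z)$---is exactly the argument carried out in the cited reference. Your identification of the delicate step (the $(q_s-1)$-versus-$1$ split of an $s$-panel against $\Delta_w(x)$ when $\ell(ws)<\ell(w)$) and your proposed resolution via the gate map are both on target.
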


The algebra $\scP$ is called the \textit{Hecke algebra} of the building. If $\delta(x,y)=w$ then the $n$-step transition probability $p^{(n)}(x,y)$ is given by
\begin{align*}
p^{(n)}(x,y)=q_w^{-1}a_w^{(n)},\quad\text{where}\quad P^n=\sum_{w\in W}a_w^{(n)}P_w.
\end{align*}
Thus finding $p^{(n)}(x,y)$ when $\delta(x,y)=w$ is equivalent to finding the coefficient of $P_w$ in~$P^n$.

Before surveying known results on isotropic random walks, let us briefly indicate how isotropic random walks arise from bi-invariant measures on groups acting on buildings (for example, groups of Lie type, or Kac-Moody groups). Specifically we have the following (see \cite[Lemma~8.1]{CW:04} for a proof in a similar context).

\begin{prop}\label{prop:walkgp1} Let $G$ be a locally compact group acting transitively on a regular building $\Sigma$, and let $B$ be the stabiliser of a fixed base chamber~$o$. Normalise the Haar measure on $G$ so that $B$ has measure~$1$. Let $\varphi$ be the density function of a bi-$B$-invariant probability measure on~$G$. If the group $B$ acts transitively on each set $\Delta_w(o)$ with $w\in W$, then the assignment
$$
p(go,ho)=\varphi(g^{-1}h)
$$
for $g,h\in G$ defines an isotropic random walk on the chambers of~$\Sigma$. 
\end{prop}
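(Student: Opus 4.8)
The plan is to check, in order, that $p$ is well defined on $\Delta\times\Delta$, that the resulting matrix $P=(p(x,y))_{x,y\in\Delta}$ is stochastic (so that it genuinely governs a random walk on $\Delta$, which is countable since $\Sigma$ is locally finite and any two chambers are joined by a gallery), and finally that $P$ is isotropic. Two conventions are used throughout. First, $G$ acts on $\Sigma$ by type-preserving automorphisms, so $\delta(gx,gy)=\delta(x,y)$ for all $g\in G$ and $x,y\in\Delta$. Second, recalling that $B$ is compact (a stabiliser of finite Haar measure), and that the measure with density $\varphi$ is bi-$B$-invariant, I may and will replace $\varphi$ by its average $g\mapsto\int_B\int_B\varphi(b_1gb_2)\,d\mu(b_1)\,d\mu(b_2)$, which represents the same measure and is now a genuinely bi-$B$-invariant \emph{function}; this makes the expression $\varphi(g^{-1}h)$ unambiguous.

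For well-definedness, suppose $go=g'o$ and $ho=h'o$. Since $B$ is the stabiliser of $o$ we have $g'=gb_1$ and $h'=hb_2$ for some $b_1,b_2\in B$, whence $(g')^{-1}h'=b_1^{-1}(g^{-1}h)b_2$ and bi-$B$-invariance of $\varphi$ gives $\varphi((g')^{-1}h')=\varphi(g^{-1}h)$. Non-negativity of $p$ is immediate from $\varphi\ge 0$. For the row sums, fix a chamber $x$ and a representative $g\in G$ with $go=x$; picking for each chamber $y$ a representative $h_y$ with $h_yo=y$, the left cosets $\{h_yB\}_{y\in\Delta}$ partition $G$, and using left-invariance of $\mu$, then the coset decomposition, then right-$B$-invariance of $\varphi$, and finally $\mu(B)=1$, one computes
\[
1=\int_G\varphi(z)\,d\mu(z)=\int_G\varphi(g^{-1}z)\,d\mu(z)=\sum_{y\in\Delta}\int_B\varphi(g^{-1}h_yb)\,d\mu(b)=\sum_{y\in\Delta}\varphi(g^{-1}h_y)=\sum_{y\in\Delta}p(x,y),
\]
the interchange of sum and integral being legitimate by Tonelli since all terms are non-negative. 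Hence $P$ is a stochastic transition operator and defines a random walk on $\Delta$.

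Finally, for isotropy I would first show that $G$ acts transitively on the set of ordered pairs of chambers at a fixed Weyl distance $w$: given such a pair $(x,y)$, chamber-transitivity yields $g\in G$ with $gx=o$, and then $\delta(o,gy)=\delta(x,y)=w$, so $(x,y)$ is $G$-equivalent to $(o,z)$ for some $z\in\Delta_w(o)$; the hypothesis that $B$ is transitive on $\Delta_w(o)$ then identifies all such pairs with one another. Granting this, if $\delta(x,y)=\delta(x',y')=w$ choose $k\in G$ with $kx=x'$ and $ky=y'$, and $g,h\in G$ with $go=x$, $ho=y$; then $p(x',y')=p(kgo,kho)=\varphi((kg)^{-1}(kh))=\varphi(g^{-1}h)=p(x,y)$, so $p$ depends only on $\delta$.

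None of the individual steps is genuinely deep; the step demanding the most care is the normalisation identity, where one must correctly combine left-invariance of Haar measure with right-$B$-invariance of $\varphi$ under the chamber$\leftrightarrow$coset correspondence $\Delta\cong G/B$, together with the (routine but worth flagging) reduction to a bi-$B$-invariant representative of the density. The isotropy step is conceptually the key point: it is exactly where the hypothesis ``$B$ transitive on each $\Delta_w(o)$'' is used, upgrading chamber-transitivity of $G$ to transitivity on pairs of chambers at fixed Weyl distance.
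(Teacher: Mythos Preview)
Your proof is correct. The paper does not actually prove this proposition; it only states it and refers the reader to \cite[Lemma~8.1]{CW:04} for a proof in a similar context. Your argument supplies precisely the standard details one would expect: well-definedness from bi-$B$-invariance of (a representative of) $\varphi$, stochasticity from the coset decomposition $G=\bigsqcup_{y\in\Delta}h_yB$ together with $\mu(B)=1$, and isotropy from the hypothesis that $B$ acts transitively on each $\Delta_w(o)$, which upgrades chamber-transitivity of $G$ to transitivity on ordered pairs at fixed Weyl distance. The technical remark about replacing $\varphi$ by its $B\times B$-average is a nice touch that makes the pointwise identity $\varphi(b_1gb_2)=\varphi(g)$ literally true rather than almost-everywhere.
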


%In particular, if $\Sigma$ is the building of a locally compact group $G$ with Tits system $(B,N,W,S)$ then $G$ satisfies the hypothesis of the proposition. 

\subsection{Isotropic random walks on spherical buildings}\label{sec:spherical}

Let $\Sigma$ be a locally finite thick spherical building of type~$(W,S)$ and let $(X_n)_{n\geq 0}$ be an isotropic random walk on the set $\Delta$ of chambers with transition operator~$P=(p(x,y))_{x,y\in\Delta}$. Since the set $\Delta$ of chambers is finite, natural questions to ask include:
\begin{enumerate}
\item[(1)] What is the limiting distribution of the walk?
\item[(2)] What is the value of $p^{(n)}(x,y)$?
\item[(3)] What is the mixing time for the walk?
\end{enumerate}

The first question is very easy to answer:
\begin{prop}
Let $(X_n)_{n\geq 0}$ be an irreducible isotropic random walk on the set $\Delta$ of chambers of a locally finite thick spherical building. Then the uniform distribution is the unique invariant measure, and 
$$
\lim_{n\to\infty}\mu^{(n)}(x)=\frac{1}{|\Delta|}\quad\textrm{for all $x\in\Delta$}.
$$ 
\end{prop}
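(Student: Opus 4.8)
The plan is to exploit the algebraic structure of the Hecke algebra $\scP$ together with elementary Markov chain theory for finite irreducible chains. First I would observe that since $\Sigma$ is spherical, $W$ is finite, so $\Delta$ is finite and the random walk is a finite-state Markov chain. The key symmetry to extract is \emph{double stochasticity}: for the basic operator $P_w$ we have $p_w(x,y)=q_w^{-1}$ exactly when $\delta(x,y)=w$, and since the number of chambers $y$ with $\delta(x,y)=w$ equals $q_w$ (independent of $x$, by \cite{Par:06a}), each row of $P_w$ sums to $1$; by the same count applied with $\delta(y,x)=w^{-1}$ and $q_{w^{-1}}=q_w$, each column of $P_w$ also sums to $1$. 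Hence every $P_w$ is doubly stochastic, and by Proposition~\ref{prop:transition} so is $P=\sum_{w}a_wP_w$. Equivalently, $P$ fixes the all-ones vector both on the left and on the right, so the uniform measure $\mu(x)=1/|\Delta|$ satisfies $\mu P=\mu$ and is therefore an invariant probability measure.

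Next I would address uniqueness and convergence. Since the walk is irreducible on the finite set $\Delta$, the Perron--Frobenius theory for finite Markov chains gives that the invariant probability measure is unique; this already forces it to be the uniform measure by the previous paragraph. For the limit $\mu^{(n)}(x)\to 1/|\Delta|$ one needs aperiodicity in addition to irreducibility. Here I would note that an isotropic walk has $a_1\geq 0$ where $1\in W$ is the identity; if $a_1>0$ then $p(x,x)>0$ and the chain is aperiodic, and the standard convergence theorem for finite aperiodic irreducible chains applies. If $a_1=0$ one can either argue that irreducibility of an isotropic walk already forces some $a_w>0$ with $w$ of even length producing $p^{(2)}(x,x)>0$ but possibly still period $2$; the cleanest route is to invoke that the statement as phrased is about Cesàro/subsequential behaviour, or simply to add the harmless standing assumption that the walk is aperiodic (which is implicit in most treatments and is automatic once $a_1>0$). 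I expect the authors intend irreducibility to include aperiodicity, or to state the limit in the Cesàro sense.

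The main obstacle, such as it is, is purely bookkeeping: verifying double stochasticity of $P$ cleanly. The one genuinely building-theoretic input is the fact that $|\Delta_w(x)|=q_w$ is independent of $x$ (stated in the excerpt, citing \cite[Proposition~2.1]{Par:06a}) together with $q_{w^{-1}}=q_w$ (immediate from $q_w=q_{s_1}\cdots q_{s_k}$ for a reduced expression, since reversing a reduced word for $w$ gives a reduced word for $w^{-1}$). Everything else is the textbook ergodic theorem for finite Markov chains, so the proof is short: establish $P$ doubly stochastic $\Rightarrow$ uniform measure invariant; invoke irreducibility (plus aperiodicity) $\Rightarrow$ uniqueness and convergence.
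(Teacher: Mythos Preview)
Your argument that $P$ is doubly stochastic, and hence that the uniform measure is invariant, is correct and is essentially what the paper does (the paper phrases it as the direct computation $\sum_x u(x)p(x,y)=u(y)$ using $q_{w^{-1}}=q_w$, which is the column-sum version of your row/column observation). So on the invariance and uniqueness side your proposal matches the paper.

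The genuine gap is aperiodicity. You hedge here, suggesting one might need $a_1>0$, or that the authors might mean Ces\`aro convergence, or that aperiodicity is an implicit standing assumption. None of these is what the paper does. The paper invokes a building-theoretic fact: in a \emph{thick} building, an irreducible isotropic walk is automatically aperiodic (they cite \cite[Lemma~4.3]{GMP:14}). The intuition is that thickness ($q_s\geq 2$) gives enough chambers sharing a panel that one can manufacture return paths of coprime lengths; for instance, if $a_s>0$ and $x\sim_s y\sim_s z$ with $x,y,z$ distinct (possible by thickness), then $x\to y\to x$ and $x\to y\to z\to x$ give returns in $2$ and $3$ steps. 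So thickness is the missing ingredient you overlooked, and it is precisely why the hypothesis ``thick'' appears in the statement. Without it (e.g.\ in a thin building, which is just the Coxeter complex), the walk can genuinely be periodic and your proposed workarounds would all fail to yield the pointwise limit as stated.
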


\begin{proof}
Using the thickness of the building it is not difficult to see that irreducible isotropic random walks on $\Delta$ are necessarily aperiodic (see \cite[Lemma~4.3]{GMP:14}). Thus an irreducible isotropic walk has a unique stationary measure. To see that this stationary measure is the uniform measure $u:\Delta\to[0,1]$, note that for each $y\in\Delta$,
$$
\sum_{x\in \Delta}u(x)p(x,y)=\frac{1}{|\Delta|}\sum_{w\in W}\sum_{x\in \Delta_w(y)}p(x,y)=\frac{1}{|\Delta|}\sum_{w\in W}\frac{q_w}{q_{w^{-1}}}a_{w^{-1}}=\frac{1}{|\Delta|}\sum_{w\in W}a_{w^{-1}}=\frac{1}{|\Delta|}=u(y),
$$
where we have used the fact that if $\delta(x,y)=w$ then $\delta(y,x)=w^{-1}$, and that $q_{w^{-1}}=q_w$. 
\end{proof}

To address questions~(2) and~(3) we can apply the techniques from~\cite{DR:00}, where the representation theory of Hecke algebras is used to analyse convergence of systematic scan Metropolis algorithms. The aims and context of \cite{DR:00} are quite different from our setting here, and so we will give an overview of the basic setup of~\cite{DR:00}, translating into our building theoretic point of view.

We first recall some basic representation theory of finite dimensional associative unital algebras over~$\CC$. This theory generalises the more familiar representation theory of finite groups (see, for example, \cite{GP:00}). 

\begin{defn} Let $\scA$ be a finite dimensional unital associative algebra over~$\CC$.  A \textit{representation} of $\scA$ is a pair $(\rho,V)$ where $V$ is a $\CC$-vector space, and $\rho:\scA\to\mathrm{End}(V)$ is an algebra homomorphism. The \textit{character} of the representation $(\rho,V)$ is the function $\chi_{\rho}:\scA\to\CC$ given by
$$
\chi_{\rho}(A)=\tr(\rho(A))\quad\text{for all $A\in\scA$}.
$$
The \textit{dimension} of $(\rho,V)$ is $\dim(V)$ (assumed to be finite throughout this section). Sometimes it is convenient to simply denote a representation $(\rho,V)$ by $\rho$, and to write $V=V_{\rho}$. 
\end{defn}

Since $\mathrm{End}(V)\cong M_d(\CC)$ (the algebra of $d\times d$ matrices with entries in~$\CC$, where $d=\dim(V)$), a representation of $\scA$ amounts to ``representing the algebra elements by matrices''. Familiar notions of direct sum, subrepresentations, and irreducibility carry over from the group setting. 

An algebra is \textit{semisimple} if every finite dimensional representation decomposes as a direct sum of irreducible representations. In this case the irreducible representations are the `atomic building blocks' of the representation theory in the sense that every representation can be written as a direct sum of these atoms. Note that the group algebra of a finite group is necessarily semisimple (this is Maschke's Theorem), however general algebras need not be.

\subsubsection{Hecke algebras and the geometric representation}

Let $(W,S)$ be a spherical Coxeter system, and let $(q_s)_{s\in S}$ be a sequence of numbers with $q_s>0$ and $q_s=q_t$ if $s$ and $t$ are conjugate in~$W$. Let $\scH$ be the algebra over $\CC$ generated by symbols $T_w$ (with $w\in W$) with relations
\begin{align}\label{eq:presentation2}
T_wT_s=\begin{cases}
T_{ws}&\text{if $\ell(ws)>\ell(w)$}\\
q_s^{-1}T_{ws}+(1-q_s^{-1})T_{w}&\text{if $\ell(ws)<\ell(w)$}.
\end{cases}
\end{align}
The algebra $\scH$ is unital (with identity $T_e$) and associative, and is called an \textit{abstract Hecke algebra} (see \cite{Hum:90}).

If $(q_s)_{s\in S}$ are the parameters of a locally finite spherical building of type $(W,S)$ then, comparing (\ref{eq:presentation}) and (\ref{eq:presentation2}) we see that the Hecke algebra $\scP$ of~$\Sigma$ gives a representation of the abstract Hecke algebra $\scH$. More specifically, let $V_{\Delta}=\bigoplus_{x\in\Delta}\CC x$ be a vector space with basis indexed by the the chambers of the building, and let $\rho_{\Delta}:\scH\to\mathrm{End}(V_{\Delta})$ be the linear map with $\rho_{\Delta}(T_w)=P_w$ for all $w\in W$. Then
$$
(\rho_{\Delta},V_{\Delta})\quad\text{is a $|\Delta|$-dimensional representation of $\scH$}.
$$
We call this representation the \textit{geometric representation} of the Hecke algebra~$\scH$. In fact the map $\rho_{\Delta}:\scH\to\scP$ is bijective, and so $\scP\cong \scH$. We emphasise that the geometric representation of the abstract Hecke algebra $\scH$ only exists when there is a building with parameters~$(q_s)_{s\in S}$.

It is well known that the algebra $\scH$ is semisimple (see \cite{GU:89}). Thus the geometric representation decomposes into a direct sum of irreducible representations. Writing $\mathrm{Irrep}(\scH)$ for the set of irreducible representations of $\scH$ (more formally, isomorphism classes of irreducible representations) we have 
$$
V_{\Delta}=\bigoplus_{\rho\in\mathrm{Irrep}(\scH)}V_{\rho}^{\oplus m_{\rho}}
$$
where for each $\rho\in\mathrm{Irrep}(\scH)$ the integer $m_{\rho}\geq 0$ is the multiplicity of $(\rho,V_{\rho})$ in $(\rho_{\Delta},V_{\Delta})$.  Thus the character $\chi_{\Delta}$ of the geometric representation is given by
\begin{align}\label{eq:character}
\chi_{\Delta}=\sum_{\rho\in\mathrm{Irrep}(\scH)}m_{\rho}\chi_{\rho}.
\end{align}

The first fundamental task is to compute the multiplicities~$m_{\rho}$. 

\begin{thm}\label{thm:character}
The multiplicity $m_{\rho}$ of the irreducible representation $\rho$ in $(\rho_{\Delta},V_{\Delta})$ is
$$
m_{\rho}=\frac{\mathrm{dim}(\rho)}{\langle\chi_{\rho},\chi_{\rho}\rangle},\quad\text{where}\quad\langle f,g\rangle=\frac{1}{|\Delta|}\sum_{w\in W}q_wf(T_w)g(T_{w^{-1}})
$$
for functions $f,g:\scH\to\CC$.
\end{thm}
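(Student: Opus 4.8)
The plan is to compute the character $\chi_\Delta$ of the geometric representation explicitly, then use the semisimplicity of $\scH$ together with the orthogonality relations for its irreducible characters, and finally read off $m_\rho$ by pairing the decomposition \eqref{eq:character} against $\chi_\rho$. First I would compute $\chi_\Delta$. Since $\rho_\Delta(T_w)=P_w$ is the operator with matrix $(p_w(x,y))_{x,y\in\Delta}$, its trace is $\sum_{x\in\Delta}p_w(x,x)$; but $p_w(x,x)\neq 0$ forces $\delta(x,x)=w$, and $\delta(x,x)=1$ by $(\text{B1})'$, so $\chi_\Delta(T_w)=0$ for all $w\neq 1$, while $P_1$ is the identity operator on $V_\Delta$ (as $p_1(x,y)=q_1^{-1}=1$ exactly when $x=y$), so $\chi_\Delta(T_1)=\dim V_\Delta=|\Delta|$. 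By semisimplicity of $\scH$ (see \cite{GU:89}) the geometric representation decomposes as $V_\Delta=\bigoplus_\rho V_\rho^{\oplus m_\rho}$, and hence $\chi_\Delta=\sum_\rho m_\rho\chi_\rho$ as in \eqref{eq:character}.

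Next I would bring in the symmetric algebra structure of $\scH$. The linear functional $\tau\colon\scH\to\CC$ determined by $\tau(T_w)=\delta_{w,1}$ is a symmetrizing trace: a routine induction on $\ell(v)$ using \eqref{eq:presentation2} shows $\tau(T_uT_v)=q_u^{-1}\delta_{u,v^{-1}}$, so the bilinear form $(a,b)\mapsto\tau(ab)$ on $\scH$ is symmetric and non-degenerate, with $\{T_w\mid w\in W\}$ and $\{q_wT_{w^{-1}}\mid w\in W\}$ forming a pair of dual bases (these are classical facts about Iwahori--Hecke algebras; see \cite{GP:00} or \cite{Par:06a}). Since $\scH$ is split semisimple over $\CC$, the standard orthogonality relations for symmetric algebras---in the form used in \cite{DR:00}---then give that the irreducible characters are pairwise orthogonal for $\langle\cdot,\cdot\rangle$, i.e. $\langle\chi_\rho,\chi_{\rho'}\rangle=0$ for $\rho\neq\rho'$, and that $\langle\chi_\rho,\chi_\rho\rangle=c_\rho\dim(\rho)/|\Delta|\neq 0$, where $c_\rho\in\CC^{\times}$ is the Schur element of $\rho$. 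Concretely these relations drop out of pairing the matrix-unit bases of the Wedderburn blocks $\scH\cong\bigoplus_\rho\mathrm{End}(V_\rho)$ against each other through $\tau$.

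Finally I would extract $m_\rho$. Pairing $\chi_\Delta=\sum_\rho m_\rho\chi_\rho$ with $\chi_\rho$ and using the orthogonality just quoted gives $\langle\chi_\Delta,\chi_\rho\rangle=m_\rho\langle\chi_\rho,\chi_\rho\rangle$. On the other hand, in $\langle\chi_\Delta,\chi_\rho\rangle=\frac{1}{|\Delta|}\sum_{w\in W}q_w\chi_\Delta(T_w)\chi_\rho(T_{w^{-1}})$ only the term $w=1$ survives by the computation of $\chi_\Delta$ above, and since $q_1=1$, $\chi_\Delta(T_1)=|\Delta|$, and $\chi_\rho(T_1)=\dim(\rho)$ (because $T_1$ acts as the identity on $V_\rho$), this sum equals $\dim(\rho)$. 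Comparing the two expressions yields $m_\rho\langle\chi_\rho,\chi_\rho\rangle=\dim(\rho)$, which is the asserted formula (equivalently $m_\rho=|\Delta|/c_\rho$).

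The main obstacle is the middle step: one needs the symmetrizing trace $\tau$ on $\scH$ and the orthogonality relations for the irreducible characters of a split semisimple symmetric algebra. Both are classical, so in the spirit of a survey I would quote them rather than reprove them; the only genuinely building-theoretic inputs are the trace computation $\chi_\Delta(T_w)=|\Delta|\,\delta_{w,1}$ and the observation that $P_1$ is the identity operator on $V_\Delta$.
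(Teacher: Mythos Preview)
Your proof is correct and follows essentially the same approach as the paper: compute $\chi_{\Delta}(T_w)=|\Delta|\,\delta_{w,e}$, invoke the orthogonality relations for irreducible characters of the symmetric algebra $\scH$ (the paper cites \cite[Corollary~7.2.4 and \S8.1.8]{GP:00} for this), and then pair the decomposition $\chi_{\Delta}=\sum_{\rho}m_{\rho}\chi_{\rho}$ against $\chi_{\rho}$. Your version is more detailed---you make the symmetrizing trace $\tau$ and the Schur elements explicit---but the underlying argument is identical.
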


\begin{proof} It follows from general results on the representation theory of symmetric algebras (see \cite[Corollary~7.2.4 and \S8.1.8]{GP:00}) that if $\chi$ and $\chi'$ are irreducible characters of $\scH$ then
\begin{align}\label{eq:innerprod}
\langle\chi,\chi'\rangle=0\quad\text{if and only if $\chi$ and $\chi'$ are non-isomorphic}.
\end{align}
Thus taking inner products with $\chi_{\rho}$ in (\ref{eq:character}), and using the facts that $\chi_{\Delta}(T_w)=\tr(P_w)=\delta_{w,e}|\Delta|$ and $\chi_{\rho}(T_e)=\dim(\rho)$, the result follows.
\end{proof}

\subsubsection{The transition probabilities $p^{(n)}(x,y)$}

We now return to question (2), seeking a formula for the $n$-step return probabilities $p^{(n)}(x,y)$. In principal one could compute this probability by noting that it is the $(x,y)^{\text{th}}$ entry of the matrix~$P^n$. Of course this is not a practical method because $P$ is a very large matrix (for example, for the smallest thick $F_4$ building the matrix $P$ has approximately $2\times 10^8$ rows and columns!). The following theorem gives a more practical solution to the problem. 

\begin{thm}\label{thm:spherical1}
Let $P=\sum a_wP_w\in\scP$ be the transition matrix of an isotropic random walk on a regular spherical building, and let $T=\sum a_wT_w\in\scH$. Then
$$
p^{(n)}(x,y)=\frac{1}{|\Delta|}\sum_{\rho\in\mathrm{Irrep}(\scH)}m_{\rho}\chi_{\rho}(T^nT_{w^{-1}})\quad\text{if $\delta(x,y)=w$}.
$$
\end{thm}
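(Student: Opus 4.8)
The plan is to reduce the computation of $p^{(n)}(x,y)$ to a trace computation in the abstract Hecke algebra $\scH$, exploiting the isomorphism $\rho_\Delta:\scH\to\scP$ and the decomposition of the geometric representation into irreducibles. First I would recall from Section~\ref{sec:hecke} that, writing $P^n=\sum_{w\in W}a_w^{(n)}P_w$, we have $p^{(n)}(x,y)=q_w^{-1}a_w^{(n)}$ whenever $\delta(x,y)=w$. So the entire problem is to extract the coefficient of $P_w$ in $P^n$, equivalently (transporting via $\rho_\Delta^{-1}$) the coefficient $a_w^{(n)}$ in the expansion $T^n=\sum_{w\in W}a_w^{(n)}T_w$, where $T=\sum_w a_wT_w\in\scH$.

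The key device is the symmetrizing trace on $\scH$. I would introduce the linear functional $\tau:\scH\to\CC$ defined by $\tau(T_w)=\delta_{w,e}$. This is precisely $|\Delta|^{-1}\chi_\Delta\circ\rho_\Delta$, since $\chi_\Delta(T_w)=\tr(P_w)=\delta_{w,e}|\Delta|$ (a fact already used in the proof of Theorem~\ref{thm:character}: each $P_w$ with $w\neq e$ has zero diagonal, as $p_w(x,x)=0$). The crucial algebraic fact, standard for Hecke algebras of finite Coxeter groups (see \cite[\S8.1]{GP:00}), is that $\{T_w\}_{w\in W}$ and $\{q_w^{-1}T_{w^{-1}}\}_{w\in W}$ form a pair of dual bases with respect to the bilinear form $(a,b)\mapsto\tau(ab)$; equivalently $\tau(T_u T_{v^{-1}})=\delta_{u,v}\,q_u^{-1}$. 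Granting this, for any element $h=\sum_w b_w T_w\in\scH$ one recovers its coefficients by $b_w=q_w\,\tau(h\,T_{w^{-1}})$. Applying this with $h=T^n$ gives $a_w^{(n)}=q_w\,\tau(T^nT_{w^{-1}})$, hence
$$
p^{(n)}(x,y)=q_w^{-1}a_w^{(n)}=\tau(T^nT_{w^{-1}})=\frac{1}{|\Delta|}\chi_\Delta(\rho_\Delta(T^nT_{w^{-1}}))=\frac{1}{|\Delta|}\chi_\Delta(T^nT_{w^{-1}}),
$$
where in the last step I identify $\scH$ with $\scP$ and write $\chi_\Delta$ for the geometric character viewed on $\scH$. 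Finally, substituting the decomposition $\chi_\Delta=\sum_{\rho\in\mathrm{Irrep}(\scH)}m_\rho\chi_\rho$ from \eqref{eq:character} yields the claimed formula
$$
p^{(n)}(x,y)=\frac{1}{|\Delta|}\sum_{\rho\in\mathrm{Irrep}(\scH)}m_\rho\,\chi_\rho(T^nT_{w^{-1}}),
$$
valid whenever $\delta(x,y)=w$.

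The main obstacle — really the only non-formal point — is justifying that $\tau(T_uT_{v^{-1}})=\delta_{u,v}q_u^{-1}$, i.e.\ that $\{q_w^{-1}T_{w^{-1}}\}$ is the basis dual to $\{T_w\}$ under the symmetrizing form. One clean way to see this directly in the building language is to note that $\tau(T_uT_{v^{-1}})$ is the coefficient of $T_e$ in $T_uT_{v^{-1}}$, which by Theorem~\ref{thm:product} equals $c_{u,v^{-1}}^{e}=\frac{q_e}{q_uq_{v^{-1}}}|\Delta_u(x)\cap\Delta_v(x)|$ for any chamber $x$ (taking $y=x$, so $\delta(x,y)=e$); this intersection is nonempty iff $u=v$, in which case it is the single chamber at distance $u$ equal to the one at distance $v$... more carefully $\Delta_u(x)\cap\Delta_v(x)=\emptyset$ unless $u=v$, giving $q_u^{-1}$. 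Alternatively one simply cites the general theory of symmetric algebras as in the proof of Theorem~\ref{thm:character}. Everything else is routine linear algebra plus the already-established isomorphism $\scH\cong\scP$ and the semisimple decomposition \eqref{eq:character}.
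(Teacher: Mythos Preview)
Your proof is correct and follows essentially the same route as the paper: both arguments establish the orthogonality relation $\chi_\Delta(T_uT_{v^{-1}})=q_u^{-1}|\Delta|\delta_{u,v}$ (equivalently, your dual-basis statement $\tau(T_uT_{v^{-1}})=q_u^{-1}\delta_{u,v}$) via the structure constants $c_{u,v^{-1}}^e$ from Theorem~\ref{thm:product}, use it to extract the coefficient $a_w^{(n)}$ from $T^n$, and then invoke the decomposition~\eqref{eq:character}. The only cosmetic difference is that you phrase things in the language of symmetrizing traces and dual bases, whereas the paper works directly with $\chi_\Delta$; the content is identical (and your passing remark that the intersection is ``a single chamber'' is a slip---it has $q_u$ elements---but you recover the correct value $q_u^{-1}$ in the next clause).
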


\begin{proof} We claim that
\begin{align}\label{eq:1obs}
\chi_{\Delta}(T_uT_{v^{-1}})=q_u^{-1}|\Delta|\delta_{u,v}\quad\text{for all $u,v\in W$}.
\end{align}
To see this, note that $\chi_{\Delta}(T_w)=\tr(\rho_{\Delta}(T_w))=\tr(P_w)=|\Delta|\delta_{w,e}$ (because each $P_w$ is a $|\Delta|\times|\Delta|$ matrix, with $(x,y)^{\mathrm{th}}$ entry equal to $1$ if $\delta(x,y)=w$ and $0$ otherwise). Thus 
$$
\chi_{\Delta}(T_uT_{v^{-1}})=\tr(P_uP_{v^{-1}})=\sum_{w\in W}c_{u,v^{-1}}^{w}\tr(P_w)=c_{u,v^{-1}}^e|\Delta|=\frac{1}{q_uq_v}|\Delta_u(o)\cap \Delta_v(o)|,
$$
where we have used Theorem~\ref{thm:product}, and (\ref{eq:1obs}) follows.

Now, if $P^n=\sum a_w^{(n)}P_w$ then $T^n=\sum a_w^{(n)}T_w$, and so
$$
\frac{1}{|\Delta|}\chi_{\Delta}(T^nT_{w^{-1}})=\frac{1}{|\Delta|}\sum_{v\in W}a_w^{(n)}\chi_{\Delta}(T_vT_{w^{-1}})=q_w^{-1}a_w^{(n)}.
$$
If $\delta(x,y)=w$ then $p^{(n)}(x,y)=q_w^{-1}a_w^{(n)}$, and thus
$
p^{(n)}(x,y)=\chi_{\Delta}(T^nT_{w^{-1}})/|\Delta|$.
The result follows from~(\ref{eq:character}). 
\end{proof}

\subsubsection{Mixing times}

Now we move to the more sophisticated question of mixing times. To begin with we need to define a notion of ``distance'' between two measures. In the literature the \textit{total variation distance} has become a standard choice, popularised by Persi Diaconis (see, for example, \cite{Dia:88}). This distance is defined as follows: If $\mu$ and $\nu$ are probability measures on $\Delta$ then
$$
\|\mu-\nu\|_{\mathrm{tv}}=\max_{A\subseteq \Delta}|\mu(A)-\nu(A)|.
$$
To work with the total variation distance it is helpful to observe the following elementary fact. 

\begin{lemma}
Let $\mu$ be probability measure on~$\Delta$ and let $u:\Delta\to[0,1]$ be the uniform distribution. Then
$$
\|\mu-u\|_{\mathrm{tv}}=\frac{1}{2}\|\mu-u\|_1\leq\frac{\sqrt{|\Delta|}}{2}\|\mu-u\|_2
$$
where $\|\cdot\|_1$ and $\|\cdot\|_2$ are the $\ell^1$ and $\ell^2$ norms respectively. 
\end{lemma}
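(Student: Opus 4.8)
The plan is to prove the two claims separately, both by direct elementary estimates.

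\textbf{The identity $\|\mu-u\|_{\mathrm{tv}}=\tfrac12\|\mu-u\|_1$.} First I would prove the more general fact that for any two probability measures $\mu,\nu$ on a finite set $\Delta$ one has $\|\mu-\nu\|_{\mathrm{tv}}=\tfrac12\|\mu-\nu\|_1$. Write $A^+=\{x\in\Delta\mid \mu(x)\geq\nu(x)\}$ and $A^-=\Delta\setminus A^+$. Since $\mu(\Delta)=\nu(\Delta)=1$, we have $\sum_{x\in A^+}(\mu(x)-\nu(x))=\sum_{x\in A^-}(\nu(x)-\mu(x))$, and this common value equals $\mu(A^+)-\nu(A^+)$. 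For an arbitrary $A\subseteq\Delta$, $\mu(A)-\nu(A)\leq \mu(A\cap A^+)-\nu(A\cap A^+)\leq \mu(A^+)-\nu(A^+)$, and similarly $\nu(A)-\mu(A)\leq \nu(A^-)-\mu(A^-)=\mu(A^+)-\nu(A^+)$; hence the maximum defining $\|\mu-\nu\|_{\mathrm{tv}}$ is attained at $A^+$ (or $A^-$) and equals $\mu(A^+)-\nu(A^+)$. On the other hand $\|\mu-\nu\|_1=\sum_{x\in A^+}(\mu(x)-\nu(x))+\sum_{x\in A^-}(\nu(x)-\mu(x))=2\bigl(\mu(A^+)-\nu(A^+)\bigr)$, which gives the claimed identity. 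Specialising $\nu=u$ yields the first equality in the lemma.

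\textbf{The inequality $\tfrac12\|\mu-u\|_1\leq \tfrac{\sqrt{|\Delta|}}{2}\|\mu-u\|_2$.} This is just Cauchy--Schwarz on $\Delta$: for any function $f:\Delta\to\RR$,
$$
\|f\|_1=\sum_{x\in\Delta}|f(x)|\cdot 1\leq\Bigl(\sum_{x\in\Delta}|f(x)|^2\Bigr)^{1/2}\Bigl(\sum_{x\in\Delta}1\Bigr)^{1/2}=\sqrt{|\Delta|}\,\|f\|_2.
$$
Applying this with $f=\mu-u$ and dividing by $2$ gives the inequality. Combining the two parts completes the proof.

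There is no real obstacle here; the only point requiring a little care is the verification that the supremum in the definition of total variation distance is attained on the set $A^+$ where $\mu$ dominates $u$, which is the standard argument recalled above. Everything else is a one-line application of Cauchy--Schwarz.
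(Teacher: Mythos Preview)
Your proof is correct and follows essentially the same route as the paper: identify the set where $\mu$ dominates $u$, use that both are probability measures to see the positive and negative parts of $\mu-u$ have equal mass, and then apply Cauchy--Schwarz for the inequality. If anything, your argument is slightly more careful in verifying that the supremum in the total variation distance is actually attained on $A^+$, which the paper treats as evident.
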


\begin{proof}
Clearly $\|\mu-u\|_{\mathrm{tv}}$ equals either $|\mu(A)-u(A)|$ or $|\mu(B)-u(B)|$, where $A$ and $B$ are the sets $A=\{x\in \Delta\mid \mu(x)>u(x)\}$ and $B=\{x\in \Delta\mid \mu(x)<u(x)\}$. In fact
$$
|\mu(A)-u(A)|=|1-\mu(B)-1+u(B)|=|\mu(B)-u(B)|
$$
and so $\|\mu-u\|_{\mathrm{tv}}=\frac{1}{2}\left(|\mu(A)-u(A)|+|\mu(B)-u(B)|\right)=\frac{1}{2}\|\mu-u\|_1$. The final inequality follows from Cauchy-Schwarz. 
\end{proof}

Let $\mu^{(n)}:\Delta\to[0,1]$ be the measure $\mu^{(n)}(x)=p^{(n)}(o,x)$, where $o\in\Delta$ is a fixed chamber of~$\Sigma$. The following theorem gives a mathematically tractable upper bound estimate for the total variation distance $\|\mu^{(n)}-u\|_{\mathrm{tv}}$, and thus can be used to give upper bounds for mixing times. Define an involution $*:\scH\to\scH$ by $(\sum a_wT_w)^*=\sum\overline{a}_wT_{w^{-1}}$. Note that $\scH$ has a $1$-dimensional representation $\rho_{\mathrm{triv}}$ (the \textit{trivial representation}) given by $\rho_{\mathrm{triv}}(T_s)=1$ for all $s\in S$ (to check this, simply verify that the defining relations (\ref{eq:presentation2}) are satisfied).

\begin{thm}\label{thm:spherical2}\emph{(cf. \cite{DR:00})}
Let $P=\sum a_wP_w$ be the transition operator of an isotropic random walk on the chambers of a regular spherical buildings, and let $T=\sum a_wT_w\in\scH$. Then
$$
\|\mu^{(n)}-u\|_{\mathrm{tv}}^2\leq\frac{1}{4}\sum_{\rho\neq \rho_{\mathrm{triv}}}m_{\rho}\chi_{\rho}\left(T^{n}(T^*)^n\right).
$$
\end{thm}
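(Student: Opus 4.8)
The plan is to express $\|\mu^{(n)}-u\|_{\mathrm{tv}}^2$ in terms of the $\ell^2$ norm and then expand this norm using the character-theoretic machinery already set up. By the Lemma preceding the statement we have
$$
\|\mu^{(n)}-u\|_{\mathrm{tv}}^2\leq\frac{|\Delta|}{4}\|\mu^{(n)}-u\|_2^2=\frac{|\Delta|}{4}\sum_{x\in\Delta}\bigl(\mu^{(n)}(x)-u(x)\bigr)^2.
$$
Since $u(x)=1/|\Delta|$ and $\sum_x\mu^{(n)}(x)=\sum_x u(x)=1$, the cross term vanishes and $\sum_x(\mu^{(n)}(x)-u(x))^2=\sum_x\mu^{(n)}(x)^2-1/|\Delta|$. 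So the first step is to identify $\sum_x\mu^{(n)}(x)^2$ with a character value. Using $\mu^{(n)}(x)=p^{(n)}(o,x)=q_{\delta(o,x)}^{-1}a_{\delta(o,x)}^{(n)}$ from Theorem~\ref{thm:spherical1} and grouping chambers by $\delta(o,x)=w$ (there are $q_w$ such chambers), we get $\sum_x\mu^{(n)}(x)^2=\sum_{w\in W}q_w^{-1}(a_w^{(n)})^2$.

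The second step is to recognise this sum as $\frac{1}{|\Delta|}\chi_{\Delta}(T^n(T^*)^n)$. Indeed, if $P^n=\sum_w a_w^{(n)}P_w$ then $T^n=\sum_w a_w^{(n)}T_w$ (the map $\rho_\Delta$ is an algebra isomorphism $\scH\cong\scP$), so $(T^n)^*=\sum_w \overline{a_w^{(n)}}T_{w^{-1}}=\sum_w a_w^{(n)}T_{w^{-1}}$ (the coefficients are real since $P$ is a real matrix), and hence, using the key identity $\chi_{\Delta}(T_uT_{v^{-1}})=q_u^{-1}|\Delta|\,\delta_{u,v}$ established in the proof of Theorem~\ref{thm:spherical1}, one finds
$$
\chi_{\Delta}\bigl(T^n(T^*)^n\bigr)=\sum_{u,v\in W}a_u^{(n)}a_v^{(n)}\chi_{\Delta}(T_uT_{v^{-1}})=|\Delta|\sum_{w\in W}q_w^{-1}(a_w^{(n)})^2.
$$
Combining the first two steps gives $\|\mu^{(n)}-u\|_{\mathrm{tv}}^2\leq\frac14\bigl(\chi_{\Delta}(T^n(T^*)^n)-1\bigr)$.

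The third step is to decompose $\chi_\Delta$ into irreducibles via (\ref{eq:character}), $\chi_\Delta=\sum_{\rho}m_\rho\chi_\rho$, and to peel off the contribution of the trivial representation. For $\rho=\rho_{\mathrm{triv}}$ one checks $m_{\rho_{\mathrm{triv}}}=1$ (e.g.\ from Theorem~\ref{thm:character}, or because the uniform vector spans the trivial subrepresentation in $V_\Delta$) and that $\rho_{\mathrm{triv}}(T)=\sum_w a_w\rho_{\mathrm{triv}}(T_w)=\sum_w a_w=1$ — here one uses that $\ell(ws)>\ell(w)$ forces $\rho_{\mathrm{triv}}(T_{ws})=\rho_{\mathrm{triv}}(T_w)$ so all $\rho_{\mathrm{triv}}(T_w)=1$, together with $\sum_w a_w=1$ from Proposition~\ref{prop:transition}. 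Also $\rho_{\mathrm{triv}}(T^*)=\overline{\rho_{\mathrm{triv}}(T)}=1$ since the $a_w$ are real. Hence $\chi_{\rho_{\mathrm{triv}}}(T^n(T^*)^n)=1$, and this cancels the $-1$:
$$
\|\mu^{(n)}-u\|_{\mathrm{tv}}^2\leq\frac14\sum_{\rho\neq\rho_{\mathrm{triv}}}m_\rho\,\chi_\rho\bigl(T^n(T^*)^n\bigr),
$$
which is the claim.

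I do not expect a serious obstacle here; the argument is essentially bookkeeping on top of Theorems~\ref{thm:character} and~\ref{thm:spherical1}. The one point requiring a little care is the reality of the coefficients $a_w^{(n)}$ (needed to identify $(T^n)^*$ with $\sum_w a_w^{(n)}T_{w^{-1}}$ rather than its conjugate), which follows since $P$, and hence $P^n$, is a real matrix and the $P_w$ form a basis of a real form of $\scP$; and the verification that $m_{\rho_{\mathrm{triv}}}=1$, which is where the argument quietly uses thickness/regularity so that the geometric representation genuinely contains the trivial one with multiplicity exactly one.
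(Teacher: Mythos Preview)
Your proposal is correct and follows essentially the same route as the paper: bound the total variation distance by the $\ell^2$ norm via the preceding Lemma, compute $\sum_x\mu^{(n)}(x)^2=\sum_w q_w^{-1}(a_w^{(n)})^2=|\Delta|^{-1}\chi_\Delta(T^n(T^*)^n)$, and arrive at $\|\mu^{(n)}-u\|_{\mathrm{tv}}^2\leq\tfrac14(\chi_\Delta(T^n(T^*)^n)-1)$. The paper actually stops at this inequality and leaves the decomposition $\chi_\Delta=\sum_\rho m_\rho\chi_\rho$ together with $m_{\rho_{\mathrm{triv}}}\chi_{\rho_{\mathrm{triv}}}(T^n(T^*)^n)=1$ implicit; you spell this out explicitly, and your remarks on the reality of the $a_w^{(n)}$ and on $m_{\rho_{\mathrm{triv}}}=1$ are useful details the paper omits.
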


\begin{proof}
We have
\begin{align*}
\|\mu^{(n)}-u\|_{\mathrm{tv}}^2&\leq \frac{|\Delta|}{4}\|\mu^{(n)}-u\|_2^2=\frac{|\Delta|}{4}\langle\mu^{(n)}-u,\mu^{(n)}-u\rangle_2=\frac{1}{4}\left(|\Delta|\langle\mu^{(n)},\mu^{(n)}\rangle_2-1\right).
\end{align*}
Now
$$
\langle\mu^{(n)},\mu^{(n)}\rangle_2=\sum_{w\in W}q_w^{-1}\left(a_w^{(n)}\right)^2=\frac{1}{|\Delta|}\chi_{\Delta}(T^n(T^*)^n),
$$
and so 
$$
\|\mu^{(n)}-u\|_{\mathrm{tv}}^2\leq \frac{1}{4}\Big(\chi_{\Delta}(T^n(T^*)^n)-1\Big).\qedhere
$$
\end{proof}

Theorems~\ref{thm:spherical1} and~\ref{thm:spherical2}, together with the multiplicity formula from Theorem~\ref{thm:character}, provide some basic theory for studying isotropic random walks on spherical buildings. To make more practical estimates in given examples one needs to work harder with the representation theory (for example, to give meaningful bounds on the right hand side of the inequality in Theorem~\ref{thm:spherical2}). Some calculations are made in~\cite{DR:00}, in a different context, that can be translated to give estimates for certain random walks on buildings $\Sigma$ of type $A_{n}$ (see \cite[Proposition~7.4 and Theorem~7.5]{DR:00}). However in the building theoretic context the walks covered in \cite{DR:00} are perhaps not the most natural (for example, the simple random walk is not covered). Thus there is still a lot to do in this direction, and we hope that the setup provided above might stimulate some future research. In Appendix~\ref{app:rank2} we outline the details of the representation theory in the rank~$2$ case (that is, when the building is a generalised polygon).

\subsection{Random walks on affine buildings}

Let $(X_n)_{n\geq 0}$ be a random walk on an infinite graph with transition probabilities $p(x,y)$. Natural questions to ask in this setting include:
\begin{enumerate}
\item[(1)] At what velocity does the random walk move to infinity?
\item[(2)] What is the distribution of the fluctuations away from expected distance?
\item[(3)] What are the asymptotics of $p^{(n)}(x,y)$?
\end{enumerate}
Appropriate solutions to these problems come in the form of a law of large numbers, a central limit theorem, and a local limit theorem (respectively).

For random walks on affine buildings it is natural to consider both random walks on the chambers of the building, and random walks on the vertices of the building. The latter case now has a rather complete theory. We will consider both cases below.

If the affine building has rank~$2$ then we are dealing with a random walk on a trees. In this context there is a huge literature which takes us too far afield to discuss here, and so we will focus on the higher rank case.

\subsubsection{Random walks on the vertices of an affine building}\label{sec:vertices}

Let $R$ be an irreducible root system with coweight lattice~$P$, and let $(W_{\mathrm{aff}},S_{\mathrm{aff}})$ be the associated affine Coxeter system. Let $\Sigma$ be a regular affine building of type $(W_{\mathrm{aff}},S_{\mathrm{aff}})$, and let $V$ be the set of all special vertices of~$\Sigma$. Recall the definitions of the vector distance function $\vect{d}(\cdot,\cdot)$ from Definition~\ref{defn:vectdist}. Some of the formulae of this section become more complicated in the case of $\widetilde{C}_n$ buildings with $q_0\neq q_n$, and so here we will restrict to the case $q_0=q_n$ for $\widetilde{C}_n$ buildings (see \cite{Par:06a,Par:06b,Par:07} for the general case). 

We now define isotropic random walks on the set~$V$ of all special vertices, and outline the algebraic and analytic theory that is used to analyse them. In Appendix~\ref{app:A} we will give more details in the specific case of $\widetilde{C}_2$ buildings, where one can carry out the calculations `by hand'.

\begin{defn}
A random walk $(X_n)_{n\geq 0}$ on $V$ is \textit{isotropic} if its transition probabilities satisfy
$$
p(x,y)=p(x',y')\quad\text{whenever $\vect{d}(x,y)=\vect{d}(x',y')$}.
$$
\end{defn}

For each $x\in V$ and $\lambda\in P^+$ Let
$$
V_{\lambda}(x)=\{y\in V\mid \vect{d}(x,y)=\lambda\}\quad\text{be the sphere of `radius' $\lambda$ centred at $x$}.
$$
The cardinality $N_{\lambda}=|V_{\lambda}(x)|$ does not depend on $x\in V$
(see \cite[Proposition~1.5]{Par:06b}). If $(X_n)_{n\geq 0}$ is an isotropic random walk on $V$ then there are numbers $a_{\lambda}\geq 0$ with $\sum_{\lambda\in P^+}a_{\lambda}=1$ such that
\begin{align}\label{eq:a}
p(x,y)=\frac{a_{\lambda}}{N_{\lambda}}\quad\text{for all $y\in V_{\lambda}(x)$}.
\end{align}
In an analogous way to the case of isotropic random walks on chambers (see Proposition~\ref{prop:transition}) the transition operator $A$ of an isotropic random walk on the vertices of a regular affine building is of the form
\begin{align}\label{eq:atrans}
A=\sum_{\lambda\in P^+}a_{\lambda}A_{\lambda},
\end{align}
where the numbers $a_{\lambda}$ are as in (\ref{eq:a}) and the operator $A_{\lambda}$ acts on functions $f:V\to\CC$ by
$$
A_{\lambda}f(x)=\frac{1}{N_{\lambda}}\sum_{y\in V_{\lambda}(x)}f(y).
$$

Let $\mathscr{A}$ be the vector space over $\CC$ with basis $\{A_{\lambda}\mid \lambda\in P^+\}$. The following is an analogue of Theorem~\ref{thm:product} (the proof is, however, a little more involved). 

\begin{thm}\label{thm:com}\emph{\cite[Theorem~5.24]{Par:06b}}
The vector space $\scA$ is a commutative associative unital algebra under composition of linear operators.
\end{thm}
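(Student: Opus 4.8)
The plan is to follow the proof of Theorem~\ref{thm:product}: establish an explicit multiplication table for the averaging operators $A_{\lambda}$ by a counting argument, deduce associativity and the existence of a unit formally, and then supply the one genuinely extra ingredient — absent for the chamber algebra $\scP$ — that yields commutativity. Concretely, for $x,z\in V$ and $\lambda,\mu\in P^{+}$, iterating the definition $A_{\lambda}f(x)=N_{\lambda}^{-1}\sum_{y\in V_{\lambda}(x)}f(y)$ gives
$$
(A_{\lambda}A_{\mu}f)(x)=\frac{1}{N_{\lambda}N_{\mu}}\sum_{z\in V}N_{\lambda,\mu}(x,z)\,f(z),\qquad N_{\lambda,\mu}(x,z):=\bigl|\{y\in V\mid \vect{d}(x,y)=\lambda,\ \vect{d}(y,z)=\mu\}\bigr|.
$$
The crucial claim is that $N_{\lambda,\mu}(x,z)$ depends only on $\lambda$, $\mu$, and $\nu:=\vect{d}(x,z)$. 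Granting this and writing $N_{\lambda,\mu}(\nu)$ for the common value, one groups the sum over $z$ by $\vect{d}(x,z)$ to obtain
$$
A_{\lambda}A_{\mu}=\sum_{\nu\in P^{+}}\frac{N_{\nu}}{N_{\lambda}N_{\mu}}\,N_{\lambda,\mu}(\nu)\,A_{\nu},
$$
so $\scA$ is closed under composition; associativity is then automatic, since the $A_{\lambda}$ are honest linear operators on functions $V\to\CC$; and $A_{0}=\mathrm{id}$ (because $V_{0}(x)=\{x\}$) is the unit.

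The step I expect to require real work is exactly the ``little more involved'' point flagged in the text: the independence of $N_{\lambda,\mu}(x,z)$ from the choice of endpoints with prescribed vector distance. I would prove this following \cite{Par:06a,Par:06b}, using the retraction maps $\rho_{A,\fs}$ onto a fixed apartment centred ``deep'' in a sector, together with the structure theory of sectors in affine buildings (property (S1) and the compatibility of $\rho_{A,\fs}$ with vector distances). The extra difficulty compared with Theorem~\ref{thm:product} is that here one counts vertices up to the $W_{0}$-action on the lattice $P$ — i.e.\ via dominant representatives of vector distances — rather than via the sharper $W_{\mathrm{aff}}$-valued distance, so one must check that the retraction picks up each relevant configuration with the correct multiplicity.

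For commutativity, by the multiplication table it is enough to prove the symmetry $N_{\lambda,\mu}(\nu)=N_{\mu,\lambda}(\nu)$ (together with $N_{\lambda}=N_{\lambda^{\ast}}$, where $\lambda^{\ast}:=-w_{0}\lambda$, which follows by running the same argument for $\lambda^{\ast}$). This is the combinatorial shadow of the classical Gelfand-pair argument for $(G(\QQ_{p}),G(\ZZ_{p}))$: reversing the two-step path $x\to y\to z$ and using $\vect{d}(y,x)=\vect{d}(x,y)^{\ast}$ gives $N_{\lambda,\mu}(\nu)=N_{\mu^{\ast},\lambda^{\ast}}(\nu^{\ast})$, while the opposition symmetry $\lambda\mapsto\lambda^{\ast}=-w_{0}\lambda$ — which preserves the thickness parameters $(q_{s})_{s\in S}$ and hence all of these counting invariants — gives $N_{\lambda,\mu}(\nu)=N_{\lambda^{\ast},\mu^{\ast}}(\nu^{\ast})$; composing the two identities yields $N_{\lambda,\mu}(\nu)=N_{\mu,\lambda}(\nu)$. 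The subtle point, and the main obstacle, is to justify the opposition symmetry intrinsically at the level of these base-point-free intersection numbers for an \emph{arbitrary} regular affine building, which need not carry any global automorphism realising $-w_{0}$; this is precisely what is carried out in \cite[Theorem~5.24]{Par:06b}, and the payoff for the survey is that $\scA$ is the commutative ``spherical Hecke algebra'' controlling isotropic random walks on the vertices, mirroring the $p$-adic setting.
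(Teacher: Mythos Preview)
Your outline is sound and correctly isolates the genuine difficulty: the base-point independence of the intersection numbers $N_{\lambda,\mu}(x,z)$, and the opposition symmetry $N_{\lambda,\mu}(\nu)=N_{\lambda^{*},\mu^{*}}(\nu^{*})$ needed for your Gelfand-pair argument. You are right that the latter cannot be obtained from a global automorphism of an arbitrary regular affine building, and that it ultimately rests on showing the structure constants depend only on root-system data and the parameters $(q_s)$.

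It is worth noting, however, that the paper does not actually supply a proof of this theorem in the main body; it simply cites \cite[Theorem~5.24]{Par:06b} and remarks that the argument is ``a little more involved'' than Theorem~\ref{thm:product}. Where the paper \emph{does} give a proof is in Appendix~\ref{app:A}, for the special case of $\widetilde{C}_2$, and the route taken there is quite different from yours. Rather than a Gelfand-style symmetry argument, the appendix (Theorem~\ref{thm:recursionC2} and Lemma~\ref{lem:partial}) establishes explicit recursion formulae expressing $A_{m,n}A_{1,0}$ and $A_{m,n}A_{0,1}$ as linear combinations of $A_{i,j}$'s, verifies directly that the two generators commute ($A_{1,0}A_{0,1}=A_{0,1}A_{1,0}$), and then shows by induction on a total order on $\NN^2$ that every $A_{m,n}$ is a polynomial in $A_{1,0}$ and $A_{0,1}$. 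Commutativity of $\scA$ then falls out immediately, and one even gets the stronger statement $\scA\cong\CC[X,Y]$ for free.

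The trade-off is clear. Your approach is conceptual and type-independent, but the opposition symmetry is a real obstacle in the absence of a group action and you have (appropriately) deferred it back to \cite{Par:06b}. The appendix approach is entirely elementary and self-contained, but is computational and specific to rank~$2$; its general-type analogue would require showing that the $A_{\omega_i}$ pairwise commute and generate $\scA$, which is itself nontrivial (this is essentially the content of the Satake isomorphism, cf.\ Theorem~\ref{thm:centre}). Neither route avoids genuine work; they simply locate it differently.
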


The algebra $\scA$ plays an important role in understanding isotropic random walks on the vertices of affine buildings. The key feature of Theorem~\ref{thm:com} is that this algebra is commutative. In fact one can be more precise.

\begin{thm}\label{thm:centre}\emph{\cite[Theorem~6.16]{Par:06b}}
Let $\Sigma$ be a regular affine building. Let $\scP$ the the algebra of chamber set averaging operators on $\Sigma$ (c.f. Theorem~\ref{thm:product}) and let $\scA$ be the algebra of vertex set averaging operators on~$\Sigma$ (c.f. Theorem~\ref{thm:com}). Then $\scA$ is isomorphic to the centre of $\scP$. 
\end{thm}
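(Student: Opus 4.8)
The plan is to realise both $\scA$ and the centre $Z(\scP)$ as the ``spherical'' subalgebra of the affine Hecke algebra cut out by the idempotent that averages a function over the minimal residues at a special vertex; this is the building-theoretic incarnation of the Satake isomorphism. First, exactly as in the spherical case, comparing the multiplication rule~(\ref{eq:presentation}) with the abstract Hecke relations and using the linear independence of $\{P_w\mid w\in\Waff\}$ shows that $\scP$ is isomorphic, via $P_w\mapsto T_w$, to the affine Hecke algebra $\scH$ with parameters $(q_s)_{s\in\Saff}$ (constant on conjugacy classes of generators — under the standing $\widetilde C_n$ hypothesis this also forces $q_{s_0}=q_{s_n}$). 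Since $(W_0,S_0)$ is a parabolic subsystem of $(\Waff,\Saff)$, the subalgebra $\scP_0=\langle P_s\mid s\in S_0\rangle$ of $\scP$ is a finite-dimensional Hecke algebra, and it carries the trivial idempotent
$$
E_0=\frac{1}{W_0(q)}\sum_{w\in W_0}q_w\,P_w,\qquad W_0(q)=\sum_{w\in W_0}q_w .
$$
A short calculation from~(\ref{eq:presentation}) gives $E_0^2=E_0$ and $P_sE_0=E_0=E_0P_s$ for $s\in S_0$; geometrically $(E_0f)(x)$ is the average of $f$ over the minimal $S_0$-residue of $x$, that is, over the chambers containing its type-$0$ special vertex. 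Hence $E_0$ acting on $\CC[\Delta]$ has image canonically $\CC[V_0]$, the functions on the type-$0$ special vertices, and $E_0\scP E_0$ acts on $\CC[V_0]$.

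The heart of the argument is the map
$$
\Phi:Z(\scP)\longrightarrow E_0\scP E_0,\qquad \Phi(z)=zE_0 .
$$
As $z$ is central and $E_0$ idempotent, $\Phi(z)\Phi(z')=zz'E_0=\Phi(zz')$ and $\Phi(1)=E_0$, so $\Phi$ is a unital algebra homomorphism onto the corner algebra $E_0\scP E_0$. To prove it is an isomorphism I would use the Bernstein presentation of $\scP\cong\scH$: there is a commutative subalgebra with basis $\{X^\lambda\}$ — a group algebra of a translation lattice — over which $\scP$ is free of rank $|W_0|$, and $Z(\scP)$ is exactly its ring of $W_0$-invariants, with basis the monomial symmetric functions $m_\lambda=\sum_{\mu\in W_0\lambda}X^\mu$. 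Feeding $P_sE_0=E_0$ into the Bernstein--Lusztig commutation relations yields Macdonald's formula in the shape $E_0X^\lambda E_0=m_\lambda E_0+\sum_{\mu<\lambda}c_{\lambda\mu}\,m_\mu E_0$, the sum over $\mu$ strictly below $\lambda$ in the dominance order. This one identity does two jobs at once: it shows $\{E_0X^\lambda E_0\}$ is a basis of $E_0\scP E_0$, and it shows $\Phi$ sends the basis $\{m_\lambda\}$ of $Z(\scP)$ onto that basis by a unitriangular change of coefficients; hence $\Phi$ is bijective.

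Finally I would identify $E_0\scP E_0$, acting on $\CC[V_0]$, with $\scA$. For $w\in\Waff$, restricting $E_0P_wE_0$ to functions on $V_0$ sends $f$ to a weighted average of its values over vertices; by the intersection-cardinality statement of Theorem~\ref{thm:product} (and its vertex analogue) the weights depend only on the vector distance $\vect{d}$ of Definition~\ref{defn:vectdist}, so $E_0P_wE_0$ is a linear combination of the averaging operators $A_\lambda$. Running $w$ over double-coset representatives of $W_0\backslash\Waff/W_0$ — equivalently over the translations $t_\lambda$ — and comparing with the sphere sizes $N_\lambda$ in~(\ref{eq:a})--(\ref{eq:atrans}), one checks the correspondence is a bijection onto the basis $\{A_\lambda\}$, so $E_0\scP E_0\cong\scA$ as algebras. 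Composing with $\Phi$ gives $\scA\cong Z(\scP)$.

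I expect the main obstacle to be the middle step: that $\Phi$ is an isomorphism is invisible to the building geometry and genuinely requires the Bernstein presentation of the affine Hecke algebra together with the dominance-triangularity of Macdonald's formula (equivalently, the general fact that multiplication by the trivial idempotent carries the centre of an affine Hecke algebra isomorphically onto its spherical subalgebra). A secondary, bookkeeping obstacle is keeping the types of special vertices straight — which is exactly what the $\widetilde C_n$, $q_0=q_n$ normalisation is arranged to control — so that the corner cut out by $E_0$ really does see all of $\scA$; when it does not, one runs the same argument for each type of special vertex, replacing $E_0$ by the trivial idempotent of the corresponding maximal parabolic of $\Waff$.
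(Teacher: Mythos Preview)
The paper does not prove this statement; it is simply quoted from \cite[Theorem~6.16]{Par:06b}. Your outline---identifying $\scP$ with the affine Hecke algebra, invoking Bernstein's description of its centre as the $W_0$-invariants in the group algebra of the translation lattice, passing to the spherical corner $E_0\scP E_0$ via $z\mapsto zE_0$, and then identifying that corner geometrically with $\scA$---is the standard route and is essentially the argument in the cited reference.

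One small remark on the bookkeeping point you raise at the end: the paper's $\scA$ is defined on the set $V$ of \emph{all} special vertices, whereas your $E_0$ cuts out only the type-$0$ ones. Under the standing hypothesis ($q_0=q_n$ in type $\widetilde C_n$, which is exactly the case the paper restricts to) the algebras of averaging operators on the various types of special vertices are canonically isomorphic to one another and to $\scA$, so this is indeed only a normalisation issue and your final paragraph handles it correctly.
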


For each $\alpha\in R$ we write $q_{\alpha}=q_i$ if $\alpha\in W_0\alpha_i$, and let
$$
r^{\lambda}=\prod_{\alpha\in R^+}q_{\alpha}^{\frac{1}{2}\langle\lambda,\alpha\rangle}\quad\text{for all $\lambda\in P$}.
$$
If $q_i=q$ for all $i=0,1,\ldots,d$ then $r^{\lambda}=q^{\ell(t_{\lambda})/2}$ where $t_{\lambda}$ is the translation by~$\lambda$.

If $u\in\mathrm{Hom}(P,\CC^{\times})$ we write $u^{\lambda}=u(\lambda)$, and if $w\in W_0$ and $u\in\mathrm{Hom}(P,\CC^{\times})$ let $wu\in \mathrm{Hom}(P,\CC^{\times})$ be given by $(wu)^{\lambda}=u^{w\lambda}$ for all $\lambda\in P$. For each $\lambda\in P^+$ the \textit{Macdonald spherical function} $P_{\lambda}$ is the function $P_{\lambda}:\mathrm{Hom}(P,\CC^{\times})\to\CC$ given by
\begin{align*}
P_{\lambda}(u)=\frac{r^{-\lambda}}{W_0(q^{-1})}\sum_{w\in W_0}u^{w\lambda}c(wu)\quad\text{where}\quad c(u)=\prod_{\alpha\in R^+}\frac{1-q_{\alpha}^{-1}u^{-\alpha^{\vee}}}{1-u^{-\alpha^{\vee}}},
\end{align*}
where $W_0(q^{-1})=\sum_{w\in W_0}q_w^{-1}$. This formula requires, of course, that the denominators are nonzero, however it turns out that $P_{\lambda}(u)$ is a linear combination of terms $u^{\mu}$ with $\mu\in P$ and so the `singular' cases where a denominator vanishes can be obtained by taking an appropriate limit in the general formula. The Macdonald spherical functions arise in the representation theory of $p$-adic groups (see~\cite{Mac:71}).

Theorem~\ref{thm:centre}, combined with the \textit{Satake isomorphism}, implies the following result, giving a complete description of the irreducible representations of~$\scA$. 

\begin{thm}\emph{\cite[Proposition~2.1]{Par:06b}}
For each $u\in\mathrm{Hom}(P,\CC^{\times})$ there is a $1$-dimensional representation $\pi_u$ of $\scA$ given by $\pi_u(A_{\lambda})=P_{\lambda}(u)$. Moreover, every $1$-dimensional representation $\pi$ of $\scA$ is of the form $\pi=\pi_u$ for some $u\in\mathrm{Hom}(P,\CC^{\times})$, and $\pi_u=\pi_{u'}$ if and only if $u'=wu$ for some $w\in W_0$. 
\end{thm}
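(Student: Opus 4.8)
The plan is to use the \emph{Satake isomorphism} to identify the vertex-averaging algebra $\scA$ with the ring $\CC[P]^{W_0}$ of $W_0$-invariant elements of the group algebra $\CC[P]$ of the coweight lattice, and then to read off all $1$-dimensional representations from the $W_0$-geometry of the torus $T=\mathrm{Hom}(P,\CC^{\times})$. This identification can be obtained directly, or by combining Theorem~\ref{thm:centre} (which identifies $\scA$ with the centre $Z(\scP)$) with the Bernstein-type description of the centre of an affine Hecke algebra as $\CC[P]^{W_0}$. The one substantive input is the explicit form of the Satake map $\mathsf{S}:\scA\to\CC[P]^{W_0}$ on the basis $\{A_{\lambda}\}_{\lambda\in P^{+}}$: regarding elements of $\CC[P]^{W_0}$ as regular functions on $T$, one has
\[
\mathsf{S}(A_{\lambda})(u)=P_{\lambda}(u)\qquad\text{for all }u\in T,\ \lambda\in P^{+},
\]
with $P_{\lambda}$ the Macdonald spherical function from the statement. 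That $\mathsf{S}(A_{\lambda})$ is genuinely a $W_0$-invariant Laurent polynomial (so that evaluation at every $u\in T$ makes sense, including the ``singular'' $u$ where the $c$-function has poles) is precisely the fact, already recorded in the text, that $P_{\lambda}(u)$ is a $\CC$-linear combination of the monomials $u^{\mu}$, $\mu\in P$.

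Granting this, the rest is commutative algebra for the finite group $W_0$ acting on $T$. A $1$-dimensional representation of $\scA$ is the same as a $\CC$-algebra homomorphism $\CC[P]^{W_0}\to\CC$; since $\CC[P]^{W_0}$ is finitely generated and $\CC$ is algebraically closed, these are exactly the closed points of $\mathrm{Spec}\,\CC[P]^{W_0}$. As $W_0$ is finite, $\CC[P]$ is a finite, hence integral, extension of $\CC[P]^{W_0}$, so by lying-over every homomorphism $\CC[P]^{W_0}\to\CC$ extends to one of $\CC[P]$, i.e.\ is evaluation $f\mapsto f(u)$ at some $u\in T$. Pulling this back along $\mathsf{S}$ gives the homomorphism $A_{\lambda}\mapsto\mathsf{S}(A_{\lambda})(u)=P_{\lambda}(u)$, which is $\pi_{u}$. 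This proves simultaneously that each $\pi_{u}$ is a well-defined $1$-dimensional representation (part~(1)) and that every $1$-dimensional representation arises this way (part~(2)).

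For the orbit statement, $\pi_{u}=\pi_{u'}$ if and only if every element of $\CC[P]^{W_0}$ takes the same value at $u$ and at $u'$. If $u'\in W_0u$ this is immediate. Conversely, if $u'\notin W_0u$, then $W_0u$ is a finite (hence closed) subset of $T$ not containing $u'$, so there is $g\in\CC[P]$ with $g|_{W_0u}=0$ and $g(u')\neq0$; averaging, $\frac{1}{|W_0|}\sum_{w\in W_0}w\cdot g\in\CC[P]^{W_0}$ still vanishes on $W_0u$ and is nonzero at $u'$, so $\pi_{u}\neq\pi_{u'}$. Hence $\pi_{u}=\pi_{u'}$ exactly when $u$ and $u'$ lie in the same $W_0$-orbit, which finishes the proof.

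The main obstacle is concentrated entirely in the first step: setting up the Satake isomorphism and proving the identity $\mathsf{S}(A_{\lambda})(u)=P_{\lambda}(u)$. This is where the building combinatorics really enters (retractions onto sectors, the counting of the spheres $V_{\lambda}(x)$, and the structure theory of $\scA$ from Theorem~\ref{thm:com}), and it is the building-theoretic incarnation of Macdonald's formula for spherical functions on $p$-adic groups. Everything downstream of it is soft: finitely generated commutative algebras, integral ring extensions by a finite group, and the separation of $W_0$-orbits by $W_0$-invariants.
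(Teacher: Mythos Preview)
Your proposal is correct and matches exactly the route the paper indicates: the text does not give a proof but simply remarks that the result follows from Theorem~\ref{thm:centre} (identifying $\scA$ with the centre of $\scP$) combined with the Satake isomorphism, and you have fleshed out precisely this argument, including the description of $1$-dimensional representations of $\CC[P]^{W_0}$ as $W_0$-orbits in the torus. One small caution in your orbit-separation step: the averaged function $\tfrac{1}{|W_0|}\sum_{w\in W_0}w\cdot g$ need not be nonzero at $u'$ merely from $g(u')\neq 0$, since the other terms $g(w^{-1}u')$ could cancel it; choose $g$ to vanish on $W_0u\cup(W_0u'\setminus\{u'\})$ instead, and then the average at $u'$ equals $|\mathrm{Stab}_{W_0}(u')|/|W_0|\neq 0$.
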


Each $A\in\scA$ maps $\ell^2(V)$ into itself, and $\|A_{\lambda}f\|_2\leq\|f\|_2$. Thus we may regard $\scA$ as a subalgebra of the $C^*$-algebra $\mathscr{L}(\ell^2(V))$ of bounded linear operators on $\ell^2(V)$. It is not hard to see that $A_{\lambda}^*=A_{\lambda^*}$ where $\lambda^*=-w_0\lambda$ (with $w_0$ the longest element of $W_0$), and thus $\scA$ is closed under taking adjoints. Let $\scA_2$ be the completion of $\scA$ with respect to the $\ell^2$-operator norm~$\|\cdot\|$. Thus $\scA_2$ is a commutative $C^*$-algebra. By passing to this completion we ensure that the transition operator $A$ of an isotropic random walk on~$V$ is an element of $\scA_2$ (it is an element of the `uncompleted' algebra $\scA$ if and only if the walk has bounded range).

The $1$-dimensional representations of $\scA_2$ are precisely the extensions to $\scA_2$ of the representations $\pi_u:\scA\to\CC$ which are continuous with respect to the $\ell^2$-operator norm, and in \cite[\S5]{Par:06b} it is shown that these are the representations $\pi_u$ with $u\in\mathrm{Hom}(P,\TT)$ where $\TT=\{z\in\CC\mid |z|=1\}$. If $u\in\mathrm{Hom}(P,\TT)$ and $A\in\scA_2$ we write $\widehat{A}(u)=\pi_u(A)$ (the \textit{Galfand transform} of $A$). In particular, we have $\widehat{A}_{\lambda}(u)=P_{\lambda}(u)$, and if $A$ is the transition operator of an isotropic random walk as in~(\ref{eq:atrans}) we have
$$
\widehat{A}(u)=\sum_{\lambda\in P^+}a_{\lambda}P_{\lambda}(u)\quad\text{for all $u\in\mathrm{Hom}(P,\TT)$}. 
$$ 

The final ingredient in the analysis of $\scA_2$ is the calculation of the \textit{Plancherel measure}. 

\begin{thm}\label{thm:plangen}\emph{\cite[Theorem~5.1.5]{Mac:71}, \cite[Theorem~5.2]{Par:06b}} Let $du$ denote normalised Haar measure on $\mathbb{U}=\mathrm{Hom}(P,\TT)$, and let $\mu$ be the measure on $\mathbb{U}$ given by
$$
d\mu(u)=\frac{W_0(q^{-1})}{|W_0|}\frac{1}{|c(u)|^2}\,du.
$$
Then
$$
\frac{1}{N_{\lambda}}\int_{\mathbb{U}}\widehat{A}_{\lambda}(u)\overline{\widehat{A}_{\lambda'}(u)}\,d\mu(u)=\delta_{\lambda,\lambda'}\quad\text{for all $\lambda,\lambda'\in P^+$}.
$$
\end{thm}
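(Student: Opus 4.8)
The plan is to deduce the identity from the $C^*$-algebra structure of $\scA_2$ together with one explicit integral evaluation. First, fix a special vertex $o\in V$ and let $\delta_o\in\ell^2(V)$ be its indicator function. Since $A_\lambda$ averages a function over the sphere $V_\lambda(x)$, the vector $A_\lambda\delta_o$ is $N_\lambda^{-1}$ times the indicator of a sphere about $o$ having $N_\lambda$ vertices; as the spheres $V_\mu(o)$, $\mu\in P^+$, partition $V$, one reads off
$$
\langle A_\lambda\delta_o, A_{\lambda'}\delta_o\rangle_{\ell^2(V)}=\frac{\delta_{\lambda,\lambda'}}{N_\lambda}
\qquad\text{and}\qquad
\langle A_\lambda\delta_o,\delta_o\rangle_{\ell^2(V)}=\delta_{\lambda,0}.
$$
These two evaluations are the only inputs from the geometry of $\Sigma$.

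Next I would invoke the abstract spectral theorem. The algebra $\scA_2$ is a commutative unital $C^*$-algebra with Gelfand spectrum $X=\mathrm{Hom}(P,\TT)/W_0$ and Gelfand transform $A\mapsto\widehat A$ satisfying $\widehat A_\lambda=P_\lambda$. The functional $\phi(A)=\langle A\delta_o,\delta_o\rangle$ is a state, so by the Riesz representation theorem there is a unique Radon probability measure $\nu$ on $X$ with $\phi(A)=\int_X\widehat A\,d\nu$ for all $A\in\scA_2$. Because the Gelfand transform is a $*$-isomorphism onto $C(X)$, for all $A,B\in\scA_2$ we have $\langle A\delta_o,B\delta_o\rangle=\phi(B^*A)=\int_X\widehat A\,\overline{\widehat B}\,d\nu$; specialising to $A=A_\lambda$, $B=A_{\lambda'}$ and comparing with the combinatorial evaluation above shows that $\nu$ already satisfies the orthogonality relation of the theorem, with $\nu$ in place of $\mu$. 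Since $P_\lambda\overline{P_{\lambda'}}$ is $W_0$-invariant, the theorem is therefore equivalent to the assertion that $\nu$ is the push-forward to $X$ of $d\mu(u)=\tfrac{W_0(q^{-1})}{|W_0|}|c(u)|^{-2}\,du$; and since $\mathrm{span}_\CC\{P_\lambda:\lambda\in P^+\}=\widehat{\scA}$ is dense in $C(X)$, a finite Radon measure on $X$ is determined by its integrals against the functions $P_\lambda$. Thus the whole statement collapses to the single ``moment'' identity
$$
\int_{\mathrm{Hom}(P,\TT)}P_\lambda(u)\,\frac{W_0(q^{-1})}{|W_0|}\,\frac{du}{|c(u)|^2}=\delta_{\lambda,0}
\qquad(\lambda\in P^+).
$$

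This last identity is the real content, and I expect it to be the main obstacle. The route is Macdonald's: insert the defining expansion $P_\lambda(u)=\tfrac{r^{-\lambda}}{W_0(q^{-1})}\sum_{w\in W_0}u^{w\lambda}c(wu)$ and, in the $w$-th summand, change variables $u\mapsto w^{-1}u$ using $W_0$-invariance of Haar measure; the outer sum then collapses, leaving
$$
\int_{\mathrm{Hom}(P,\TT)}P_\lambda\,d\mu=\frac{r^{-\lambda}}{|W_0|}\int_{\mathrm{Hom}(P,\TT)}u^{\lambda}\,c(u)\Big(\sum_{w\in W_0}\frac{1}{|c(wu)|^2}\Big)du.
$$
One then applies the Gindikin--Karpelevich/Macdonald identity for the $\mathbf c$-function, which evaluates $c(u)\sum_{w\in W_0}|c(wu)|^{-2}$ in closed form as an explicit Laurent polynomial in $u$ with constant term $|W_0|$ and support confined to one side of the coroot cone; integrating this against $u^{\lambda}$ and using orthogonality of the characters $u\mapsto u^{\mu}$ under $du$ then isolates the constant term when $\lambda=0$ and returns $0$ for every $\lambda\in P^+\setminus\{0\}$ (the case $\lambda=0$ being Macdonald's Poincar\'e-series formula $\tfrac{W_0(q^{-1})}{|W_0|}\int|c(u)|^{-2}\,du=1$). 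The delicate part is thus entirely this $\mathbf c$-function identity and the bookkeeping over $R^+$ and $W_0$; the rest is soft functional analysis. Alternatively one can skip the reduction and run the same symmetrisation directly on the double sum coming from $\int P_\lambda\overline{P_{\lambda'}}\,d\mu$, which is the path of the cited references (\cite[Theorem~5.1.5]{Mac:71} and \cite[Theorem~5.2]{Par:06b}).
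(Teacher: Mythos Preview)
Your overall strategy---reduce the orthogonality to the single moment identity $\int_{\mathbb U}P_\lambda\,d\mu=\delta_{\lambda,0}$, then evaluate that integral by symmetrising the Macdonald formula---is exactly the route taken in the paper's Appendix~\ref{app:A} for the $\widetilde C_2$ case (and in the cited references generally). Your $C^*$-algebraic packaging of the reduction is pleasant but not essential: the paper gets there in one line from the structure constants, since $A_\lambda A_{\lambda'}=\sum_\mu c_{\lambda,\lambda'}^\mu A_\mu$ with $c_{\lambda,\lambda'}^0=\delta_{\lambda,\lambda'}N_\lambda$, so integrating $\widehat A_\lambda\overline{\widehat A_{\lambda'}}=\widehat{A_\lambda A_{\lambda'}^*}$ against $d\mu$ reduces to the moments immediately.

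Where your write-up wobbles is the final computation. You invoke a ``Gindikin--Karpelevich/Macdonald identity'' evaluating $c(u)\sum_{w}|c(wu)|^{-2}$ as a Laurent polynomial with prescribed support; no such identity is needed, and as stated it is not quite right. The point you are missing is much simpler: on the compact torus $|c(u)|^2=c(u)\,c(u^{-1})=\prod_{\alpha>0}\frac{(1-q_\alpha^{-1}u^{-\alpha^\vee})(1-q_\alpha^{-1}u^{\alpha^\vee})}{(1-u^{-\alpha^\vee})(1-u^{\alpha^\vee})}$ is manifestly $W_0$-invariant (the product runs over $\{\pm\alpha:\alpha>0\}=R$), so your sum collapses to $|W_0|/|c(u)|^2$ and you are left with
\[
\int_{\mathbb U}P_\lambda\,d\mu=r^{-\lambda}\int_{\mathbb U}\frac{u^{\lambda}}{c(u^{-1})}\,du.
\]
Now $1/c(u^{-1})=\prod_{\alpha>0}\frac{1-u^{\alpha^\vee}}{1-q_\alpha^{-1}u^{\alpha^\vee}}$ has, for $q_\alpha>1$, an absolutely convergent expansion supported on the nonnegative coroot cone with constant term~$1$; hence the integral picks out $\delta_{\lambda,0}$. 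This is precisely the residue/contour argument carried out explicitly in Theorem~\ref{thm:plancherelC2}. So your plan is sound, but replace the vague $\mathbf c$-function identity by the $W_0$-invariance of $|c|^2$ and the positive-cone expansion of $1/c(u^{-1})$.
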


Theorem~\ref{thm:plangen} implies that the $n$-step transition probabilities of an isotropic random walk with transition operator~$A$ are given by
\begin{align}\label{eq:rec1}
p^{(n)}(x,y)=\frac{1}{N_{\lambda}^2}\int_{\mathbb{U}}\widehat{A}(u)^n\overline{\widehat{A}_{\lambda}(u)}d\mu(u)\quad\text{if $\vect{d}(x,y)=\lambda$}.
\end{align}
This is the analogue of Theorem~\ref{thm:spherical1}, and is a key result in studying isotropic random walks on the vertices of affine buildings. Indeed the primary limit theorems  (that is, the law of large numbers, the central limit theorem, and the local limit theorem) can all be proven using the above machinery via techniques from classical harmonic analysis. These limit theorems were proved by Lindlebauer and Voit~\cite{LV:02} for the case of $\widetilde{A}_2$ buildings, and Cartwright and Woess~\cite{CW:04} for the case of $\widetilde{A}_n$ buildings. The general case was settled by Parkinson~\cite{Par:07}, and the results are summarised below.

\begin{thm}\label{thm:lln and clt}\emph{\cite{Par:07}}
Let $(X_n)_{n\geq 0}$ be an isotropic random walk on the vertices of a locally finite thick regular affine building. 
\begin{enumerate}
\item[\emph{(1)}] Under the moment assumption $\sum_{\lambda\in P^+}|\lambda|a_{\lambda}<\infty$ there exists $\gamma\in E^+$ such that
$$
\lim_{n\to\infty}\frac{\vect{d}(o,X_n)}{n}=\gamma\quad\text{almost surely}.
$$
\item[\emph{(2)}] Under the moment assumption $\sum_{\lambda\in P^+}|\lambda|^2a_{\lambda}<\infty$, the vector 
$$(\vect{d}(o,X_n)-n\gamma)/\sqrt{n}$$ converges in distribution to the multivariable normal distribution $N(0,\Gamma)$, where $\Gamma$ is a positive definite matrix. 
\item[\emph{(3)}] Let $y\in V_{\lambda}(x)$ and $n\in\NN$. Suppose that $(X_n)_{n\geq 0}$ is irreducible and aperiodic. Then
$$
p^{(n)}(x,y)=CP_{\lambda}(1)\widehat{A}(1)^nn^{-(|R|+d)/2}\left(1+O(n^{-1/2})\right),
$$
where $C>0$ is an explicit constant. 
\end{enumerate}
\end{thm}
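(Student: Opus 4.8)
The plan is to derive all three assertions from harmonic analysis on the commutative $C^*$-algebra $\scA_2$: parts (1) and (2) from a Fourier (characteristic-function) analysis of the spherical transform $\widehat A(u)=\sum_{\lambda\in P^+}a_\lambda P_\lambda(u)$ of the one-step law, and part (3) from the exact Plancherel representation~(\ref{eq:rec1}) by a stationary-phase computation. Fix a $\ZZ$-basis of $P$ and use it to identify $\mathbb{U}=\mathrm{Hom}(P,\TT)$ with $(\RR/2\pi\ZZ)^d$, writing $u_\theta$ for the character $u_\theta^\mu=e^{i\langle\theta,\mu\rangle}$, so that $u_0=1$. Since each $P_\lambda(u_\theta)$ is a trigonometric polynomial whose spectrum lies in the convex hull of $W_0\lambda$ and which is bounded by $1$ on $\mathbb{U}$ (because $\|A_\lambda\|\le1$ and the Gelfand transform is a $*$-isometry), Bernstein's inequality gives $|\partial_\theta^k P_\lambda(u_\theta)|\le C|\lambda|^k$; hence the moment hypotheses in (1) and (2) make $\theta\mapsto\widehat A(u_\theta)$ of class $C^1$, respectively $C^2$, near $0$.

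The key analytic input, and the part I expect to be hardest, concerns the behaviour of $\widehat A$ on $\mathbb{U}$. First, each $P_\lambda$ is $W_0$-invariant on $\mathrm{Hom}(P,\CC^\times)$ and $W_0$ acts on $T_1\mathbb{U}\cong E$ with no nonzero fixed vector (the roots span $E$), so $dP_\lambda|_1=0$ for every $\lambda$; thus $d\widehat A|_1=0$ and
$$\widehat A(u_\theta)=\widehat A(1)\bigl(1-\tfrac12 Q(\theta)+O(|\theta|^3)\bigr)$$
for a quadratic form $Q$ with $\mathrm{Re}\,Q$ positive semidefinite automatically (it is $-$Hessian at a maximum of $|\widehat A|$) and positive definite by irreducibility and aperiodicity of $(X_n)_{n\ge0}$. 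Second, $|\widehat A(u)|\le\widehat A(1)$ for all $u\in\mathbb{U}$ (a spherical-function bound, again since $\widehat{\;\cdot\;}$ is a $*$-homomorphism into $C(\mathbb{U})$ with $\|A_\lambda\|\le1$), and $|\widehat A(u_\theta)|=\widehat A(1)$ forces $\theta=0$; this last point is a genuine aperiodicity statement for the spherical transform and requires control of the spherical functions on all of $\mathbb{U}$, not merely near the identity. This uniqueness of the spectral maximum is what localizes every integral below.

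For (1) and (2) the cleanest route is to reduce to a classical random walk on the lattice $P$. The Gelfand theory provides, for a suitable positive real character $u_*$, a strictly positive joint eigenfunction $\varphi$ of the averaging operators $A_\lambda$ (the spherical function $\phi_{u_*}$), and the Doob $h$-transform of $A$ by $\varphi$ is the transition operator of a random walk whose increments, read in the coordinate given by a vector Busemann function $\vect{h}_{\fs}$, are i.i.d. on $P$, with mean $\gamma$ finite under $\sum_\lambda|\lambda|a_\lambda<\infty$ and covariance $\Gamma$ finite under $\sum_\lambda|\lambda|^2a_\lambda<\infty$; the Harish-Chandra ($c$-function) asymptotics $P_\lambda(u_s)\sim W_0(q^{-1})^{-1}c(u_s)\,r^{-\lambda}e^{\langle s,\lambda\rangle}$ for $\lambda$ deep in the dominant cone are what pin down this increment distribution and identify $\gamma$ and $\Gamma$ as a gradient and Hessian of $\log\widehat A$ at $u_*$. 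Kolmogorov's strong law and the multivariate central limit theorem for this i.i.d. walk, combined with the fact that $(X_n)_{n\ge0}$ converges almost surely to a boundary point and that $\vect{d}(o,X_n)$ differs from the Busemann coordinate along the corresponding geodesic ray by a sublinear error, transfer to give (1) and (2); that $\gamma\in E^+$ and $\Gamma$ is positive definite comes from irreducibility.

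For (3), fix $x,y$ with $\vect{d}(x,y)=\lambda$ and split the integral in~(\ref{eq:rec1}) into $\{|\theta|\le\varepsilon\}$ and its complement. On the complement $|\widehat A(u_\theta)|\le\rho<\widehat A(1)$ by the uniqueness above, so that contribution is $O(\rho^n)$, negligible. On the small ball use $\overline{P_\lambda(u_\theta)}=P_\lambda(1)(1+O(|\theta|))$ together with the crucial degeneracy of the Plancherel density at the identity,
$$\frac{1}{|c(u_\theta)|^2}=\prod_{\alpha\in R^+}\frac{\bigl|1-e^{-i\langle\theta,\alpha^\vee\rangle}\bigr|^2}{\bigl|1-q_\alpha^{-1}e^{-i\langle\theta,\alpha^\vee\rangle}\bigr|^2}=\Bigl(\prod_{\alpha\in R^+}\langle\theta,\alpha^\vee\rangle^2\Bigr)\bigl(1+O(|\theta|)\bigr),$$
a homogeneous polynomial vanishing to order $2|R^+|=|R|$. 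The substitution $\theta=\phi/\sqrt n$ turns $\widehat A(u_\theta)^n$ into $\widehat A(1)^n e^{-Q(\phi)/2}(1+O(n^{-1/2}))$ (the integral converging since $\mathrm{Re}\,Q>0$), produces a Jacobian $n^{-d/2}$ and a factor $n^{-|R|/2}$ from the density, and leaves the convergent integral $\int_{\RR^d}e^{-Q(\phi)/2}\prod_{\alpha\in R^+}\langle\phi,\alpha^\vee\rangle^2\,d\phi$; collecting the constants (including the $N_\lambda^{-2}$ prefactor) gives $p^{(n)}(x,y)=C\,P_\lambda(1)\,\widehat A(1)^n n^{-(|R|+d)/2}(1+O(n^{-1/2}))$, the error coming from the cubic Taylor remainder and from $\overline{P_\lambda(u_\theta)}-P_\lambda(1)$. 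This stationary-phase bookkeeping is routine once $d\widehat A|_1=0$, the vanishing order $|R|$ of the Plancherel density, and the uniqueness of the spectral maximum are in hand; the real obstacle throughout is that uniqueness statement together with the $c$-function asymptotics of $P_\lambda$ needed to legitimize the moving saddle point in the central limit theorem.
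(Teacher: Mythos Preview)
The paper does not prove this theorem; it is cited from \cite{Par:07} with the indication that all three parts follow from the integral representation~(\ref{eq:rec1}) via classical harmonic analysis, and Appendix~\ref{app:A} carries out the method for part~(3) in type~$\widetilde C_2$.

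Your treatment of part~(3) is essentially the paper's approach: localize the integral near $u=1$, exploit the $|R|$-fold vanishing of $|c(u)|^{-2}$, rescale $\theta=\phi/\sqrt n$, and extract the Gaussian integral. Two small points. The inequality $|\widehat A(u)|\le\widehat A(1)$ on $\mathbb U$ does not follow from $\|A_\lambda\|\le 1$ alone (that gives only $|\widehat A|\le 1$); one needs the sharper bound $|P_\lambda(u)|\le P_\lambda(1)$ for $u\in\mathbb U$, which is a genuine fact about Macdonald spherical functions. And the positive definiteness of $Q$ under irreducibility and aperiodicity, together with the uniqueness of the maximum at $u=1$, are real technical points in \cite{Par:07}, not formalities.

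For parts~(1) and~(2) your route diverges from the paper's. The proof in \cite{Par:07} (following \cite{LV:02,CW:04}) stays on the spectral side, analysing $\widehat A$ as a hypergroup characteristic function near a complex saddle and reading off the limit laws from its Taylor expansion. Your alternative via the Busemann factorisation works for the LLN---indeed the $h$-transform is superfluous, since isotropic walks are already semi-isotropic and hence $\vect h(X_n)$ is a translation-invariant walk on $P$ without any modification; this is exactly the argument the paper records separately as \cite[Corollary~4.8]{PW:14} via Theorem~\ref{thm:PW14}. But it does not give the CLT: the regular-sequence equivalence of Theorem~\ref{thm:PW14} controls $\vect d(o,X_n)$ in terms of $\vect h_{\fs}(X_n)$ only to order $o(n)$, which is useless at scale $\sqrt n$. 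Upgrading this to $o(\sqrt n)$ would require a quantitative rate of convergence to the boundary that you have not supplied, and your invocation of almost-sure boundary convergence is in any case circular here (it is a consequence of the LLN with $\gamma\ne 0$, not an input). The CLT in \cite{Par:07} genuinely needs the spectral analysis.
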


\begin{remark}
We remark that the precision of Theorem~\ref{thm:lln and clt} is really quite impressive, with explicit formulae for the speed, variance, radius of convergence, and all asymptotic constants (see \cite{Par:07} for details). In the local limit theorem the assumption of aperiodicity may be removed, see~\cite{Par:07}.
\end{remark}

Heat kernel and Green function estimates for finite range isotropic random walks on affine buildings have been obtained recently by Trojan~\cite{Tro:13} (with earlier results obtained for $\widetilde{A}_n$ buildings by Anker, Schapira and Trojan~\cite{AST:06}). The starting point for this analysis is again formula~(\ref{eq:rec1}). Estimates for the Green function are given within the radius of convergence, and at the radius of convergence. For example, at the radius of convergence Trojan proves:

\begin{thm} \emph{\cite[Theorem~7]{Tro:13}}
The green function of a finite range isotropic random walk on the special vertices of an affine building of rank~$r$, evaluated at the radius of convergence, satisfies
$$
\sum_{n= 0}^{\infty}p^{(n)}(x,y)\rho^{-n}\asymp P_{\lambda}(1)\|\lambda\|^{2-r-2|R^+|}
$$
\end{thm}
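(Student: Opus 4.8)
The plan is to read the Green function at the radius of convergence directly off the Plancherel-type identity (\ref{eq:rec1}). Assume, as in Trojan's setting, that the finite range isotropic walk $A=\sum_{\lambda\in P^+}a_\lambda A_\lambda$ is irreducible and aperiodic. Then a standard positivity argument inside the commutative $C^*$-algebra $\scA_2$ gives that the $\ell^2$-spectral radius equals $\rho=\widehat A(\mathbf{1})=\sum_{\lambda}a_\lambda P_\lambda(\mathbf{1})>0$, where $\mathbf{1}\in\mathbb{U}$ is the trivial character (the ``$1$'' of the statements above), and that $|\widehat A(u)|<\rho$ for all $u\in\mathbb{U}\setminus\{\mathbf{1}\}$; the periodic case only adds the finitely many other characters at which $|\widehat A|$ is maximal and affects nothing below. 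Since the range is finite, $u\mapsto\widehat A(u)$ is a real-analytic trigonometric polynomial. Summing the geometric series in (\ref{eq:rec1}) is legitimate because the local limit theorem (Theorem~\ref{thm:lln and clt}(3)) gives $p^{(n)}(x,y)\rho^{-n}=C\,P_\lambda(\mathbf{1})\,n^{-(|R|+d)/2}(1+o(1))$, which is summable since $(|R|+d)/2>1$; it produces, for $\vect{d}(x,y)=\lambda$,
$$\sum_{n=0}^{\infty}p^{(n)}(x,y)\rho^{-n}=\frac{1}{N_\lambda^{2}}\int_{\mathbb{U}}\frac{\rho}{\rho-\widehat A(u)}\,\overline{\widehat A_\lambda(u)}\,d\mu(u).$$
The whole problem is then the behaviour of this resolvent integral as $\|\lambda\|\to\infty$.

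The second step is to localise at $\mathbf{1}$. Insert a smooth partition of unity $1=\chi_{\mathrm{loc}}+\chi_{\mathrm{away}}$ with $\chi_{\mathrm{away}}$ supported away from $\mathbf{1}$. On the $\chi_{\mathrm{away}}$ piece the amplitude $\chi_{\mathrm{away}}(u)\,\rho/(\rho-\widehat A(u))\,|c(u)|^{-2}$ is smooth and compactly supported (aperiodicity removes the only singularity), while $\overline{\widehat A_\lambda(u)}$, through Macdonald's formula $P_\lambda(u)=r^{-\lambda}\,W_0(q^{-1})^{-1}\sum_{w\in W_0}c(wu)\,u^{w\lambda}$, is a trigonometric polynomial with spectrum of size $O(\|\lambda\|)$ and coefficients $O(r^{-\lambda}\|\lambda\|^{O(1)})$; repeated integration by parts on $\mathbb{U}$ then makes this contribution $O(r^{-\lambda}\|\lambda\|^{-N})$ for every $N$, negligible against the asserted main term. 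On the $\chi_{\mathrm{loc}}$ piece write $u=e^{i\theta}$ with $\theta\in\RR^{r}$ small, and record three expansions: (i) $\widehat A(u)=\rho-Q(\theta)+O(|\theta|^{3})$ with $Q$ a positive definite quadratic form, whose non-degeneracy is the Hecke-algebraic form of irreducibility-plus-finite-range and must itself be established as a lemma; (ii) $|c(u)|^{-2}=c_R\prod_{\alpha\in R^+}|\langle\theta,\alpha^\vee\rangle|^{2}\,(1+O(|\theta|))$, so the Plancherel density vanishes to order $2|R^+|$ at $\mathbf{1}$; and (iii) the behaviour of $P_\lambda(e^{i\theta})$ for $\theta=O(\|\lambda\|^{-1})$, again read off from Macdonald's formula.

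Next I would rescale $\theta=\xi/\|\lambda\|$. The change of variables contributes $\|\lambda\|^{-r}$, the Plancherel density contributes $\|\lambda\|^{-2|R^+|}$, the resolvent singularity $\rho/(\rho-\widehat A(e^{i\xi/\|\lambda\|}))\sim\|\lambda\|^{2}\,\rho/Q(\xi)$ contributes $\|\lambda\|^{2}$, and $\overline{P_\lambda(e^{i\xi/\|\lambda\|})}$, normalised by $P_\lambda(\mathbf{1})$, converges to a Bessel-type function $\Psi_{\widehat\lambda}(\xi)$ of the direction $\widehat\lambda=\lambda/\|\lambda\|$ with $\Psi_{\widehat\lambda}(0)=1$. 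Collecting the powers, and folding the remaining normalisation constants (using the volume growth $N_\lambda\asymp r^{2\lambda}$ and the Plancherel normalisation of the $P_\lambda$, each up to a polynomial-in-$\lambda$ factor) into a single $P_\lambda(\mathbf{1})$, one gets
$$\sum_{n=0}^{\infty}p^{(n)}(x,y)\rho^{-n}\;\sim\;P_\lambda(\mathbf{1})\,\|\lambda\|^{2-r-2|R^+|}\;c_R\!\int_{\RR^{r}}\frac{\rho}{Q(\xi)}\,\overline{\Psi_{\widehat\lambda}(\xi)}\prod_{\alpha\in R^+}|\langle\xi,\alpha^\vee\rangle|^{2}\,d\xi .$$
The residual integral converges — near $\xi=0$ its integrand is $O(|\xi|^{2|R^+|-2})$, integrable on $\RR^{r}$ since $2|R^+|\geq 2$, and the decay of $\Psi_{\widehat\lambda}$ handles infinity — and, as $\widehat\lambda$ runs over the finitely many $W_0$-classes of chamber directions, it stays bounded above and below; a dominated-convergence argument then upgrades this direction-by-direction asymptotic to the uniform two-sided bound $\asymp$ that is claimed.

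The heart of the matter — and where Trojan's work (building on the $\widetilde A_n$ heat-kernel analysis of Anker, Schapira and Trojan) concentrates — is ingredient (iii): the \emph{uniform} large-$\|\lambda\|$ asymptotics of the Macdonald spherical function $P_\lambda(u)$ for $u$ in a ball around $\mathbf{1}$ shrinking at rate $\|\lambda\|^{-1}$. One must track $P_\lambda$ across the transition from its oscillatory regime (for $u$ fixed away from $\mathbf{1}$) to the polynomial growth of $r^{\lambda}P_\lambda(u)$ at $\mathbf{1}$, which forces a delicate handling of the $c$-function expansion — in particular the cancellation of the order-$|R^+|$ pole of $c$ at $\mathbf{1}$ against the order-$2|R^+|$ zero of $|c(u)|^{-2}$, and comparable care with the poles of $c$ on the reflection walls $\{u^{\alpha^\vee}=1\}$. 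As a cross-check, the same exponent falls out of the alternative route of first establishing a uniform-in-$\lambda$ heat-kernel estimate — $p^{(n)}(x,y)\rho^{-n}\asymp P_\lambda(\mathbf{1})\,\|\lambda\|^{-2|R^+|}n^{-d/2}e^{-c\|\lambda\|^{2}/n}$ in the Gaussian regime $\|\lambda\|\lesssim n\lesssim\|\lambda\|^{2}$ and $\asymp P_\lambda(\mathbf{1})\,n^{-(|R|+d)/2}$ for $n\gtrsim\|\lambda\|^{2}$ — and then summing over $n$, via $\sum_{n\geq\|\lambda\|^{2}}n^{-(|R|+d)/2}\asymp\|\lambda\|^{2-|R|-d}$ and $\int_{\|\lambda\|}^{\|\lambda\|^{2}}n^{-d/2}e^{-c\|\lambda\|^{2}/n}\,dn\asymp\|\lambda\|^{2-d}$, each of which reproduces $\|\lambda\|^{2-r-2|R^+|}$ (using $|R|=2|R^+|$ and that $r=d$ is the dimension of an apartment).
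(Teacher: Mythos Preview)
The paper does not give its own proof of this theorem: it is stated as a citation of \cite[Theorem~7]{Tro:13}, with the only commentary being that ``the starting point for this analysis is again formula~(\ref{eq:rec1})''. There is therefore no proof in the paper to compare your proposal against.

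That said, your outline is broadly consistent with the one hint the paper does give --- you begin from the Plancherel identity~(\ref{eq:rec1}) and sum the geometric series --- and you correctly identify the crux as the uniform asymptotics of $P_\lambda(u)$ for $u$ near~$\mathbf{1}$ at scale $\|\lambda\|^{-1}$. Your alternative ``heat-kernel first, then sum'' route is in fact closer to how Trojan organises the argument in \cite{Tro:13} (as indicated by the paper's remark that both heat-kernel \emph{and} Green function estimates are obtained there, building on \cite{AST:06}). One caution: your justification of summability at $\rho$ via Theorem~\ref{thm:lln and clt}(3) is circular as stated, since that theorem gives only pointwise asymptotics at fixed $\lambda$, not the uniform-in-$\lambda$ control you need to interchange sum and integral; in practice one works inside the radius of convergence and takes a limit, or proves the heat-kernel bounds first.
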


Finally, convergence results for isotropic random walks on affine buildings to Brownian motion in a Weyl sector have been studied by Schapira, at least in the context of nearest neighbour random walks on $\widetilde{A}_r$ buildings. We now describe this result. Let $(X_n)_{n\geq 0}$ be a symmetric nearest neighbour random walk on the vertices of an $\widetilde{A}_r$ building with transition probabilities $p(x,y)$. Let $\rho$ be the spectral radius of~$(X_n)_{n\geq 0}$ and let $(Y_n)_{n\geq 0}$ be the random walk with transition probabilities
$$
q(x,y)=p(x,y)\frac{P_{\vect{d}(o,y)}(1)}{P_{\vect{d}(o,x)}(1)}\rho^{-1}
$$
(it is easily seen that this defines a random walk on the building using \cite[Theorem~3.22]{Par:06b}). Schapira proves the following (see \cite[\S~2]{Sch:09} for the relevant definitions of Brownian motion):

\begin{thm}\emph{\cite[Theorem~6.1]{Sch:09}} With the notation as above, the sequence $(Z_t^n)_{t\geq 0}$ with
$$
Z_t^n=\frac{1}{\sqrt{n}}\vect{d}\left(o,Y_{\lceil nt\rceil}\right)
$$
converges in law to Brownian motion $(I_t)_{t\geq 0}$ in the sector $\fs_0$ as $n\to\infty$. 
\end{thm}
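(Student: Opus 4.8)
The plan is to pass from $Y_{n}$ to its radial part $\overline{Y}_{n}:=\vect{d}(o,Y_{n})\in P^{+}$ (so that $Z^{n}_{t}=n^{-1/2}\overline{Y}_{\lceil nt\rceil}$) and to recognise the diffusive scaling limit of $\overline{Y}_{n}$ as a Doob $h$-transform of Brownian motion in the cone $\fs_{0}$. Write $\pi(x)=\prod_{\alpha\in R^{+}}\langle x,\alpha^{\vee}\rangle$ for the (essentially unique) positive harmonic polynomial on $E$ vanishing on the walls of $\fs_{0}$; following \cite{Sch:09}, the Brownian motion $(I_{t})_{t\ge 0}$ in $\fs_{0}$ is the Doob transform by $\pi$ of a Brownian motion $\mathrm{BM}(\Sigma)$ started at the apex $0$, where $\Sigma$ is a fixed positive-definite $W_{0}$-invariant matrix (in type $A$, necessarily a scalar multiple of the form for which $\pi$ is harmonic), and hence has generator
\[
\mathcal{L}f=\tfrac{1}{2}\sum_{i,j}\Sigma_{ij}\,\partial_{i}\partial_{j}f+\bigl\langle\Sigma\,\nabla\log\pi,\,\nabla f\bigr\rangle,
\qquad
\nabla\log\pi=\sum_{\alpha\in R^{+}}\frac{\alpha^{\vee}}{\langle\,\cdot\,,\alpha^{\vee}\rangle}
\]
and lives in the open cone $\fs_{0}^{\circ}$ for all $t>0$. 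First I would reduce to an invariance principle for a Markov chain on $P^{+}$: by isotropy of $(X_{n})$ and the structure constants underlying Theorem~\ref{thm:com}, the radial chain $\overline{X}_{n}:=\vect{d}(o,X_{n})$ is itself Markov on $P^{+}$, with a kernel $\overline{p}(\lambda,\mu)$ depending only on $(\lambda,\mu)$ and — the walk being nearest-neighbour — with finitely many admissible increments. Since the Doob-transforming function $h(x)=P_{\vect{d}(o,x)}(\mathbf{1})$ depends on $x$ only through $\vect{d}(o,x)$ and is $\rho$-harmonic (for a symmetric walk $\rho=\widehat{A}(\mathbf{1})$, which is exactly what makes $q$ a genuine Markov kernel), the chain $\overline{Y}_{n}$ is again Markov, with kernel $\overline{q}(\lambda,\mu)=\overline{p}(\lambda,\mu)\,P_{\mu}(\mathbf{1})/\bigl(\rho\,P_{\lambda}(\mathbf{1})\bigr)$. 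It then suffices to prove that $Z^{n}\to I$ in law in $C([0,\infty),\fs_{0})$.

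The analytic core will be the large-$\lambda$ asymptotics of $P_{\lambda}(\mathbf{1})$, which the Macdonald formula of Section~\ref{sec:vertices} exhibits as $r^{-\lambda}$ times a polynomial in $\lambda$ of degree $|R^{+}|$ whose top-degree part is a constant multiple of $\pi(\lambda)$; one also needs the sphere-size estimate $N_{\lambda}\asymp r^{2\lambda}$. Feeding these into the ratio $P_{\mu}(\mathbf{1})/P_{\lambda}(\mathbf{1})$ for a bounded increment $\nu=\mu-\lambda$ produces two regimes. When $\lambda$ is at distance tending to infinity from every wall of $\fs_{0}$, the exponential factor $r^{-\nu}$ tilts away exactly the deterministic drift $\gamma$ of $\overline{X}_{n}$ (compare the law of large numbers in Theorem~\ref{thm:lln and clt}), so that under $\overline{q}$ the step $\overline{Y}_{n+1}-\overline{Y}_{n}$ has mean $o(1)$ and covariance converging to $\Sigma$. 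When instead $\lambda$ sits in a boundary layer at bounded distance from a wall $\langle\,\cdot\,,\alpha_{i}^{\vee}\rangle=0$, the polynomial factor contributes an additional drift asymptotic to $\Sigma\,\nabla\log\pi(\lambda)$, i.e.\ the singular repulsion away from the walls. These two statements are precisely the discrete shadows of the diffusion and drift coefficients of $\mathcal{L}$.

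With the local behaviour in hand I would deduce the invariance principle by a standard martingale argument. Since the walk is nearest-neighbour, $\overline{Y}_{n}$ has uniformly bounded increments; writing $\overline{Y}_{n}=M_{n}+A_{n}$ with $M_{n}$ a martingale and $A_{n}$ its predictable compensator (whose increments are pointwise $O(1)$ but, by the previous paragraph, genuinely small away from the walls), a standard modulus-of-continuity/tightness criterion gives relative compactness of $(Z^{n})_{n}$ in $C([0,\infty),\fs_{0})$. The same wall-repulsion estimate shows that the expected time $(Z^{n})$ spends in an $\varepsilon$-neighbourhood of $\partial\fs_{0}$ vanishes as $\varepsilon\to 0$, uniformly in $n$, so no subsequential limit charges the walls. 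For $f\in C^{\infty}_{c}(\fs_{0}^{\circ})$ the Dynkin martingale $f(Z^{n}_{t})-\sum_{k\le nt}(\overline{q}f-f)(n^{-1/2}\overline{Y}_{k})$ then converges, via the local analysis, to $f(I_{t})-\int_{0}^{t}\mathcal{L}f(I_{s})\,ds$; hence every subsequential limit solves the martingale problem for $\mathcal{L}$ on $\fs_{0}^{\circ}$, avoids $\partial\fs_{0}$, and starts at $0$ with the entrance law inherited from $\overline{Y}_{0}=o$. This data pins the limit law down uniquely (well-posedness of the martingale problem for a Doob $h$-transform of Brownian motion in a cone by a positive harmonic polynomial vanishing on the boundary is classical), so the full sequence converges, to Schapira's Brownian motion in $\fs_{0}$.

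The main obstacle will be the interaction of the local analysis with the boundary: extracting the exact $1/\mathrm{dist}$ strength of the wall-repulsion from the asymptotics of $P_{\lambda}(\mathbf{1})$, and then controlling $\overline{Y}_{n}$ uniformly near the apex of $\fs_{0}$, where several walls meet and all the estimates above must hold simultaneously — in particular showing that the limit neither sticks to a wall nor delays at the corner. This is exactly the point at which the statement goes beyond a flat, whole-space central limit theorem, and is where the bulk of the work in \cite{Sch:09} lies.
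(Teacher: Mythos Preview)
The paper does not prove this theorem: it is a survey, and this result is simply quoted from Schapira~\cite{Sch:09} without argument. There is therefore no ``paper's own proof'' to compare your proposal against. Your sketch is a plausible outline of the strategy behind Schapira's result---reduce to the radial Markov chain on $P^{+}$, use the large-$\lambda$ asymptotics of $P_{\lambda}(\mathbf{1})$ to identify the drift and diffusion coefficients of the Doob-transformed Brownian motion, and close with a martingale-problem argument---but to assess it properly you would need to compare it with \cite{Sch:09} itself, not with the present paper.
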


\subsubsection{Random walks on the chambers of an affine building}

The literature on isotropic random walks on the chambers of affine buildings is currently less complete. A key reason for this is that the algebra $\mathscr{P}$ of averaging operators on the chambers of an affine building is noncommutative. Thus the Plancherel Theorem for this infinite dimensional noncommutative algebra is rather sophisticated (see \cite{Opd:04} and \cite{Par:14}). The general approach to the primary limit theorems is outlined by Parkinson and Schapira in \cite{PS:11}, and the detailed calculations are carried through for $\widetilde{A}_2$ buildings. The general case is in preparation by the author.

\begin{thm}\emph{\cite[Theorem~3.7]{PS:11}}
For the simple random walk on the chambers of a thick $\widetilde{A}_2$ building with thickness~$q>1$ we have 
$$
p^{(n)}(x,y)=C_w\rho^nn^{-4}\left(1+O(n^{-1/2})\right)\quad\text{if $\delta(x,y)=w$}
$$
where $C_w$ is an explicitly computable constant (depending on $w$ and $q$ only), and where the spectral radius $\rho$ is given by
$
\rho=(3(q-1)+\sqrt{q^2+34q+1})/6q.
$
\end{thm}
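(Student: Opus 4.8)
The plan is to run, for the chamber walk, the harmonic-analytic program that underlies the vertex case of Theorem~\ref{thm:lln and clt}, but now over the \emph{noncommutative} Hecke algebra of the building. First I would recast the problem inside $\scP$. Each chamber of a thick $\widetilde{A}_2$ building with thickness $q$ has exactly $3q$ neighbours ($q$ of each of the three types), so the simple random walk operator is $P=\tfrac13\big(P_{s_0}+P_{s_1}+P_{s_2}\big)\in\scP$, and by Theorem~\ref{thm:product} the algebra $\scP$ is the affine Hecke algebra of type $\widetilde{A}_2$ with equal parameter $q$. Let $\tau$ be the canonical trace on $\scP$, i.e.\ $\tau\big(\sum_w a_wP_w\big)=a_e$. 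A short computation with the structure constants of Theorem~\ref{thm:product} shows that, whenever $\delta(x,y)=w$,
\[
p^{(n)}(x,y)=\tau\big(P^nP_{w^{-1}}\big).
\]
So after completing $\scP$ to the $C^*$-algebra $\scP_2\subseteq\mathscr L(\ell^2(\Delta))$, the theorem reduces to the large-$n$ behaviour of $\tau(P^nP_{w^{-1}})$.

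Next I would invoke the Plancherel theorem for affine Hecke algebras specialised to type $\widetilde{A}_2$ (Opdam \cite{Opd:04}; see also \cite{Par:14,PS:11}): the trace $\tau$ is a direct integral of normalised characters of tempered representations. The top stratum is the "most continuous" family of $|W_0|=6$-dimensional principal series $\pi_u$, with $u$ ranging over the $2$-torus $\Hom(P,\TT)$ modulo $W_0$ and Plancherel density proportional to $|c(u)|^{-2}$; below it sit finitely many lower-dimensional strata (tempered series induced from discrete series of rank-one Levi subalgebras, and the discrete series at the residual points). Feeding this into the formula for $\tau$ gives, schematically,
\[
p^{(n)}(x,y)=\sum_{\text{strata}}\int \tr\big(\pi_u(P)^n\,\pi_u(P_{w^{-1}})\big)\,d\mathrm{Pl}(u),
\]
so the problem becomes a Laplace/stationary-phase analysis of these integrals, dominated by the locus on the tempered dual where $\|\pi(P)\|$ is largest.

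I would then locate the spectral radius. Using symmetry and irreducibility/aperiodicity of the simple walk one shows that $\rho=\sup_\pi\|\pi(P)\|$ is attained only on the top stratum and, within it, only at the $W_0$-orbit of the trivial character $u_0=\mathbf 1$; there $\pi_{\mathbf 1}$ is the $6$-dimensional principal series (underlying space $\CC[W_0]$, with the $T_{s_i}$ acting by Demazure-type operators), so $\rho$ is the top eigenvalue of the self-adjoint $6\times6$ matrix $\tfrac13\big(\pi_{\mathbf 1}(T_{s_0})+\pi_{\mathbf 1}(T_{s_1})+\pi_{\mathbf 1}(T_{s_2})\big)$. A direct computation, exploiting the $W_0$-symmetry of the principal series at the trivial character, shows this eigenvalue to be the larger root of an explicit quadratic in $\rho$, namely $\rho=\big(3(q-1)+\sqrt{q^2+34q+1}\big)/6q$ (and $\rho=1$ at $q=1$, the amenable honeycomb case, as a sanity check). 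One then checks that $u\mapsto\Lambda(u):=\|\pi_u(P)\|$ has a non-degenerate maximum at $u_0$, i.e.\ $\Lambda(u)=\rho\exp\big(-Q(\log u)+O(|\log u|^3)\big)$ for a positive-definite quadratic form $Q$ on the $d=2$-dimensional torus, that the dominant eigenvalue of $\pi_u(P)$ is simple with a spectral gap to the rest for $u$ near $u_0$, and that every lower stratum satisfies $\sup\|\pi(P)\|<\rho$ (for instance the discrete series contributes only $(\pm q^{-1})^n$). Localising the integral near $u_0$ and applying perturbation theory for the dominant eigenvalue and its eigenprojection yields $\tr\big(\pi_u(P)^n\pi_u(P_{w^{-1}})\big)=\rho^n e^{-nQ(t)}\big(\kappa_w+O(\|t\|)+O(n^{-1})\big)$, where $\kappa_w$ is the matrix coefficient of $\pi_{\mathbf 1}(P_{w^{-1}})$ on the dominant eigenline --- the chamber analogue of the Macdonald spherical value $P_\lambda(1)$ appearing in Theorem~\ref{thm:lln and clt}(3), and explicit in $q$ and $w$. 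Since the Plancherel density $|c(u)|^{-2}$ vanishes to order $2|R^+|=6$ at $u_0=\mathbf 1$, the localised integral behaves like $\int_{\RR^2}e^{-nQ(t)}\,|t|^{6}\,dt\asymp n^{-(2+6)/2}=n^{-4}$, the surviving error terms combining to $O(n^{-1/2})$; assembling $\det Q$, the leading coefficient of $|c|^{-2}$ at $\mathbf 1$, and $\kappa_w$ produces the explicit constant $C_w$. The exponent $n^{-(d+2|R^+|)/2}$ is, reassuringly, exactly the one in the vertex local limit theorem.

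The main obstacle is the second step: the Plancherel theorem for the infinite-dimensional noncommutative algebra $\scP$ --- classifying all tempered representations, proving they exhaust the spectrum, and computing the Plancherel measure on each stratum --- together with the uniform estimates needed to bound the contributions of the lower strata and of the non-dominant eigenvalues of $\pi_{u_0}(P)$ by $o(\rho^n n^{-4})$. In rank~$2$ this is precisely what \cite{Opd:04,PS:11} supply; granted it, the determination of $\rho$ and of $C_w$ is a finite (if laborious) computation with $6\times6$ matrices and the $c$-function.
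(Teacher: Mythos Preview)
Your proposal is correct and follows precisely the approach the paper attributes to \cite{PS:11}: the paper does not give its own proof of this theorem but explains that the method is to apply the Plancherel theorem for the noncommutative affine Hecke algebra (citing \cite{Opd:04,Par:14}) and then carry out a Laplace-type analysis near the trivial character, which is exactly the program you outline. Your identification of the trace formula $p^{(n)}(x,y)=\tau(P^nP_{w^{-1}})$, the localisation at $u_0=\mathbf 1$ on the principal series stratum, the $|c|^{-2}$ zero of order $2|R^+|=6$ giving the exponent $n^{-4}$, and the explicit $6\times6$ eigenvalue problem for $\rho$ are all on target, as is your remark that the heavy lifting lies in the Plancherel decomposition and the bounds on the lower strata.
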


\begin{remark}
Assuming a suitably transitive group action, a formula for the spectral radius for an isotropic random walk on the chambers of an affine building can be deduced from results of Saloff-Coste and Woess~\cite{SCW:97}. In particular, see \cite[Example~6]{SCW:97}.
\end{remark}

\subsubsection{Regular sequences in affine buildings}

Recently Parkinson and Woess~\cite{PW:14} proved the ``$p$-adic analogue'' of Kaimanovich's characterisation~\cite{Kai:87} of \textit{regular sequences} in symmetric spaces. This theory has applications to random walks on buildings and associated groups, and we describe this here. Recall the definition of the vector Busemann functions $\vect{h}_{\fs}$ from Definition~\ref{defn:buse}. 

It is convenient to work with a natural `metric realisation' of the affine building $\Sigma$. By the construction in Section~\ref{sec:root} we may regard the apartments of an affine building as tessellations of a Euclidean space, and thus there is a metric on each apartment. Using axioms (B2) and (B3) it can be shown that these metrics may be `glued together' to make $\Sigma$ into a metric space (see \cite[\S11.2]{AB:08}). By \cite[Theorem~11.16]{AB:08} this metric space is a $\mathrm{CAT}(0)$ space. In this section we will regard affine buildings as metric spaces, although we also remember the underlying simplicial complex structure. The vector distance and the Busemann functions (originally only defined for vertices) naturally extend to give a vector distance and Busemann function for any points $x,y$ of the building (see \cite{PW:14} for details).

Let $\lambda\in E^+$. A \textit{$\lambda$-ray} in $\Delta$ is a function $\fr:[0,\infty)\to\Delta$ such that
$$
\vect{d}(\fr(t_1),\fr(t_2))=(t_2-t_1)\lambda\quad\text{for all $t_2\geq t_1\geq 0$}.
$$
Since we are specifying both a speed and direction, the notion of a $\lambda$-ray is a refinement of the usual notion of a ray in a $\mathrm{CAT}(0)$ space. 

\begin{thm}\emph{\cite[Theorem~3.2]{PW:14}}\label{thm:PW14}
Let $(x_n)_{n\geq 0}$ be a sequence in $\Delta$, and let $\lambda\in E^+$. Let $\fs$ be a sector of $\Delta$. The following are equivalent:
\begin{enumerate}
\item[(1)] There is a $\lambda$-ray $\fr:[0,\infty)\to\Delta$ such that $d(x_n,\fr(n))=o(n)$.
\item[(2)] $d(x_n,x_{n+1})=o(n)$ and $\vect{h}_{\fs}(x_n)=n\mu_{\fs}+o(n)$ for some $\mu_{\fs}\in W_0\lambda$ (independent of $n$).
\item[(3)] $d(x_n,x_{n+1})=o(n)$ and $\vect{d}(o,x_n)=n\lambda+o(n)$. 
\end{enumerate}
\end{thm}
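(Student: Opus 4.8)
The plan is to prove the cyclic chain of implications $(1)\Rightarrow(2)\Rightarrow(3)\Rightarrow(1)$, working throughout in the $\mathrm{CAT}(0)$ metric realisation of $\Sigma$ and using the vector distance $\vect{d}$, the vector Busemann function $\vect{h}_{\fs}$, and the retraction $\rho_{A,\fs}$ as the main tools. The geometric intuition is that a $\lambda$-ray is an asymptotic "straight line" of speed $|\lambda|$ and direction $\lambda$, and both conditions (2) and (3) are sublinear-error versions of this, phrased respectively in terms of Busemann coordinates and of vector distance from the origin.

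\emph{$(1)\Rightarrow(2)$.} Given the $\lambda$-ray $\fr$ with $d(x_n,\fr(n))=o(n)$, the triangle inequality immediately gives $d(x_n,x_{n+1})\le d(x_n,\fr(n))+d(\fr(n),\fr(n+1))+d(\fr(n+1),x_{n+1})=|\lambda|+o(n)$, which is certainly $o(n)$. For the Busemann statement, I would first establish that $\vect{h}_{\fs}$ is Lipschitz (with constant $1$, say) for the metric on $\Sigma$; this follows from the fact that $\rho_{A,\fs}$ is distance-nonincreasing and $\psi_{A,\fs}$ is an isometry onto the standard apartment. Hence $\vect{h}_{\fs}(x_n)=\vect{h}_{\fs}(\fr(n))+o(n)$, and it remains to show $\vect{h}_{\fs}(\fr(n))=n\mu_{\fs}+o(n)$ for a fixed $\mu_{\fs}\in W_0\lambda$. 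For this I would use property (S1): a subsector $\fs'$ of $\fs$ together with the initial segment $\fr([0,T])$ lies in a common apartment $A'$; pushing forward by $\psi_{A,\fs}$, the ray becomes, up to bounded error, a genuine Euclidean ray of velocity vector in $W_0\lambda$, so on that segment $\vect{h}_{\fs}\circ\fr$ is affine with slope some $\mu_{\fs}\in W_0\lambda$. A standard argument (comparing the "directions at infinity", or iterating (S1) along the ray) shows this slope is independent of the segment, giving (2).

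\emph{$(2)\Rightarrow(3)$.} Here one recovers $\vect{d}(o,x_n)$ from the Busemann coordinate. Retract onto an apartment $A$ containing $\fs$ via $\rho_{A,\fs}$: since retractions preserve vector distance to points deep in the centre sector and since $\vect{h}_{\fs}(x_n)=\psi_{A,\fs}(\rho_{A,\fs}(x_n))$, the image point $\rho_{A,\fs}(x_n)$ sits at Euclidean position $n\mu_{\fs}+o(n)$ in $A$, where $\mu_{\fs}\in W_0\lambda$. One then argues that $\vect{d}(o,x_n)$ and $\vect{d}(o,\rho_{A,\fs}(x_n))$ differ by $o(n)$: intuitively, a point whose $\fs$-Busemann coordinate is "far out in the direction $\mu_{\fs}$" must genuinely be at vector distance $\approx n\lambda$ from $o$, because the retraction can only shorten distances and, for points deep in a sector direction, the geodesic from $o$ is not folded by the retraction (up to lower-order terms). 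Taking the dominant representative, $(n\mu_{\fs})^+=n\lambda$, so $\vect{d}(o,x_n)=n\lambda+o(n)$; the bound $d(x_n,x_{n+1})=o(n)$ is carried over unchanged.

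\emph{$(3)\Rightarrow(1)$.} This is the constructive direction and I expect it to be the main obstacle. Given only the sublinear control $d(x_n,x_{n+1})=o(n)$ and $\vect{d}(o,x_n)=n\lambda+o(n)$, one must build an honest $\lambda$-ray $\fr$ that tracks the sequence. The strategy is to select, along a suitable subsequence $n_k\to\infty$, sectors or geodesic segments $[o,x_{n_k}]$ whose directions converge (using local finiteness / properness of the building, or compactness of the boundary at infinity of a $\mathrm{CAT}(0)$ space); the limiting direction must be $\lambda$ because the vector distances grow like $n\lambda$. One then takes $\fr$ to be the ray from $o$ (or from a fixed vertex) in that limiting direction with speed $|\lambda|$, parametrised so that $\vect{d}(\fr(t_1),\fr(t_2))=(t_2-t_1)\lambda$; such a ray exists because in each apartment the vector distance along a Euclidean ray in direction $\lambda$ is exactly $(t_2-t_1)\lambda$, and (S1) lets one keep extending the ray through the building. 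The delicate part is showing $d(x_n,\fr(n))=o(n)$ along the \emph{full} sequence, not just a subsequence: this requires combining the step bound $d(x_n,x_{n+1})=o(n)$ (which prevents the sequence from jumping between different asymptotic directions) with the vector-distance hypothesis and convexity of the metric in the $\mathrm{CAT}(0)$ space, plus the flat-strip/parallel-geodesics behaviour inside apartments. I would carry this out by a contradiction argument: if $d(x_n,\fr(n))\ge\varepsilon n$ infinitely often, pass to a subsequence, extract a competing limit direction, and derive a contradiction with $\vect{d}(o,x_n)=n\lambda+o(n)$ via the fact that two distinct unit directions in $E^+$ give strictly smaller vector distance along the "diagonal" than $\lambda$ would. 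The details of this last estimate — quantifying how the angle between directions at $o$ degrades the growth rate of $\vect{d}(o,\cdot)$ — are where the real work lies, and I would lean on \cite[\S11]{AB:08} for the $\mathrm{CAT}(0)$ comparison geometry and on \cite{Par:06b} for the interaction between $\vect{d}$ and retractions.
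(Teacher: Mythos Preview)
This survey paper does not contain a proof of Theorem~\ref{thm:PW14}; the result is simply quoted from \cite[Theorem~3.2]{PW:14} and then applied. So there is no proof here to compare your proposal against, and a full assessment would require looking at the original paper \cite{PW:14}.

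That said, your sketch has a real gap in the implication $(2)\Rightarrow(3)$. You write that ``$\vect{d}(o,x_n)$ and $\vect{d}(o,\rho_{A,\fs}(x_n))$ differ by $o(n)$'' and justify this only heuristically. This is not true on the nose: the sector retraction $\rho_{A,\fs}$ can drastically change the vector distance to the fixed basepoint~$o$, since many points in different ``sheets'' of the building (with quite different $\vect{d}(o,\cdot)$) retract to the same point of~$A$. What the retraction does control is distance to points deep in~$\fs$, not to~$o$. The hypothesis $d(x_n,x_{n+1})=o(n)$ is essential here and you never actually use it in this step --- it is precisely what prevents the sequence from hopping between sheets that collapse under~$\rho_{A,\fs}$. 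A correct argument needs to combine the sublinear increments with the Busemann control to force $x_n$ to stay (up to $o(n)$) inside a single apartment containing a subsector of~$\fs$, after which the comparison of $\vect{d}(o,x_n)$ with the Euclidean picture becomes legitimate. Your $(1)\Rightarrow(2)$ and $(3)\Rightarrow(1)$ outlines are more convincing, though in $(3)\Rightarrow(1)$ the passage from subsequential to full-sequence tracking also leans on the step bound in a way you should make explicit.
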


A sequence $(x_n)_{n\geq 0}$ satisfying any one of the above equivalent conditions is called a \textit{$\lambda$-regular sequence}. This is a direct analogue of Kaimanovich's results on symmetric spaces~\cite{Kai:87}. We will discuss applications of Theorem~\ref{thm:PW14} to random walks on affine buildings and associated groups in this section and the next.

Since $\Delta$ is $\mathrm{CAT}(0)$ we define the \textit{visibility boundary} $\partial \Delta$ 
in the usual way as the set of equivalence classes of rays (with two rays being 
\textit{equivalent} if the distance between them is bounded). The standard topology makes 
$\overline{\Delta}=\Delta\cup\partial \Delta$ into a compact Hausdorff space 
(see Bridson and Haefliger \cite[\S II.8.5]{BH:99}). Points of the visibility boundary are called 
\textit{ideal points} of~$\Delta$. Given $\xi\in\partial \Delta$ and $x\in \Delta$, 
there is a unique ray in the class $\xi$ with base point $x$ (\cite[Proposition~II.8.2]{BH:99} 
or \cite[Lemma~11.72]{AB:08}). We sometimes denote this ray by $[x,\xi)$. Thus one may think 
of $\partial \Delta$ as ``all rays based at $x$'' for any fixed~$x\in\Delta$.

\begin{defn} A random walk on $V$ is \textit{semi-isotropic} if the transition probabilities of the walk depend only on the vectors $\vect{d}(x,y)$ and $\vect{h}(y)-\vect{h}(x)$. 
\end{defn}

Clearly isotropic random walks are semi-isotropic, but not vice-versa. For each $\lambda\in P$ let $H_{\lambda}=\{x\in V\mid \vect{h}(x)=\lambda\}$. As shown in \cite[Proposition~4.6]{PW:14} semi-isotropic random walks are `factorisable' over~$P$, in the sense that the value of the sum
$$
\overline{p}(\lambda,\mu)=\sum_{y\in H_{\mu}}p(x,y)\quad\text{with $\lambda,\mu\in P$ and $x\in H_{\lambda}$}
$$
does not depend on the particular $x\in H_{\lambda}$ chosen. Moreover, we have
$$
\overline{p}(\lambda+\nu,\mu+\nu)=\overline{p}(\lambda,\mu)\quad\text{for all $\lambda,\mu,\nu\in P$}.
$$
In other words, if $(X_n)_{n\geq 0}$ is semi-isotropic then the sequence $\vect{h}(X_n)\in P$ is a translation invariant random walk on~$P$ with transition probabilities $\overline{p}(\lambda,\mu)$. Since $P\cong \ZZ^d$ the random walk $(\vect{h}(X_n))_{n\geq 0}$ is well understood from the classical theory, and using Theorem~\ref{thm:PW14} we obtain the following result for the original random walk $(X_n)_{n\geq 0}$ on the building.

\begin{thm}\emph{\cite[Corollary 4.8]{PW:14}}
Let $(X_n)_{n\geq 0}$ be a semi-isotropic random walk on $V$. Under the finite first moment assumption $\sum_{\nu\in P}\overline{p}(0,\nu)|\nu|<\infty$ we have
$$
\lim_{n\to\infty}\frac{1}{n}\vect{d}(o,X_n)=\lambda\quad\text{almost surely},
$$
where $\lambda$ is the dominant element in the $W_0$-orbit of $\mu=\sum_{\nu\in P}\overline{p}(0,\nu)\nu$. Moreover, if $\lambda\neq 0$ then $(X_n)_{n\geq 0}$ converges almost surely to an ideal point~$X_{\infty}$.
\end{thm}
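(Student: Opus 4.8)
The plan is to reduce the statement to a combination of the classical strong law of large numbers for translation-invariant random walks on $P\cong\ZZ^d$ and Theorem~\ref{thm:PW14}, exactly along the lines suggested by the discussion preceding the statement. First I would record the two structural facts about a semi-isotropic walk $(X_n)_{n\geq 0}$ that are cited in the excerpt: that $(\vect{h}(X_n))_{n\geq 0}$ is a genuine random walk on the lattice $P$ with translation-invariant step distribution $\overline{p}(0,\cdot)$, and that $d(X_n,X_{n+1})$ depends only on $\vect{d}(X_n,X_{n+1})$, which is the dominant representative of $\vect{h}(X_{n+1})-\vect{h}(X_n)$ together with finitely much extra data, so that in particular the increments $d(X_n,X_{n+1})$ are i.i.d.\ with the same finite first moment as the steps $\nu$ of $(\vect{h}(X_n))$. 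The finite first moment hypothesis $\sum_{\nu\in P}\overline{p}(0,\nu)|\nu|<\infty$ is exactly what is needed for both of these.

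Next I would apply the classical SLLN on $\ZZ^d$ to the walk $\vect{h}(X_n)$: almost surely $\vect{h}(X_n)/n \to \mu$ where $\mu = \sum_{\nu\in P}\overline{p}(0,\nu)\nu$, so $\vect{h}(X_n) = n\mu + o(n)$ a.s. Simultaneously, the i.i.d.\ increments $d(X_n,X_{n+1})$ having finite first moment force $d(X_n,X_{n+1}) = o(n)$ a.s.\ (a one-line Borel--Cantelli / SLLN argument). These are precisely conditions~(2) of Theorem~\ref{thm:PW14} applied with $\fs = \fs_0$ and $\mu_{\fs_0} = \mu$, provided $\mu\in W_0\lambda$ for the dominant element $\lambda$ of the orbit $W_0\mu$ — which is automatic since $\mu$ itself lies in that orbit. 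Hence Theorem~\ref{thm:PW14} yields a $\lambda$-ray $\fr$ with $d(X_n,\fr(n)) = o(n)$, and condition~(3) of that theorem gives $\vect{d}(o,X_n) = n\lambda + o(n)$ a.s., i.e.\ $\frac1n\vect{d}(o,X_n)\to\lambda$, which is the first assertion. (I should note the harmless point that a single $\mu$ works for all $n$, since the classical SLLN gives one limiting drift vector, so the sector $\fs$ in Theorem~\ref{thm:PW14}(2) can be fixed once and for all.)

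For the convergence to an ideal point when $\lambda\neq 0$: the $\lambda$-ray $\fr$ produced above determines an ideal point $\xi = [\fr]\in\partial\Delta$, and since $d(X_n,\fr(n)) = o(n)$ while $\fr$ escapes to infinity at linear speed $|\lambda|>0$, a comparison-triangle estimate in the $\mathrm{CAT}(0)$ space $\Delta$ shows that the geodesic rays $[o,X_n)$ (or the broken path $o,X_1,X_2,\dots$) converge to $\xi$ in the cone topology on $\overline{\Delta}$; concretely, the Gromov product $(X_n\mid \fr(m))_o$ tends to infinity, which is the standard criterion for convergence to a boundary point. I would also invoke here that $d(X_n,X_{n+1}) = o(n)$ prevents the walk from ``jumping across'' the boundary, so the limit is the same along the whole sequence rather than merely along a subsequence. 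Thus $X_n\to X_\infty := \xi$ almost surely.

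The main obstacle is the $\mathrm{CAT}(0)$ geometry input in the last paragraph: converting the ``$o(n)$-shadowing of a linearly-escaping ray'' into genuine convergence in $\overline{\Delta}$ requires a quantitative $\mathrm{CAT}(0)$ estimate (thin triangles / convexity of the metric) to control the angular deviation of $[o,X_n)$ from $[o,\xi)$, and one must be slightly careful that the $o(n)$ error does not accumulate badly — but since the escape speed $|\lambda|$ is a fixed positive constant, the ratio (transverse error)/(distance travelled) still tends to $0$, which suffices. Everything else — the reduction to the lattice walk, the two applications of the SLLN, and the appeal to Theorem~\ref{thm:PW14} — is essentially bookkeeping with the definitions of semi-isotropy and of the Busemann function.
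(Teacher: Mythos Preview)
Your overall strategy is exactly the one the paper sketches (it gives no detailed proof here, only the sentence preceding the theorem): factor through the translation-invariant lattice walk $(\vect{h}(X_n))_{n\geq 0}$ on $P\cong\ZZ^d$, apply the classical SLLN there, and feed the outcome into Theorem~\ref{thm:PW14}. Your treatment of boundary convergence via $\mathrm{CAT}(0)$ shadowing of the $\lambda$-ray is likewise the standard route.

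There is, however, one incorrect assertion in your verification of the hypothesis $d(X_n,X_{n+1})=o(n)$ required by Theorem~\ref{thm:PW14}. You claim that $\vect{d}(X_n,X_{n+1})$ is ``the dominant representative of $\vect{h}(X_{n+1})-\vect{h}(X_n)$''. This is false: the sector-based retraction $\rho_{A,\fs_0}$ defining $\vect{h}$ is distance-nonincreasing but not an isometry, so $\vect{h}(y)-\vect{h}(x)$ need not lie in $W_0\cdot\vect{d}(x,y)$ at all. Already in a homogeneous tree, two vertices on the same horocycle have Busemann increment~$0$ but arbitrarily large even metric distance. Consequently your inference that the metric increments $d(X_n,X_{n+1})$ have ``the same finite first moment as the steps $\nu$'' does not follow from $\sum_{\nu}\overline{p}(0,\nu)|\nu|<\infty$; indeed one can write down semi-isotropic walks on a tree with $\overline{p}(0,\nu)$ supported on $\nu=0$ (so the hypothesis is trivially satisfied) yet $\EE[d(X_0,X_1)]=\infty$. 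What you actually need is the regularity fact from \cite{PW:14} that the cardinalities $|V_\lambda(x)\cap H_{\vect{h}(x)+\nu}|$ are independent of $x$, giving that the pair $(\vect{d}(X_n,X_{n+1}),\vect{h}(X_{n+1})-\vect{h}(X_n))$ is i.i.d., together with a first moment on $d(X_0,X_1)$ itself (which in \cite{PW:14} is part of the setup rather than a consequence of the displayed condition on $\overline{p}$). Once that is secured, your Borel--Cantelli step and the remainder of the argument go through as written.
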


The drift-free case (when $\lambda=0$) is more subtle. A weaker form of convergence of the random walk in this case is established in \cite[Theorem~4.15]{PW:14} for nearest neighbour random walks.

\subsubsection{Random walks on groups acting on affine buildings}

Limit theorems for isotropic random walks on the vertices of affine buildings imply limit theorems for bi-$K$-invariant probability measures on groups acting sufficiently transitively on the building, where $K$ is the stabiliser of a fixed (special) vertex of the building. This is completely analogous to the chamber case of Proposition~\ref{prop:walkgp1} (see \cite[Remark~2.19]{Par:07} for some details). For example, If $G=G(\QQ_p)$ is a Chevalley group over the $p$-adic numbers, and if $K=G(\ZZ_p)$ with $\ZZ_p$ the ring of $p$-adic integers, then Theorem~\ref{thm:lln and clt} gives a local limit theorem for the density function of a bi-$K$-invariant probability measure on~$G$ (see Example~\ref{ex:padic}).

In the prototypical example of $G=SL_{d+1}(\QQ_p)$ one can remove the bi-$K$-invariance assumption, at the cost of losing explicit formulae for the spectral radius. We expect the following result of Tolli to hold for more general Lie types, however at present it is only available for~$SL_{d+1}$. 

\begin{thm}\emph{\cite{Tol:01}}
Let $G=SL_{d+1}(\FF)$ where $\FF$ is a local field. Let $f$ be a continuous compactly supported density of a probability measure on $G$ such that
\begin{enumerate}
\item[\emph{(1)}] $f$ is symmetric, that is $f(x)=f(x^{-1})$, and 
\item[\emph{(2)}] the support of $f$ is a neighbourhood of the identity that generates~$G$. 
\end{enumerate}
Then there exists a number $\rho>0$ and a positive function $\psi:G\to\RR_{\geq 0}$ such that
$$
\rho^{-n}n^{d(d+2)/2}f^{(*n)}\to \psi\quad\text{pointwise as $n\to\infty$}.
$$
\end{thm}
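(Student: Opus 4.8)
The plan is to run the argument through the spectral theory of the left convolution operator $\lambda(f)\colon\phi\mapsto f*\phi$ on $L^2(G)$, in the same spirit as the bi-$K$-invariant analysis behind Theorems~\ref{thm:plangen} and~\ref{thm:lln and clt}, but now invoking the \emph{full} Plancherel decomposition of $G=SL_{d+1}(\FF)$ rather than only its spherical (Hecke-algebraic) part. Since $f$ is real, nonnegative and symmetric, $\lambda(f)$ is bounded and self-adjoint; put $\rho=\|\lambda(f)\|$, which is the number in the statement (and $\rho<1$ because $G$ is non-amenable). Because $\mathrm{supp}(f)$ is a neighbourhood of $e$ that generates $G$, the associated random walk is irreducible and aperiodic, so the spectrum of $\lambda(f)$ meets $\{-\rho,\rho\}$ only at $\rho$. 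Writing $f^{(*n)}=f*\cdots*f$ and applying the Plancherel inversion formula for $G$ (Harish--Chandra in the real case; Harish--Chandra, with Macdonald for the spherical part, in the non-archimedean case, where one may also picture $G$ acting on the $\widetilde A_d$ building of Example~\ref{ex:padic}), one gets for $x\in G$
\[
f^{(*n)}(x)=\int_{\widehat G_{\mathrm{temp}}}\mathrm{tr}\!\left(\pi(f)^{n}\,\pi(x)^{-1}\right)\,d\mu_{\mathrm{Pl}}(\pi),
\]
and the whole problem reduces to the large-$n$ asymptotics of this integral.

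The next step is to localise near the representations realising the norm. One shows that $\pi\mapsto\|\pi(f)\|$ attains its maximum $\rho$ on the tempered dual \emph{only} along the unramified (spherical) principal series, and there only at the base point of the parameter space --- the point $u=1$ of $\mathbb{U}=\mathrm{Hom}(P,\TT)$ in the $p$-adic case, the trivial character of the split torus in the real case --- and moreover that at that point $\rho$ is a \emph{simple} eigenvalue of $\pi_{1}(f)$, with eigenvector the essentially unique $K$-fixed vector $v_{1}$ and with a spectral gap below it. For bi-$K$-invariant $f$ this is automatic: then $\pi_u(f)$ acts on $v_u$ by the scalar $\sum_\lambda a_\lambda P_\lambda(u)$, a continuous function on the compact $\mathbb{U}$ whose (by aperiodicity unique) maximum is at $u=1$. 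For a merely symmetric $f$ it follows from a Perron--Frobenius type argument --- positivity of $f$ forces the operator norm to be realised on the component carrying the ``almost invariant'' vectors --- together with the multiplicity-one occurrence of the trivial $K$-type in the spherical principal series of $SL_{d+1}$. One then checks that every other piece of the Plancherel measure (discrete series and the remaining parabolically induced families) contributes a strictly smaller exponential rate, $\|\pi(f)\|\le\rho-\varepsilon$, and so is negligible after division by $\rho^{n}$.

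On a neighbourhood of the base point the top eigenvalue of $\pi_u(f)$ has the form $\rho\,e^{-Q(u)+o(|u-1|^2)}$ for a positive-definite quadratic form $Q$ (non-degeneracy of the Hessian uses that $\mathrm{supp}(f)$ is not contained in a coset of a proper subgroup), and the associated rank-one spectral projection, inserted into $\mathrm{tr}(\pi_u(f)^{n}\pi_u(x)^{-1})$, isolates the matrix coefficient $\langle\pi_u(x)v_u,v_u\rangle$, i.e. the elementary spherical function $\varphi_u(x)$. Together with the Plancherel density $|c(u)|^{-2}\,du$ from Theorem~\ref{thm:plangen} this yields
\[
\rho^{-n}f^{(*n)}(x)\;\sim\;C\int_{\text{nbhd of }1}e^{-nQ(u)}\,\varphi_u(x)\,\frac{du}{|c(u)|^{2}}.
\]
Since $|c(u)|^{-2}$ vanishes to order $2|R^{+}|$ at $u=1$ while the integration runs over a space of dimension $d=\mathrm{rank}$, the standard Laplace estimate produces the polynomial factor $n^{-(d+2|R^{+}|)/2}=n^{-(|R|+d)/2}=n^{-d(d+2)/2}$ (consistent with $\dim G=d(d+2)$) and the limit $\psi(x)=C'\varphi_{1}(x)$, where $\varphi_{1}$ is the Harish--Chandra $\Xi$-function --- the analogue of $P_{\vect d(o,\cdot)}(1)$ in Theorem~\ref{thm:lln and clt} --- which is everywhere strictly positive, giving the asserted positivity of $\psi$; the subleading Laplace terms and the exponential smallness of the discarded part give pointwise convergence with an $O(n^{-1/2})$ correction.

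\medskip\noindent\textbf{Main obstacle.} The crux is the structural claim of the second paragraph: for $f$ only symmetric, compactly supported, with generating support, the operator norm $\rho$ of $\pi_u(f)$ is attained, exactly at $u=1$, as a \emph{simple isolated eigenvalue} carried by the $K$-fixed vector, with non-degenerate quadratic decay nearby. Controlling the top of the spectrum of $\pi_u(f)$ as an \emph{operator}, rather than the scalar $\sum a_\lambda P_\lambda(u)$ available in the bi-$K$-invariant setting, is precisely what makes the general case harder than Theorem~\ref{thm:lln and clt}, and the multiplicity-one property of the trivial $K$-type together with the explicit description of the unramified principal series that make the claim accessible are features of $SL_{d+1}$ --- which is why the result is not yet known for other Lie types. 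A secondary difficulty is the bookkeeping of the Plancherel measure of $SL_{d+1}(\FF)$ with its several components, where one needs the uniform spectral gap away from the spherical series (again a positivity input), and the justification of the inversion formula and of the dominated-convergence and Laplace estimates uniformly in~$x$.
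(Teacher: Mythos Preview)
The paper does not contain a proof of this theorem: it is a survey, and the result is quoted from Tolli~\cite{Tol:01} without any argument reproduced. There is therefore no ``paper's own proof'' to compare your proposal against.

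That said, your outline is a reasonable sketch of the expected architecture --- Plancherel inversion, localisation to the spherical principal series at the base point, Laplace asymptotics giving the $n^{-d(d+2)/2}$ factor --- and you correctly flag the essential difficulty: for $f$ that is merely symmetric rather than bi-$K$-invariant, one must control the top of the spectrum of $\pi_u(f)$ as an operator, not as a scalar. However, as written the proposal is a roadmap rather than a proof. The ``Perron--Frobenius type argument'' that is supposed to force the norm to be realised uniquely at $u=1$ with a simple eigenvalue on the $K$-fixed line is not spelled out, and this is genuinely the heart of the matter; positivity of $f$ alone does not obviously propagate through the representation-theoretic machinery in the way you suggest. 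Likewise the uniform spectral gap $\|\pi(f)\|\le\rho-\varepsilon$ for the non-spherical pieces of the Plancherel measure, and the non-degeneracy of the Hessian $Q$, are asserted rather than established. To turn this into an actual proof you would need to consult Tolli's paper for how these points are handled in the $SL_{d+1}$ case.
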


Let $\Sigma$ be a regular affine building, and let $G$ be a subgroup of the automorphism group~$\mathrm{Aut}(\Sigma)$. Let $\sigma$ be a Borel probability measure on $G$, such that the support of $\sigma$ generates $G$. We say that $\sigma$ has \textit{finite first moment} if
$$
\int_Gd(o,go)\,d\sigma(g)<\infty.
$$
Let $(g_n)_{n\geq 0}$ be a stationary sequence of $G$-valued random variables with joint distribution $\sigma$ The \textit{right random walk} is the sequence $(X_n)_{n\geq 0}$ with 
$$
X_0=o\quad\text{and}\quad X_n=g_1\cdots g_no\quad\text{for $n\geq 1$}.
$$
The theory of regular sequences (Theorem~\ref{thm:PW14}) implies the following result for the right random walk.

\begin{thm}\emph{\cite[Theorem~4.1]{PW:14}} Let $G$ and $\sigma$ be as above, and suppose that $\sigma$ has finite first moment. Let $(X_n)_{n\geq 0}$ be the associated right random walk on $\Sigma$. There exists $\lambda\in E^+$ such that
$$
\lim_{n\to\infty}\frac{1}{n}\vect{d}(o,X_n)=\lambda\quad\text{almost surely},
$$
and for each sector $\fs$ of $\Sigma$ there exists $\mu_{\fs}\in W_0\lambda$ such that
$$
\lim_{n\to\infty}\frac{1}{n}\vect{h}_{\fs}(X_n)=\mu\quad\text{almost surely}.
$$
If $\lambda\neq 0$ then $(X_n)_{n\geq 0}$ converges almost surely to an ideal point~$X_{\infty}$.  
\end{thm}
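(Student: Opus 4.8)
The plan is to combine classical ergodic theory with the regular-sequence theorem (Theorem~\ref{thm:PW14}). Throughout recall that the metric realisation of $\Sigma$ is a complete, proper $\mathrm{CAT}(0)$ space on which $G$ acts by isometries, that $|\vect{d}(x,y)|=d(x,y)$ for the $\mathrm{CAT}(0)$ metric, and that the vector Busemann functions $\vect{h}_{\fs}$ are $1$-Lipschitz. First I would run a subadditivity argument. For $m\le n$ we have $d(X_m,X_n)=d(o,g_{m+1}\cdots g_n o)$, so $a_{m,n}:=d(X_m,X_n)$ is a stationary subadditive array which is integrable by the finite first moment hypothesis; Kingman's subadditive ergodic theorem then yields a constant $\ell\ge 0$ with $n^{-1}d(o,X_n)\to\ell$ almost surely and in $L^1$. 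The increments $d(X_{n-1},X_n)=d(o,g_n o)$ are identically distributed with finite mean, so $\sum_n\PP[d(o,g_n o)>\varepsilon n]\le\varepsilon^{-1}\EE[d(o,g_1 o)]<\infty$ for every $\varepsilon>0$, and Borel--Cantelli gives $d(X_{n-1},X_n)=o(n)$ almost surely. If $\ell=0$ we are done immediately: $n^{-1}\vect{d}(o,X_n)\to 0$ since $|\vect{d}(o,X_n)|=d(o,X_n)$, and $n^{-1}\vect{h}_{\fs}(X_n)\to 0$ for every sector $\fs$ by the Lipschitz bound, so we take $\lambda=0$ and $\mu_{\fs}=0$ (the final assertion being vacuous).

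So assume $\ell>0$. Here I would invoke the multiplicative ergodic theorem of Karlsson and Margulis for isometric cocycles of a proper $\mathrm{CAT}(0)$ space: almost surely there is a unit-speed geodesic ray $\gamma\colon[0,\infty)\to\Sigma$ with $\gamma(0)=o$ and $d(X_n,\gamma(\ell n))=o(n)$. The geometric point then needed is that every geodesic ray in an affine building is a $\lambda$-ray for some $\lambda\in E^+$. Indeed, any two points of the building lie in a common apartment and the geodesic segment between them is the straight segment inside it; fixing an apartment $A$ containing $o$ together with a long initial arc of $\gamma$, with chart $\psi\colon A\to E$, we get $\psi(\gamma(t))=\psi(o)+t\bar v$ on that arc, whence $\vect{d}(\gamma(a),\gamma(b))=((b-a)\bar v)^+=(b-a)v$ for $0\le a\le b$, where $v=(\bar v)^+$ lies in $E^+$ and is independent of the apartment because $\vect{d}$ is. Reparametrising, $\fr(s):=\gamma(\ell s)$ is a $\lambda$-ray with $\lambda:=\ell v\in E^+$ (and $|\lambda|=\ell>0$), and $d(X_n,\fr(n))=o(n)$.

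With this, condition~(1) of Theorem~\ref{thm:PW14} holds for the sequence $(X_n)$, so $(X_n)$ is $\lambda$-regular; conditions~(3) and~(2) of that theorem then give $\vect{d}(o,X_n)=n\lambda+o(n)$ and, for each sector $\fs$, $\vect{h}_{\fs}(X_n)=n\mu_{\fs}+o(n)$ for some $\mu_{\fs}\in W_0\lambda$, which are exactly the two asserted limits. For convergence to an ideal point, set $L_n=d(o,X_n)=\ell n+o(n)\to\infty$; then $d(X_n,\gamma(L_n))\le d(X_n,\gamma(\ell n))+|L_n-\ell n|=o(L_n)$, and since $t\mapsto d([o,X_n](t),\gamma(t))$ is convex on $[0,L_n]$ and vanishes at $t=0$, it is at most $(t/L_n)\,o(L_n)$, so $d([o,X_n](R),\gamma(R))\to 0$ for every fixed $R>0$. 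Hence the geodesics $[o,X_n]$ converge to $\gamma$ uniformly on compacta, i.e.\ $X_n\to X_\infty:=[\gamma]\in\partial\Delta$ in the cone topology of $\overline{\Delta}$.

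The main obstacle is the step in the case $\ell>0$ that produces an honest geodesic ray sublinearly tracked by the walk: the scalar law of large numbers and the negligibility of the increments are routine, whereas passing from ``the walk escapes at speed $\ell$'' to ``the walk follows a ray'' genuinely uses the cocycle structure $X_n=g_1\cdots g_n o$ and is the substance of the Karlsson--Margulis theorem. The one further point that must be checked carefully is that geodesic rays in an affine building are $\lambda$-rays, since this is exactly what lets Theorem~\ref{thm:PW14} convert metric tracking into convergence of the vector distance and the vector Busemann functions; everything else is soft $\mathrm{CAT}(0)$ geometry.
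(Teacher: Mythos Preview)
Your proposal is correct and matches the approach the survey indicates: the paper does not give a proof here but states that ``the theory of regular sequences (Theorem~\ref{thm:PW14}) implies the following result,'' and in \cite{PW:14} this is carried out exactly as you outline---Kingman/Karlsson--Margulis sublinear tracking by a geodesic ray, the observation that geodesic rays in an affine building are $\lambda$-rays (since any ray is contained in an apartment), and then Theorem~\ref{thm:PW14} to pass from metric tracking to the convergence of $\vect{d}$ and $\vect{h}_{\fs}$. Your identification of the Karlsson--Margulis step as the substantive input is accurate.
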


\subsection{Random walks on Fuchsian buildings}\label{sec:fuchsian}

Probability theory for buildings and related groups of non-spherical, non-affine type is in its infancy. Recently isotropic random walks on the chambers of Fuchsian buildings have been studied by Gilch, M\"{u}ller and Parkinson, and a law of large numbers and a central limit theorem have been obtained. In this case the Hecke algebra has less controllable representation theory than in the spherical and affine cases, owing partly to the existence of free group subgroups in the Coxeter systems). Thus the representation theoretic techniques that have worked so nicely for the spherical and affine cases do not seem to help.

Instead the arguments rely much more heavily on the underlying hyperbolic geometry of the building and the planarity of its apartments, with the general ideas adapted from the work of Ha\"{i}ssinski, Mathieu and M\"{u}ller \cite{HMM:13}. In this work the planarity of the and hyperbolicity of the Cayley graph of a surface group are exploited to develop a `renewal theory' related to the automata structure of the group. In the setting of Fuchsian buildings the apartments of the building are planar and hyperbolic, and so similar ideas can be applied to the apartments. To lift this to the entire building requires some more work, and in \cite{GMP:14} a theory of cones, cone types, and automata for Fuchsian buildings paralleling the more familiar notions in groups is developed to achieve this goal. The idea is to find a decomposition of the trajectory of the walk into aligned pieces in such a way that these pieces are independent and identically distributed. Roughly speaking, one fixes a recurrent cone type $\mathbf{T}$ and sets $R_1$ to be the first time that the walk visits a cone of type $\mathbf{T}$ and never leaves this cone again. Inductively one defines $R_{n+1}$ to be the first time after $R_n$ that the walk enters a cone of type $\mathbf{T}$ and never leaves it again. The main results of \cite{GMP:14} are as follows.

\begin{thm}\label{thm:llnfuchsian}
Let $\Sigma$ be a regular Fuchsian building and let $(X_n)_{n\geq 0}$
be an isotropic random walk on~$\Delta$ with bounded range. Then,
\begin{align*}
\frac{1}{n} d(o,X_{n}) \stackrel{a.s.}{\longrightarrow} v=  \frac{\EE[ d(X_{ R_{2}}, X_{ R_{1}})]}{\EE[  R_{2}- R_{1}]}>0~\mbox{ as }  n\to\infty.
\end{align*}
\end{thm}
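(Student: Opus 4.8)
The plan is to realise the trajectory of $(X_n)_{n\ge 0}$ as a concatenation of independent, identically distributed ``blocks'' via a regeneration (renewal) structure, following the scheme of Ha\"{i}ssinski--Mathieu--M\"{u}ller \cite{HMM:13} but working inside the building rather than inside a surface group. Throughout, let $d$ denote the gallery distance on the chamber set $\Delta$. First I would set up the combinatorial infrastructure: for a chamber $x$ lying on a minimal gallery from $o$, define the \emph{cone} $\mathcal{C}(x)$ to be the set of chambers $y$ such that $x$ lies on a minimal gallery from $o$ to $y$, and introduce an equivalence relation on such pairs for which the number of \emph{cone types} is finite. This finiteness, and the ``self-similarity'' of cones needed below, is exactly where local finiteness together with the planarity and hyperbolicity of the apartments of a Fuchsian building are used, paralleling the classical theory of cone types in hyperbolic groups. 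One then identifies a \emph{recurrent} cone type $\mathbf{T}$: one for which, with positive probability, a walk started from a chamber $x$ with $\mathcal{C}(x)$ of type $\mathbf{T}$ never leaves $\mathcal{C}(x)$, and which is moreover visited infinitely often. Here one must also establish transience of the walk --- using irreducibility and aperiodicity (from \cite{GMP:14}) together with the non-amenability forced by the hyperbolic geometry --- so that $d(o,X_n)\to\infty$ and hence infinitely many cones are entered along the trajectory; a conditional Borel--Cantelli argument then shows that the regeneration times $R_1<R_2<\cdots$ from the statement are almost surely finite and infinite in number.

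Next I would prove the key regeneration property. Applying the strong Markov property at $R_n$ and using that, after time $R_n$, the walk stays forever inside a cone of type $\mathbf{T}$ whose internal transition mechanism depends on the type $\mathbf{T}$ alone and not on the past, one obtains that the pairs $\bigl(R_{n+1}-R_n,\ d(X_{R_n},X_{R_{n+1}})\bigr)$, for $n\ge 1$, are i.i.d.\ (the block indexed by $n=0$ may have a different law, which is harmless). Moreover the nesting $\mathcal{C}(X_{R_{n+1}})\subseteq\mathcal{C}(X_{R_n})$ forces the regeneration chambers to be ``aligned'', so that $d(o,X_{R_k})=\sum_{j=1}^{k-1}d(X_{R_j},X_{R_{j+1}})+O(1)$ with an error bounded in terms of $\mathbf{T}$ only. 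The bounded-range hypothesis gives $d(X_{R_n},X_{R_{n+1}})\le C\,(R_{n+1}-R_n)$, so finiteness of $\EE[d(X_{R_2},X_{R_1})]$ reduces to finiteness of $\EE[R_2-R_1]$.

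Establishing $\EE[R_2-R_1]<\infty$ is, I expect, the main obstacle. It requires a quantitative bound, summable in time, on the probability that the walk --- having just entered a type-$\mathbf{T}$ cone --- later escapes it. This is where the negative curvature of the apartments must genuinely be used: the ``anti-backtracking'' pressure it creates makes escape from a deep subcone exponentially unlikely, and the finiteness of the set of cone types lets one package this as a contraction/sub-multiplicativity estimate, yielding exponential tails for $R_2-R_1$ and in particular a finite mean. (Lifting these estimates from the planar apartments to the whole building is precisely the role of the automata and cone-type theory developed in \cite{GMP:14}.) Once this is available, the conclusion is quick: the strong law of large numbers for the i.i.d.\ blocks gives $R_k/k\to\EE[R_2-R_1]=:c$ and $d(o,X_{R_k})/k\to\EE[d(X_{R_2},X_{R_1})]$ almost surely, hence $d(o,X_{R_k})/R_k\to v$. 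Since $R_k/k\to c<\infty$ we have $R_{k+1}-R_k=o(k)$, and then for $R_k\le n<R_{k+1}$ the bounded-range bound $|d(o,X_n)-d(o,X_{R_k})|\le C(R_{k+1}-R_k)=o(k)=o(n)$ together with $n=ck+o(k)$ lets one interpolate to arbitrary $n$, giving $d(o,X_n)/n\to v$ almost surely. Finally $v>0$, because $d(X_{R_1},X_{R_2})\ge 1$ always (the walk has moved strictly deeper into a $\mathbf{T}$-cone), so $\EE[d(X_{R_2},X_{R_1})]>0$, while $\EE[R_2-R_1]<\infty$.
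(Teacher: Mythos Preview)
Your proposal is correct and follows essentially the same approach that the paper outlines (the paper is a survey and does not give a full proof, referring to \cite{GMP:14}): you set up cone types for the Fuchsian building, exploit finiteness of cone types via planarity and hyperbolicity of the apartments, fix a recurrent cone type $\mathbf{T}$, define the renewal times $R_n$ exactly as the paper does, establish the i.i.d.\ structure of the blocks $\bigl(R_{n+1}-R_n,\,d(X_{R_n},X_{R_{n+1}})\bigr)$ for $n\ge 1$, and conclude via the strong law of large numbers plus interpolation between renewal epochs. You have also correctly flagged the genuinely delicate point---proving $\EE[R_2-R_1]<\infty$ via exponential tails coming from the hyperbolic geometry and the automata/cone-type machinery---as the main technical obstacle, which is indeed where the work in \cite{GMP:14} lies.
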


\begin{thm}\label{thm:cltfuchsian}
Let $\Sigma$ be a regular Fuchsian building and let $(X_n)_{n\geq 0}$ be an isotropic random walk on~$\Delta$ with bounded range. Then, with $v$ as in Theorem~\ref{thm:llnfuchsian},
$$ \frac{d(o,X_{n}) -nv }{\sqrt{n}} \stackrel{\cD}{\longrightarrow} \cN (0, \sigma^2),\quad\text{where}\quad \sigma^{2}=\frac{\EE[(d(X_{R_{2}}, X_{R_{1}}) - (R_{2}- R_{1})v)^{2}]}{\EE[R_{2}-R_{1}]}.$$
\end{thm}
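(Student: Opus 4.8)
The plan is to prove the central limit theorem from the same regeneration structure that underlies Theorem~\ref{thm:llnfuchsian}, tracking the Gaussian fluctuations of the renewal sums rather than merely their averages. First I would use the theory of cones, cone types and automata for Fuchsian buildings from~\cite{GMP:14} to fix a recurrent cone type $\mathbf{T}$ and define the stopping times $R_1<R_2<\cdots$, where $R_{k+1}$ is the first time after $R_k$ that the walk enters a cone of type $\mathbf{T}$ and never leaves it again. The key structural fact to establish is that $R_k<\infty$ almost surely and that the blocks $\bigl(R_{n+1}-R_n,\ (X_{R_n+j})_{0\le j\le R_{n+1}-R_n}\bigr)$ for $n\ge 1$ are independent and identically distributed: conditionally on the walk sitting at $R_n$ inside a cone which is never exited, the future evolution depends only on the isomorphism type $\mathbf{T}$ of that cone, hence is a fresh copy of the walk conditioned never to leave such a cone. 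In the same step I would show, by an irreducibility plus Borel--Cantelli argument, that $\mathbf{T}$ is reached ``geometrically fast'', so that $R_2-R_1$ has exponential tails; combined with the bounded range assumption this gives all moments of $R_2-R_1$ and of the increment $d(X_{R_1},X_{R_2})$, in particular finite second moments, so that $v=\EE[d(X_{R_2},X_{R_1})]/\EE[R_2-R_1]$ is well defined and (by Theorem~\ref{thm:llnfuchsian}) positive.

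\textbf{Additivity of the distance along renewals.} Next I would prove that the renewal points are \emph{aligned}: for $m<k$,
\[
d(X_{R_m},X_{R_k})=\sum_{i=m}^{k-1}d(X_{R_i},X_{R_{i+1}}),
\]
so that, writing $Y_i:=d(X_{R_i},X_{R_{i+1}})$, we have $d(o,X_{R_k})=d(o,X_{R_1})+\sum_{i=1}^{k-1}Y_i$ up to a bounded term. This is where the hyperbolicity of $\Sigma$ and the planarity of its apartments are used: the defining ``enters a cone and never leaves'' property of the renewal times forces minimal galleries from $o$ through successive $X_{R_i}$ to concatenate without backtracking, exactly as in the surface-group renewal theory of Ha\"{i}ssinski--Mathieu--M\"{u}ller~\cite{HMM:13}, lifted to the building via the cone machinery of~\cite{GMP:14}. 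I would also bound the discrepancy between $d(o,X_n)$ and $d(o,X_{R_{N(n)}})$, where $N(n)=\max\{k:R_k\le n\}$, by $\max_{R_{N(n)}\le t\le R_{N(n)+1}}d(X_{R_{N(n)}},X_t)$; since the block lengths (hence, with bounded range, the within-block distances) have all moments and $N(n)\to\infty$, this discrepancy is $o(\sqrt n)$ almost surely.

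\textbf{CLT with a random time change.} With these structural facts the rest is classical. By Step~1 the pairs $(Y_i,\tau_i)$ with $\tau_i:=R_{i+1}-R_i$ are i.i.d.\ with finite second moments, so the centred variables $Z_i:=Y_i-v\,\tau_i$ are i.i.d., mean zero, with variance $\EE[(d(X_{R_2},X_{R_1})-(R_2-R_1)v)^2]=\sigma^2\,\EE[R_2-R_1]$, i.e.\ $\sigma^2\mu_R$ with $\mu_R:=\EE[R_2-R_1]$. The classical CLT gives $m^{-1/2}\sum_{i=1}^m Z_i\Rightarrow\cN(0,\sigma^2\mu_R)$, and since $N(n)/n\to 1/\mu_R$ almost surely, Anscombe's theorem (equivalently, the renewal--reward CLT) yields $n^{-1/2}\sum_{i=1}^{N(n)}Z_i\Rightarrow\cN(0,\sigma^2)$. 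Finally $0\le n-R_{N(n)}\le\tau_{N(n)}=o(\sqrt n)$, so using Step~2 and $\sum_{i=1}^{m}Y_i=\sum_{i=1}^m Z_i+v(R_{m+1}-R_1)$ we obtain
\[
\frac{d(o,X_n)-nv}{\sqrt n}=\frac{1}{\sqrt n}\sum_{i=1}^{N(n)}Z_i+o(1)\quad\text{in probability},
\]
which converges in distribution to $\cN(0,\sigma^2)$, as claimed.

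\textbf{Main obstacle.} The genuinely hard part is Step~1 together with the alignment in Step~2: making precise the notion of cone type for a Fuchsian building, exhibiting a recurrent cone type with geometrically fast return times, establishing the exact i.i.d.\ regeneration of the blocks, and showing that the geodesic through the renewal points does not backtrack. These are precisely the technical core of~\cite{GMP:14}; once they are in place, Step~3 is entirely standard.
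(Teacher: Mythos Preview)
Your proposal is correct and follows essentially the same approach as the paper describes. The paper does not give a self-contained proof of this theorem; it only sketches the renewal-theoretic strategy from~\cite{GMP:14} (cone types, recurrent cone type~$\mathbf{T}$, renewal times $R_k$, i.i.d.\ aligned blocks), and your outline---regeneration structure, exponential tails, additivity of distance along renewal points, then Anscombe/renewal-reward CLT---is exactly that strategy fleshed out, with the hard geometric input correctly identified and deferred to~\cite{GMP:14}.
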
 

\subsection{Future directions}

We conclude the main body of this paper by listing some future directions and open problems in the theory of random walks on buildings:
\begin{enumerate}
\item[(1)] Provide sharp mixing time estimates and establish cut-off phenomenon for natural random walks on spherical buildings (in particular, for the simple random walk). 
\item[(2)] Prove a law of large numbers, a central limit theorem, and a local limit theorem for isotropic random walks on the chambers of affine buildings (generalising \cite{PS:11}). 
\item[(3)] Establish a local limit theorem for $p$-adic Lie groups of general type (generalising \cite{Tol:01}).
\item[(4)] Prove convergence properties for the right random walk on a group acting on an affine building in the drift free case (c.f. \cite{PW:14}). 
\item[(5)] Give an explicit formula for the spectral radius of a random walk on a Fuchsian building or Coxeter group (or any other non-spherical non-affine building or Coxeter group). There are some trivial `tree-like' examples, although apart from these no explicit formulae are known. Efficient algorithms, or asymptotic formulae in the thickness parameter, would also be interesting in lieu of an explicit formula. 
\item[(6)] Prove a precise and explicit local limit theorem for a non-spherical, non-affine building.
\item[(7)] Derive heat kernel and Green function estimates for random walks on the chambers of affine buildings (extending \cite{Tro:13}). 
\item[(8)] Generalise the Brownian motion convergence results of \cite{Sch:09} to arbitrary type. As a first step one might consider either the rank~$2$ cases, or remove the nearest neighbour restriction from~\cite{Sch:09} for walks on $\widetilde{A}_n$ buildings. 
\end{enumerate}

\begin{appendix}

\section{Isotropic random walks on the vertices of a $\widetilde{C}_2$ building}\label{app:A}

In this appendix we carry out the details of the outline given in Section~\ref{sec:vertices} in the special case of an affine building of type~$\widetilde{C}_2$. These calculations were made by the author some years ago, in collaboration with Donald Cartwright, following the calculations made in the $\widetilde{A}_2$ case by Cartwright and M{\l}otkowski~\cite{CM:94}. The calculations here are very much `hands-on', and do not require as much machinery as the general case. See the author's thesis for some further calculations for $\widetilde{G}_2$ buildings and so called $\widetilde{BC}_2$ buildings.

Let $\Sigma$ be a building of type $\widetilde{C}_2$ with thickness parameters $q_0=q_2=q$ and $q_1=r$. Let $V$ be the set of all special vertices of~$\Sigma$. The root system and fundamental coweights~$\omega_1$ and $\omega_2$ are illustrated in Figure~\ref{fig:affinerank3}(b). If $\lambda=k\omega_1+l\omega_2$ we write $V_{\lambda}(x)=V_{k,l}(x)$. 

\begin{lemma}\label{lem:N} The cardinalities $N_{k,l}=|V_{k,l}(x)|$ do not depend on $x\in V$, and we have
\begin{align*}
N_{k,l}&=(q+1)(r+1)(qr+1)q^2r(q^2r^2)^{k-1}(q^2r)^{l-1}\\
N_{k,0}&=(r+1)(qr+1)q(q^2r^2)^{k-1}\\
N_{0,l}&=(q+1)(qr+1)(q^2r)^{l-1}.
\end{align*}
\end{lemma}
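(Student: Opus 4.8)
The independence of $N_{k,l}$ from $x$ is \cite[Proposition~1.5]{Par:06b}; alternatively it drops out of the computation below, every term of which depends only on $q$ and $r$. For the formulae themselves, the plan is to set up a recursion in $(k,l)$, exploiting the local structure of $\Sigma$ at a special vertex. By the building axioms this structure is that of the generalised quadrangle $\mathcal{Q}$ of order $(r,q)$ occurring as the residue $R_{\{s_1,s_2\}}$ of a type-$0$ vertex (equivalently $R_{\{s_0,s_1\}}$ of a type-$2$ vertex). Recall that $\mathcal{Q}$ has $\sum_{w\in W_0}q_w=(q+1)(r+1)(qr+1)$ chambers, and $(r+1)(qr+1)$ vertices of one type and $(q+1)(qr+1)$ of the other; these three numbers are exactly the prefactors in the three asserted identities, which is the structural reason they appear.

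Fix a type-$0$ special vertex $o$. I would handle the base cases by direct inspection inside a single apartment through $o$ (Figure~\ref{fig:affinerank3}(b)): the vertices at vector distance $\omega_2$ from $o$ are precisely the type-$2$ vertices lying on chambers incident to $o$, of which there are $(q+1)(qr+1)$, giving $N_{0,1}$; the vertices at vector distance $\omega_1$ are reached from $o$ across one further panel, which lies on a wall of ``long'' type and so contributes the extra factor $q$, giving $N_{1,0}=(r+1)(qr+1)\,q$; and $N_{1,1}$ is assembled from these. For the inductive step one shows that, when $\lambda$ lies in the interior of the dominant cone, every vertex $y$ with $\vect{d}(o,y)=\lambda$ has exactly $q_{t_{\omega_1}}=q^2r^2$ ``children'' $z$ with $\vect{d}(o,z)=\lambda+\omega_1$ and $\vect{d}(y,z)=\omega_1$, and exactly $q_{t_{\omega_2}}=q^2r$ children in the $\omega_2$ direction (here $q_{t_{\omega_i}}$ is the product of thickness parameters along a reduced word for the translation $t_{\omega_i}$), and conversely that each such $z$ has a unique such parent $y$. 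This uniqueness is where a little care is needed: $y$ must be identified with a prescribed point of the convex hull $\conv(o,z)$ inside any apartment containing $o$ and $z$, so that it does not depend on the choice of apartment. Near the walls $l=0$ and $k=0$ the child-count acquires a modified first-step factor, which the explicit quadrangle geometry pins down; iterating the recursion and substituting the base cases then yields the three stated products.

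An equivalent and perhaps tidier bookkeeping device is to pass to chamber counts. Fixing a chamber $\mathbf{c}_0\ni o$ and sending $y\in V_\lambda(o)$ to the gate $\mathrm{proj}_{R(y)}(\mathbf{c}_0)$ of the residue $R(y)$ of chambers incident to $y$ gives a bijection of $V_\lambda(o)$ onto $\bigsqcup_{\mu\in W_0\lambda}\{c\in\Delta:\delta(\mathbf{c}_0,c)=v_\mu\}$, where $v_\mu\in\Waff$ is the minimal-length element carrying $\mathbf{c}_0$ to the chamber at the position-$\mu$ vertex nearest $\mathbf{c}_0$ (the pieces are disjoint because $\mu$ is recovered from $v_\mu$). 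Since $|\{c:\delta(\mathbf{c}_0,c)=v_\mu\}|=q_{v_\mu}$, this gives $N_\lambda=\sum_{\mu\in W_0\lambda}q_{v_\mu}$, an orbit sum with $8$ terms when $\lambda$ is regular and $4$ terms on each of the two walls. Reading the $q_{v_\mu}$ off the tessellation and summing the resulting graded progression reproduces the same answers; a useful consistency check is that $N_\lambda$ must have leading term $q_{t_\lambda}=\prod_{\alpha\in R^+}q_\alpha^{\langle\lambda,\alpha\rangle}$, contributed by the anti-dominant vertex of the orbit.

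The main obstacle is exactly this hands-on wall analysis: organising, in each of the three cases, which sequence of long and short walls is crossed by the minimal gallery realising $v_\mu$ (equivalently, in the recursion language, how the child-count is modified at the first step away from a wall), and checking the parent-uniqueness claim. Once the $\widetilde{C}_2$ apartment is drawn carefully these become elementary counting exercises, which is precisely why the whole computation can be carried through ``by hand'' without the Satake isomorphism and Plancherel machinery required in the general case.
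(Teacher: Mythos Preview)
Your proposal is correct and essentially matches the paper. The paper also proceeds by induction, but rather than writing out the recursion it illustrates the argument with a single worked example ($N_{1,2}$): one counts the $(q+1)(r+1)(qr+1)$ chambers in the quadrangle residue of $x$, and from each such chamber $A$ counts the $q^4r^2$ minimal galleries of a fixed reduced type to a chamber $B$ at the target position, arguing that the resulting map to $V_{1,2}(x)$ is a bijection. This direct gallery count is really the same mechanism as your parent--child recursion (your factors $q^2r^2$ and $q^2r$ are exactly the gallery-count increments for one step in the $\omega_1$ and $\omega_2$ directions), just packaged globally rather than inductively; your care about parent-uniqueness via $\conv(o,z)$ is the precise point underlying the paper's bijection claim. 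Your second bookkeeping device, the orbit-sum $N_\lambda=\sum_{\mu\in W_0\lambda}q_{v_\mu}$ via gate projections, does not appear in the paper but is a clean alternative that makes the Poincar\'e-polynomial prefactors transparent.
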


Let us illustrate Lemma~\ref{lem:N} with an examples (the general argument is an induction). Figure~\ref{fig:C2count} shows part of an apartment of $\Sigma$. Panels with thickness $q$ are shown as solid lines, and panels with thickness $r$ are shown as dashed lines. The vertex $y$ is in $V_{1,2}(x)$.

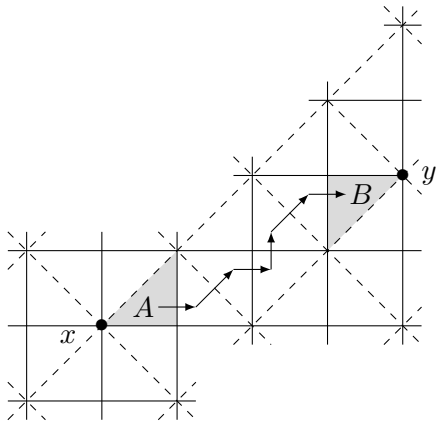
\begin{figure}[!h]
\centering
\begin{tikzpicture}[scale=1]
\path [fill=lightgray!70] (0,0) -- (1,0) -- (1,1) -- (0,0);
%\path [fill=lightgray!70] (2,2) -- (3,2) -- (3,3) -- (2,2);
\path [fill=lightgray!70] (3,2) -- (4,2) -- (3,1) -- (3,2);
\draw (-1.25,0)--(4.25,0);
\draw (-1.25,1)--(4.25,1);
\draw (1.75,2)--(4,2);
\draw (2.75,3)--(4.25,3);
\draw (3.75,4)--(4.25,4);
\draw (-1.25,-1)--(1.25,-1);
\draw (4,-0.25)--(4,4.25);
\draw (3,-0.25)--(3,3.25);
\draw (2,-0.25)--(2,2.25);
\draw (1,-1.25)--(1,1.25);
\draw (0,-1.25)--(0,1.25);
\draw (-1,-1.25)--(-1,1.25);
\draw [style = dashed] (-1.25,-1.25)--(4.25,4.25);
\draw [style=dashed] (-1.25,0.75)--(-0.75,1.25);
\draw [style=dashed] (0.75,-1.25)--(1.25,-0.75);
\draw [style=dashed] (1.75,-0.25)--(4.25,2.25);
\draw [style=dashed] (3.75,-0.25)--(4.25,0.25);
\draw [style=dashed] (-1.25,-0.75)--(-0.75,-1.25);
\draw [style=dashed] (-1.25,1.25)--(1.25,-1.25);
\draw [style=dashed] (0.75,1.25)--(2.25,-0.25);
\draw [style=dashed] (1.75,2.25)--(4.25,-0.25);
\draw [style=dashed] (3,3)--(4.25,1.75);
\draw [style=dashed] (3.75,4.25)--(4.25,3.75);
\draw [-latex] (0.75,0.25)--(1.25,0.25);
\draw [-latex] (1.25,0.25)--(1.75,0.75);
\draw [-latex] (1.75,0.75)--(2.25,0.75);
\draw [-latex] (2.25,0.75)--(2.25,1.25);
\draw [-latex] (2.25,1.25)--(2.75,1.75);
%\draw [-latex] (2.75,1.75)--(2.75,2.25);
\draw [-latex] (2.75,1.75)--(3.25,1.75);
%\node at (0,0) {$\bullet$};
%\draw [-latex, line width=1pt] (0,0)--(1,1);
\node at (0,0) {$\bullet$};
%\node at (3,3) {$\bullet$};
\node at (4,2) {$\bullet$};
%\node at (2.75,3.25) {$z$};
\node at (4.35,2) {$y$};
\node at (-0.45,-0.15) {$x$};
\node at (0.55,0.25) {$A$};
\node at (3.45,1.75) {$B$};
%\node at (2.75,2.45) {$C$};
%\node at (0.25,0.7) {$A'$};
\end{tikzpicture}
\caption{Computing the cardinalities $|V_{k,l}(x)|$}\label{fig:C2count}
\end{figure}

Let $A$ be a chamber of the building containing $x$. There are $q^4r^2$ galleries in the building starting at $A$ and ending at a chamber in position $B$, and each of these end chambers contains a vertex in $V_{1,2}(x)$. Moreover, every vertex in $V_{1,2}(x)$ can be reached by such a gallery starting at some chamber containing $x$, and different starting chambers result in different end vertices in $V_{1,2}(x)$. Thus $|V_{1,2}(x)|=Kq^4r^2$, where $K$ is the number of chambers containing~$x$. The set of chambers containing $x$ is a spherical building of type $C_2$ (that is, a generalised quadrangle) with parameters $(q,r)$. Thus $K=(q+1)(r+1)(qr+1)$, and hence $N_{1,2}=(q+1)(r+1)(qr+1)q^4r^2$.

For each pair $k,l\geq 0$ define an operator $A_{k,l}$ acting on functions $f:V\to\CC$ by
$$
A_{k,l}f(x)=\frac{1}{N_{k,l}}\sum_{y\in V_{k,l}(x)}f(y).
$$
Every isotropic random walk $(X_n)_{n\geq 0}$ on $V$ has transition operator $A$ of the form
$$
A=\sum_{k,l\geq 0}a_{k,l}A_{k,l}
$$
with $a_{k,l}\geq 0$ and $\sum_{k,l\geq 0}a_{k,l}=1$. Explicitly, $a_{k,l}=\PP[X_{n+1}\in V_{k,l}(x)\mid X_n=x]=p(x,y)/N_{k,l}$ for any $y\in V_{k,l}(x)$. 

\begin{thm}\label{thm:recursionC2} The following formulae hold, where in each case the indices $m,n$ are required to be large enough to ensure that the indices appearing on the right are all at least~$0$.
\begin{align*}
A_{1,0}A_{0,1}&=A_{0,1}A_{1,0}\\
N_{1,0}A_{m,n}A_{1,0}&=rA_{m+1,n-2}+(q-1)(r+1)A_{m,n}+q^2r^2A_{m+1,n}+q^2rA_{m-1,n+2}+A_{m-1,n}\\
N_{1,0}A_{0,n}A_{1,0}&=(r+1)A_{1,n-2}+(q-1)(r+1)A_{0,n}+q^2r(r+1)A_{1,n}\\
N_{1,0}A_{m,0}A_{1,0}&=qr(q+1)A_{m-1,2}+q^2r^2A_{m+1,0}+(q-1)A_{m,0}+A_{m-1,0}\\
N_{1,0}A_{m,1}A_{1,0}&=q^2r^2A_{m+1,1}+q^2rA_{m-1,3}+A_{m-1,1}+(qr+q-1)A_{m,1}\\
N_{0,1}A_{m,n}A_{0,1}&=A_{m,n-1}+qr A_{m+1,n-1}+qA_{m-1,n+1}+q^2rA_{m,n+1}\\
N_{0,1}A_{0,n}A_{0,1}&=A_{0,n-1}+q^2rA_{0,n+1}+q(r+1)A_{1,n-1}\\
N_{0,1}A_{m,0}A_{0,1}&=(q+1)A_{m-1,1}+qr(q+1)A_{m,1}.
\end{align*}
\end{thm}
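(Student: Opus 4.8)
The plan is to compute every product $A_{k,l}A_{1,0}$ and $A_{k,l}A_{0,1}$ by expanding it in the basis $\{A_{\mu}\mid\mu\in P^{+}\}$ of $\scA$ and reading off the structure constants as intersection numbers of spheres, which in the $\widetilde C_{2}$ case are evaluated by the same gallery counts used for Lemma~\ref{lem:N}.

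First I would dispose of the commutativity relation: $A_{1,0}A_{0,1}=A_{0,1}A_{1,0}$ is a special case of Theorem~\ref{thm:com}. For the remaining identities, write $A_{\lambda}A_{\mu}=\sum_{\nu}c^{\nu}A_{\nu}$ in $\scA$ (legitimate by Theorem~\ref{thm:com}). Evaluating this identity on the indicator function $\mathbf 1_{\{w\}}$ of a single special vertex $w$ at a special vertex $x$ gives
$$
c^{\nu}=\frac{N_{\nu}}{N_{\lambda}N_{\mu}}\,\bigl|\{\,y\in V:\vect{d}(x,y)=\lambda,\ \vect{d}(y,w)=\mu\,\}\bigr|\qquad\text{for any }x,w\text{ with }\vect{d}(x,w)=\nu,
$$
the count being independent of the chosen $x,w$ (this independence is exactly the content of Theorem~\ref{thm:com}). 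Specialising $\mu=\omega_{1}$ (so $N_{\mu}=N_{1,0}$) or $\mu=\omega_{2}$, the theorem reduces to a local count: given special vertices $x$ and $w$, how many $y\in V_{\omega_{1}}(w)$ (resp.\ $V_{\omega_{2}}(w)$) satisfy $\vect{d}(x,y)=\lambda$? A single–apartment check shows that $\vect{d}(x,y)$ can take only the finitely many values occurring on the right–hand sides of the stated formulas — the dominant folds of $\vect{d}(x,w)+\epsilon$ for $\epsilon\in W_{0}\omega_{1}$, together with $\vect{d}(x,w)$ itself — and applying $A_{\lambda}A_{1,0}$ to the constant function and using $A_{\nu}\mathbf 1=\mathbf 1$ forces the coefficients in each row to sum to $N_{1,0}$ (resp.\ $N_{0,1}$), a convenient check on the answer.

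The combinatorial core is to evaluate, for fixed $x,w$ with $\vect{d}(x,w)=\nu$, the number $J_{\nu}=\bigl|\{y\in V_{\omega_{1}}(w):\vect{d}(x,y)=\lambda\}\bigr|$. Working in a fixed apartment $A$ containing $x$ and $w$ and using a retraction onto $A$ (see \cite{Par:06b}), one checks that for $\lambda=m\omega_{1}+n\omega_{2}$ deep inside the dominant cone the vertices $y\in V_{\omega_{1}}(w)$ split according to the four directions $\epsilon\in W_{0}\omega_{1}$ in which $y$ departs from $w$, together with a fifth family for which $\vect{d}(x,y)=\vect{d}(x,w)$; the size of each family is a product of the thickness parameters $q_{0}=q_{2}=q$ and $q_{1}=r$, computed by counting minimal galleries exactly as in the sketch of Lemma~\ref{lem:N} and in Figure~\ref{fig:C2count}, using that the residue of a special vertex is a generalised quadrangle with parameters $(q,r)$. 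Assembling these contributions gives $N_{1,0}A_{m,n}A_{1,0}=rA_{m+1,n-2}+(q-1)(r+1)A_{m,n}+q^{2}r^{2}A_{m+1,n}+q^{2}rA_{m-1,n+2}+A_{m-1,n}$. The remaining formulas are degenerations of this count: when $\lambda$ meets a wall of the dominant cone, some of the points $w+\epsilon$ fall outside the cone and fold back, merging the corresponding directions, while for $w$ lying on a wall the branching of a gallery that crosses that wall changes, so that the affected coefficients are rescaled rather than merely added — this is why the cases $A_{0,n}$, $A_{m,0}$ and $A_{m,1}$ (and, for the $\omega_{2}$–products, $A_{0,n}$ and $A_{m,0}$) must be handled separately. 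The $A_{k,l}A_{0,1}$ identities follow in the same way with $\omega_{1}$ replaced by $\omega_{2}$ throughout.

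The main obstacle is the case book-keeping: for $\lambda$ on or adjacent to a wall one must identify which directions in $W_{0}\omega_{1}$ fold together and, crucially, recompute the branching factors for galleries beginning on a $q$-wall versus an $r$-wall, so that the rescaled coefficients (for instance $qr(q+1)$ rather than $r(q^{2}+1)$ in the $A_{m,0}A_{1,0}$ identity) come out correctly. Each individual count is elementary once the apartment geometry of $\widetilde C_{2}$ and the local generalised–quadrangle structure are in hand, and the constraint that the coefficients in each row sum to $N_{1,0}$ or $N_{0,1}$ gives a running consistency check against arithmetic slips.
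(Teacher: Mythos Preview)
Your proposal is correct and follows essentially the same approach as the paper: derive the product formula $A_{\lambda}A_{\mu}=\sum_{\nu}\frac{N_{\nu}}{N_{\lambda}N_{\mu}}|V_{\lambda}(x)\cap V_{\mu}(y)|\,A_{\nu}$ (with $\vect d(x,y)=\nu$) and then evaluate the intersection cardinalities for $\mu\in\{\omega_1,\omega_2\}$ by apartment geometry and gallery counts, with the boundary cases treated separately. One small remark on logical flow: in the appendix the commutativity $A_{1,0}A_{0,1}=A_{0,1}A_{1,0}$ is meant to be established here directly (Lemma~\ref{lem:partial} then deduces commutativity of all of $\scA$ from it), so invoking Theorem~\ref{thm:com} at this point is a slight forward reference in spirit, though not in substance since that theorem is cited from the literature.
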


\begin{proof}
From the definition of the operators $A_{m,n}$ we have
\begin{align}\label{eq:productform}
A_{m,n}A_{s,t}f(x)=\sum_{u,v\geq 0}\left(\frac{1}{N_{u,v}}\sum_{y\in V_{u,v}(x)}\frac{N_{u,v}}{N_{m,n}N_{s,t}}|V_{m,n}(x)\cap V_{s,t}(y)|f(y)\right)
\end{align}
(see \cite[(3.1)]{Par:06a} for some intermediate steps). The formulae in the theorem follow by computing the cardinalities $|V_{m,n}(x)\cap V_{s,t}(y)|$ in the cases $(s,t)=(1,0)$ and $(s,t)=(0,1)$. We will give the calculation for $(s,t)=(0,1)$ and $m,n\geq 1$, leaving the remaining cases as an exercise. 

\begin{figure}[!h]
\centering
\begin{tikzpicture}[scale=0.35]
\draw (0,0)--(15,0);
\draw [style=dashed] (0,0)--(10,10);
\draw (10,2.5)--(10,7.5);
\draw (12,2.5)--(12,7.5);
\draw (14,2.5)--(14,7.5);
\draw (9.5,3)--(14.5,3);
\draw (9.5,5)--(14.5,5);
\draw (9.5,7)--(14.5,7);
\draw [style = dashed] (9.5,2.5)--(14.5,7.5);
\draw [style = dashed] (9.5,7.5)--(14.5,2.5);
\draw [style = dashed] (9.5,6.5)--(10.5,7.5);
\draw [style = dashed] (13.5,2.5)--(14.5,3.5);
\draw [style = dashed] (9.5,3.5)--(10.5,2.5);
\draw [style = dashed] (13.5,7.5)--(14.5,6.5);
\draw (4.75,5)--(5.25,5);
\draw (6.8,-0.2)--(7.2,0.2);
\node at (0,0) {$\bullet$};
\node at (12,5) {$\bullet$};
\node at (4.25,5) {$k$};
\node at (6.6,-0.75) {$l$};
\node [color=white] at (10,7) {$\bullet$};
\node [color=white] at (10,3) {$\bullet$};
\node [color=white] at (14,3) {$\bullet$};
\node [color=white] at (14,7) {$\bullet$};
\node at (10,7) {$\circ$};
\node at (10,3) {$\circ$};
\node at (14,3) {$\circ$};
\node at (14,7) {$\circ$};
\node at (12.25,5.75) {$y$};
\node at (-0.5,0) {$x$};
\draw [style=dotted] (7,0)--(9.5,2.5);
\draw [style=dotted] (5,5)--(9.5,5);
\draw (2,0)--(2,2);
\draw (2,2)--(4.5,2);
\draw (4,0)--(4,4);
\draw [style=dashed] (2,2)--(4,0);
\draw [style=dashed] (4,4)--(4.5,3.5);
\draw [style=dashed] (4,0)--(4.5,0.5);
%\node at (14,2) {$(k-1,l+1)$};
%\node at (14,8) {$(k+1,l)$};
\end{tikzpicture}
\caption{Computing the intersection cardinalities}\label{fig:C2recursion}
\end{figure}
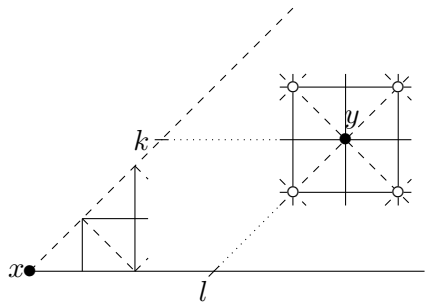

Suppose that $y\in V_{k,l}(x)$, with $k,l\geq 1$, and consider the intersection $V_{i,j}(x)\cap V_{0,1}(y)$. It is clear from Figure~\ref{fig:C2recursion} that if this intersection is nonempty then 
$$(i,j)\in\{(k,l-1),(k-1,l+1),(k+1,l-1),(k,l+1)\}$$ (these are the $4$ points marked with $\circ$ in Figure~\ref{fig:C2recursion}). Using some basic building theory we see that $|V_{k,l-1}(x)\cap V_{0,1}(y)|=1$, $|V_{k-1,l+1}(x)\cap V_{0,1}(y)|=q$, $|V_{k+1,l-1}(x)\cap V_{0,1}(y)|=qr$, and $|V_{k,l+1}(x)\cap V_{0,1}(y)|=q^2r$. Thus for large enough $m,n\geq 1$ we have
\begin{align*}
|V_{m,n}(x)\cap V_{0,1}(y)|&=\begin{cases}
1&\text{if $y\in V_{m,n+1}(x)$}\\
q&\text{if $y\in V_{m+1,n-1}(x)$}\\
qr&\text{if $y\in V_{m-1,n+1}(x)$}\\
q^2r&\text{if $y\in V_{m,n-1}(x)$}.
\end{cases}
\end{align*}
It follows from (\ref{eq:productform}) and Lemma~\ref{lem:N} that
\begin{align*}
A_{m,n}A_{0,1}&=\frac{1}{N_{0,1}}(q^2rA_{m,n+1}+qrA_{m+1,n-1}+qA_{m-1,n+1}+A_{m,n-1}).
\end{align*}
The remaining formulae are similar, with some care for small values of $m$ and $n$.
\end{proof}

Let $\scA$ be the linear span of $\{A_{m,n}\mid m,n\geq 0\}$ over $\CC$. 

\begin{lemma}\label{lem:partial}
The vector space $\scA$ is a commutative unital algebra over $\CC$, generated by $A_{1,0}$ and $A_{0,1}$. Moreover, $\scA$ is isomorphic to $\CC[X,Y]$ (the algebra of polynomials in commuting indeterminates~$X$ and~$Y$), with an isomorphism given by $X\mapsto A_{1,0}$ and $Y\mapsto A_{0,1}$. 
\end{lemma}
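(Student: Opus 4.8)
The plan is to deduce everything from the explicit multiplication rules in Theorem~\ref{thm:recursionC2} by a single triangularity argument. First I would introduce the total order $\prec$ on $\ZZ_{\geq 0}^{2}$ defined by $(m,n)\prec(m',n')$ if and only if $2m+n<2m'+n'$, or $2m+n=2m'+n'$ and $n<n'$. This is a well-order of order type $\omega$ (for each value of $2m+n$ there are only finitely many pairs $(m,n)$), and it refines the dominance order on the dominant coweights $m\omega_{1}+n\omega_{2}$ — which is the conceptual reason the argument works, and a hands-on shadow of the Satake isomorphism $\scA\cong\CC[P]^{W_{0}}$. The one point that requires a genuine (if routine) check is that $\prec$ simultaneously triangularizes multiplication by \emph{both} generators: inspecting each of the eight relations in Theorem~\ref{thm:recursionC2}, together with the two products $A_{0,0}A_{1,0}=A_{1,0}$ and $A_{0,1}A_{1,0}=A_{1,0}A_{0,1}$ that are not literally among them, one finds that for every $(m,n)\in\ZZ_{\geq 0}^{2}$
$$A_{m,n}A_{1,0}=\gamma_{m,n}\,A_{m+1,n}+\sum_{(m',n')\prec(m+1,n)}(\ast)\,A_{m',n'},$$
$$A_{m,n}A_{0,1}=\delta_{m,n}\,A_{m,n+1}+\sum_{(m',n')\prec(m,n+1)}(\ast)\,A_{m',n'},$$
with leading coefficients $\gamma_{m,n},\delta_{m,n}$ that are products of $q,r$ and quantities $q\pm1$, $r\pm1$, hence in $\CC^{\times}$.

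Next I would let $\scB$ denote the subalgebra (automatically commutative, by the first relation of Theorem~\ref{thm:recursionC2}, with identity $A_{0,0}=\mathrm{id}$) of operators on $\{f:V\to\CC\}$ generated by $A_{1,0}$ and $A_{0,1}$, and set $M_{a,b}=A_{1,0}^{a}A_{0,1}^{b}$. Using the two displayed relations I would prove, by induction on $(a,b)$ along $\prec$, the triangularity statement
$$M_{a,b}=c_{a,b}\,A_{a,b}+\sum_{(m,n)\prec(a,b)}(\ast)\,A_{m,n}\qquad\text{with }c_{a,b}\in\CC^{\times}.$$
In the inductive step one writes $M_{a,b}=M_{a,b-1}A_{0,1}$ when $b\geq1$ and $M_{a,0}=M_{a-1,0}A_{1,0}$, applies the inductive hypothesis and then the displayed relations, and uses the immediate implications $(m,n)\prec(a,b-1)\Rightarrow(m,n+1)\prec(a,b)$ and $(m,n)\prec(a-1,0)\Rightarrow(m+1,n)\prec(a,0)$ to see that every newly created term other than the leading one lies strictly below $(a,b)$. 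In particular each $M_{a,b}$ lies in $\scA=\mathrm{span}\{A_{m,n}\mid m,n\geq0\}$, so $\scB\subseteq\scA$; conversely, since the transition from $\{M_{a,b}\}$ to $\{A_{m,n}\}$ is $\prec$-triangular with invertible diagonal, a strong induction along $\prec$ gives $A_{a,b}\in\scB$ for all $(a,b)$, so $\scA\subseteq\scB$. Hence $\scA=\scB$ is a commutative unital $\CC$-algebra generated by $A_{1,0}$ and $A_{0,1}$ (this reproves the special case of Theorem~\ref{thm:com} at hand).

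Finally I would consider the surjective $\CC$-algebra homomorphism $\varepsilon\colon\CC[X,Y]\to\scA$ determined by $X\mapsto A_{1,0}$, $Y\mapsto A_{0,1}$ (well defined and surjective by the previous step). It sends the monomial basis $X^{a}Y^{b}$ to $\{M_{a,b}\}$, and this family is linearly independent in $\scA$: in any nontrivial relation $\sum\mu_{a,b}M_{a,b}=0$, taking $(a_{0},b_{0})$ maximal for $\prec$ among indices with $\mu_{a_{0},b_{0}}\neq0$ and reading off the coefficient of $A_{a_{0},b_{0}}$ via the triangularity statement gives $\mu_{a_{0},b_{0}}c_{a_{0},b_{0}}\neq0$, contradicting the linear independence of the $A_{m,n}$. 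Thus $\varepsilon$ carries a basis to a linearly independent set, so it is injective, hence an isomorphism $\CC[X,Y]\xrightarrow{\ \sim\ }\scA$, as claimed.

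The hard part is the first step: confirming that the \emph{one} order $\prec$ triangularizes multiplication by both generators across all of the case-split relations of Theorem~\ref{thm:recursionC2}, including the low-index boundary relations and the two products not explicitly listed there, and that every leading coefficient is nonzero. Once the correct order — the one refining the dominance order on coweights — has been pinned down, the remaining inductions and the identification with $\CC[X,Y]$ are purely formal.
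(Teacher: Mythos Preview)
Your argument is correct and follows essentially the same route as the paper's proof: both fix a total order on $\ZZ_{\geq 0}^2$ for which multiplication by $A_{1,0}$ and $A_{0,1}$ is triangular with nonzero leading coefficient, deduce that $A_{1,0}^mA_{0,1}^n=c_{m,n}A_{m,n}+\text{(lower terms)}$ with $c_{m,n}\neq 0$, and then run the standard surjectivity/injectivity argument for the evaluation map $\CC[X,Y]\to\scA$. The only difference is the specific order chosen: the paper uses $(k,l)\prec(m,n)$ iff $k+l<m+n$, or $k+l=m+n$ and $k<m$, whereas you use $2m+n$ as the primary key with $n$ as tie-breaker; both orders refine the dominance order on $P^+$ and both triangularize the recursions of Theorem~\ref{thm:recursionC2}, so there is no substantive distinction.
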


\begin{proof}
Let $\prec$ be the total order on $\NN^2$ given by $(k,l)\prec (m,n)$ if either $k+l<m+n$ or $k+l=m+n$ and $k<m$. An induction using this total order and the formulae in Theorem~\ref{thm:recursionC2} shows that for each $(m,n)\in\NN^2$ and $(k,l)\in\NN^2$ the product $A_{m,n}A_{k,l}$ is a linear combination of terms $A_{i,j}$ with $(i,j)\in\NN^2$. Thus $\scA$ is a unital algebra (with unit $A_{0,0}=I$). Moreover, for each $(m,n)\in\NN^2$ an induction shows that there is a positive number $c_{m,n}>0$ such that
\begin{align}\label{eq:poly}
A_{m,n}=c_{m,n}A_{1,0}^mA_{0,1}^n+\text{a linear combination of $A_{1,0}^kA_{0,1}^{l}$ with $(k,l)\prec (m,n)$}.
\end{align}
Thus $\scA$ is generated by $A_{1,0}$ and $A_{0,1}$, and hence is commutative as $A_{1,0}A_{0,1}=A_{0,1}A_{1,0}$. 

It follows that there is a surjective homomorphism $\psi:\CC[X,Y]\to\scA$ with $\psi(X)=A_{1,0}$ and $\psi(Y)=A_{0,1}$. Suppose that $z=\sum a_{k,l}X^kY^l\in\ker(\psi)$ is nonzero, and let $(m,n)\in\NN^2$ be maximal subject to $a_{m,n}\neq 0$. Then~(\ref{eq:poly}) implies that
$$
0=\psi(z)=c_{m,n}'A_{m,n}+\text{linear combination of terms $A_{k,l}$ with $(k,l)\prec (m,n)$}
$$
for some $c_{m,n}'\neq 0$, contradicting the linear independence of the operators $A_{k,l}$. Thus $\psi$ is injective, and so $\scA\cong \CC[X,Y]$. 
\end{proof}

Let $C_2$ be the group of signed permutations on two letters, acting on pairs of nonzero complex numbers $(z_1,z_2)$ permutations and inversions (for example, there is an element $\sigma\in C_2$ with $\sigma(z_1,z_2)=(z_2^{-1},z_1)$). This group of order~$8$ is the Weyl group of type~$C_2$.

\begin{thm}\label{thm:reps} For each pair $(z_1,z_2)$ of nonzero complex numbers there is a $1$-dimensional representation $\pi_{z_1,z_2}$ of $\scA$ given by 
\begin{align*}
\pi_{z_1,z_2}(A_{m,n})=\frac{(qr)^{-m}(q\sqrt{r})^{-n}}{(1+q^{-1})(1+r^{-1})(1+q^{-1}r^{-1})}\sum_{\sigma\in C_2}c(z_{\sigma(1)},z_{\sigma(2)})z_{\sigma(1)}^{m+n}z_{\sigma(2)}^n
\end{align*}
where
$$
c(z_1,z_2)=\frac{(1-q^{-1}z_1^{-1}z_2^{-1})(1-q^{-1}z_1^{-1}z_2)(1-r^{-1}z_1^{-2})(1-r^{-1}z_2^{-2})}
{(1-z_1^{-1}z_2^{-1})(1-z_1^{-1}z_2)(1-z_1^{-2})(1-z_2^{-2})}
$$
whenever $z_1,z_2,z_1^{-1},z_2^{-1}$ are pairwise distinct, and if $z_1,z_2,z_1^{-1},z_2^{-1}$ are not pairwise distinct then the formula for $\pi_{z_1,z_2}(A_{m,n})$ is obtained from the above formula by taking an appropriate limit. Moreover every $1$-dimensional representation $\pi$ of $\scA$ is of the form $\pi=\pi_{z_1,z_2}$ for some $z_1,z_2\in\CC^{\times}$, and $\pi_{z_1,z_2}=\pi_{z_1',z_2'}$ if and only if $(z_1',z_2')=\sigma(z_1,z_2)$ for some $\sigma\in C_2$. 
\end{thm}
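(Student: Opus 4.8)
I would start from Lemma~\ref{lem:partial}: since $\scA\cong\CC[X,Y]$ under $X\mapsto A_{1,0}$, $Y\mapsto A_{0,1}$, a $1$-dimensional representation of $\scA$ is nothing but a pair of complex numbers, namely $\bigl(\pi(A_{1,0}),\pi(A_{0,1})\bigr)$. So the theorem is really a comparison between this parametrisation of characters by $\CC^{2}$ and the parametrisation by pairs $(z_{1},z_{2})$. Write $f_{m,n}(z_{1},z_{2})$ for the right-hand side of the displayed formula and put $\Theta\colon(\CC^{\times})^{2}\to\CC^{2}$, $\Theta(z_{1},z_{2})=\bigl(f_{1,0}(z_{1},z_{2}),f_{0,1}(z_{1},z_{2})\bigr)$; the function $f_{m,n}$ is, in the notation of Section~\ref{sec:vertices}, the type-$C_{2}$ Macdonald spherical function $P_{m\omega_{1}+n\omega_{2}}$.

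To prove the first assertion I would argue as in the proof of Lemma~\ref{lem:partial}. First, $f_{m,n}$ is a well-defined symmetric Laurent polynomial in $z_{1},z_{2}$: clearing the denominators of the $c$-functions turns $\sum_{\sigma\in C_{2}}c(z_{\sigma(1)},z_{\sigma(2)})z_{\sigma(1)}^{m+n}z_{\sigma(2)}^{n}$ into a $C_{2}$-alternating Laurent polynomial, hence divisible by the Weyl denominator, and the quotient is the required symmetric polynomial; this is the standard well-definedness of Macdonald spherical functions. The Macdonald constant-term identity $\sum_{\sigma\in C_{2}}c(z_{\sigma(1)},z_{\sigma(2)})=(1+q^{-1})(1+r^{-1})(1+q^{-1}r^{-1})$ gives $f_{0,0}\equiv 1$. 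The essential point is then to verify that the family $\bigl(f_{m,n}\bigr)_{m,n\geq 0}$ satisfies every one of the relations of Theorem~\ref{thm:recursionC2}, with each $A_{i,j}$ replaced by the number $f_{i,j}(z_{1},z_{2})$ and each product of operators by a product of complex numbers. Granting this, the relations of Theorem~\ref{thm:recursionC2} --- exactly as in Lemma~\ref{lem:partial} and equation~(\ref{eq:poly}) --- let one solve recursively, in the order $\prec$, for $A_{m,n}$ as a fixed polynomial $P_{m,n}(A_{1,0},A_{0,1})$, and the same polynomial identities hold among the $f$'s; hence $f_{m,n}(z_{1},z_{2})=P_{m,n}\bigl(f_{1,0}(z_{1},z_{2}),f_{0,1}(z_{1},z_{2})\bigr)$, so $A_{m,n}\mapsto f_{m,n}(z_{1},z_{2})$ is precisely the algebra homomorphism $\scA\cong\CC[X,Y]\to\CC$ determined by $X\mapsto f_{1,0}(z_{1},z_{2})$, $Y\mapsto f_{0,1}(z_{1},z_{2})$. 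This homomorphism is $\pi_{z_{1},z_{2}}$.

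For the remaining two assertions I would study $\Theta$. The $f_{m,n}$ are manifestly invariant under $C_{2}$ acting on $(z_{1},z_{2})$, so $\pi_{z_{1},z_{2}}$ depends only on the $C_{2}$-orbit of $(z_{1},z_{2})$. Expanding each factor $(1-q_{\alpha}^{-1}u^{-\alpha^{\vee}})/(1-u^{-\alpha^{\vee}})$ of the $c$-function as a geometric series shows that $\sum_{\sigma}c(z_{\sigma})z_{\sigma(1)}^{m+n}z_{\sigma(2)}^{n}$ equals a nonzero multiple of the $C_{2}$-orbit sum $m_{m\omega_{1}+n\omega_{2}}$ plus a combination of orbit sums $m_{\mu}$ with $\mu$ strictly below $m\omega_{1}+n\omega_{2}$ in dominance order; in particular $f_{1,0}$ and $f_{0,1}$ differ from nonzero multiples of the fundamental orbit sums $m_{\omega_{1}}$ and $m_{\omega_{2}}$ only by constants. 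Since the invariant ring $\CC[z_{1}^{\pm1},z_{2}^{\pm1}]^{C_{2}}$ is freely generated by $m_{\omega_{1}}$ and $m_{\omega_{2}}$ (the standard description of $W_{0}$-invariant Laurent polynomials on the coweight lattice as a polynomial ring on fundamental orbit sums), this triangular relation gives $\CC[f_{1,0},f_{0,1}]=\CC[z_{1}^{\pm1},z_{2}^{\pm1}]^{C_{2}}$. Hence, after the identification of characters of $\scA$ with $\CC^{2}$ above, $\Theta$ is the quotient morphism $(\CC^{\times})^{2}\to(\CC^{\times})^{2}/C_{2}$, and $(\CC^{\times})^{2}/C_{2}=\mathrm{Spec}\,\CC[f_{1,0},f_{0,1}]\cong\CC^{2}$. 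A geometric quotient by a finite group is surjective with fibres exactly the orbits, which gives both that every character of $\scA$ is some $\pi_{z_{1},z_{2}}$ and that $\pi_{z_{1},z_{2}}=\pi_{z_{1}',z_{2}'}$ if and only if $(z_{1}',z_{2}')=\sigma(z_{1},z_{2})$ for some $\sigma\in C_{2}$.

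The main obstacle is the verification of the relations of Theorem~\ref{thm:recursionC2} for the $f_{m,n}$. I would clear the $c$-function denominators to turn each relation into an identity of $C_{2}$-alternating Laurent polynomials, then use the functional equations relating $c(s_{i}u)$ and $c(u)$ for the two simple reflections $s_{i}$ to reduce the comparison to a short explicit list of monomials; this identity is precisely the Pieri rule for multiplication by $A_{1,0}$ and $A_{0,1}$ among the type-$C_{2}$ Macdonald spherical functions. Alternatively one can avoid the computation altogether: Theorem~\ref{thm:centre} identifies $\scA$ with the centre of the chamber Hecke algebra, the Satake isomorphism identifies that centre with $\CC[z_{1}^{\pm1},z_{2}^{\pm1}]^{C_{2}}$ carrying each $A_{\lambda}$ to the corresponding Hall--Littlewood polynomial, and under this identification the relations of Theorem~\ref{thm:recursionC2} become standard Pieri identities while $\pi_{z_{1},z_{2}}$ becomes evaluation at $(z_{1},z_{2})$; but the direct verification is in keeping with the elementary spirit of this appendix.
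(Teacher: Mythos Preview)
Your proposal is correct but takes a different route from the paper. The paper's proof runs in the opposite direction: it starts from an arbitrary character $\pi^{(u,v)}$ (determined by $u=\pi(A_{1,0})$, $v=\pi(A_{0,1})$), applies $\pi$ to the recursions of Theorem~\ref{thm:recursionC2} to obtain a fourth-order linear recurrence in~$n$ for the normalised values $a_{m,n}=(qr)^m(q\sqrt r)^n\pi(A_{m,n})$, factors the auxiliary polynomial as $(\lambda^2-a\lambda+1)(\lambda^2-b\lambda+1)$ to produce the parameters $z_1,z_1^{-1},z_2,z_2^{-1}$, and then solves for the coefficients using $C_2$-symmetry and explicit initial conditions, eventually identifying the coefficient function as $c(z_1,z_2)/(1+q^{-1})(1+r^{-1})(1+q^{-1}r^{-1})$. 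The surjectivity and fibre statements then fall out immediately from the explicit formulae~(\ref{eq:base1}), (\ref{eq:base2}) for $u$ and $v$ in terms of $z_1,z_2$.

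By contrast you take the formula as given (from the general Macdonald spherical function machinery), reduce existence of $\pi_{z_1,z_2}$ to checking that the $f_{m,n}$ satisfy the recursions of Theorem~\ref{thm:recursionC2}, and handle the classification via invariant theory (triangularity of the $P_\lambda$ against orbit sums, Steinberg's theorem that $\CC[P]^{W_0}$ is polynomial on fundamental orbit sums, and the finite-quotient description of $(\CC^\times)^2\to(\CC^\times)^2/C_2$). This is conceptually cleaner for the ``moreover'' part, but it imports exactly the sort of general results the appendix is explicitly trying to bypass in favour of bare-hands computation; and your main obstacle --- verifying the Pieri identities for $f_{m,n}$ --- is of comparable difficulty to the paper's explicit solution of the recurrences. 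The paper's approach has the advantage of being entirely self-contained and of \emph{deriving} the formula rather than verifying it, which is the advertised purpose of the appendix; yours has the advantage of making the $W_0$-orbit parametrisation of characters transparent from the outset.
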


\begin{proof} 
By Lemma~\ref{lem:partial} we have $\scA\cong\CC[X,Y]$. The $1$-dimensional representations of $\CC[X,Y]$ are precisely the evaluation maps $X\mapsto u$ and $Y\mapsto v$, and thus for each $(u,v)\in\CC^2$ there is a unique $1$-dimensional representation $\pi^{(u,v)}$ of $\scA$ determined by $\pi^{(u,v)}(A_{1,0})=u$ and $\pi^{(u,v)}(A_{0,1})=v$, and all $1$-dimensional representations are of this form. 

Let $(u,v)\in\CC^2$ and write $a_{m,n}=(qr)^m(q\sqrt{r})^n\pi^{(u,v)}(A_{m,n})$. Let $u'=(qr)^{-1}N_{1,0}u$ and $v'=(q\sqrt{r})^{-1}N_{0,1}v$. Applying $\pi$ to the formulae in Theorem~\ref{thm:recursionC2} gives:
\begin{align}
\label{eq:cc1}u'a_{m,n}&=a_{m+1,n-2}+(1-q^{-1})(1+r^{-1})a_{m,n}+a_{m+1,n}+a_{m-1,n+2}+a_{m-1,n}\\
\label{eq:cc2}u'a_{0,n}&=(1+r^{-1})(a_{1,n-2}+(1-q^{-1})a_{0,n}+a_{1,n})\\
\label{eq:cc3}u'a_{m,0}&=(1+q^{-1})a_{m-1,2}+a_{m+1,0}+(1-q^{-1})r^{-1}a_{m,0}+a_{m-1,0}\\
\label{eq:cc4}u'a_{m,1}&=a_{m+1,1}+a_{m-1,3}+a_{m-1,1}+(1+r^{-1}-q^{-1}r^{-1})a_{m,1}\\
\label{eq:cc5}v'a_{m,n}&=a_{m,n-1}+a_{m+1,n-1}+a_{m-1,n+1}+a_{m,n+1}\\
\label{eq:cc6}v'a_{0,n}&=a_{0,n-1}+a_{0,n+1}+(1+r^{-1})a_{1,n-1}\\
\label{eq:cc7}v'a_{m,0}&=(1+q^{-1})(a_{m-1,1}+a_{m,1}),
\end{align}
where in each case the indices $m,n$ are required to be large enough to ensure that the indices appearing on the right are all at least~$0$. From (\ref{eq:cc5}) we have
$$
a_{m+1,n-1}+a_{m-1,n+1}=v'a_{m,n}-a_{m,n-1}-a_{m,n+1}\quad\text{for all $m,n\geq 1$},
$$
and using this equation in (\ref{eq:cc1}) gives
\begin{align*}
u'a_{m,n}&=(1-q^{-1})(1+r^{-1})a_{m,n}+(a_{m+1,n-2}+a_{m-1,n})+(a_{m+1,n}+a_{m-1,n+2})\\
&=(1-q^{-1})(1+r^{-1})a_{m,n}+(v'a_{m,n-1}-a_{m,n-2}-a_{m,n})+(v'a_{m,n+1}-a_{m,n}-a_{m,n+2}),
\end{align*}
valid for all $m\geq 1$ and $n\geq 2$. A similar calculation using~(\ref{eq:cc6}) and (\ref{eq:cc2}) shows that the above formula also holds for $m=0$. By replacing $n$ by $n+2$ and rearranging we obtain
\begin{align}\label{eq:look}
a_{m,n+4}-v' a_{m,n+3}+\alpha a_{m,n+2}-v' a_{m,n+1}+a_{m,n}=0\quad\text{for all $m,n\geq 0$,}
\end{align}
where $\alpha=2+u'-(1-q^{-1})(1+r^{-1})$.  The auxiliary equation of this linear recurrence (in $n$) factorises as a product of two quadratics: $\lambda^4-v'\lambda^3+\alpha\lambda^2-v'\lambda+1=(\lambda^2-a\lambda+1)(\lambda^2-b\lambda+1)$, and so the roots of the auxiliary equation are of the form $z_1,z_1^{-1},z_2,z_2^{-1}$ for some numbers $z_1,z_2\in\CC^{\times}$. By Newton's identities we have 
\begin{align}
\label{eq:base1}u&=\frac{qr}{N_{1,0}}\left((1-q^{-1})(1+r^{-1})+(z_1+z_1^{-1})(z_2+z_2^{-1})\right)\\
\label{eq:base2}v&=\frac{q\sqrt{r}}{N_{0,1}}\left(z_1+z_1^{-1}+z_2+z_2^{-1}\right).
\end{align}
Writing $\pi_{z_1,z_2}=\pi^{(u,v)}$ whenever $(z_1,z_2)\in(\CC^{\times})^2$ and $(u,v)\in\CC^2$ are related as above, it follows from (\ref{eq:base1}) and (\ref{eq:base2}) that $\pi_{z_1,z_2}=\pi_{z_1',z_2'}$ if and only if $(z_1',z_2')=(z_{\sigma(1)},z_{\sigma(2)})$ for some $\sigma\in C_2$. We now verify that $\pi_{z_1,z_2}(A_{m,n})$ is given by the formula in the statement of the theorem.

Assuming for now that $z_1,z_1^{-1},z_2,z_2^{-1}$ are pairwise distinct, solving the recurrence~(\ref{eq:look}) gives
$$
a_{m,n}=C_{1,m}(z_1,z_2)z_1^n+C_{2,m}(z_1,z_2)z_2^n+C_{3,m}(z_1,z_2)z_1^{-n}+C_{4,m}(z_1,z_2)z_2^{-n}
$$
for suitable functions $C_{i,m}(z_1,z_2)$ (independent of $n$). Writing $C_{1,m}(z_1,z_2)=C_m(z_1,z_2)$, the invariance under the group~$C_2$ implies that 
$$
C_{2,m}(z_1,z_2)=C_m(z_2,z_1),\quad C_{3,m}(z_1,z_2)=C_m(z_1^{-1},z_2^{-1}),\quad C_{4,m}(z_1,z_2)=C_m(z_2^{-1},z_1^{-1}),
$$
and also that $C_m(z_1,z_2^{-1})=C_m(z_1,z_2)$. Thus for all $m,n\geq 0$ we have
\begin{align}\label{eq:temprec}
a_{m,n}=C_{m}(z_1,z_2)z_1^n+C_{m}(z_2,z_1)z_2^n+C_{m}(z_1^{-1},z_2^{-1})z_1^{-n}+C_{m}(z_2^{-1},z_1^{-1})z_2^{-n}.
\end{align}
Writing $C_m=C_m(z_1,z_2)$ it follows from (\ref{eq:temprec}) and (\ref{eq:cc5}) that
\begin{align}\label{eq:anotherrec}
z_1^{-1}C_{m+2}-(z_2+z_2^{-1})C_{m+1}+z_1C_m=0\quad\text{for all $m\geq 0$}
\end{align}
(we have used the fact that $v'=z_1+z_1^{-1}+z_2+z_2^{-1}$). The roots of the auxiliary equation of the recurrence~(\ref{eq:anotherrec}) are $z_1z_2$ and $z_1z_2^{-1}$, and these are distinct by hypothesis, and hence
$$
C_m=D(z_1,z_2)z_1^mz_2^m+D'(z_1,z_2)z_1^mz_2^{-m}\quad\text{for all $m\geq 0$}
$$
for suitable functions $D(z_1,z_2)$ and $D'(z_1,z_2)$ independent of $m$. Since $C_m(z_1,z_2)=C_m(z_1,z_2^{-1})$ we have $D'(z_1,z_2)=D(z_1,z_2^{-1})$, and thus by (\ref{eq:temprec}) we have
$$
a_{m,n}=\sum_{\sigma\in C_2}D(z_{\sigma(1)},z_{\sigma(2)})z_{\sigma(1)}^{m+n}z_{\sigma(2)}^m\quad\text{for all $m,n\geq 0$}.
$$

To compute $D(z_1,z_2)$ we proceed as follows: Using the recurrence formulae we obtain explicit formulae for $a_{0,0},a_{0,1},a_{0,2}$ and $a_{0,3}$ in terms of $z_1$ and $z_2$. In particular, $a_{0,0}=1$, and $a_{1,0}=qr u$ and $a_{0,1}=q\sqrt{r} v$ are given by (\ref{eq:base1}) and (\ref{eq:base2}). Then~(\ref{eq:cc6}) with $n=1$ gives $a_{0,2}=v'a_{0,1}-1-(1+r^{-1})a_{1,0}$. By (\ref{eq:cc6}) with $n=2$ we have $a_{0,3}=v'a_{0,2}-a_{0,1}-(1+r^{-1})a_{1,1}$, and $a_{1,1}$ is computed using~(\ref{eq:cc7}) with $m=1$ giving $a_{11}=v'(1+q^{-1})^{-1}a_{1,0}-a_{0,1}$. This gives the initial conditions of the recurrence~(\ref{eq:look}) with $m=0$, and thus the coefficients in~(\ref{eq:temprec}) (with $m=0$) can be computed, giving
$$
C_0(z_1,z_2)=\frac{(1-q^{-1}z_1^{-1}z_2^{-1})(1-q^{-1}z_1^{-1}z_2)(1-r^{-1}z_1^{-2})}{(1+q^{-1})(1+q^{-1}r^{-1})(1-z_1^{-1}z_2^{-1})(1-z_1^{-1}z_2)(1-z_1^{-2})}.
$$
From (\ref{eq:cc6}) and (\ref{eq:temprec}) we see that
$
C_1(z_1,z_2)=\frac{z_1(z_2+z_2^{-1})}{1+r^{-1}}C_0,
$
and thus the initial conditions of the recurrence~(\ref{eq:anotherrec}) are known. Thus we can solve for $D(z_1,z_2)$, and we find that $D(z_1,z_2)=c(z_1,z_2)/(1+q^{-1})(1+r^{-1})(1+q^{-1}r^{-1})$, with $c(z_1,z_2)$ as in the statement of the theorem (a computer algebra package is recommended for these calculations). 

Thus the formula for $\pi_{z_1,z_2}$ is verified in the case where $z_1,z_2,z_1^{-1},z_2^{-1}$ are pairwise distinct. Generally, from (\ref{eq:base1}), (\ref{eq:base2}), and the fact that $A_{1,0}$ and $A_{0,1}$ generate~$\scA$ we see that $\pi(A_{m,n})$ is a polynomial in $z_1,z_1^{-1},z_2,z_2^{-1}$. Thus in the case where $z_1,z_1^{-1},z_2,z_2^{-1}$ are not pairwise distinct we can obtain the formula for $\pi_{z_1,z_2}(A_{m,n})$ by taking an appropriate limit.
\end{proof}

Let $\TT=\{t\in\CC\mid |t|=1\}$, and let $dt$ denote normalised Haar measure on~$\TT$. Thus for integrable functions $f$ on $\TT$ we have $\int_{\TT}f(t)\,dt=\frac{1}{2\pi}\int_0^{2\pi}f(e^{i\theta})\,d\theta$.  If $A\in\scA$ and $t\in\TT^2$ we write $\widehat{A}(t)=\pi_{t}(A)$. The following theorem establishes the Plancherel formula for the algebra~$\scA$. 

\begin{thm}\label{thm:plancherelC2} We have
$$
\frac{1}{N_{k,l}}\int_{\TT^2}\widehat{A}_{k,l}(t)\widehat{A}_{m,n}(t)\,d\mu(t)=\delta_{(k,l),(m,n)},
$$
where $d\mu(t)$ is the measure
$$
d\mu(z)=K\frac{1}{|c(z_1,z_2)|^2}dt_1dt_2,\quad\text{with}\quad K=\frac{1}{8}(1+q^{-1})(1+r^{-1})(1+q^{-1}r^{-1}).
$$
\end{thm}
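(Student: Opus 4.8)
The plan is to evaluate $\int_{\TT^2}\widehat A_{k,l}(t)\widehat A_{m,n}(t)\,d\mu(t)$ directly, by substituting the explicit formula for $\widehat A_{m,n}$ from Theorem~\ref{thm:reps} and then using essentially nothing beyond the orthogonality of the characters $t\mapsto t_1^{i}t_2^{j}$ on $\TT^{2}$ and the symmetry of the measure. Two preliminary reductions are worth isolating. First, recall from Section~\ref{sec:vertices} that $A_{\lambda}^{*}=A_{\lambda^{*}}$ with $\lambda^{*}=-w_0\lambda$; since the longest element $w_0$ of the Weyl group of type $C_2$ acts as $-\mathrm{id}$, we get $A_{m,n}^{*}=A_{m,n}$, whence $\widehat A_{m,n}(t)\in\RR$ for $t\in\TT^{2}$. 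Thus the product in the statement is genuinely $\widehat A_{k,l}(t)\overline{\widehat A_{m,n}(t)}$, i.e.\ an $L^{2}(\TT^{2},\mu)$ inner product, and moreover $\overline{\widehat A_{m,n}(t)}=\widehat A_{m,n}(t^{-1})$. Second, for $t=(t_1,t_2)\in\TT^{2}$ one has $\overline{c(t_1,t_2)}=c(t_1^{-1},t_2^{-1})$, and using $|1-aw|=|1-aw^{-1}|$ for $w\in\TT$, $a\in\RR$, together with the constancy of $\alpha\mapsto q_{\alpha}$ on $C_2$-orbits of roots, one checks that $|c(\sigma t)|^{2}=|c(t)|^{2}$ for every $\sigma\in C_2$; hence $d\mu$, like normalized Haar measure $dt_1\,dt_2$, is $C_2$-invariant.

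Next write $r^{(m,n)}=(qr)^{m}(q\sqrt r)^{n}$ and $t^{(m,n)}=t_1^{m+n}t_2^{n}$ (a character of $\TT^{2}$; as $(m,n)$ ranges over $\ZZ^{2}$ these run over an orthonormal basis of $L^{2}(\TT^{2},dt_1dt_2)$), so that Theorem~\ref{thm:reps} reads $\widehat A_{m,n}(t)=\dfrac{r^{-(m,n)}}{W_0(q^{-1})}\sum_{\sigma\in C_2}c(\sigma t)\,(\sigma t)^{(m,n)}$, where $W_0(q^{-1})=(1+q^{-1})(1+r^{-1})(1+q^{-1}r^{-1})=8K$ (the Poincaré polynomial of the type $C_2$ Weyl group at $q^{-1}$, and $|W_0|=8$). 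Substituting this for $\widehat A_{k,l}$, and the conjugate formula $\overline{\widehat A_{m,n}(t)}=\widehat A_{m,n}(t^{-1})=\frac{r^{-(m,n)}}{W_0(q^{-1})}\sum_{\tau}c(\tau t^{-1})(\tau t^{-1})^{(m,n)}$, into the integral, and writing $|c(t)|^{2}=c(t)c(t^{-1})$, the $(\sigma,\tau)$-term has integrand $\frac{c(\sigma t)c(\tau t^{-1})}{c(t)c(t^{-1})}(\sigma t)^{(k,l)}(\tau t^{-1})^{(m,n)}\cdot K$. In the $\tau$-summand perform the change of variables $t\mapsto\tau^{-1}t$; this is legitimate by the $C_2$-invariance of $dt_1dt_2$ and of $|c(t)|^{2}$, and (using $(\tau^{-1}t)^{-1}=\tau^{-1}(t^{-1})$) it simplifies the term to $\frac{c(\sigma\tau^{-1}t)}{c(t)}(\sigma\tau^{-1}t)^{(k,l)}t^{-(m,n)}\cdot K$, which after relabelling $\rho=\sigma\tau^{-1}$ no longer depends on $\tau$. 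The $\tau$-sum therefore contributes a factor $|W_0|=8$, and since $8K/W_0(q^{-1})^{2}=1/W_0(q^{-1})$ we obtain
$$
\int_{\TT^2}\widehat A_{k,l}\,\widehat A_{m,n}\,d\mu
=\frac{r^{-(k,l)}\,r^{-(m,n)}}{W_0(q^{-1})}\sum_{\rho\in C_2}\int_{\TT^2}\frac{c(\rho t)}{c(t)}\,(\rho t)^{(k,l)}\,t^{-(m,n)}\,dt_1\,dt_2 .
$$

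It remains to evaluate the right-hand side. Each integral is a constant-term (residue) computation: by the product formula for $c$, the ratio $c(\rho t)/c(t)$ telescopes to a product, over the roots inverted by $\rho$, of elementary factors of the shape $(1-q^{-1}t^{\pm\beta})/(1-t^{\pm\beta})$, whose Laurent expansions on $\TT^{2}$ (determined by which pole lies inside the torus) are explicit geometric series, and $\int_{\TT^2}t_1^{i}t_2^{j}\,dt_1dt_2=\delta_{(i,j),(0,0)}$. Collecting these series and summing over $\rho\in C_2$ is precisely the rank-two instance of the Gindikin--Karpelevich/Macdonald constant-term identity; the bulk of the proof is to run this bookkeeping, showing that the surviving contributions cancel unless $k\omega_1+l\omega_2=m\omega_1+n\omega_2$ (which, both pairs being in $\NN^{2}$, forces $(k,l)=(m,n)$) and, for $(k,l)=(m,n)$, evaluating the resulting constant; normalizing it against the value of $N_{k,l}$ supplied by Lemma~\ref{lem:N} then yields the asserted identity. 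I expect this last step --- the rank-two constant-term identity together with pinning down the normalization constant --- to be the main obstacle; for a rank-two root system it is entirely explicit and can be carried out by hand (a computer algebra system is convenient, as it already was for Theorem~\ref{thm:reps}).

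Finally, it is worth noting that, once the constants are matched, the statement is simply the specialization of the general Plancherel theorem (Theorem~\ref{thm:plangen}) to type $\widetilde C_2$ with $q_0=q_2=q$, $q_1=r$: the parametrization $\mathbb U=\mathrm{Hom}(P,\TT)\cong\TT^{2}$ and the $c$-function agree with Theorem~\ref{thm:reps}, one has $W_0(q^{-1})/|W_0|=K$ as computed above, and $N_{\lambda}=N_{k,l}$ for $\lambda=k\omega_1+l\omega_2$ by Lemma~\ref{lem:N}; so one may alternatively deduce the theorem from Theorem~\ref{thm:plangen} rather than redoing the harmonic analysis by hand.
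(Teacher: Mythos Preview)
Your approach is valid in outline but differs substantially from the paper's, and the difference is instructive. You attack $\int_{\TT^2}\widehat A_{k,l}\widehat A_{m,n}\,d\mu$ head-on by expanding both spherical functions and symmetrizing; this lands you at a sum of integrals $\int_{\TT^2} \frac{c(\rho t)}{c(t)}(\rho t)^{(k,l)}t^{-(m,n)}\,dt$ over $\rho\in C_2$, which you correctly identify as a rank-two constant-term computation but leave unfinished as ``the main obstacle''. The paper instead begins with an algebraic reduction: since $A_{k,l}A_{m,n}=\sum_{i,j} c_{(k,l),(m,n)}^{(i,j)}A_{i,j}$ with $c_{(k,l),(m,n)}^{(0,0)}=\delta_{(k,l),(m,n)}N_{k,l}$, and since $\widehat{(\cdot)}$ is a homomorphism, it suffices to prove the simpler statement $\int_{\TT^2}\widehat A_{k,l}(t)\,d\mu(t)=\delta_{(k,l),(0,0)}$. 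With only one spherical function in the integrand, the $C_2$-symmetrization collapses the sum over $\sigma$ to eight copies of a single term $\int_{\TT^2} t_1^{k+l}t_2^{l}\,c(t_1^{-1},t_2^{-1})^{-1}\,dt_1dt_2$, and the inner $t_1$-integral is then a contour integral whose integrand has all its poles (at $qt_2^{\pm1}$ and $\pm\sqrt r$) outside the unit circle, so only the residue at the origin survives. A second elementary contour integral in $t_2$ finishes the job. No constant-term identity or geometric-series bookkeeping is needed.

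Your route would work --- the constant-term identity you invoke is genuine and tractable in rank two --- but the paper's reduction via the algebra structure constants is precisely what buys the elementary, ``by-hand'' character that the appendix advertises. Your closing observation that the theorem is the $\widetilde C_2$ specialization of Theorem~\ref{thm:plangen} is correct, but of course the appendix's stated purpose is to redo that specialization without appealing to the general machinery.
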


\begin{proof}
Since $A_{k,l}A_{m,n}=\sum_{i,j\geq 0}c_{(k,l),(m,n)}^{(i,j)}A_{i,j}$ and $c_{(k,l),(m,n)}^{(0,0)}=\delta_{(k,l),(m,n)}N_{k,l}$, it suffices to show that $\int_{\TT^2}\widehat{A}_{k,l}(t)\,d\mu(t)=\delta_{(k,l),(0,0)}$. Using the facts that $|c(t_1,t_2)|^2=c(t_{\sigma(1)},t_{\sigma(2)})c(t_{\sigma(1)}^{-1},t_{\sigma(2)}^{-1})$ for $(t_1,t_2)\in\TT^2$ and $\sigma\in C_2$, and that $\int_{\TT}f(t^{-1})\,dt=\int_{\TT}f(t)\,dt$ we have
\begin{align*}
\int_{\TT^2}\widehat{A}_{k,l}(t)d\mu(t)&=\frac{1}{8}(qr)^{-k}(q\sqrt{r})^{-l}\int_{\TT^2}\sum_{\sigma\in C_2}\frac{t_{\sigma(1)}^{k+l}t_{\sigma(2)}^{l}}{c(t_{\sigma(1)}^{-1},t_{\sigma(2)}^{-1})}\,dt_1dt_2\\
&=(qr)^{-k}(q\sqrt{r})^{-l}\int_{\TT}\bigg(\int_{\TT}\frac{t_{1}^{k+l}t_{2}^{l}}{c(t_{1}^{-1},t_{2}^{-1})}\,dt_1\bigg)dt_2.
\end{align*}
As a contour integral, the inner integral is
$$
\int_{\TT}\frac{t_1^{k+l}t_2^l}{c(t_1^{-1},t_2^{-1})}dt_1=\frac{1}{2\pi i}\int_{\Gamma}\frac{z_1^{k+l}t_2^l}{c(z_1^{-1},t_2^{-1})}\frac{dz_1}{z_1}
$$
where $\Gamma$ is the unit circle traversed once counterclockwise. The poles of the function $f(z_1)=1/c(z_1^{-1},t_2^{-1})$ are at $z_1=qt_2^{-1},qt_2,\pm\sqrt{r}$ and since $|t_2|=1$ and $q,r>1$ we see that $f(z_1)$ has no poles inside the contour~$\Gamma$, and so by residue calculus we deduce that
$$
\int_{\TT}\frac{t_1^{k+l}t_2^l}{c(t_1^{-1},t_2^{-1})}dt_1=\delta_{(k,l),(0,0)}\lim_{z_1\to 0}\frac{1}{c(z_1^{-1},t_2^{-1})}=\delta_{(k,l),(0,0)}\frac{1-t_2^2}{1-r^{-1}t_2^2}.
$$
Thus
$$
\int_{\TT^2}\widehat{A}_{k,l}(t)\,d\mu(t)=\delta_{(k,l),(0,0)}\int_{\TT}\frac{1-t_2^2}{1-r^{-1}t_2^2}\,dt_2=\delta_{(k,l),(0,0)}.\qedhere
$$
\end{proof}

\begin{thm}\label{thm:pn} Let $(X_n)_{n\geq 0}$ be an isotropic random walk on $\Sigma$ with transition operator~$A$. Then
$$
p^{(n)}(x,y)=\frac{1}{N_{k,l}^2}\int_{\TT^2}\widehat{A}(t)^n\widehat{A}_{k,l}(t)\,d\mu(t)\quad\text{if $y\in V_{k,l}(x)$}.
$$
\end{thm}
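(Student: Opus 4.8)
The plan is to run the $\widetilde{C}_2$ version of the argument behind formula~(\ref{eq:rec1}), using the by-hand ingredients assembled in Theorems~\ref{thm:recursionC2}, \ref{thm:reps} and~\ref{thm:plancherelC2}. First I would record the elementary bookkeeping. Since $\scA$ is a commutative algebra closed under multiplication by Theorem~\ref{thm:recursionC2} (equivalently Lemma~\ref{lem:partial}), the transition operator $A=\sum_{k,l\geq 0}a_{k,l}A_{k,l}$ satisfies $A^n\in\scA$ — for a bounded-range walk this is a finite computation, and in general one works in the $\ell^2$-operator-norm completion $\scA_2$ as in the main text — so we may write
$$
A^n=\sum_{k,l\geq 0}a_{k,l}^{(n)}A_{k,l}.
$$
Taking the $(x,y)$ matrix entry and using that $A_{k,l}$ has $(x,y)$ entry $N_{k,l}^{-1}$ if $y\in V_{k,l}(x)$ and $0$ otherwise, one gets
$$
p^{(n)}(x,y)=\frac{a_{k,l}^{(n)}}{N_{k,l}}\qquad\text{whenever }y\in V_{k,l}(x);
$$
in particular the $n$-step walk is again isotropic, and the theorem is reduced to extracting the coefficient $a_{k,l}^{(n)}$.

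For this I would pass through the one-dimensional representations $\pi_t$, $t\in\TT^2$, of Theorem~\ref{thm:reps} (these are exactly the ones extending continuously to $\scA_2$, cf.\ \cite{Par:06b}). Because each $\pi_t$ is an algebra homomorphism, $\widehat{A}(t)^n=\pi_t(A)^n=\pi_t(A^n)=\sum_{k,l}a_{k,l}^{(n)}\widehat{A}_{k,l}(t)$. Multiplying this identity by $\widehat{A}_{k,l}(t)$ and integrating against $d\mu$, the Plancherel formula of Theorem~\ref{thm:plancherelC2} kills every cross term and gives $\int_{\TT^2}\widehat{A}_{m,n}(t)\widehat{A}_{k,l}(t)\,d\mu(t)=N_{k,l}\,\delta_{(m,n),(k,l)}$, so that
$$
\int_{\TT^2}\widehat{A}(t)^n\,\widehat{A}_{k,l}(t)\,d\mu(t)=N_{k,l}\,a_{k,l}^{(n)}.
$$
Combining with the formula for $p^{(n)}(x,y)$ above and dividing by $N_{k,l}^2$ yields the claim. (No complex conjugate appears because the longest element of the Weyl group of type $C_2$ acts as $-\mathrm{id}$, so $A_{k,l}^*=A_{k,l}$ and hence $\widehat{A}_{k,l}(t)\in\RR$ for $t\in\TT^2$.)

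The only genuinely delicate point is the termwise integration in the second step when the walk has unbounded range, since then the sum over $(k,l)$ is infinite; for bounded-range walks it is vacuous. In general I would justify it by dominated convergence: the partial sums $\sum_{k+l\leq M}a_{k,l}^{(n)}A_{k,l}$ converge to $A^n$ in $\ell^2$-operator norm, and since for $t\in\TT^2$ the map $\pi_t$ is a character of the commutative $C^*$-algebra $\scA_2$, hence contractive, their images converge to $\widehat{A}(t)^n$ uniformly on the compact space $\TT^2$; moreover $|\widehat{A}(t)|\leq\|A\|\leq 1$ and $\widehat{A}_{k,l}$ is bounded on $\TT^2$, so the integrands are uniformly bounded and one may pass to the limit under the integral sign. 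Alternatively one simply invokes the general results of \cite{Par:06a,Par:06b}, of which this is the $\widetilde{C}_2$ instance; the point of the appendix is precisely that here $\scA$, the spectrum $\TT^2/C_2$, the representations $\pi_t$, and the Plancherel measure $d\mu$ have all been produced explicitly and by hand.
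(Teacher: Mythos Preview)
Your argument is correct and follows exactly the same route as the paper's proof: write $A^n=\sum a_{i,j}^{(n)}A_{i,j}$, identify $p^{(n)}(x,y)=a_{k,l}^{(n)}/N_{k,l}$, then integrate $\widehat{A}(t)^n\widehat{A}_{k,l}(t)$ against $d\mu$ and invoke the orthogonality relation of Theorem~\ref{thm:plancherelC2} to pick out $N_{k,l}a_{k,l}^{(n)}$. Your additional remarks on the unbounded-range case and on the absence of the complex conjugate (via $w_0=-\mathrm{id}$ in type $C_2$) are valid refinements that the paper's terse proof simply omits.
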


\begin{proof} We have $A^n=\sum_{i,j\geq 0}a_{i,j}^{(n)}A_{i,j}$ where $p^{(n)}(x,y)=a_{i,j}^{(n)}/N_{i,j}$ whenever $y\in V_{i,j}(x)$. Thus by Theorem~\ref{thm:plancherelC2} we have, for any pair $x,y$ with $y\in V_{k,l}(x)$,
$$
\int_{\TT^2}\widehat{A}(t)^n\widehat{A}_{k,l}(t)\,d\mu(t)=\sum_{i,j\geq 0}a_{i,j}^{(n)}\int_{\TT^2}\widehat{A}_{i,j}(t)\widehat{A}_{k,l}(t)\,d\mu(t)=N_{k,l}a_{k,l}^{(n)}=N_{k,l}^2p^{(n)}(x,y).\qedhere
$$
\end{proof}

The asymptotics for the $n$-step transition probabilities of the random walk (that is, the local limit theorem) can be extracted in a standard way from Theorem~\ref{thm:pn}. Let us simply illustrate this in an example. Consider the `simple random walk' on $V$ with transition operator $A_{0,1}$ (this is the random walk such that if $X_n=y$ in Figure~\ref{fig:C2recursion}, then $X_{n+1}$ is one of the $N_{0,1}=(q+1)(qr+1)$ vertices marked with $\circ$ in the figure, each chosen with equal probability~$1/N_{0,1}$). We can compute $\widehat{A}_{0,1}(t)$ from the general formula in Theorem~\ref{thm:reps}, however a shortcut is given by~(\ref{eq:base2}), giving
$$
\widehat{A}_{0,1}(t)=\frac{q\sqrt{r}}{(q+1)(qr+1)}\left(t_1+t_1^{-1}+t_2+t_2^{-1}\right).
$$
The simple random walk is periodic, with period~$2$, and so we consider $p^{(2n)}(x,x)$. Writing $e^{i\theta}=(e^{i\theta_1},e^{i\theta_2})$ we have
\begin{align*}
\widehat{A}_{0,1}(e^{i\theta})&=\frac{2q\sqrt{r}(\cos\theta_1+\cos\theta_2)}{(q+1)(qr+1)}\quad\text{and}\quad \frac{1}{|c(e^{i\theta})|^2}=\frac{4(\theta_1^2-\theta_2^2)^2\theta_1^2\theta_2^2}{(1-q^{-1})^4(1-r^{-1})^4}+O(\|\theta\|^4).
\end{align*}
Let $\rho=4q\sqrt{r}/((q+1)(qr+1))$ and $K'=K/(\pi^2(1-q^{-1})^4(1-r^{-1})^4)$. Some standard tricks from asymptotic analysis now give
\begin{align*}
p^{(2n)}(x,x)&=\frac{K}{4\pi^2}\int_{-\pi}^{\pi}\int_{-\pi}^{\pi}\frac{\widehat{A}_{0,1}(e^{i\theta})}{|c(e^{i\theta})|^2}\,d\theta_1d\theta_2\\
&\sim K'\rho^{2n}\int_{-\pi}^{\pi}\int_{-\pi}^{\pi}\left(\frac{\cos\theta_1+\cos\theta_2}{2}\right)^{2n}(\theta_1^2-\theta_2^2)^2\theta_1^2\theta_2^2\,d\theta_1d\theta_2\\
&\sim2K'\rho^{2n}\int_{-\epsilon}^{\epsilon}\int_{-\epsilon}^{\epsilon}\left(\frac{\cos\theta_1+\cos\theta_2}{2}\right)^{2n}(\theta_1^2-\theta_2^2)^2\theta_1^2\theta_2^2\,d\theta_1d\theta_2\\
&=\frac{2K'}{n^4}\rho^{2n}\int_{-\sqrt{n}\epsilon}^{\sqrt{n}\epsilon}\int_{-\sqrt{n}\epsilon}^{\sqrt{n}\epsilon}\left(\frac{\cos(\varphi_1/\sqrt{n})+\cos(\varphi_2/\sqrt{n})}{2}\right)^{2n}(\varphi_1^2-\varphi_2^2)^2\varphi_1^2\varphi_2^2\,d\varphi_1d\varphi_2\\
&\sim\frac{2K'}{n^4}\rho^{2n}\int_{-\infty}^{\infty}\int_{-\infty}^{\infty}e^{-(\varphi_1^2+\varphi_2^2)/2}(\varphi_1^2-\varphi_2^2)^2\varphi_1^2\varphi_2^2\,d\varphi_1d\varphi_2,
\end{align*}
and thus in conclusion we have
\begin{align*}
p^{(2n)}(x,x)&\sim\frac{6(q+1)(r+1)(qr+1)q^2r^2}{\pi(q-1)^4(r-1)^4}\left(\frac{4q\sqrt{r}}{(q+1)(qr+1)}\right)^{2n}n^{-4}.
\end{align*}

\section{Rank~2 Hecke algebras and the Feit-Higman Theorem}\label{app:rank2}

In this appendix we present the representation theory of rank $2$ spherical Hecke algebras and apply it to Theorems~\ref{thm:spherical1} and~\ref{thm:spherical2}. As a byproduct we arrive at a proof of the Feit-Higman Theorem (this proof is due to Kilmoyer and Solomon~\cite{KS:73}). Let $(W,S)$ be the Coxeter system of type $I_2(m)$. Write $S=\{s_1,s_2\}$, and write $q_{s_1}=q$ and $q_{s_2}=r$ (with $q=r$ if $m$ is odd). Let $T_i=T_{s_i}$ for $i=1,2$ be the generators of the abstract Hecke algebra $\scH$. The classification of the irreducible representations of $\scH$ is elementary:

\begin{prop}\label{prop:22}\emph{\cite[Theorem~8.3.1]{GP:00}}
In the above notation:
\begin{enumerate}
\item[\emph{(1)}] If $m$ is odd then the complete list of irreducible representations of $\scH$ is as follows: There are precisely $2$ $1$-dimensional irreducible representations, given by
\begin{align*}
\begin{aligned}
\rho_{\mathrm{triv}}(T_1)&=1\\
\rho_{\mathrm{triv}}(T_2)&=1
\end{aligned}\quad\text{and}\quad 
\begin{aligned}
\rho_{\mathrm{sgn}}(T_1)&=-q^{-1}\\
\rho_{\mathrm{sgn}}(T_2)&=-q^{-1}
\end{aligned}
\end{align*}
and precisely $(m-1)/2$ $2$-dimensional representations, given by
$$
\rho_j(T_1)=\frac{1}{q}\begin{bmatrix}-1&0\\
c_j&q\end{bmatrix}\quad\text{and}\quad\rho_j(T_2)=\frac{1}{q}\begin{bmatrix}q&c_j'\\
0&-1\end{bmatrix}\quad\text{for $1\leq j\leq (m-1)/2$},
$$
where $c_j$ and $c_j'$ are any numbers satisfying $c_jc_j'=4q\cos^2(\pi j / m)$. 
\item[\emph{(2)}] If $m$ is even then the complete list of irreducible representation of $\scH$ is as follows: There are preciesly $4$ $1$-dimensional representations, given by  
\begin{align*}
\begin{aligned}
\rho_{\mathrm{triv}}(T_1)&=1\\
\rho_{\mathrm{triv}}(T_2)&=1
\end{aligned}\quad \text{and}\quad
\begin{aligned}
\rho_{\mathrm{sgn}}(T_1)&=-q^{-1}\\
\rho_{\mathrm{sgn}}(T_2)&=-r^{-1}
\end{aligned}\quad\text{and}\quad
\begin{aligned}
\rho^1(T_1)&=1\\
\rho^1(T_2)&=-r^{-1}
\end{aligned}\quad\text{and}\quad\begin{aligned}
\rho^2(T_1)&=-q^{-1}\\
\rho^2(T_2)&=1.
\end{aligned}
\end{align*}
There are exactly $(m-2)/2$ $2$-dimensional representations, given by 
$$
\rho_j(T_1)=\frac{1}{q}\begin{bmatrix}-1&0\\
c_j&q\end{bmatrix}\quad\text{and}\quad \rho_j(T_2)=\frac{1}{r}\begin{bmatrix}r&c_j'\\
0&-1\end{bmatrix}\quad\text{for $1\leq j\leq (m-2)/2$},
$$
where $c_j$ and $c_j'$ are any numbers satisfying $c_jc_j'=q+r+2\sqrt{qr}\cos(2\pi j/m)$. 
\end{enumerate}
\end{prop}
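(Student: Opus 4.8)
The plan is to replace $\scH$ by a presentation in its two generators $T_1,T_2$ and then classify the (necessarily low-dimensional) representations by hand. Abbreviate $q=q_{s_1}$, $r=q_{s_2}$ (so $q=r$ when $m$ is odd). Setting $w=s_i$ in (\ref{eq:presentation2}) gives $T_i^2=q_i^{-1}+(1-q_i^{-1})T_i$, i.e.\ $(T_i-1)(T_i+q_i^{-1})=0$, so in every representation $\rho(T_i)$ is semisimple with spectrum in $\{1,-q_i^{-1}\}$. Since $\underbrace{s_1s_2s_1\cdots}_{m}$ and $\underbrace{s_2s_1s_2\cdots}_{m}$ are the two reduced words for the longest element $w_0$ of $W$, iterating (\ref{eq:presentation2}) yields the braid relation $\underbrace{T_1T_2T_1\cdots}_{m}=T_{w_0}=\underbrace{T_2T_1T_2\cdots}_{m}$. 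The $\CC$-algebra on two symbols modulo these quadratic and braid relations is spanned by the alternating words of length $\le m$, of which there are at most $1+2(m-1)+1=2m$; as $\dim\scH=|W|=2m$ and $\scH$ satisfies the relations, they form a presentation of $\scH$, so a representation of $\scH$ is exactly a pair $(X,Y)=(\rho(T_1),\rho(T_2))$ satisfying them. Moreover $\scH$ is finite-dimensional, so we may take $(\rho,V)$ irreducible and finite-dimensional; if $X$ or $Y$ is scalar the other has an eigenvector spanning a submodule, forcing $\dim V=1$, and otherwise, writing $E_i=(q_i\rho(T_i)+1)/(q_i+1)$ for the idempotent onto the $1$-eigenspace of $\rho(T_i)$ (these generate $\rho(\scH)$ and $E_1V\neq 0$), one picks $0\neq w\in E_1V$ with $E_1E_2E_1w=\mu w$, notes $E_1w=w$ hence $E_1E_2w=\mu w$, and checks that $\CC w+\CC E_2w$ is stable under $E_1,E_2$, hence an $\scH$-submodule; so $\dim V\le 2$.

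The $1$-dimensional representations are the pairs $(\epsilon_1,\epsilon_2)$ with $\epsilon_i\in\{1,-q_i^{-1}\}$ satisfying the braid relation: for $m$ even this is automatic and we get $\rho_{\mathrm{triv}},\rho_{\mathrm{sgn}},\rho^1,\rho^2$, while for $m$ odd ($q=r$) the braid relation collapses to $\epsilon_1=\epsilon_2$, leaving only $\rho_{\mathrm{triv}},\rho_{\mathrm{sgn}}$. For a $2$-dimensional irreducible $\rho$, neither $X$ nor $Y$ is scalar, so each has the two distinct eigenvalues $1,-q^{-1}$ (resp.\ $1,-r^{-1}$), and they share no eigenvector; taking the basis formed by an eigenvector of $Y$ for eigenvalue $1$ and an eigenvector of $X$ for eigenvalue $1$ puts $X,Y$ into precisely the triangular shapes displayed in the statement, with off-diagonal entries $c_j/q$, $c_j'/r$. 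Conjugating by a diagonal matrix leaves only the product $c:=c_jc_j'$ as an isomorphism invariant, and computing the eigenlines of $X$ and $Y$ shows $\rho$ is irreducible iff $c\notin\{0,(q+1)(r+1)\}$.

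It remains to determine which values of $c$ are compatible with the braid relation. Put $C=XY=\rho(T_1T_2)$, so $\det C=(qr)^{-1}$ and $\tr C=c(qr)^{-1}-q^{-1}-r^{-1}$, a bijective affine function of $c$. When $m=2k$, writing $YX=X^{-1}(XY)X=Y(XY)Y^{-1}$ shows the braid relation $(XY)^k=(YX)^k$ is equivalent to $C^k$ commuting with both $X$ and $Y$, hence (Schur) to $C^k$ being scalar; since $C$ is non-scalar (otherwise the off-diagonal entries vanish), $C$ is then semisimple with eigenvalues $(qr)^{-1/2}e^{\pm 2\pi ij/m}$, so $\tr C=2(qr)^{-1/2}\cos(2\pi j/m)$ and $c_jc_j'=q+r+2\sqrt{qr}\cos(2\pi j/m)$; the values $j\equiv 0\pmod{m/2}$ are exactly those at which $C$ would be forced scalar or the rep meets the reducible locus, so $j$ runs over $1,\dots,(m-2)/2$. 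For $m=2k+1$ one argues analogously, now using that conjugation by $C^k$ interchanges $X$ and $Y$, to conclude $C$ is semisimple with eigenvalue ratio an $m$-th root of unity; the trace identity then gives $c_jc_j'=4q\cos^2(\pi j/m)$ for $j=1,\dots,(m-1)/2$. In each case distinct admissible $j$ give distinct $c$, hence pairwise non-isomorphic representations. Finally, the representations found have $\sum(\dim)^2=2m=\dim\scH$, so by semisimplicity of $\scH$ the list is complete.

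\emph{Main obstacle.} The substance lies entirely in the $2$-dimensional case: translating the length-$m$ braid relation for $2\times 2$ matrices into the explicit formula for $c_jc_j'$, and — especially for $m$ odd — pinning down which degenerate parameters must be discarded while verifying that the surviving pairs $(X,Y)$ really do define irreducible, pairwise non-isomorphic $\scH$-modules. The presentation, the dimension bound, the $1$-dimensional case, and the concluding count are all routine.
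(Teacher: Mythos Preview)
Your argument is correct in outline and substantially more detailed than the paper's own proof, which is essentially a verification: the paper simply checks that the displayed matrices satisfy the defining relations~(\ref{eq:presentation2}), checks irreducibility directly, distinguishes the representations via the inner product~(\ref{eq:innerprod}), and concludes by the dimension count $\sum(\dim\rho)^2=2m=\dim\scH$. You instead \emph{derive} the classification from scratch: you extract a presentation of $\scH$, bound the dimension of irreducibles by $2$ via the idempotent argument, and then analyse the braid relation through the eigenvalues of $C=XY$. Both routes finish with the same semisimplicity/dimension count, but yours explains where the formula $c_jc_j'=q+r+2\sqrt{qr}\cos(2\pi j/m)$ comes from rather than pulling it from the reference.

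Two places deserve tightening. First, in the even case your sentence ``the values $j\equiv 0\pmod{m/2}$ are exactly those at which $C$ would be forced scalar or the rep meets the reducible locus'' conflates two separate exclusions: $j\in\{0,m/2\}$ is excluded because it forces the eigenvalues of $C$ to coincide (whence $C^k$ scalar with $C$ non-scalar is impossible, $C$ being non-diagonalisable), whereas irreducibility of the surviving $\rho_j$ amounts to $c_jc_j'\notin\{0,(q+1)(r+1)\}$, which holds for $1\le j\le (m-2)/2$ by the AM--GM bounds $(q+r)/2\sqrt{qr}\ge 1$ and $(qr+1)/2\sqrt{qr}\ge 1$ together with $|\cos(2\pi j/m)|<1$. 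Second, your odd-$m$ sketch (``one argues analogously, now using that conjugation by $C^k$ interchanges $X$ and $Y$'') is the right idea---from $C^kX=YC^k$ one gets that $C^{2k+1}=C^kXY$ relates to $YC^kY$, and ultimately that $C^m=C^{2k+1}$ is scalar up to a computable factor---but the passage from ``$C^kXC^{-k}=Y$'' to the precise constraint $c_jc_j'=4q\cos^2(\pi j/m)$ and the exclusion $j\ne 0$ needs a line or two more than ``analogously''. None of this is a real obstacle; it just wants to be written out.
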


\begin{proof}
It is a straightforward exercise to show that the claimed formulae produce representations (by checking that the defining relations~(\ref{eq:presentation2}) are satisfied). It is also easy to check that the representations are irreducible. To check that they are pairwise non-isomorphic one can compute the inner products and check~(\ref{eq:innerprod}), and finally to check we have all irreducible representations one uses the character theoretic fact that $\sum \dim(\rho)^2=|I_2(m)|=2m$, where the sum is over all irreducible representations of~$\scH$. See \cite[Theorem~8.3.1]{GP:00} for details.
\end{proof}

The representation theory described in Proposition~\ref{prop:22} allows us to be extremely precise for random walks on generalised polygons. Let us simply illustrate these arguments for the case of generalised quadrangles (that is, $m=4$):

\begin{cor} For the simple random walk on a generalised quadrangle with parameters $(q,r)$ we have
$$
p^{(n)}(o,o)=\frac{1+k_1\lambda_1^n+k_2\lambda_2^n+k_3\lambda_3^n+k_4(\lambda_+^n+\lambda_-^n)}{(q+1)(r+1)(qr+1)}
$$
and 
$$
\|\mu^{(n)}-u\|_{\mathrm{tv}}^2\leq \frac{1}{4}\left(k_1\lambda_1^{2n}+k_2\lambda_2^{2n}+k_3\lambda_3^{2n}+k_4(\lambda_+^{2n}+\lambda_-^{2n})\right),
$$
where the numbers $k_i$, $\lambda_i$, and $\lambda_{\pm}$ are given by $k_1=q^2r^2$, $k_2=r^2(qr+1)/(q+r)$, $k_3=q^2(qr+1)/(q+r)$, $k_4=qr(q+1)(r+1)/(q+r)$, $\lambda_1=-2/(q+r)$, $\lambda_2=(q-1)/(q+r)$, $\lambda_3=(r-1)/(q+r)$, and 
$
\lambda_{\pm}=(q+r-2\pm\sqrt{(q-r)^2+4(q+r)})/2(q+r).
$
\end{cor}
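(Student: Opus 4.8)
The plan is to recognise the simple random walk as a single element of the rank~$2$ Hecke algebra $\scH$ of type $I_2(4)$ and then run it through Theorems~\ref{thm:spherical1} and~\ref{thm:spherical2} using the explicit description of $\mathrm{Irrep}(\scH)$ in Proposition~\ref{prop:22}(2). For $m=4$ the list consists of the four one-dimensional representations $\rho_{\mathrm{triv}},\rho_{\mathrm{sgn}},\rho^1,\rho^2$ and the single ($(m-2)/2=1$) two-dimensional representation $\rho_1$, whose parameters satisfy $c_1c_1'=q+r+2\sqrt{qr}\cos(\pi/2)=q+r$. First I would note that a chamber of a generalised quadrangle with parameters $(q,r)$ has $q$ neighbours across its $s_1$-panel and $r$ across its $s_2$-panel, so the simple random walk has $P=\tfrac{q}{q+r}P_{s_1}+\tfrac{r}{q+r}P_{s_2}$ and corresponds to $T=\tfrac{1}{q+r}(qT_1+rT_2)\in\scH$. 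I would also record $|\Delta|=\sum_{w\in W}q_w=(q+1)(r+1)(qr+1)$.

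Next I would compute $\rho(T)$ for each irreducible. For the one-dimensional representations this is immediate from the formulas in Proposition~\ref{prop:22}: $\rho_{\mathrm{triv}}(T)=1$, $\rho_{\mathrm{sgn}}(T)=\lambda_1=-2/(q+r)$, $\rho^1(T)=\lambda_2=(q-1)/(q+r)$ and $\rho^2(T)=\lambda_3=(r-1)/(q+r)$. For $\rho_1$ I would combine the two given $2\times 2$ matrices to get $\rho_1(T)=\tfrac{1}{q+r}\left[\begin{smallmatrix}r-1 & c_1'\\ c_1 & q-1\end{smallmatrix}\right]$; its trace is $(q+r-2)/(q+r)$ and, using $c_1c_1'=q+r$, its determinant simplifies so that the characteristic polynomial has discriminant $(q-r)^2+4(q+r)>0$ and roots exactly the claimed $\lambda_\pm=(q+r-2\pm\sqrt{(q-r)^2+4(q+r)})/2(q+r)$. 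In particular $\rho_1(T)$ is diagonalisable and $\chi_{\rho_1}(T^k)=\lambda_+^k+\lambda_-^k$ for all $k$.

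The heart of the argument is the computation of the multiplicities $m_\rho$ via Theorem~\ref{thm:character}, i.e.\ $m_\rho=\dim(\rho)/\langle\chi_\rho,\chi_\rho\rangle$ with $\langle f,g\rangle=\tfrac{1}{|\Delta|}\sum_{w\in W}q_w f(T_w)g(T_{w^{-1}})$. For a one-dimensional $\rho$ the value $\chi_\rho(T_w)$ is the product of the generator values over a reduced word for $w$, and summing $q_w\chi_\rho(T_w)\chi_\rho(T_{w^{-1}})$ over the eight elements of $I_2(4)$ factors as a product over $\{s_1,s_2\}$; this gives $m_{\mathrm{triv}}=1$, $m_{\mathrm{sgn}}=q^2r^2=k_1$, $m_{\rho^1}=r^2(qr+1)/(q+r)=k_2$, $m_{\rho^2}=q^2(qr+1)/(q+r)=k_3$. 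For $\rho_1$ one must evaluate $\chi_{\rho_1}(T_w)=\tr\rho_1(T_w)$ on all eight $w$ by multiplying out the matrices (the crosses $\chi_{\rho_1}(T_{s_1s_2})=\chi_{\rho_1}(T_{s_2s_1})=0$ fall out, and $\chi_{\rho_1}(T_{w_0})=-2/(qr)$), again collapsing the sum with $c_1c_1'=q+r$; this yields $\langle\chi_{\rho_1},\chi_{\rho_1}\rangle=2(q+r)/(qr(q+1)(r+1))$ and hence $m_{\rho_1}=qr(q+1)(r+1)/(q+r)=k_4$. One can check $\sum_\rho m_\rho\dim(\rho)=|\Delta|$ as a consistency test.

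Finally I would assemble the two displayed formulas. Since $o$ is a fixed chamber, $\delta(o,o)=e$ and $T_{w^{-1}}=T_e$, so Theorem~\ref{thm:spherical1} gives $p^{(n)}(o,o)=\tfrac{1}{|\Delta|}\sum_\rho m_\rho\chi_\rho(T^n)$; substituting the values $\rho(T)^n$ (one-dimensional) and $\lambda_+^n+\lambda_-^n$ (for $\rho_1$) together with the $m_\rho$ above produces exactly the stated expression over $(q+1)(r+1)(qr+1)$. For the mixing bound, the coefficients $a_{s_1},a_{s_2}$ are real and $s_1,s_2$ are involutions, so $T^*=T$ and $T^n(T^*)^n=T^{2n}$; Theorem~\ref{thm:spherical2} then gives $\|\mu^{(n)}-u\|_{\mathrm{tv}}^2\le\tfrac14\sum_{\rho\neq\rho_{\mathrm{triv}}}m_\rho\chi_\rho(T^{2n})$, which is precisely $\tfrac14\bigl(k_1\lambda_1^{2n}+k_2\lambda_2^{2n}+k_3\lambda_3^{2n}+k_4(\lambda_+^{2n}+\lambda_-^{2n})\bigr)$. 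I expect the only real obstacle to be the bookkeeping in the $\rho_1$ computations of step three: computing the traces of all eight matrices $\rho_1(T_w)$ and knowing exactly where to invoke $c_1c_1'=q+r$ to make the character sum telescope; everything else is substitution into the general theorems.
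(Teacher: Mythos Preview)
Your proposal is correct and follows essentially the same approach as the paper's own proof: identify $T=\tfrac{q}{q+r}T_1+\tfrac{r}{q+r}T_2$, compute the five character values $\chi_\rho(T^n)$ and the five multiplicities via Theorem~\ref{thm:character}, then plug into Theorems~\ref{thm:spherical1} and~\ref{thm:spherical2} with $T^*=T$. The paper's proof is terser (it just states the multiplicities and eigenvalues without showing the matrix or trace computations you outline), but the logical skeleton is identical.
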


\begin{proof}
For the first statement, from Theorem~\ref{thm:spherical1} we have
$$
p^{(n)}(o,o)=\frac{1}{|\Delta|}\sum_{\rho\in\mathrm{Irrep}(\scH)}m_{\rho}\chi_{\rho}(T^n),
$$
where $T=\frac{q}{q+r}T_1+\frac{r}{q+r}T_2$ (since we are considering the simple random walk). We have $|\Delta|=(q+1)(r+1)(qr+1)$, and by Proposition~\ref{prop:22} there are $5$ irreducible representations of $\scH$ with respective multiplicities $m_{\mathrm{triv}}=1$, $m_{\mathrm{sgn}}=q^2r^2$, $m^1=r^2(qr+1)/(q+r)$, $m^2=q^2(qr+1)/(q+r)$, and $m_1=qr(q+1)(r+1)/(q+r)$ (computed using Theorem~\ref{thm:character}). We have $\chi_{\mathrm{triv}}(T^n)=1$, $\chi_{\mathrm{sgn}}(T^n)=(-2/(q+r))^n$, $\chi^1(T^n)=((q-1)/(q+r))^n$, $\chi^2(T)=((r-1)/(q+r))^n$, and a calculation of eigenvalues gives $\chi_1(T^n)=\lambda_+^n+\lambda_-^n$. The result follows, and the second statement follows similarly from Theorem~\ref{thm:spherical2}, noting that $T^*=T$. 
\end{proof}

By considering the formulae for the multiplicities of irreducible representations in the geometric representation we obtain a proof of the Feit-Higman Theorem and some of the divisibility conditions from Theorem~\ref{thm:param} (c.f. \cite{KS:73}):

\begin{proof}[Proof of the Feit-Higman Theorem and divisibility conditions]
Suppose that a finite thick generalised $m$-gon exists with parameters $(q,r)$. Let $\chi_j$ be the character of the representation $\rho_j$ from Proposition~\ref{prop:22}. By Theorem~\ref{thm:character} we have $\langle\chi_j,\chi_j\rangle\in\QQ$. On the other hand we can explicitly compute these inner products. Writing $\theta_j=2\pi j/m$, a tedious calculation gives 
$$
|\Delta|\langle\chi_j,\chi_j\rangle=\begin{cases}
2m+\frac{(q-1)^2m}{q(1-\cos\theta_j)}&\text{if $m$ is odd}\\
2m+\frac{\left(r(q-1)^2+q(r-1)^2\right)m}{2qr\sin^2\theta_j}+\frac{(q-1)(r-1)m\cos\theta_j}{\sqrt{qr}\sin^2\theta_j}&\text{if $m$ is even}.
\end{cases}
$$
If $m$ is odd, then the formulae force $\cos\theta_j$ to be rational, and this implies that $m=3$. 
If $m$ is even, then the above formulae imply that both $\sin^2(2\pi/m)$ and $\cos(2\pi/m)$ are rational (consider $\langle\chi_1,\chi_1\rangle+\langle\chi_{(m/2)-j},\chi_{(m/2)-j}\rangle$ to see that $\sin^2(2\pi/m)$ is rational). Together these facts imply that $m\in\{2,4,6,8\}$.

The divisibility conditions for the cases $m=4,6,8$ follow by computing the multiplicity of $\rho^1$ using Theorem~\ref{thm:character}. The facts that $\sqrt{qr}\in\ZZ$ and $\sqrt{2qr}\in\ZZ$ for hexagons and octagons (respectively) arise from the multiplicity of $\rho_1$. 
\end{proof}

\section{Spherical and affine Coxeter systems}\label{app:classification}

In this final appendix we list the Coxeter diagrams of the irreducible spherical and affine Coxeter systems. For the affine systems, the extra generator that is added to the spherical system is indicated by~$\circ$. 

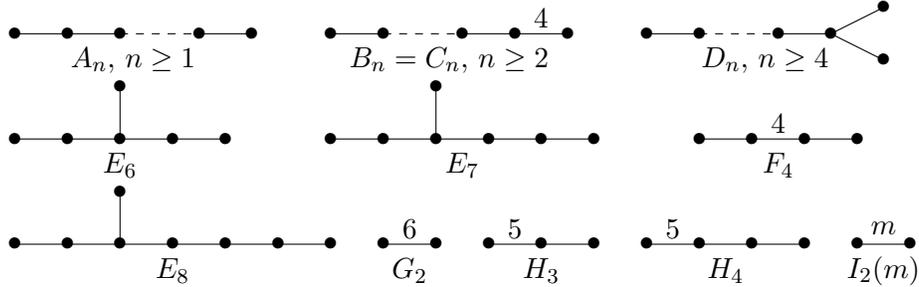
\begin{figure}[!h]
\centering
\begin{tikzpicture}[scale=0.7]
\node at (0,0) {$\bullet$};
\node at (1,0) {$\bullet$};
\node at (2,0) {$\bullet$};
\node at (3.5,0) {$\bullet$};
\node at (4.5,0) {$\bullet$};
\draw (0,0)--(2,0);
\draw (3.5,0)--(4.5,0);
\draw [style=dashed] (2,0)--(3.5,0);
\node at (2.25,-0.5) {$A_n,\,n\geq 1$};
\node at (6,0) {$\bullet$};
\node at (7,0) {$\bullet$};
\node at (8.5,0) {$\bullet$};
\node at (9.5,0) {$\bullet$};
\node at (10.5,0) {$\bullet$};
\draw (6,0)--(7,0);
\draw (8.5,0)--(10.5,0);
\node at (10,0.3) {$4$};
\draw [style=dashed] (7,0)--(8.5,0);
\node at (8.25,-0.5) {$B_n=C_n,\,n\geq 2$};
\node at (12,0) {$\bullet$};
\node at (13,0) {$\bullet$};
\node at (14.5,0) {$\bullet$};
\node at (15.5,0) {$\bullet$};
\node at (16.5,0.5) {$\bullet$};
\node at (16.5,-0.5) {$\bullet$};
\draw (12,0)--(13,0);
\draw (14.5,0)--(15.5,0);
\draw (15.5,0)--(16.5,0.5);
\draw (15.5,0)--(16.5,-0.5);
\draw [style=dashed] (13,0)--(14.5,0);
\node at (14.25,-0.5) {$D_n,\,n\geq 4$};
\node at (0,-2) {$\bullet$};
\node at (1,-2) {$\bullet$};
\node at (2,-2) {$\bullet$};
\node at (3,-2) {$\bullet$};
\node at (4,-2) {$\bullet$};
\node at (2,-1) {$\bullet$};
\draw (0,-2)--(4,-2);
\draw (2,-2)--(2,-1);
\node at (2,-2.5) {$E_6$};
\node at (6,-2) {$\bullet$};
\node at (7,-2) {$\bullet$};
\node at (8,-2) {$\bullet$};
\node at (9,-2) {$\bullet$};
\node at (10,-2) {$\bullet$};
\node at (11,-2) {$\bullet$};
\node at (8,-1) {$\bullet$};
\draw (6,-2)--(11,-2);
\draw (8,-2)--(8,-1);
\node at (8.5,-2.5) {$E_7$};
\node at (13,-2) {$\bullet$};
\node at (14,-2) {$\bullet$};
\node at (15,-2) {$\bullet$};
\node at (16,-2) {$\bullet$};
\draw (13,-2)--(16,-2);
\node at (14.5,-1.7) {$4$};
\node at (14.5,-2.5) {$F_4$};
\node at (0,-4) {$\bullet$};
\node at (1,-4) {$\bullet$};
\node at (2,-4) {$\bullet$};
\node at (3,-4) {$\bullet$};
\node at (4,-4) {$\bullet$};
\node at (5,-4) {$\bullet$};
\node at (6,-4) {$\bullet$};
\node at (2,-3) {$\bullet$};
\draw (0,-4)--(6,-4);
\draw (2,-4)--(2,-3);
\node at (3,-4.5) {$E_8$};
\node at (7,-4) {$\bullet$};
\node at (8,-4) {$\bullet$};
\draw (7,-4)--(8,-4);
\node at (7.5,-3.7) {$6$};
\node at (7.5,-4.5) {$G_2$};
\node at (9,-4) {$\bullet$};
\node at (10,-4) {$\bullet$};
\node at (11,-4) {$\bullet$};
\draw (9,-4)--(11,-4);
\node at (9.5,-3.7) {$5$};
\node at (10,-4.5) {$H_3$};
\node at (12,-4) {$\bullet$};
\node at (13,-4) {$\bullet$};
\node at (14,-4) {$\bullet$};
\node at (15,-4) {$\bullet$};
\draw (12,-4)--(15,-4);
\node at (12.5,-3.7) {$5$};
\node at (13.5,-4.5) {$H_4$};
\node at (16,-4) {$\bullet$};
\node at (17,-4) {$\bullet$};
\draw (16,-4)--(17,-4);
\node at (16.5,-4.5) {$I_2(m)$};
\node at (16.5,-3.7) {$m$};
\end{tikzpicture}
\caption{Irreducible spherical Coxeter systems}\label{fig:sphericalclassification}
\end{figure}

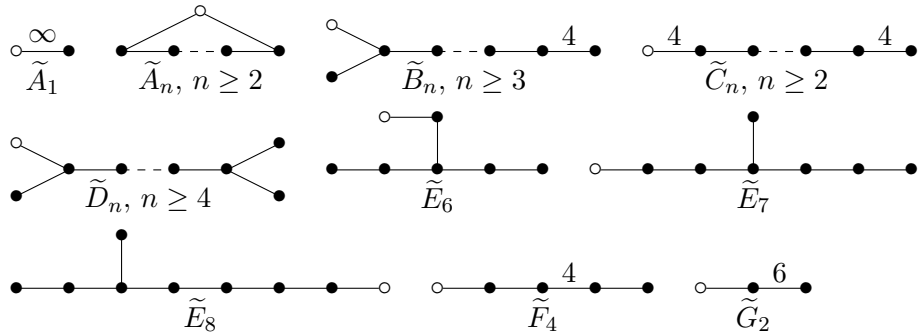
\begin{figure}[!h]
\centering
\begin{tikzpicture}[scale=0.7]
\draw (0,0)--(1,0);
\node [color=white] at (0,0) {$\bullet$};
\node at (0,0) {$\circ$};
\node at (1,0) {$\bullet$};
\node at (0.5,0.3) {$\infty$};
\node at (0.5,-0.5) {$\widetilde{A}_1$};
\node at (2,0) {$\bullet$};
\node at (3,0) {$\bullet$};
\node at (4,0) {$\bullet$};
\node at (5,0) {$\bullet$};
\draw (2,0)--(3,0);
\draw (4,0)--(5,0);
\draw [style=dashed] (3,0)--(4,0);
\draw (2,0)--(3.5,0.75)--(5,0);
\node [color=white] at  (3.5,0.75) {$\bullet$};
\node at (3.5,0.75) {$\circ$};
\node at (3.5,-0.5) {$\widetilde{A}_n,\,n\geq 2$};
\node at (6,-0.5) {$\bullet$};
\node at (7,0) {$\bullet$};
\node at (8,0) {$\bullet$};
\node at (9,0) {$\bullet$};
\node at (10,0) {$\bullet$};
\node at (11,0) {$\bullet$};
\draw (6,0.5)--(7,0)--(8,0);
\draw (6,-0.5)--(7,0);
\draw (9,0)--(11,0);
\node at (10.5,0.3) {$4$};
\draw [style=dashed] (8,0)--(9,0);
\node [color=white] at (6,0.5) {$\bullet$};
\node at (6,0.5) {$\circ$};
\node at (8.5,-0.5) {$\widetilde{B}_n,\,n\geq 3$};
\node at (13,0) {$\bullet$};
\node at (14,0) {$\bullet$};
\node at (15,0) {$\bullet$};
\node at (16,0) {$\bullet$};
\node at (17,0) {$\bullet$};
\draw (12,0)--(14,0);
\draw (15,0)--(17,0);
\node at (16.5,0.3) {$4$};
\node at (12.5,0.3) {$4$};
\draw [style=dashed] (14,0)--(15,0);
\node [color=white] at (12,0) {$\bullet$};
\node at (12,0) {$\circ$};
\node at (14.25,-0.5) {$\widetilde{C}_n,\,n\geq 2$};
\node at (0,-2.75) {$\bullet$};
\node at (1,-2.25) {$\bullet$};
\node at (2,-2.25) {$\bullet$};
\node at (3,-2.25) {$\bullet$};
\node at (4,-2.25) {$\bullet$};
\node at (5,-1.75) {$\bullet$};
\node at (5,-2.75) {$\bullet$};
\draw (0,-1.75)--(1,-2.25)--(2,-2.25);
\draw (0,-2.75)--(1,-2.25);
\draw [style=dashed] (2,-2.25)--(3,-2.25);
\draw (3,-2.25)--(4,-2.25)--(5,-1.75);
\draw (4,-2.25)--(5,-2.75);
\node [color=white] at (0,-1.75) {$\bullet$};
\node at (0,-1.75) {$\circ$};
\node at (2.5,-2.75) {$\widetilde{D}_n,\,n\geq 4$};
\node at (6,-2.25) {$\bullet$};
\node at (7,-2.25) {$\bullet$};
\node at (8,-2.25) {$\bullet$};
\node at (9,-2.25) {$\bullet$};
\node at (10,-2.25) {$\bullet$};
\node at (8,-1.25) {$\bullet$};
\draw (6,-2.25)--(10,-2.25);
\draw (8,-2.25)--(8,-1.25);
\draw (8,-1.25)--(7,-1.25);
\node [color=white] at (7,-1.25) {$\bullet$};
\node at (7,-1.25) {$\circ$};
\node at (8,-2.75) {$\widetilde{E}_6$};
\node at (12,-2.25) {$\bullet$};
\node at (13,-2.25) {$\bullet$};
\node at (14,-2.25) {$\bullet$};
\node at (15,-2.25) {$\bullet$};
\node at (16,-2.25) {$\bullet$};
\node at (17,-2.25) {$\bullet$};
\node at (14,-1.25) {$\bullet$};
\draw (14,-2.25)--(14,-1.25);
\draw (11,-2.25)--(17,-2.25);
\node [color=white] at (11,-2.25) {$\bullet$};
\node at (11,-2.25) {$\circ$};
\node at (14,-2.75) {$\widetilde{E}_7$};
\node at (0,-4.5) {$\bullet$};
\node at (1,-4.5) {$\bullet$};
\node at (2,-4.5) {$\bullet$};
\node at (3,-4.5) {$\bullet$};
\node at (4,-4.5) {$\bullet$};
\node at (5,-4.5) {$\bullet$};
\node at (6,-4.5) {$\bullet$};
\node at (2,-3.5) {$\bullet$};
\draw (0,-4.5)--(7,-4.5);
\draw (2,-4.5)--(2,-3.5);
\node [color=white] at (7,-4.5) {$\bullet$};
\node at (7,-4.5) {$\circ$};
\node at (3.5,-5) {$\widetilde{E}_8$};
\node at (9,-4.5) {$\bullet$};
\node at (10,-4.5) {$\bullet$};
\node at (11,-4.5) {$\bullet$};
\node at (12,-4.5) {$\bullet$};
\draw (8,-4.5)--(12,-4.5);
\node at (10.5,-4.2) {$4$};
\node [color=white] at (8,-4.5) {$\bullet$};
\node at (8,-4.5) {$\circ$};
\node at (10,-5) {$\widetilde{F}_4$};
\node at (14,-4.5) {$\bullet$};
\node at (15,-4.5) {$\bullet$};
\draw (13,-4.5)--(15,-4.5);
\node at (14.5,-4.2) {$6$};
\node [color=white] at (13,-4.5) {$\bullet$};
\node at (13,-4.5) {$\circ$};
\node at (14,-5) {$\widetilde{G}_2$};
\end{tikzpicture}
\caption{Irreducible affine Coxeter systems}\label{fig:affineclassification}
\end{figure}

\end{appendix}

\newpage

{\footnotesize

}

\medskip

\noindent James Parkinson\newline
School of Mathematics and Statistics\newline
University of Sydney\newline
Carslaw Building, F07\newline
NSW, 2006, Australia\newline
\texttt{jamesp@maths.usyd.edu.au}

\end{document}